\documentclass[a4paper,11pt]{amsart}
\usepackage[hmarginratio={1:1},vmarginratio={1:1},lmargin=60pt,tmargin=60pt]{geometry}

\usepackage[T1]{fontenc}
\usepackage[utf8]{inputenc}
\usepackage[numbers]{natbib}

\usepackage{amsmath,amssymb,amsthm,amscd,mathtools, dsfont}
\usepackage{aliascnt,bbm,bm,mathrsfs,stackrel,textcomp}
\usepackage{array,enumitem,etoolbox,float,graphicx,xcolor,colortbl}
\usepackage{tikz}
\usetikzlibrary{arrows.meta,calc,cd,decorations.markings,decorations.pathreplacing,decorations.pathmorphing,matrix,positioning,shapes,shapes.callouts}

\usepackage[hypertexnames=false]{hyperref}
\usepackage{bookmark}
\let\emph\relax
\DeclareTextFontCommand{\emph}{\bfseries\em}

\definecolor{mygray}{gray}{0.6}
\definecolor{mygraydark}{gray}{0.4}
\definecolor{mygraylight}{gray}{0.85}
\definecolor{spinach}{RGB}{46,139,87}
\definecolor{tomato}{RGB}{255,99,71}
\definecolor{orchid}{RGB}{143,40,194}
\definecolor{neon}{RGB}{77,77,255}
\definecolor{lightneon}{RGB}{110,110,255}
\definecolor{pumpkin}{RGB}{224,180,80}
\definecolor{citron}{RGB}{190,180,90}
\definecolor{lava}{RGB}{207,16,32}
\definecolor{cream}{RGB}{255,253,208}
\definecolor{verdigris}{RGB}{67,179,174}
\definecolor{mydarkblue}{RGB}{10,10,170}
\definecolor{darkspinach}{RGB}{20,70,20}
\definecolor{darktomato}{RGB}{155,40,30}
\definecolor{darkorchid}{RGB}{50,10,100}
\definecolor{darklava}{RGB}{150,8,16}
\definecolor{zero}{RGB}{0,0,0}
\definecolor{one}{RGB}{255,0,0}
\definecolor{two}{RGB}{0,255,0}
\definecolor{three}{RGB}{0,0,255}

\hypersetup{
pdftoolbar=true,
pdfmenubar=true,
pdffitwindow=false,
pdfstartview={FitH},
pdftitle={Presentations for categories of crystals},
pdfauthor={David He and Daniel Tubbenhauer},
pdfcreator={David He and Daniel Tubbenhauer},
pdfproducer={David He and Daniel Tubbenhauer},
pdfkeywords={Crystals, monoidal categories, generators and relations, Jones--Wenzl projectors},
pdfnewwindow=true,
colorlinks=true,
linkcolor=mydarkblue,
citecolor=teal,
filecolor=magenta,
urlcolor=orchid,
linkbordercolor=lava,
citebordercolor=teal,
urlbordercolor=orchid,
linktocpage=true
}

\usepackage{tikz}
\usetikzlibrary{matrix, fit, calc, backgrounds, arrows, shapes, shapes.geometric, shapes.misc}

\tikzstyle{tikzfig}=[baseline=-0.25em,scale=0.5]
\pgfkeys{/tikz/tikzit fill/.initial=0}
\pgfkeys{/tikz/tikzit draw/.initial=0}
\pgfkeys{/tikz/tikzit shape/.initial=0}
\pgfkeys{/tikz/tikzit category/.initial=0}

\pgfdeclarelayer{edgelayer}
\pgfdeclarelayer{nodelayer}
\pgfsetlayers{background,edgelayer,nodelayer,main}
\tikzstyle{none}=[inner sep=0mm]
\tikzstyle{every loop}=[]

\newcommand{\tikzfig}[1]{%
{\tikzstyle{every picture}=[tikzfig]
\IfFileExists{#1.tikz}
  {\input{#1.tikz}}
  {%
    \IfFileExists{./figures/#1.tikz}
      {\input{./figures/#1.tikz}}
      {\tikz[baseline=-0.5em]{\node[draw=red,font=\color{red},fill=red!10!white] {\textit{#1}};}}%
  }}%
}

\tikzstyle{blob}=[fill=white, draw=black, shape=circle, inner sep=1.5pt]
\tikzstyle{fullblob}=[fill=black, draw=black, shape=circle, tikzit fill=black, tikzit draw=black, inner sep=1.5pt]
\tikzstyle{jonesrectangle}=[fill=white, draw=black, shape=rectangle, minimum width=1cm, minimum height=0.6cm]

\tikzstyle{thickstrand}=[-, very thick]
\tikzstyle{arrow}=[draw={mygray}, ->, thick]	
\tikzstyle{bluearrow}=[draw={blue!75!black}, thick, ->]
\tikzstyle{redarrow}=[draw={purple!75!black}, thick, ->]
\tikzstyle{blueline}=[draw={rgb,255: red,0; green,0; blue,255}, thick, -]
\tikzstyle{redline}=[draw={rgb,255: red,255; green,0; blue,0}, thick, -]
\tikzstyle{braid-over}=[-, draw=white, double=black, double distance=0.8pt, tikzit draw={rgb,255: red,128; green,0; blue,128}]
\tikzstyle{background}=[-, dashed, draw={rgb,255: red,191; green,191; blue,191}, thin]

\tikzset{
anchorbase/.style={baseline={([yshift=-0.5ex]current bounding box.center)}},
crossline/.style={preaction={draw=white,line width=10pt,-},preaction={draw=black,line width=1.8pt,-}},
usual/.style={line width=2pt,color=black},
dot/.style={
decoration={markings,post length=0.25mm,pre length=0.25mm,
mark=at position #1 with {\node[circle,radius=0.2cm,inner sep=-1.5pt,color=black,fill=black]{};}},
postaction={decorate}
},
dot/.default=1,
sp4rel/.style={baseline={([yshift=-0.5ex]current bounding box.center)},line cap=round,line join=round},
sp4one/.style={line width=2pt,color=black},
sp4two/.style={line width=2pt,color=cyan}
}
\tikzstyle directed=[postaction={decorate,decoration={markings,mark=at position #1 with {\arrow[line width=0.2mm, black]{>}}}}]
\tikzstyle rdirected=[postaction={decorate,decoration={markings,mark=at position #1 with {\arrow[line width=0.2mm, black]{<}}}}]
\tikzset{slup/.style={directed=.6},sldown/.style={rdirected=.6}}

\def\NewTheorem#1{%
\newaliascnt{#1}{equation}%
\newtheorem{#1}[#1]{#1}%
\aliascntresetthe{#1}%
\expandafter\def\csname #1autorefname\endcsname{#1}%
}
\def\equationautorefname~#1\null{(#1)\null}

\numberwithin{equation}{subsection}

\NewTheorem{Proposition}
\NewTheorem{Theorem}
\NewTheorem{Lemma}
\NewTheorem{Corollary}
\theoremstyle{definition}
\NewTheorem{Definition}
\AtEndEnvironment{Definition}{\null\hfill$\Diamond$}%
\NewTheorem{Notation}
\AtEndEnvironment{Notation}{\null\hfill$\Diamond$}%
\NewTheorem{Example}
\AtEndEnvironment{Example}{\null\hfill$\Diamond$}%
\theoremstyle{remark}
\NewTheorem{Remark}
\AtEndEnvironment{Remark}{\null\hfill$\Diamond$}%

\setlist[enumerate]{itemsep=0.15cm,label=\emph{\upshape(\alph*)}}
\setlist[enumerate,2]{itemsep=0.15cm,label=\emph{\upshape(\roman*)}}
\setlist[enumerate,3]{itemsep=0.15cm,label=\emph{\upshape(\Alph*)}}
\newcolumntype{C}{>{$}c<{$}}
\newcolumntype{P}[1]{>{\centering\arraybackslash}p{#1}}

\newcommand{\Z}{\mathbb{Z}}

\newcommand{\D}{\mathcal{D}}

\newcommand{\CC}{\mathcal{C}}

\newcommand{\kk}{\mathbbm{k}}

\DeclareMathOperator{\CrysMo}{CrysMo}

\DeclareMathOperator{\Mat}{Mat}

\DeclareMathOperator{\Top}{Top}

\DeclareMathOperator{\wt}{wt}

\DeclareMathOperator{\Fund}{\mathrm{Fund}}

\DeclareMathOperator{\Hom}{Hom}

\DeclareMathOperator{\g}{\mathfrak{g}}

\DeclareMathOperator{\gcrys}{\mathfrak{g}-\mathrm{Crys}}

\newcommand{\Id}{\mathds 1}

\newcommand{\webcap}{\begin{tikzpicture}[anchorbase,scale=0.4]
\draw[usual] (0,0)to[out=90,in=180] (0.5,0.5)to[out=0,in=90] (1,0);
\end{tikzpicture}}

\newcommand{\tup}{\begin{tikzpicture}[anchorbase,scale=0.4]
\draw[usual] (0,0)to[out=90,in=180] (0.5,0.5);
\draw[usual] (1,0)to[out=90,in=0] (0.5,0.5);
\draw[usual] (0.5,0)to (0.5,0.5);
\end{tikzpicture}}

\newcommand{\spfourcapone}{%
\begin{tikzpicture}[sp4rel,scale=0.82]
\draw[sp4one] (0,0) node[below] {$1$}
to[out=90,in=180] (0.55,0.50)
to[out=0,in=90] (1.10,0) node[below] {$1$};
\end{tikzpicture}}

\newcommand{\spfourcupone}{%
\begin{tikzpicture}[sp4rel,scale=0.82]
\draw[sp4one] (0,0) node[above] {$1$}
to[out=270,in=180] (0.55,-0.50)
to[out=0,in=270] (1.10,0) node[above] {$1$};
\end{tikzpicture}}

\newcommand{\spfourcaptwo}{%
\begin{tikzpicture}[sp4rel,scale=0.82]
\draw[sp4two] (0,0) node[below] {$2$}
to[out=90,in=180] (0.55,0.50)
to[out=0,in=90] (1.10,0) node[below] {$2$};
\end{tikzpicture}}

\newcommand{\spfourcuptwo}{%
\begin{tikzpicture}[sp4rel,scale=0.82]
\draw[sp4two] (0,0) node[above] {$2$}
to[out=270,in=180] (0.55,-0.50)
to[out=0,in=270] (1.10,0) node[above] {$2$};
\end{tikzpicture}}

\newcommand{\spfourmergeaa}{%
\begin{tikzpicture}[sp4rel,scale=0.82]
\draw[sp4one] (-0.55,0) node[below] {$1$} -- (0,0.55);
\draw[sp4one] ( 0.55,0) node[below] {$1$} -- (0,0.55);
\draw[sp4two] (0,0.55) -- (0,1.20) node[above] {$2$};
\end{tikzpicture}}

\newcommand{\spfoursplitaa}{%
\begin{tikzpicture}[sp4rel,scale=0.82]
\draw[sp4two] (0,0) node[below] {$2$} -- (0,0.65);
\draw[sp4one] (0,0.65) -- (-0.55,1.20) node[above] {$1$};
\draw[sp4one] (0,0.65) -- ( 0.55,1.20) node[above] {$1$};
\end{tikzpicture}}

\newcommand{\spfourmergeonetwo}{%
\begin{tikzpicture}[sp4rel,scale=0.82]
\draw[sp4one] (-0.55,0) node[below] {$1$} -- (0,0.55);
\draw[sp4two] ( 0.55,0) node[below] {$2$} -- (0,0.55);
\draw[sp4one] (0,0.55) -- (0,1.20) node[above] {$1$};
\end{tikzpicture}}

\newcommand{\spfoursplitonetwo}{%
\begin{tikzpicture}[sp4rel,scale=0.82]
\draw[sp4one] (0,0) node[below] {$1$} -- (0,0.65);
\draw[sp4one] (0,0.65) -- (-0.55,1.20) node[above] {$1$};
\draw[sp4two] (0,0.65) -- ( 0.55,1.20) node[above] {$2$};
\end{tikzpicture}}

\newcommand{\spfourcrossonetwo}{%
\begin{tikzpicture}[sp4rel,scale=0.82]
\draw[sp4one] (-0.55,0) node[below] {$1$} -- (0,0.55);
\draw[sp4one] (0,0.55) -- (0.55,1.10) node[above] {$1$};
\draw[sp4two] (0.55,0) node[below] {$2$} -- (0,0.55);
\draw[sp4two] (0,0.55) -- (-0.55,1.10) node[above] {$2$};
\end{tikzpicture}}

\newcommand{\spfourcrosstwoone}{%
\begin{tikzpicture}[sp4rel,scale=0.82]
\draw[sp4two] (-0.55,0) node[below] {$2$} -- (0,0.55);
\draw[sp4two] (0,0.55) -- (0.55,1.10) node[above] {$2$};
\draw[sp4one] (0.55,0) node[below] {$1$} -- (0,0.55);
\draw[sp4one] (0,0.55) -- (-0.55,1.10) node[above] {$1$};
\end{tikzpicture}}

\newcommand{\spfourswitchup}{%
\begin{tikzpicture}[sp4rel,scale=0.82]
\draw[sp4one] (-0.55,0) node[below] {$1$} -- (-0.55,0.55);
\draw[sp4one] ( 0.55,0) node[below] {$1$} -- ( 0.55,0.55);
\draw[densely dashed,line width=1.1pt] (-0.55,0.55) -- (0.55,0.55);
\draw[sp4two] (-0.55,0.55) -- (-0.55,1.15) node[above] {$2$};
\draw[sp4two] ( 0.55,0.55) -- ( 0.55,1.15) node[above] {$2$};
\end{tikzpicture}}

\newcommand{\spfourswitchdown}{%
\begin{tikzpicture}[sp4rel,scale=0.82]
\draw[sp4two] (-0.55,0) node[below] {$2$} -- (-0.55,0.60);
\draw[sp4two] ( 0.55,0) node[below] {$2$} -- ( 0.55,0.60);
\draw[densely dashed,line width=1.1pt] (-0.55,0.60) -- (0.55,0.60);
\draw[sp4one] (-0.55,0.60) -- (-0.55,1.15) node[above] {$1$};
\draw[sp4one] ( 0.55,0.60) -- ( 0.55,1.15) node[above] {$1$};
\end{tikzpicture}}

\newcommand{\upvert}{\begin{tikzpicture}[anchorbase,scale=0.6]
\draw[usual,slup] (0,-0.7) -- (0,0.7);
\fill (0,0.7) circle (0.07);
\fill (0,-0.7) circle (0.07);
\end{tikzpicture}}

\newcommand{\upverta}{\begin{tikzpicture}[anchorbase,scale=0.8]
\draw[usual] (0,0) -- (0,1);
\end{tikzpicture}}

\newcommand{\downvert}{\begin{tikzpicture}[anchorbase,scale=0.6]
\draw[usual,sldown] (0,-0.7) -- (0,0.7);
\fill (0,0.7) circle (0.07);
\fill (0,-0.7) circle (0.07);
\end{tikzpicture}}

\newcommand{\trisplit}{\begin{tikzpicture}[anchorbase,scale=0.8]
\draw[usual,sldown] (0,0.7) -- (0,0);
\draw[usual,slup] (0,0) -- (0.6,-0.5);
\draw[usual,slup] (0,0) -- (-0.6,-0.5);
\fill (0,0.7) circle (0.07);
\fill (0.6,-0.5) circle (0.07);
\fill (-0.6,-0.5) circle (0.07);
\end{tikzpicture}}

\newcommand{\trisplita}{\begin{tikzpicture}[anchorbase,scale=0.8]
\draw[usual] (0,0.5) -- (0,0);
\draw[usual] (0,0) -- (0.5,-0.5);
\draw[usual] (0,0) -- (-0.5,-0.5);
\end{tikzpicture}}

\newcommand{\trimergedown}{\begin{tikzpicture}[anchorbase,scale=0.6]
\draw[usual,slup] (0,0.7) -- (0,0);
\draw[usual,slup] (0.6,-0.5) -- (0,0);
\draw[usual,slup] (-0.6,-0.5) -- (0,0);
\fill (0,0.7) circle (0.07);
\fill (0.6,-0.5) circle (0.07);
\fill (-0.6,-0.5) circle (0.07);
\end{tikzpicture}}

\newcommand{\cupL}{\begin{tikzpicture}[anchorbase,scale=0.6]
\draw[usual,slup] (-0.6,0) to[out=270,in=270] (0.6,0);
\fill (-0.6,0) circle (0.07);
\fill (0.6,0) circle (0.07);
\end{tikzpicture}}

\newcommand{\vtrimergedown}{\begin{tikzpicture}[anchorbase,scale=0.6]
\draw[usual,sldown] (0,-0.7) -- (0,0);
\draw[usual,sldown] (0.6,0.5) -- (0,0);
\draw[usual,sldown] (-0.6,0.5) -- (0,0);
\fill (0,-0.7) circle (0.07);
\fill (0.6,0.5) circle (0.07);
\fill (-0.6,0.5) circle (0.07);
\end{tikzpicture}}

\newcommand{\capL}{\begin{tikzpicture}[anchorbase,scale=0.6]
\draw[usual,slup] (-0.6,0) to[out=90,in=90] (0.6,0);
\fill (-0.6,0) circle (0.07);
\fill (0.6,0) circle (0.07);
\end{tikzpicture}}

\newcommand{\capLa}{\begin{tikzpicture}[anchorbase,scale=0.8]
\draw[usual] (-0.5,0) to[out=90,in=90] (0.5,0);
\end{tikzpicture}}

\newcommand{\soThreeJW}[1]{%
\begin{tikzpicture}[anchorbase,x=.72cm,y=.72cm,line cap=round,line join=round]
\path[use as bounding box] (-0.12,-0.10) rectangle (1.22,1.10);
#1%
\end{tikzpicture}%
}

\newcommand{\capR}{\begin{tikzpicture}[anchorbase,scale=0.6]
\draw[usual,sldown] (-0.6,0) to[out=90,in=90] (0.6,0);
\fill (-0.6,0) circle (0.07);
\fill (0.6,0) circle (0.07);
\end{tikzpicture}}

\newcommand{\tcupL}{\begin{tikzpicture}[anchorbase,scale=0.6]
\draw[usual,slup] (0,0) to[out=90,in=180] (0.5,0.5);
\draw[usual,slup] (1,0) to[out=90,in=0] (0.5,0.5);
\draw[usual,slup] (0.5,0) -- (0.5,0.5);
\fill (0,0) circle (0.07);
\fill (1,0) circle (0.07);
\fill (0.5,0) circle (0.07);
\end{tikzpicture}}

\newcommand{\tupL}{\tcupL}

\newcommand{\tcupR}{\begin{tikzpicture}[anchorbase,scale=0.6]
\draw[usual,sldown] (0,0) to[out=90,in=180] (0.5,0.5);
\draw[usual,sldown] (1,0) to[out=90,in=0] (0.5,0.5);
\draw[usual,sldown] (0.5,0) -- (0.5,0.5);
\fill (0,0) circle (0.07);
\fill (1,0) circle (0.07);
\fill (0.5,0) circle (0.07);
\end{tikzpicture}}

\newcommand{\tdownL}{\begin{tikzpicture}[anchorbase,scale=0.6]
\draw[usual,slup] (0.5,-0.5) to[out=180,in=270] (0,0);
\draw[usual,slup] (0.5,-0.5) to[out=0,in=270] (1,0);
\draw[usual,slup] (0.5,-0.5) -- (0.5,0);
\fill (0,0) circle (0.07);
\fill (1,0) circle (0.07);
\fill (0.5,0) circle (0.07);
\end{tikzpicture}}

\DeclareMathOperator{\id}{id}

\DeclareMathOperator{\End}{End}

\newcommand{\n}{\underline{n}}

\newcommand{\kkk}{\underline{k}}

\newcommand{\m}{\underline{m}}

\newcommand{\pcrossing}{\begin{tikzpicture}[anchorbase,scale=0.6]
\draw[usual] (-0.5,-0.5) -- (0,0);
\draw[usual,slup] (0,0) -- (0.5,0.5);
\draw[usual] (-0.5,0.5) -- (0,0);
\draw[usual,slup] (0,0) -- (0.5,-0.5);
\end{tikzpicture}}

\newcommand{\mcrossing}{\begin{tikzpicture}[anchorbase,scale=0.6]
\draw[usual] (-0.5,-0.5) -- (0,0);
\draw[usual,sldown] (0,0) -- (0.5,0.5);
\draw[usual] (-0.5,0.5) -- (0,0);
\draw[usual,sldown] (0,0) -- (0.5,-0.5);
\end{tikzpicture}}

\newcommand{\soThreeBranchEmpty}{%
\begin{tikzpicture}[baseline={(current bounding box.center)},x=.36cm,y=.36cm,line cap=round,line join=round]
\path[use as bounding box] (-0.30,-0.10) rectangle (0.30,1.55);
\node[inner sep=0pt] at (0,0.72) {$\emptyset$};
\end{tikzpicture}}

\newcommand{\soThreeBranchI}{%
\begin{tikzpicture}[baseline={(current bounding box.center)},x=.36cm,y=.36cm,line cap=round,line join=round]
\path[use as bounding box] (-0.25,-0.10) rectangle (0.25,1.55);
\draw[usual] (0,0) -- (0,1.45);
\end{tikzpicture}}

\newcommand{\soThreeBranchII}{%
\begin{tikzpicture}[baseline={(current bounding box.center)},x=.36cm,y=.36cm,line cap=round,line join=round]
\path[use as bounding box] (-0.25,-0.10) rectangle (0.95,1.55);
\draw[usual] (0,0) -- (0,1.45);
\draw[usual] (0.70,0) -- (0.70,1.45);
\end{tikzpicture}}

\newcommand{\soThreeBranchMerge}{%
\begin{tikzpicture}[baseline={(current bounding box.center)},x=.36cm,y=.36cm,line cap=round,line join=round]
\path[use as bounding box] (-0.25,-0.10) rectangle (0.95,1.55);
\draw[usual] (0,0) -- (0.35,0.55);
\draw[usual] (0.70,0) -- (0.35,0.55);
\draw[usual] (0.35,0.55) -- (0.35,1.45);
\end{tikzpicture}}

\newcommand{\soThreeBranchCap}{%
\begin{tikzpicture}[baseline={(current bounding box.center)},x=.36cm,y=.36cm,line cap=round,line join=round]
\path[use as bounding box] (-0.25,-0.10) rectangle (0.95,1.00);
\draw[usual] (0,0) to[out=90,in=180] (0.35,0.55) to[out=0,in=90] (0.70,0);
\end{tikzpicture}}

\newcommand{\soThreeBranchIII}{%
\begin{tikzpicture}[baseline={(current bounding box.center)},x=.36cm,y=.36cm,line cap=round,line join=round]
\path[use as bounding box] (-0.25,-0.10) rectangle (1.65,1.55);
\draw[usual] (0,0) -- (0,1.45);
\draw[usual] (0.70,0) -- (0.70,1.45);
\draw[usual] (1.40,0) -- (1.40,1.45);
\end{tikzpicture}}

\newcommand{\soThreeBranchIMerge}{%
\begin{tikzpicture}[baseline={(current bounding box.center)},x=.36cm,y=.36cm,line cap=round,line join=round]
\path[use as bounding box] (-0.25,-0.10) rectangle (1.65,1.55);
\draw[usual] (0,0) -- (0,1.45);
\draw[usual] (0.70,0) -- (1.05,0.55);
\draw[usual] (1.40,0) -- (1.05,0.55);
\draw[usual] (1.05,0.55) -- (1.05,1.45);
\end{tikzpicture}}

\newcommand{\soThreeBranchICap}{%
\begin{tikzpicture}[baseline={(current bounding box.center)},x=.36cm,y=.36cm,line cap=round,line join=round]
\path[use as bounding box] (-0.25,-0.10) rectangle (1.65,1.10);
\draw[usual] (0,0) -- (0,1.00);
\draw[usual] (0.70,0) to[out=90,in=180] (1.05,0.55) to[out=0,in=90] (1.40,0);
\end{tikzpicture}}

\newcommand{\soThreeBranchMergeI}{%
\begin{tikzpicture}[baseline={(current bounding box.center)},x=.36cm,y=.36cm,line cap=round,line join=round]
\path[use as bounding box] (-0.25,-0.10) rectangle (1.65,1.55);
\draw[usual] (0,0) -- (0.35,0.55);
\draw[usual] (0.70,0) -- (0.35,0.55);
\draw[usual] (0.35,0.55) -- (0.35,1.45);
\draw[usual] (1.40,0) -- (1.40,1.45);
\end{tikzpicture}}

\newcommand{\soThreeBranchTree}{%
\begin{tikzpicture}[baseline={(current bounding box.center)},x=.36cm,y=.36cm,line cap=round,line join=round]
\path[use as bounding box] (-0.25,-0.10) rectangle (1.65,1.55);
\draw[usual] (0.70,0.55) -- (0.70,1.45);
\draw[usual] (0.70,0.55) -- (0,0);
\draw[usual] (0.70,0.55) -- (0.70,0);
\draw[usual] (0.70,0.55) -- (1.40,0);
\end{tikzpicture}}

\newcommand{\soThreeBranchTup}{%
\begin{tikzpicture}[baseline={(current bounding box.center)},x=.36cm,y=.36cm,line cap=round,line join=round]
\path[use as bounding box] (-0.25,-0.10) rectangle (1.65,1.10);
\draw[usual] (0,0) to[out=90,in=180] (0.70,0.80);
\draw[usual] (1.40,0) to[out=90,in=0] (0.70,0.80);
\draw[usual] (0.70,0) -- (0.70,0.80);
\end{tikzpicture}}

\newcommand{\soThreeBranchCapI}{%
\begin{tikzpicture}[baseline={(current bounding box.center)},x=.36cm,y=.36cm,line cap=round,line join=round]
\path[use as bounding box] (-0.25,-0.10) rectangle (1.65,1.10);
\draw[usual] (0,0) to[out=90,in=180] (0.35,0.55) to[out=0,in=90] (0.70,0);
\draw[usual] (1.40,0) -- (1.40,1.00);
\end{tikzpicture}}

\newcommand{\motzGen}[1]{%
\begin{tikzpicture}[anchorbase]
\path[use as bounding box] (-0.70,-0.58) rectangle (0.70,0.58);
#1%
\end{tikzpicture}%
}

\newcommand{\motzComma}{%
\begin{tikzpicture}[anchorbase]
\path[use as bounding box] (-0.10,-0.58) rectangle (0.22,0.58);
\node[inner sep=0pt] at (0,-0.32) {$,$};
\end{tikzpicture}\mspace{12mu}%
}

\newcommand{\motzTopDot}{%
\motzGen{
\draw[usual,dot] (0,0.38) -- (0,0);
}}

\newcommand{\motzBottomDot}{%
\motzGen{
\draw[usual,dot] (0,-0.38) -- (0,0);
}}

\newcommand{\motzIdentity}{%
\motzGen{
\draw[usual] (0,-0.48) -- (0,0.48);
}}

\newcommand{\motzCup}{%
\motzGen{
\draw[usual] (-0.50,0.25)
to[out=270,in=180] (0,-0.25)
to[out=0,in=270] (0.50,0.25);
}}

\newcommand{\motzCap}{%
\motzGen{
\draw[usual] (-0.50,-0.25)
to[out=90,in=180] (0,0.25)
to[out=0,in=90] (0.50,-0.25);
}}

\newcommand{\motzSnakeRight}{%
\begin{tikzpicture}[anchorbase]
\path[use as bounding box] (-0.15,-0.15) rectangle (1.85,1.85);
\begin{scope}[xscale=0.85, yscale=1.15]
\draw[usual] (0,0) -- (0,0.5)
arc (180:0:0.5)
arc (180:360:0.5)
-- (2,1);
\end{scope}
\end{tikzpicture}%
}

\newcommand{\motzSnakeLeft}{%
\begin{tikzpicture}[anchorbase]
\path[use as bounding box] (-0.15,-0.15) rectangle (1.85,1.85);
\begin{scope}[xshift=1.70cm,xscale=-1]
\begin{scope}[xscale=0.85, yscale=1.15]
\draw[usual] (0,0) -- (0,0.5)
arc (180:0:0.5)
arc (180:360:0.5)
-- (2,1);
\end{scope}
\end{scope}
\end{tikzpicture}%
}

\newcommand{\motzCapDotLeft}{%
\begin{tikzpicture}[anchorbase]
\path[use as bounding box] (-0.15,0.10) rectangle (1.15,1.20);
\draw[usual] (0,0.60) to[out=90,in=180] (0.50,1.10) to[out=0,in=90] (1.00,0.60);
\draw[usual,dot] (0,0.60) -- (0,0.20);
\draw[usual] (1.00,0.60) -- (1.00,0.20);
\end{tikzpicture}%
}

\newcommand{\motzCapDotRight}{%
\begin{tikzpicture}[anchorbase]
\path[use as bounding box] (-0.15,0.10) rectangle (1.15,1.20);
\draw[usual] (0,0.60) to[out=90,in=180] (0.50,1.10) to[out=0,in=90] (1.00,0.60);
\draw[usual] (0,0.60) -- (0,0.20);
\draw[usual,dot] (1.00,0.60) -- (1.00,0.20);
\end{tikzpicture}%
}

\newcommand{\motzCircle}{%
\begin{tikzpicture}[anchorbase]
\path[use as bounding box] (-0.65,0.05) rectangle (0.65,1.35);
\draw[usual] (-0.50,0.70) to[out=270,in=180] (0,0.20) to[out=0,in=270] (0.50,0.70);
\draw[usual] (-0.50,0.70) to[out=90,in=180] (0,1.20) to[out=0,in=90] (0.50,0.70);
\end{tikzpicture}%
}

\newcommand{\motzDotDot}{%
\begin{tikzpicture}[anchorbase]
\path[use as bounding box] (-0.30,0.05) rectangle (0.30,1.35);
\draw[usual,dot] (0,0.70) -- (0,1.20);
\draw[usual,dot] (0,0.70) -- (0,0.20);
\end{tikzpicture}%
}

\newlength{\cellsize}
\newcommand\tableau[1]{
\vcenter{
\let\\=\cr
\baselineskip=-16000pt
\lineskiplimit=16000pt
\lineskip=0pt
\halign{&\tableaucell{##}\cr#1\crcr}}}

\newcommand{\tableaucell}[1]{{%
\def \arg{#1}\def \void{}%

\ifx \void \arg
\vbox to \cellsize{\vfil \hrule width \cellsize height 0pt}%
\else
\unitlength=\cellsize
\begin{picture}(1,1)
\put(0,0){\makebox(1,1){$#1$}}
\put(0,0){\line(1,0){1}}
\put(0,1){\line(1,0){1}}
\put(0,0){\line(0,1){1}}
\put(1,0){\line(0,1){1}}
\end{picture}%
\fi}}

\def\makeautorefname#1#2{\csdef{#1autorefname}{#2}}

\makeautorefname{section}{Section}%
\makeautorefname{subsection}{Section}%
\makeautorefname{subsubsection}{Section}%

\begin{document}

\title[Presentations for categories of crystals]{Presentations for categories of crystals}
\author[M. Alqady, D. He, and D. Tubbenhauer]{Moaaz Alqady, David He, and Daniel Tubbenhauer}
\address{M.A.: University of Oregon, Department of Mathematics, Fenton Hall, Eugene, OR 97403, USA, \href{https://moaazalqady.github.io}{moaazalqady.github.io}, \href{https://orcid.org/0009-0000-0538-2106}{ORCID 0009-0000-0538-2106}}
\email{malqady@uoregon.edu}

\address{D. H.: The University of Sydney, School of Mathematics and Statistics, Australia}
\email{d.he@sydney.edu.au}

\address{D.T.: The University of Sydney, School of Mathematics and Statistics F07, Office Carslaw 827, NSW 2006, Australia, \href{http://www.dtubbenhauer.com}{www.dtubbenhauer.com}, \href{https://orcid.org/0000-0001-7265-5047}{ORCID 0000-0001-7265-5047}}
\email{daniel.tubbenhauer@sydney.edu.au}

\begin{abstract}
We give generators and relations for the monoidal categories of crystals
generated by the fundamental crystals of a simple complex Lie algebra. We also
spell out several small-rank examples.
\end{abstract}

\subjclass[2020]{Primary: 17B10, 17B37; Secondary: 05E10, 18M05.}
\keywords{Crystals, monoidal categories, generators and relations, Jones--Wenzl projectors, diagrammatics.}

\addtocontents{toc}{\protect\setcounter{tocdepth}{1}}

\maketitle

\tableofcontents

\section{Introduction}\label{sec:intro}

Diagram categories are among the oldest tools in representation theory, even
if this terminology is relatively modern. Schur--Weyl duality relates tensor
powers of the natural representation of \(\mathrm{GL}_n\) to symmetric groups, 
the prototype of string diagrams. 
Brauer's centralizer algebras play the analogous role for orthogonal and symplectic
groups. Rumer--Teller--Weyl diagrams (also called Temperley--Lieb diagrams) 
and many other works give diagrammatic presentations 
of various tensor categories generated by small representations. In modern language, the common
theme is the following question:
\[
\setlength{\fboxsep}{2pt}
\colorbox{orchid!6}{
\begin{minipage}{0.85\textwidth}
\centering
\emph{Can one present a tensor category by pictures and local
relations?}
\end{minipage}}
\]

\subsection{Can we?}

This question has many answers in the semisimple representation theory of
complex reductive groups and their quantum deformations. For example, the
Temperley--Lieb category presents the part of the representation category of
\(\mathfrak{sl}_2\) tensor generated by the vector representation, and web
categories do the same for many other small representations. These categories
are useful precisely because complicated representation-theoretic maps become
local diagrammatic moves.

Crystals are the \(q=0\) shadow of this story.  They grew out of the
work of Kashiwara on crystal bases and Lusztig on canonical bases
\cite{kashiwara1990crystalizing,Lusztig-canonical-bases}, giving
combinatorial skeletons of representations of quantum groups.
Crucially,
\[
\setlength{\fboxsep}{2pt}
\colorbox{orchid!6}{
\begin{minipage}{0.85\textwidth}
\centering
\emph{They remember the combinatorics, but forget most of the linear
algebra.}
\end{minipage}}
\]
This makes them much easier, but also stranger. For example, the
category of crystals is not braided in the usual sense. Henriques and
Kamnitzer showed that it is instead a coboundary category: there is a natural
commutor for crystals, but the braid group is replaced by the cactus group
\cite{Kamnitzcrystal}. It is also not rigid, but only weakly rigid.

Thus there are two parallel stories. 
On the one hand there are the usual web categories, living at generic \(q\), with crossings, cups, caps and local relations. 
On the other hand there is the crystal category, living at \(q=0\), with a tensor product, a commutor, and a cactus group action. 
The point of this paper is to connect these two stories by giving diagrammatic
presentations for categories of crystals. 
It turns out, because the difficult linear algebra and fancy scalars are all gone, there is a general approach to the presentation of crystals, which is what we discuss in this paper. 
Our main result gives a generators-and-relations presentation for categories of crystals in a hands-on and explicit way (i.e. one can use computer code to produce the presentation).

\begin{Remark}
The rank-one case was worked out in a joint work of the first author with Stroiński
\cite{Alqady2025coboundary}. The present paper asks for the corresponding general statement.
\end{Remark}

\subsection{A few details}

More precisely, let \(\mathfrak{g}\) be a simple complex Lie algebra, and let
\(B_i=B(\omega_i)\) be the fundamental crystals (i.e. crystals of the fundamental representations). We consider the monoidal
category \(\Fund(\gcrys)\) whose objects are tensor words
in the \(B_i\).  Its additive idempotent completion is the usual semisimple
category of \(\mathfrak{g}\)-crystals.  Our aim is to present
\(\Fund(\gcrys)\) by generators and relations.

The generating morphisms are of two types.  First, we use crossings
$B_i\otimes B_j \to B_j\otimes B_i$.
In multiplicity-free cases these are forced, while in general one has to make
a choice, for example using the Henriques--Kamnitzer commutors. Second, we use
certain elementary crystal morphisms which we call atoms. These atoms are the
minimal maps appearing in the stable decomposition of
\(B(\lambda)\otimes B_k\), for \(\lambda\) sufficiently deep in the dominant
chamber. 
Equivalently, they are the elementary steps in the branching graph
for tensor products of fundamental crystals. We depict them, for example, as:
\[
\text{ A crossing: }
\begin{tikzpicture}[thick, scale=0.8, baseline=(current bounding box.center)]
\draw[usual] (0,0) -- (1,1);
\draw[usual] (1,0) -- (0,1);
\end{tikzpicture}
,\quad\text{ some atoms: }\quad
\upverta,\trisplita,\capLa
.\]
Our main result is \autoref{iso of categories}.  It says that the category
\(\Fund(\gcrys)\) is generated by crossings and atoms,
subject to explicit local relations.  These relations come in two families.
The first family consists of \(H=I\) relations: they rewrite an upside-down atom followed
by an atom in a standard form.  The second family consists of
orientation-reversing relations: they exclude local pieces which cannot occur
inside distinguished bottom diagrams.  Together these relations imply that
every diagram can be rewritten, in sandwich form, as
\[
\begin{tikzpicture}[
baseline=(current bounding box.center),
x=1cm,y=1cm,
line width=0.8pt,
line cap=butt,
line join=miter,
lab/.style={inner sep=0pt,font=\large},
dots/.style={inner sep=0pt,font=\normalsize}
]
\begin{scope}[shift={(7.30,-0.15)}]
\draw[usual] (0.00,2.10) -- (1.80,2.10) -- (1.50,1.45) -- (0.30,1.45) -- cycle;
\draw[usual] (0.30,0.78) rectangle (1.50,1.45);
\draw[usual] (0.30,0.78) -- (1.50,0.78) -- (1.80,0.10) -- (0.00,0.10) -- cycle;
\node[lab] at (0.90,1.78) {$\bar{\bm{f_i}}$};
\node[lab] at (0.90,1.115) {$\bm{P_i}$};
\node[lab] at (0.90,0.43) {$\bm{g_i}$};
\end{scope}
\end{tikzpicture}
.\]
This gives a basis of every hom-space.  After inserting Jones--Wenzl
projectors, the same construction gives a matrix-unit basis.

The answer is finite and constructive. The atoms are defined directly from
crystal combinatorics, and the coefficients in the relations are obtained by
finite computations in crystals. In particular, the theorem gives an algorithm
for writing down presentations in concrete examples.  The number of atoms can
be large, especially in exceptional type. However, \autoref{efficient pres prop}
shows that all atoms are generated by two-strand atoms and crossings.  In type
\(A_{n-1}\), this replaces the exponential number \(2^n-n-1\) of non-identity
atoms by the cubic number \(\binom{n+1}{3}\) of two-strand atoms.

We also study Jones--Wenzl projectors in crystal categories. These are the
idempotents cutting out the top summand of a tensor word. In the usual
Temperley--Lieb and web categories, Jones--Wenzl projectors are subtle objects,
and their formulas involve quantum coefficients.  In the crystal setting the
formulas are much more rigid.  For types \(A\), \(B\), \(C\), and
\(\mathrm{G}_2\), and for suitably ordered tensor words, \autoref{JW theorem}
says that the Jones--Wenzl projector is simply a product of adjacent
two-strand Jones--Wenzl projectors. Type \(D\) has the expected additional
spin bookkeeping, but the same philosophy still applies.
We also define a poset structure on the set of distinguished bottom diagrams and explain how to explicitly compute the Jones-Wenzl projectors in terms of the M\"obius function on this poset in \autoref{Mobius JW}.

The final section spells out examples. For \(\mathfrak{sl}_2\) we recover the
\(q=0\) Temperley--Lieb category of \cite{Alqady2025coboundary}. We then give
explicit presentations for \(\mathfrak{sl}_3\), \(\mathfrak{sp}_4\), and
\(\mathrm{G}_2\), and discuss the related \(\mathrm{SO}_3\) and Motzkin
examples.  These examples should be read as crystal versions of familiar web
and diagram categories.  They also illustrate the main difference between the
generic \(q\)-world and the crystal world: much of the linear algebra
disappears, so e.g. the Jones--Wenzl projectors are much easier to obtain, but the price is that braidings are replaced by coboundary
structures and local rewriting rules are usually much more complicated.

\begin{Remark}
In fact, categories of crystals are so easy, that one can set up a computer program that outputs lists of generators and relations, and we did this; see \cite{HT} for the Python and SageMath code. For type $\mathrm{G}_2$, the list of relations is so long that we refer the reader to the computer output.
\end{Remark}

\noindent\textbf{Acknowledgments.}
We thank Mateusz Stroiński for helpful discussions and
generous explanations, and Abel Lacabanne and Pedro Vaz for conversations on a related project which also fed into this one.

DT was supported by the ARC Future Fellowship FT230100489, and notes that crystal bases exist, unlike any convincing exit strategy.

\section{Main results}\label{sec:main-results}

We start with the general theory. Our story is mostly self-contained, but background on crystals, monoidal categories and diagrammatics is required, see e.g. \cite{BuSc-crystal-bases,EtGeNiOs-tensor-categories,Tu-qt} for some references.

\subsection{Basic definitions}\label{subsec:basic-definitions}

Throughout, let $\kk$ be a fixed field. 

\begin{Remark}
A field is chosen for convenience. Everything can be done integrally (i.e. all scalars are in $\Z$), which is why crystals are much easier than the full representation categories.
\end{Remark}

Let \(\mathfrak{g}\) be a complex reductive Lie algebra with simple roots
\(\{\alpha_i\}_{i\in J}\) and simple coroots
\(\{\alpha_i^\vee\}_{i\in J}\). Write \(\Lambda\) for the weight lattice and
\(\Lambda_+\subset \Lambda\) for the set of dominant weights. We write
\(\omega_i\) for the \(i\)th fundamental weight, so that
$\langle \omega_i,\alpha_j^\vee\rangle=\delta_{i,j}$.
Thus every dominant integral weight is a nonnegative integral linear
combination of the \(\omega_i\). 

\begin{Notation}
For \(\lambda\in\Lambda_+\), let \(V(\lambda)\)
denote the simple \(\mathfrak{g}\)-module of highest weight \(\lambda\), and
let \(B(\lambda)\) denote its highest weight crystal (recalled in \autoref{crystal def}). When \(\mathfrak{g}\) is
simple, we usually identify \(J\) with \(\{1,\dots,n\}\), using the standard
numbering of the Dynkin diagram, and write
$B_i:=B(\omega_i)$.
We will often write weights in fundamental coordinates: for example, in type
\(A_2\) we write \(B(a,b)\) for \(B(a\omega_1+b\omega_2)\).
\end{Notation}

We first recall some definitions.

\begin{Definition} \label{crystal def}
A $\mathfrak{g}$-\emph{crystal} is a finite set $B$ together with functions $\wt: B\to \Lambda, e_i: B\to B\sqcup \{0\}, f_i: B\to B\sqcup \{0\}$ for each $i\in J$, such that
\begin{enumerate}
\item If $e_i(b)\neq 0$, then $\wt(e_i(b))=\wt(b)+\alpha_i$; if $f_i(b)\neq 0, \wt(f_i(b))=\wt(b)-\alpha_i$.
\item For $b,b'\in B$, we have $e_i(b)=b'$ if and only if $b=f_i(b)'$. 
\item Define $\varepsilon_i=\max\{n\ge 0: e_i^n(b)\neq 0\}, \varphi_i(b)=\max\{n\ge 0: f_i^n(b)\neq 0\}$. Then $\varphi_i(b)-\varepsilon_i(b)=\langle\wt(b),\alpha_i^\vee\rangle$.
\end{enumerate}
We call $e_i, f_i$ the raising and lowering Kashiwara operators. 
If $B, C$ are $\g$-crystals, a \emph{(strict) morphism of crystals} is a map $\psi: B\sqcup \{0\} \to C\sqcup \{0\}$ which sends $0$ to $0$, preserves weights, and commutes with $e_i$ and $f_i$ for each $i$. 
\end{Definition}

\begin{Definition}
If $B,C$ are $\g$-crystals, their tensor product $B\otimes C$ as a set is $B\times C$, with $\wt(b\otimes c)=\wt(b)+\wt(c)$. For the raising and lowering operators we use:
\begin{enumerate}
\item $e_i(b\otimes c)=\begin{cases} e_ib\otimes c & \text{ if } \varphi_i(b)\ge \varepsilon_i(c), \\
b\otimes e_ic & \text{ if } \varphi_i(b)< \varepsilon_i(c),
\end{cases}$
\item $f_i(b\otimes c)=\begin{cases} f_ib\otimes c & \text{ if } \varphi_i(b)> \varepsilon_i(c), \\
b\otimes f_ic & \text{ if } \varphi_i(b)\le \varepsilon_i(c).
\end{cases}$
\end{enumerate}
These are the conventions of \cite{kashiwara1990crystalizing}.
\end{Definition}

\begin{Remark}
We use the convention of \cite{kashiwara1990crystalizing}, which is the opposite of the one used by SageMath. 
\end{Remark}

We view a crystal as a colored graph: the vertices are the elements of $B$, and there is an edge colored by $i$ from $b$ to $b'$ if $f_i(b)=b'$. We say that $B$ is connected if this graph is connected. Recall that if $V(\lambda), \lambda \in \Lambda_+$ is a simple $\g$-module of highest weight $\lambda$, then there is a unique (up to isomorphism) connected crystal $B(\lambda)$ whose elements are obtained by applying the lowering operators $f_i$ to some \emph{highest weight element} $b_0$.

\begin{Example}
The crystal graphs $B(\omega_{1})$ in types $A_{4}$, $B_{4}$, $C_4$ and $D_{4}$ are as follows:
\begin{gather*}
A_{4}\colon
\scalebox{0.8}{$\begin{tikzpicture}[anchorbase,>=latex,line join=bevel,scale=0.5,
every path/.style={very thick}]
\node (node_0) at (27.5bp,150.5bp) [draw,draw=none] {$\bullet$};
\node (node_4) at (27.5bp,79.5bp) [draw,draw=none] {$\bullet$};
\node (node_1) at (27.5bp,221.5bp) [draw,draw=none] {$\bullet$};
\node (node_2) at (27.5bp,292.5bp) [draw,draw=none] {$\bullet$};
\node (node_3) at (27.5bp,8.5bp) [draw,draw=none] {$\bullet$};
\draw [spinach,->] (node_0) ..controls (27.5bp,131.44bp) and (27.5bp,112.5bp)  .. (node_4);
\draw (36.0bp,115.0bp) node {$3$};
\draw [red,->] (node_1) ..controls (27.5bp,202.44bp) and (27.5bp,183.5bp)  .. (node_0);
\draw (36.0bp,186.0bp) node {$2$};
\draw [blue,->] (node_2) ..controls (27.5bp,273.44bp) and (27.5bp,254.5bp)  .. (node_1);
\draw (36.0bp,257.0bp) node {$1$};
\draw [black,->] (node_4) ..controls (27.5bp,60.442bp) and (27.5bp,41.496bp)  .. (node_3);
\draw (36.0bp,44.0bp) node {$4$};
\end{tikzpicture}$}
,\quad
B_{4}\colon
\scalebox{0.8}{$\begin{tikzpicture}[anchorbase,>=latex,line join=bevel,scale=0.45,every path/.style={very thick}]
\node (node_0) at (53.5bp,435.5bp) [draw,draw=none] {$\bullet$};
\node (node_8) at (53.5bp,364.5bp) [draw,draw=none] {$\bullet$};
\node (node_1) at (53.5bp,506.5bp) [draw,draw=none] {$\bullet$};
\node (node_2) at (53.5bp,577.5bp) [draw,draw=none] {$\bullet$};
\node (node_3) at (53.5bp,150.5bp) [draw,draw=none] {$\bullet$};
\node (node_5) at (53.5bp,79.5bp) [draw,draw=none] {$\bullet$};
\node (node_4) at (53.5bp,293.0bp) [draw,draw=none] {$\bullet$};
\node (node_6) at (53.5bp,221.5bp) [draw,draw=none] {$\bullet$};
\node (node_7) at (53.5bp,8.5bp) [draw,draw=none] {$\bullet$};
\draw [spinach,->] (node_0) ..controls (53.5bp,416.44bp) and (53.5bp,397.5bp)  .. (node_8);
\draw (62.0bp,400.0bp) node {$3$};
\draw [red,->] (node_1) ..controls (53.5bp,487.44bp) and (53.5bp,468.5bp)  .. (node_0);
\draw (62.0bp,471.0bp) node {$2$};
\draw [blue,->] (node_2) ..controls (53.5bp,558.44bp) and (53.5bp,539.5bp)  .. (node_1);
\draw (62.0bp,542.0bp) node {$1$};
\draw [red,->] (node_3) ..controls (53.5bp,131.44bp) and (53.5bp,112.5bp)  .. (node_5);
\draw (62.0bp,115.0bp) node {$2$};
\draw [black,->] (node_4) ..controls (53.5bp,273.19bp) and (53.5bp,254.59bp)  .. (node_6);
\draw (62.0bp,257.0bp) node {$4$};
\draw [blue,->] (node_5) ..controls (53.5bp,60.442bp) and (53.5bp,41.496bp)  .. (node_7);
\draw (62.0bp,44.0bp) node {$1$};
\draw [spinach,->] (node_6) ..controls (53.5bp,202.44bp) and (53.5bp,183.5bp)  .. (node_3);
\draw (62.0bp,186.0bp) node {$3$};
\draw [black,->] (node_8) ..controls (53.5bp,345.42bp) and (53.5bp,326.63bp)  .. (node_4);
\draw (62.0bp,329.0bp) node {$4$};
\end{tikzpicture}$}
,\quad
C_{4}\colon
\scalebox{0.8}{$\begin{tikzpicture}[anchorbase,>=latex,line join=bevel,scale=0.5,
every path/.style={very thick}]
\node (node_0) at (27.5bp,150.5bp) [draw,draw=none] {$\bullet$};
\node (node_5) at (27.5bp,79.5bp) [draw,draw=none] {$\bullet$};
\node (node_1) at (27.5bp,221.5bp) [draw,draw=none] {$\bullet$};
\node (node_2) at (27.5bp,505.5bp) [draw,draw=none] {$\bullet$};
\node (node_7) at (27.5bp,434.5bp) [draw,draw=none] {$\bullet$};
\node (node_3) at (27.5bp,8.5bp) [draw,draw=none] {$\bullet$};
\node (node_4) at (27.5bp,363.5bp) [draw,draw=none] {$\bullet$};
\node (node_6) at (27.5bp,292.5bp) [draw,draw=none] {$\bullet$};
\draw [blue,->] (node_2) ..controls (27.5bp,486.44bp) and (27.5bp,467.5bp)  .. (node_7);
\draw (36.0bp,470.0bp) node {$1$};
\draw [red,->] (node_7) ..controls (27.5bp,415.44bp) and (27.5bp,396.5bp)  .. (node_4);
\draw (36.0bp,399.0bp) node {$2$};
\draw [spinach,->] (node_4) ..controls (27.5bp,344.44bp) and (27.5bp,325.5bp)  .. (node_6);
\draw (36.0bp,328.0bp) node {$3$};
\draw [black,->] (node_6) ..controls (27.5bp,273.44bp) and (27.5bp,254.5bp)  .. (node_1);
\draw (36.0bp,257.0bp) node {$4$};
\draw [spinach,->] (node_1) ..controls (27.5bp,202.44bp) and (27.5bp,183.5bp)  .. (node_0);
\draw (36.0bp,186.0bp) node {$3$};
\draw [red,->] (node_0) ..controls (27.5bp,131.44bp) and (27.5bp,112.5bp)  .. (node_5);
\draw (36.0bp,115.0bp) node {$2$};
\draw [blue,->] (node_5) ..controls (27.5bp,60.442bp) and (27.5bp,41.496bp)  .. (node_3);
\draw (36.0bp,44.0bp) node {$1$};
\end{tikzpicture}$}
,\quad
D_{4}\colon
\scalebox{0.8}{$\begin{tikzpicture}[anchorbase,>=latex,line join=bevel,xscale=0.65,yscale=0.5,
every path/.style={very thick}]
\node (node_0) at (61.5bp,150.5bp) [draw,draw=none] {$\bullet$};
\node (node_5) at (61.5bp,79.5bp) [draw,draw=none] {$\bullet$};
\node (node_1) at (61.5bp,363.5bp) [draw,draw=none] {$\bullet$};
\node (node_3) at (61.5bp,292.5bp) [draw,draw=none] {$\bullet$};
\node (node_2) at (61.5bp,434.5bp) [draw,draw=none] {$\bullet$};
\node (node_4) at (27.5bp,221.5bp) [draw,draw=none] {$\bullet$};
\node (node_6) at (96.5bp,221.5bp) [draw,draw=none] {$\bullet$};
\node (node_7) at (61.5bp,8.5bp) [draw,draw=none] {$\bullet$};
\draw [red,->] (node_0) ..controls (61.5bp,131.44bp) and (61.5bp,112.5bp)  .. (node_5);
\draw (70.0bp,115.0bp) node {$2$};
\draw [red,->] (node_1) ..controls (61.5bp,344.44bp) and (61.5bp,325.5bp)  .. (node_3);
\draw (70.0bp,328.0bp) node {$2$};
\draw [blue,->] (node_2) ..controls (61.5bp,415.44bp) and (61.5bp,396.5bp)  .. (node_1);
\draw (70.0bp,399.0bp) node {$1$};
\draw [spinach,->] (node_3) ..controls (52.5bp,273.24bp) and (42.895bp,253.74bp)  .. (node_4);
\draw (58.0bp,257.0bp) node {$3$};
\draw [black,->] (node_3) ..controls (70.764bp,273.24bp) and (80.653bp,253.74bp)  .. (node_6);
\draw (91.0bp,257.0bp) node {$4$};
\draw [black,->] (node_4) ..controls (36.5bp,202.24bp) and (46.105bp,182.74bp)  .. (node_0);
\draw (58.0bp,186.0bp) node {$4$};
\draw [blue,->] (node_5) ..controls (61.5bp,60.442bp) and (61.5bp,41.496bp)  .. (node_7);
\draw (70.0bp,44.0bp) node {$1$};
\draw [spinach,->] (node_6) ..controls (87.236bp,202.24bp) and (77.347bp,182.74bp)  .. (node_0);
\draw (91.0bp,186.0bp) node {$3$};
\end{tikzpicture}$}
\end{gather*}

These are small enough to see the rule: a vertex is an element of the
crystal, and an arrow labeled $i$ records the action of the lowering
operator $f_i$. (The example is taken from \cite{MaTu-klrw-crystal} 
where the reader can find code to create more examples.)
\end{Example}

\begin{Definition}
We let $\gcrys$ denote the (strictification of the) $\kk$-linear monoidal category whose indecomposable objects are the crystals $B(\lambda), \lambda \in \Lambda_+$, corresponding to simple $\g$-modules. Its morphisms are $\kk$-linear combinations of crystal morphisms, with tensor product given by the tensor product of crystals, and direct sum given by disjoint union.
\end{Definition}

We will see later that $\gcrys$ is semisimple and that $\{B(\lambda)\mid \lambda \in \Lambda_+\}$ is the set of (isomorphism classes of) simple objects in $\gcrys$.

\begin{Notation} 
For later use, if $\mathcal{A}$ is a $\kk$-linear category, then we denote by $\mathcal{A}^c$ its \emph{additive idempotent completion}. (We also call this Cauchy completion.)
\end{Notation}

\subsection{Generators}\label{subsec:generators}

Let \(\mathcal{C}\) be a $\kk$-linear additive monoidal subcategory of \(\gcrys\). Let $\cong_{\otimes}$ denote monoidal $\kk$-linear equivalence (which is automatically additive).

\begin{Definition}
Let \(B=\{B_1,B_2,\dots\}\) be a (countable) collection of objects of \(\mathcal{C}\).
We write
\[
\Fund(\mathcal{C},B)
\]
for the full monoidal subcategory of \(\mathcal{C}\) whose objects are tensor
words in the objects \(B_i\), including the empty tensor word \(\mathbbm{1}\) (representing the trivial crystal which is the monoidal unit).
Thus an object of \(\Fund(\mathcal{C},B)\) is of the form
\[
B_{\epsilon}=B_{\epsilon_1}\otimes\cdots\otimes B_{\epsilon_m},
\]
where \(\epsilon=(\epsilon_1,\dots,\epsilon_m)\) is a word in the indexing set
of \(B\). We say that \(B\) monoidally generates \(\mathcal{C}\) up to
Cauchy completion if
\[
\Fund(\mathcal{C},B)^c\cong_{\otimes} \mathcal{C}.
\]
When \(B\) is clear from the context, we write simply
\(\Fund(\mathcal{C})\).
\end{Definition}

Note that \(\Fund(\mathcal{C},B)\) is not additive: we allow tensor
products of the chosen generators, but we do not formally add direct sums or
idempotent summands. This is the point of the notation above. The category
\(\Fund(\mathcal{C},B)\) is the small monoidal category for which we seek a
generators-and-relations presentation; the additive and idempotent complete
category \(\mathcal{C}\) is then recovered by Cauchy completion.

\begin{Remark}
For \(\gcrys\), the most natural choice of \(B\) is the set of fundamental
crystals. Other choices are possible. For example, for \(\mathfrak{sl}_2\) one
could instead take the crystal of the direct sum of the vector representation
and the trivial representation (see \autoref{S:Motzkin} for such an example). 
\end{Remark}

For the rest of the paper we assume out of convenience that
\(\g\) is simple, \(\mathcal{C}=\gcrys\), and that
$B_i=B(\omega_i)$
is the crystal of the \(i\)th fundamental representation. The results that follow apply equally well to any monoidal subcategory of $\gcrys$ and we give an example of this in \autoref{S:SO3}.   

\begin{Definition}
By the \textbf{highest element} of $B_{\epsilon}$ we mean the highest weight element of the top summand of $B_{\epsilon}$. By the \textbf{weight} of $B_{\epsilon}$ we mean that of its highest element, or, equivalently, that of the top summand.
\end{Definition}

\begin{Definition} Let $f:B_\epsilon \to B_{\epsilon'}$ be a morphism in $\Fund(\mathcal{C})$.
\begin{enumerate}

\item $f$ is called a \textbf{basic morphism} if it is zero on all but a single connected component of $B_{\epsilon}$, say one of highest weight $\lambda$. In this case we say $f$ is of \textbf{weight} $\lambda$, $\wt(f)=\lambda$.

\item $f$ \textbf{decreases weight} (resp. \textbf{increases weight}) if the weight of $B_\epsilon'$ is less (resp. greater) than or equal to that of $B_\epsilon$.

\end{enumerate}
If $f$ is injective outside its kernel, write $\overline{f}$ for the map in the opposite direction, namely the inverse of $f$ on the image of $f$, and sending all other elements to 0. We extend this definition linearly to arbitrary morphisms. If $S$ is a set of morphisms, write $\overline{S}$ for $\{\overline{s}\mid s\in S\}$.
\end{Definition}

One has \(\overline{f\circ g}=\overline{g}\circ\overline{f}\), and we will use this without further comment.

\begin{Lemma}\label{lem:weight-factorization}
Every morphism \(f\colon B_\epsilon\to B_{\epsilon'}\) in
\(\Fund(\mathcal{C})\) can be written as
\[
	f=\sum c_i\, t_i\circ b_i,
\]
where \(c_i\in\kk\), the morphisms \(b_i\) decrease weight, and the morphisms
\(t_i\) increase weight. Moreover, the \(b_i\) and \(t_i\) may be chosen to be
basic morphisms.
\end{Lemma}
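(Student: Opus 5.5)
The plan is to reduce to crystal morphisms between connected components and then factor each such map through a single highest weight crystal that is realized as the top summand of some tensor word.

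First, I would reduce to basic pieces. Decompose $B_\epsilon=\bigsqcup_a C_a$ and $B_{\epsilon'}=\bigsqcup_b D_b$ into connected components, each isomorphic to some $B(\lambda)$. A crystal morphism carries a connected component either to $0$ or isomorphically onto a connected component of the same highest weight. Hence every morphism in $\Fund(\mathcal{C})$ is a $\kk$-linear combination of maps $\phi_{a,b}$, where $\phi_{a,b}$ is the unique isomorphism $C_a\to D_b$ (when $C_a\cong D_b\cong B(\lambda)$) and is zero on all other components. It therefore suffices to factor each $\phi_{a,b}$ as $t\circ b$ with $b$ basic and weight-decreasing and $t$ basic and weight-increasing.

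Next, for each $\lambda\in\Lambda_+$ I would choose a tensor word $B_\mu$ whose top summand is $B(\lambda)$. For example, write $\lambda=\sum m_i\omega_i$ and take $B_\mu=B_1^{\otimes m_1}\otimes\cdots\otimes B_n^{\otimes m_n}$. The tensor product of the highest weight elements is a highest weight element of weight $\lambda$, and its component is the unique copy of $B(\lambda)$ in $B_\mu$. So the weight of $B_\mu$ is exactly $\lambda$.

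Then set $b\colon B_\epsilon\to B_\mu$ to be the isomorphism $C_a\to$ (top summand of $B_\mu$), extended by zero, and set $t\colon B_\mu\to B_{\epsilon'}$ to be the isomorphism from the top summand onto $D_b$, extended by zero. Both maps are basic of weight $\lambda$, and $t\circ b=\phi_{a,b}$. It remains to check the weight conditions. Since $C_a\cong B(\lambda)$ is a summand of $B_\epsilon$, we need $\lambda\le\wt(B_\epsilon)$, and similarly $\lambda\le\wt(B_{\epsilon'})$. Given this, $b$ decreases weight and $t$ increases weight.

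The main obstacle is precisely this comparison: every highest weight $\lambda$ of a component of $B_\epsilon$ must satisfy $\lambda\le\wt(B_\epsilon)$ in dominance order. I would argue as follows. $\wt(B_\epsilon)=\sum\omega_{\epsilon_j}$ is the highest weight occurring in $B_\epsilon$, since each $B_{\epsilon_j}$ has all weights $\le\omega_{\epsilon_j}$ and weights add under tensor product. Any element of $B_\epsilon$, in particular the highest weight element of $C_a$, therefore has weight of the form $\wt(B_\epsilon)$ minus a nonnegative combination of simple roots. This gives the required inequality. Throughout, "less than or equal to" is read in the dominance order, which is the only sensible reading of the definition.
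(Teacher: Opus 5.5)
Your argument is correct and uses the same projection--inclusion decomposition over connected components as the paper. You are more careful than the paper at one point. You route each basic map through the tensor word $B_1^{\otimes m_1}\otimes\cdots\otimes B_n^{\otimes m_n}$, whose top summand is the unique copy of $B(\lambda)$, so the factorization genuinely lives in $\Fund(\mathcal{C})$; a bare connected component is not an object there. You also verify the dominance inequality $\lambda\le\wt(B_\epsilon)$ explicitly, which the paper leaves implicit.
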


\begin{proof}
This is the standard projection-inclusion decomposition with respect to the
connected components of \(B_\epsilon\) and \(B_{\epsilon'}\). 
(And well-known, see e.g. \cite{AnStTu-cellular-tilting}.) Namely, a strict
crystal morphism sends a connected component either to zero or isomorphically
onto a connected component of the same highest weight. Hence a basis of the
Hom-space is given by first projecting onto one connected component and then
including it into the target. These projections decrease weight, the inclusions
increase weight, and both are basic morphisms.
\end{proof}

\begin{Lemma}\label{hw element lemma}
A basic morphism $f:B_\epsilon \to B_{\epsilon'}$ has weight equal to that of $B_{\epsilon'}$ if and only if the image of $f$ contains the highest element of $B_{\epsilon'}$. Dually, $f$ has weight equal to that of $B_{\epsilon}$ if and only if the highest element of $B_{\epsilon}$ is not sent to 0. 
\end{Lemma}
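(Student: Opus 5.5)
The plan is to use the structure of strict crystal morphisms recalled in the proof of \autoref{lem:weight-factorization}. A basic morphism \(f\colon B_\epsilon\to B_{\epsilon'}\) is nonzero on exactly one connected component \(C\subseteq B_\epsilon\), say \(C\cong B(\lambda)\) with \(\lambda=\wt(f)\). On \(C\) it is an isomorphism onto a connected component \(C'\subseteq B_{\epsilon'}\) of the same highest weight \(\lambda\). (Strictly, \(f\) is a linear combination of crystal morphisms; I will read "image" and "sent to \(0\)" through this description, so that \(f\) is a nonzero multiple of a crystal morphism supported on \(C\).)

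The key input is that the top summand of a tensor word \(B_\epsilon\) occurs with multiplicity one. Writing \(\mu\) for the weight of \(B_\epsilon\), the sum of the weights of the letters, the element \(\mu\) has a one-dimensional weight space in \(V_\epsilon\). So exactly one element of \(B_\epsilon\) has weight \(\mu\), namely the tensor product of the highest weight elements of the factors. This element is killed by all \(e_i\) by the tensor product rule. It is therefore the highest element, and the component \(B(\mu)\) it generates is the unique component of highest weight \(\mu\). Every other component has highest weight strictly less than \(\mu\) in dominance order.

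For the first claim, suppose \(\lambda\) equals the weight \(\mu'\) of \(B_{\epsilon'}\). Then \(C'\) is a component of highest weight \(\mu'\), so by uniqueness it is the top summand, and the image \(C'\) contains the highest element. Conversely, if the image contains the highest element \(b'_0\), then \(b'_0\in C'\). Since \(b'_0\) is killed by all \(e_i\) and \(C'\cong B(\lambda)\) has a unique such element, \(b'_0\) is the highest weight element of \(C'\), giving \(\lambda=\mu'\).

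The dual claim is argued the same way. If \(\lambda=\mu\), then \(C\) is the top summand of \(B_\epsilon\), so it contains the highest element, and \(f\) is injective on \(C\); hence the highest element is not sent to \(0\). Conversely, if the highest element is not sent to \(0\), it lies in \(C\), since \(f\) vanishes off \(C\). It is then the highest weight element of \(C\), so \(\lambda=\mu\).

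The only step needing care is the multiplicity-one and uniqueness statement for the top summand. I would justify it using the tensor product rule together with the fact that \(\mu\) is the maximal weight occurring in \(B_\epsilon\). I do not expect other obstacles.
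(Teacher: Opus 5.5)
Your proof is correct and takes the same approach as the paper, whose proof is the single line ``immediate from the definition of weight.'' You unpack that line, and in doing so you make explicit the one real input the paper leaves implicit in the phrase ``the top summand'': the top component is the unique component of its highest weight, and you justify this correctly. Your reading of a basic morphism as a nonzero multiple of a single crystal morphism matches how the paper uses the notion. The case of a general linear combination supported on one component follows from the same argument.
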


\begin{proof}
This is immediate from the definition of weight.
\end{proof}

We now describe the generating morphisms for $\Fund(\mathcal{C})$. Let $B(\lambda)\in \mathcal{C}$ denote a connected crystal of highest weight $\lambda$. For $\lambda\in \Lambda_+$ sufficiently deep in the dominant chamber, we have 
\begin{gather}\label{eqn: fund decomp}
B(\lambda)\otimes B(\omega_k)\cong \bigoplus_{b\in B_k} B\big(\lambda +\wt(b)\big),
\end{gather}
with the decomposition independent of $\lambda$. (If $h$ is the highest weight element of $B(\lambda)$, then each $h\otimes b$ is a highest weight element.) 

\begin{Definition}\label{atom}
Let \(I=\bigsqcup_{k=1}^n B_k\). For \(b\in B_k\), put
\(\lambda(b)=\sum_i\varepsilon_i(b)\omega_i\). Then
\(\lambda(b)+\wt(b)=\sum_i\varphi_i(b)\omega_i\), by the crystal axiom
\(\varphi_i(b)-\varepsilon_i(b)=\langle\wt(b),\alpha_i^\vee\rangle\).

Let \(B_{\epsilon(b)}\) be the tensor word
\(B_1^{\otimes\varepsilon_1(b)}\otimes\cdots\otimes
B_n^{\otimes\varepsilon_n(b)}\), and let \(B_{\delta(b)}\) be the tensor word
\(B_1^{\otimes\varphi_1(b)}\otimes\cdots\otimes
B_n^{\otimes\varphi_n(b)}\). Thus both tensor words have weakly increasing
labels, and their weights are \(\lambda(b)\) and \(\lambda(b)+\wt(b)\),
respectively.

Let \(c\) be the highest element of \(B_{\epsilon(b)}\). Then \(c\otimes b\)
is a highest weight element of \(B_{\epsilon(b)}\otimes B_k\). We define
\(f_b\colon B_{\epsilon(b)}\otimes B_k\to B_{\delta(b)}\) to be the basic
morphism which maps the connected component generated by \(c\otimes b\) onto
the top component of \(B_{\delta(b)}\), and is zero on all other connected
components.

The morphism \(f_b\) is called an \textbf{atom}. We write
\(\mathcal{A}:=\{f_b\mid b\in I\}\) for the set of atoms.
\end{Definition}

The point of the definition is that $f_b$ is the `minimal' realization of the component indexed by $b$ in the decomposition shown in \eqref{eqn: fund decomp}.

\begin{Example}\label{atom examples}
For this example, and for the examples below, the reader may find it useful to recall the basic tableau combinatorics; see, for instance, \cite{BuSc-crystal-bases}.

\begin{enumerate}

\item For $\g=\mathfrak{sl}_2$, being sufficiently deep in the dominant chamber means $\lambda > \omega_1$. In this case we have $B(\lambda)\otimes B_1\cong B(\lambda-\omega_1)\oplus B(\lambda+\omega_1)$, the elements of $I=B_1=\{-1,1\}$ are of weight $-\omega_1,\omega_1$, and $\varepsilon_1(-1)=1,\varepsilon_1(1)=0$. The map $f_{1}$ is just the identity map $B_1\to B_1$, and the map $f_{-1}$ is the `evaluation' map $B_1\otimes B_1\to \mathbf{1}$ which projects onto the trivial crystal.

\item For $\mathfrak{sl}_3$, the decomposition rules are (writing weights in fundamental coordinates):
\begin{gather*}
B_{(a,b)} \otimes B_1
\cong
B_{(a+1,b)}
\oplus \mathbf{1}_{a>0}\, B_{(a-1,b+1)}
\oplus \mathbf{1}_{b>0}\, B_{(a,b-1)},
\\
B_{(a,b)} \otimes B_2
\cong
B_{(a,b+1)}
\oplus \mathbf{1}_{b>0}\, B_{(a+1,b-1)}
\oplus \mathbf{1}_{a>0}\, B_{(a-1,b)},
\end{gather*}
In this case we have six atoms, which we draw as \[\upvert,\downvert,\trisplit,\trimergedown,\capL,\capR,\] where the arrow leaving the vertex labeled $B_1$ (resp. $B_2$) goes up (resp. down).

\item More generally, in type $A$ we may treat an element $c\in B_k$ in type $A$ as a column of increasing integers of height $k$, and an element of $B_{\epsilon}$, with labels weakly increasing, as a filling of a Young diagram of shape $\mu = (\mu_1,\dots, \mu_n)$, (meaning there are $\mu_1$ columns of height 1, etc.), such that $B_{\epsilon}$ contains $\mu_k$ factors of $B_k, 1\le k\le n$. (Using the convention of \cite{kashiwara1994crystal}, we read the columns from right to left). Let $f_b: B_{\epsilon}\otimes B_k\to B_{\delta}$ be an atom, and assume an element of $B_{\epsilon}$ (resp. $B_{\delta}$) corresponds to a filling of a Young diagram of shape $\mu$ (resp. $\mu'$). Then $\mu'$ is obtained from $\mu$ by adding a vertical strip, up to removing columns of length $n$ (this is the Pieri rule). If $A$ is a filling of a diagram of shape $\mu$, and $b$ is a column, let $[A,b]$ denote the filling obtained by inserting $b$ into $A$ via Schensted's algorithm. Then if $b, c\in B_k$ we have 
\[f_b(A\otimes c)=\begin{cases} [A,c] & \text{ if $[A,c]$ is of shape $\mu'$,} \\
0 &  \text{ else.}
\end{cases}\]
For example, if \(n\ge 4\), \(k=2\), \(b=\tableau{{2}\\{3}}\in B_2\), then \(\mu=(1)\), \(\mu'=(3)\), and we have
\[
f_b\!\left(\tableau{{1}}\otimes\tableau{{2}\\{3}}\right)=\tableau{{1}\\{2}\\{3}}, \qquad f_b\!\left(\tableau{{2}}\otimes\tableau{{2}\\{3}}\right)=0.
\]
A similar description of the atoms in terms of column insertion exists in types $B,C,D$, see \cite[\S 3.3]{lecouvey2003schensted} and \cite[\S 4]{lecouvey2002typeC} for a description of the insertion algorithms in these cases.
\end{enumerate}
We will revisit several of these in \autoref{S:Examples}.
\end{Example}

For $\g$ of higher rank, we will represent the atom $f_b$ diagrammatically by lines going in and out of a \emph{trapezium}, so that
\[
\trimergedown
\quad\text{ becomes }\quad
\begin{tikzpicture}[
thick,
scale=0.8,
transform shape,
baseline={([yshift=-.5ex]current bounding box.center)}
]
\usetikzlibrary{calc}

\coordinate (BL) at (0.2,0);
\coordinate (BR) at (1.8,0);
\coordinate (TR) at (1.55,0.75);
\coordinate (TL) at (0.45,0.75);

\draw (BL) -- (BR) -- (TR) -- (TL) -- cycle;

\coordinate (BM) at ($(BL)!0.5!(BR)$);
\coordinate (TM) at ($(TL)!0.5!(TR)$);

\draw[usual] (0.65,-0.9) node[below] {\small $1$} -- (0.65,0);
\draw[usual] (1.35,-0.9) node[below] {\small $1$} -- (1.35,0);
\draw[usual] (1,0.75) -- (1,1.55) node[above] {\small $2$};
\end{tikzpicture}.\]
The box is wider at the bottom to indicate that the atom decreases weight; the flipped version will be drawn with the top wider.

\begin{Remark}\label{G2 atom dependence}
It is possible for there to be relations between atoms. For example, we have a relation of the following form in type $\mathrm{G}_2$. 
\begin{equation*}
\begin{tikzpicture}[
baseline=(current bounding box.center),
x=.95cm,y=.95cm,
line cap=round,
line join=round,
twostrand/.style={usual,green!50!black,line width=2pt}
]

\begin{scope}[shift={(0,0)}]
\draw[usual]
(0.00,0.55) -- (2.60,0.55) -- (2.00,1.10) -- (0.60,1.10) -- cycle;

\foreach \x in {0.70,1.30,1.90}{
\draw[usual] (\x,0.00) -- (\x,0.55);
\node at (\x,-0.42) {$1$};

\draw[twostrand] (1.30,1.10) -- (1.30,1.70);
\node at (1.30,2.00) {$2$};
}
\end{scope}

\node at (4.15,0.80) {$=$};

\begin{scope}[shift={(5.40,0)}]
\draw[usual]
(0.00,0.55) -- (1.35,0.55) -- (1.00,1.00) -- (0.35,1.00) -- cycle;

\draw[twostrand] (0.68,1.00) -- (0.68,1.55);

\draw[usual]
(0.40,1.55) -- (1.90,1.55) -- (1.55,2.00) -- (0.75,2.00) -- cycle;

\draw[twostrand] (1.15,2.00) -- (1.15,2.60);
\node at (1.15,2.90) {$2$};

\foreach \x in {0.45,0.90}{
\draw[usual] (\x,0.00) -- (\x,0.55);
\node at (\x,-0.42) {$1$};
}

\draw[usual] (1.70,0.00) -- (1.70,1.55);
\node at (1.70,-0.42) {$1$};
\end{scope}

\end{tikzpicture}.
\end{equation*}
We will see several other relations below.
\end{Remark}

Let \(\#B_i\) denote the cardinality of the finite crystal \(B_i\).

\begin{Proposition}\label{prop:atom-count}
The number of atoms is
\(\lvert\mathcal{A}\rvert=\sum_{i=1}^n\lvert B_i\rvert\). For the classical
types this gives
\[
\begin{array}{c|c}
\text{type} & \lvert\mathcal{A}\rvert \\ \hline
A_{n-1} & 2^n-2 \\
B_n & 4^n+2^n-1-\binom{2n+1}{n} \\
C_n & \binom{2n+1}{n}-1 \\
D_n & \displaystyle \sum_{i=1}^{n-2}\binom{2n}{i}+2^n
\end{array}
.
\]
Equivalently, the entry for \(D_n\) is
\(2^{2n-1}+2^n-1-\binom{2n}{n-1}-\frac12\binom{2n}{n}\).
For \(\mathrm{G}_2,F_4,E_6,E_7,E_8\), the corresponding numbers are
\[
21,\quad 1625,\quad 3759,\quad 404699,\quad 7054732527.
\]
\end{Proposition}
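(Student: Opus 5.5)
The first equality is immediate from \autoref{atom}: the assignment \(b\mapsto f_b\) is a bijection from \(I=\bigsqcup_k B_k\) onto \(\mathcal{A}\). Surjectivity holds by definition. For injectivity, two atoms \(f_b,f_{b'}\) can only coincide if they have the same source \(B_{\epsilon(b)}\otimes B_k\). This pins down \(k\) and \(\varepsilon_i(b)=\varepsilon_i(b')\) for all \(i\). Equal targets give \(\varphi_i(b)=\varphi_i(b')\), hence \(\wt(b)=\wt(b')\). Within a fixed source the atom is determined by the highest weight element \(c\otimes b\) generating the component on which it is nonzero. Since \(c\) is fixed, this element determines \(b\). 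So \(\lvert\mathcal{A}\rvert=\sum_i\lvert B_i\rvert=\sum_i\dim V(\omega_i)\), and the rest reduces to summing dimensions of fundamental representations.

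For the classical types I would use the standard realizations of the fundamental modules.
\begin{enumerate}
\item Type \(A_{n-1}\): \(V(\omega_i)=\Lambda^i\mathbb{C}^n\) for \(1\le i\le n-1\), giving \(\sum_{i=1}^{n-1}\binom ni=2^n-2\).
\item Type \(C_n\): \(V(\omega_i)=\Lambda^i\mathbb{C}^{2n}/\Lambda^{i-2}\mathbb{C}^{2n}\), so \(\dim V(\omega_i)=\binom{2n}{i}-\binom{2n}{i-2}\). The sum over \(i=1,\dots,n\) telescopes to \(\binom{2n}{n}+\binom{2n}{n-1}-1=\binom{2n+1}{n}-1\).
\item Type \(B_n\): \(V(\omega_i)=\Lambda^i\mathbb{C}^{2n+1}\) for \(i<n\), and \(V(\omega_n)\) is the spin module of dimension \(2^n\). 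Using \(\sum_{i=0}^{n}\binom{2n+1}{i}=4^n\), the sum is \(4^n-1-\binom{2n+1}{n}+2^n\).
\item Type \(D_n\): \(V(\omega_i)=\Lambda^i\mathbb{C}^{2n}\) for \(i\le n-2\), plus two half-spin modules of dimension \(2^{n-1}\) each, giving the displayed sum. The closed form follows from \(\sum_{i=0}^{n-2}\binom{2n}{i}=\tfrac12\bigl(4^n-\binom{2n}{n}\bigr)-\binom{2n}{n-1}\).
\end{enumerate}
In each case I would cite the standard dimension facts, e.g.\ via Weyl's dimension formula or the Kashiwara--Nakashima tableau models of the \(B_i\).

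For the exceptional types I would list the fundamental dimensions from the Weyl dimension formula. For \(\mathrm{G}_2\) they are \(7,14\), total \(21\). For \(F_4\) they are \(52,1274,273,26\), total \(1625\). For \(E_6\) they are \(27,78,351,2925,351,27\), total \(3759\). For \(E_7\) and \(E_8\) I would take the standard tables (or the computer check in \cite{HT}) and add.

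The main obstacle is the injectivity claim in the first paragraph, which needs care with the convention that atoms with equal source and target are compared as morphisms. Everything else is bookkeeping; the \(E_8\) total in particular is best verified computationally rather than by hand.
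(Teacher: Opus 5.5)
Your proposal is correct and follows the paper's proof: identify $\mathcal{A}$ with $\bigsqcup_i B_i$, use $\lvert B_i\rvert=\dim V(\omega_i)$, and sum the fundamental dimensions. Your classical sums, closed forms and exceptional totals all check out. The only difference is that you spell out injectivity of $b\mapsto f_b$ (distinct $b$ give distinct highest weight elements $c\otimes b$, so the atoms are nonzero on different components), which the paper treats as immediate from the definition.
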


\begin{proof}
By definition, there is one atom \(f_b\) for each element
\(b\in\bigsqcup_i B_i\), hence
\(\lvert\mathcal{A}\rvert=\sum_i\lvert B_i\rvert\). Moreover
\(\lvert B_i\rvert=\dim V(\omega_i)\).

It remains only to recall the dimensions of the fundamental representations.
In type \(A_{n-1}\), these are \(\binom{n}{i}\), \(1\leq i\leq n-1\), whose
sum is \(2^n-2\). In type \(B_n\), they are \(\binom{2n+1}{i}\),
\(1\leq i<n\), together with the spin representation of dimension \(2^n\);
this gives the displayed formula. In type \(C_n\), they are
\(\binom{2n}{i}-\binom{2n}{i-2}\), \(1\leq i\leq n\), and the sum telescopes
to \(\binom{2n+1}{n}-1\). In type \(D_n\), they are
\(\binom{2n}{i}\), \(1\leq i\leq n-2\), together with the two half-spin
representations, both of dimension \(2^{n-1}\). This gives the stated
formula for \(D_n\). The exceptional values are obtained in the same way,
using the Weyl dimension formula.
\end{proof}

\begin{Definition}
The set of \textbf{crossings}, $\mathcal X=\{\beta_{i,j}:1\le i, j\le n, i\neq j\}$, consists of one choice of isomorphism $\beta_{i,j}: B_i\otimes B_j\to B_j\otimes B_i$ for each pair $(i,j), i\neq j$, which are compatible in the sense that $\beta_{i,j}^{-1}=\beta_{j,i}$.
\end{Definition}

We draw $\beta_{i,j}$ as a crossing 
\[
\begin{tikzpicture}[thick, scale=0.8, baseline=(current bounding box.center)]
\draw[usual] (0,0) node[below] {$i$} -- (1,1.5);
\draw[usual] (1,0) node[below] {$j$} -- (0,1.5);
\end{tikzpicture},
\]
which is standard notation.

\begin{Remark}
In type $A,B,C,\mathrm{G}_2$ the tensor product decomposition of two fundamental representations is multiplicity-free and $\beta_{i,j}$ is unique; this is not true in type $D,E,F$. When the choice of $C$ is not unique, a natural choice is to pick the crystal commutors in the sense of \cite{Kamnitzcrystal}. (Here ``commutor'' is meant literally: this choice gives
a so-called \emph{coboundary structure}, not a braiding.) 
\end{Remark}

\begin{Example}\label{Type A crossing}
In type $A, C$, the unique crossings $\beta_{i,j}$ are realized by sequences of \emph{jeu de taquin} moves, i.e. applications of relations in the plactic monoid. (The symplectic jeu de taquin in type $C$ was introduced by \cite{sheats1999symplectic}, and proved to commute with crystal operators in \cite[\S 6]{lecouvey2002typeC}). There is also a jeu de taquin of type $B$, introduced in \cite[\S 3.5]{lecouvey2003schensted}, which gives $\beta_{i,j}$ when $i,j \neq n$; elements of $B_n$ are \emph{spin columns}, which introduce additional complexities.  

In type $A$, it is straightforward to translate jeu de taquin on two columns into the following explicit description: if the element in $B_k$ and $B_l$ are represented by subsets $I=\{i_1<\dots < i_k\}$ and $J=\{j_1<\dots <j_l\}$ respectively, and $k<l$, we obtain a set $P$ by processing the elements of $I$ in decreasing order: at the step of $i_r$, let $p_r$ be the largest yet-unused element of $J$ which is less than or equal to $i_r$; if no element of $J$ is less than or equal to $i_r$, pick the largest unused element of $J$. Let $P=\{p_1,\dots, p_k\}$. Then $I\otimes J$ is sent to $((J\setminus P)\cup I) \otimes P$. If $k>l$, then define $Q$ by processing elements of $J$ in increasing order, for $j_r$ picking the smallest unused element of $I$ which is greater than or equal to it, or the smallest unused element. Then $I\otimes J\mapsto Q\otimes ((I\setminus Q)\cup J)$. For example we have $\beta_{2,3}((1,2)\otimes (1,2,3))=(1,2,3)\otimes (1,2)$, corresponding to 
\[
\begin{array}{cc}
1&1\\
2&2\\
3&\Box
\end{array}
\quad\longrightarrow\quad
\begin{array}{cc}
1&1\\
2&2\\
\Box&3
\end{array}
\quad\longrightarrow\quad
\begin{array}{cc}
1&1\\
\Box&2\\
2&3
\end{array}
\quad\longrightarrow\quad
\begin{array}{cc}
\Box&1\\
1&2\\
2&3
\end{array}
.
\]
(We are using the convention of \cite{kashiwara1994crystal}, reading columns from right to left.) To go from $(123)\otimes (12)$, we apply the slide in the reverse direction, from inner corner to outer corner.
\end{Example}

The following is the famous Reidemeister 3 move.
\begin{gather*}
\begin{tikzpicture}[thick, scale=0.6, baseline=(current bounding box.center)]
\draw[usual] (0,0) node[below] {$i$} -- (1.5,2);
\draw[usual] (1.5,0) node[below] {$k$} -- (0,2);
\draw[usual] (0.75,0) node[below] {$j$} .. controls (0.75,0.4) and (-0.3,0.6) .. (-0.3,1) .. controls (-0.3,1.4) and (0.75,1.6) .. (0.75,2);
\end{tikzpicture}
=
\begin{tikzpicture}[thick, scale=0.6, baseline=(current bounding box.center)]
\draw[usual] (0,0) node[below] {$i$} -- (1.5,2);
\draw[usual] (1.5,0) node[below] {$k$} -- (0,2);
\draw[usual] (0.75,0) node[below] {$j$} .. controls (0.75,0.4) and (1.8,0.6) .. (1.8,1) .. controls (1.8,1.4) and (0.75,1.6) .. (0.75,2);
\end{tikzpicture}.
\end{gather*}

\begin{Lemma}
The Reidemeister 3 move holds in type $A$ and $C$, and type $B$, if $i,j,k \neq n$ (but not in general).
\end{Lemma}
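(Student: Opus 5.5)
Both sides of the Reidemeister 3 move are crystal isomorphisms
\[
B_i\otimes B_j\otimes B_k\longrightarrow B_k\otimes B_j\otimes B_i ,
\]
built from the crossings $\beta_{i,j},\beta_{i,k},\beta_{j,k}$. Since crystal morphisms commute with all $e_l,f_l$, two morphisms out of $B_i\otimes B_j\otimes B_k$ agree as soon as they agree on every highest weight element. So the plan is to compare the two composites component by component and reduce the statement to a multiplicity question about the triple product.

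\textbf{The multiplicity-free shortcut.} Suppose $B_i\otimes B_j\otimes B_k$ is multiplicity-free, i.e.\ each $B(\lambda)$ occurs at most once. Then for each $\lambda$ there is at most one connected component of weight $\lambda$ in the source and, by the same count, in the target $B_k\otimes B_j\otimes B_i$. Any crystal isomorphism must send the unique highest weight element of weight $\lambda$ to the unique one in the target. Hence both sides of the move coincide, and nothing needs to be computed in this case.

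\textbf{Where multiplicity-freeness fails.} It does fail: already in type $A_2$, $B_1^{\otimes 3}$ contains $B(1,1)$ twice. In type $A$ the relevant tool is Example~\ref{Type A crossing}.
\begin{itemize}
\item Each $\beta_{i,j}$ is a composite of jeu de taquin slides, i.e.\ plactic (Knuth) relations on the column reading word.
\item Both sides of the move therefore preserve the plactic class of the reading word.
\item The key fact to use is that the rectification (insertion tableau) $P$ determines the connected component up to isomorphism, while the recording data $Q$ identifies which copy it is. On the source, $Q$ is fixed.
\end{itemize}
The step needing care is showing that each crossing, viewed as a map to words with a permuted column-shape sequence, transports $Q$ by a rule that depends only on the shape sequence and not on the order of the slides. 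I would phrase this via Schützenberger's theorem that jeu de taquin is independent of the slide order; equivalently, for skew shapes with column sequence $(i,j,k)$, the dual equivalence class is determined by the shape alone. Both sides of the move then realize the same bijection of components, so they agree.

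\textbf{Type $C$ and type $B$ with $i,j,k\neq n$.} I would run the same argument using Sheats' symplectic jeu de taquin and Lecouvey's type $B$ version, which are already cited. Lecouvey proves these commute with crystal operators and are confluent, i.e.\ rectification is unique, and that uniqueness is exactly the ingredient used above. The excluded spin columns are precisely where Lecouvey's insertion is not a column jeu de taquin of this form.

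\textbf{The ``not in general'' claim.} I would produce an explicit counterexample, for instance in type $D_4$ with $i,j,k$ two half-spin labels and the vector, with the crossings chosen as Henriques--Kamnitzer commutors. On a doubled component, the two sides should differ by a nontrivial permutation, reflecting the fact that commutors satisfy the cactus relations rather than the braid relations. I would verify this by a machine computation using the code in \cite{HT}.

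\textbf{Main obstacle.} The hardest step is justifying that the $Q$-transport is independent of the slide order in the non-multiplicity-free cases. If a clean citation of dual equivalence for these column shapes is unavailable, I would fall back on the finite check: it suffices to compare the two sides on highest weight elements of $B_i\otimes B_j\otimes B_k$. For this I would bound the ranks involved, since only finitely many column lengths matter up to stabilization in $n$.
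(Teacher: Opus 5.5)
Your starting point is the same as the paper's, whose proof is the one-line appeal to confluence of jeu de taquin. Your reduction to highest weight elements and the multiplicity-free shortcut are fine. But the step you flag as the main obstacle is the whole content of the lemma, and the tool you propose does not supply it.

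Sch\"utzenberger's theorem only says that the \emph{rectification} of a skew tableau is independent of the slide order. It says nothing about which of several tableaux of a prescribed non-straight shape a slide sequence lands on. Your ``equivalent'' reformulation is false. Haiman's theorem makes all tableaux of a \emph{straight} shape dual equivalent. For a skew shape $\nu/\mu$, however, the dual equivalence classes with rectification shape $\lambda$ number $c^{\nu}_{\mu\lambda}$, and these are exactly the multiplicities you need to control.

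Concretely, start from the standard tableau of shape $(2,1)$ with rows $1\,2$ and $3$. Reverse slides into the outer cells $(3,1),(1,3),(2,2)$, in this order, give the tableau of shape $(3,2,1)/(2,1)$ with entries $2,1,3$ in the cells $(1,3),(2,2),(3,1)$. The order $(2,2),(1,3),(3,1)$ instead gives entries $2,3,1$ in the same cells. So two slide sequences with the same input and the same final shape can realize different bijections between copies of a component. Nothing in your argument excludes this for the two sides of the braid move. The fallback finite check does not help either, since the lemma concerns all ranks and you have no stabilization result. (Also, $B_1^{\otimes 3}$ is not an instance of the move, because crossings exist only between distinct labels. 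The first genuine instance is $B_1\otimes B_2\otimes B_3$ in type $A_3$, which contains $B(\omega_2)$ twice.)

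One way to close the gap in types $A$ and $C$ is to pass to affine crystals. There $B(\omega_r)$ is the classical restriction of the Kirillov--Reshetikhin crystal $B^{r,1}$. Since $B(\omega_i)\otimes B(\omega_j)$ is multiplicity-free, $\beta_{i,j}$ coincides with the combinatorial $R$-matrix. Tensor products of KR crystals are connected, so there is exactly one affine crystal isomorphism $B^{i,1}\otimes B^{j,1}\otimes B^{k,1}\to B^{k,1}\otimes B^{j,1}\otimes B^{i,1}$. Both sides of the move are such isomorphisms, hence they are equal; this is the Yang--Baxter equation. This does not reach type $B$, where $B^{r,1}$ is classically reducible for $2\le r\le n-1$, so that case needs a separate argument.

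The same reasoning also rules out your proposed witness for the parenthetical ``not in general'', which the paper does not prove. In $D_4$ the nodes $1,3,4$ are minuscule, $B^{r,1}\cong B(\omega_r)$ classically for these $r$, and all three pairwise products are multiplicity-free. So the move \emph{holds} for that triple, even though $B_1\otimes B_3\otimes B_4$ contains $B(\omega_2)$ twice. That commutors satisfy cactus rather than braid relations does not force failure for any particular triple. A counterexample has to be sought where the argument above breaks down, for example with labels whose $B^{r,1}$ is classically reducible, or where the crossings are not forced.
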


\begin{proof}
By the confluence of jeu de taquin slides.
\end{proof}

\subsection{Two bases for hom-spaces}

We now construct bases.

\begin{Remark}
To get bases of hom-spaces we follow a standard strategy that has appeared many times. In fact, \autoref{wedderburn basis} says that the basis given therein is a sandwich basis in the sense of \cite{Tu-sandwich-cellular}. The basis in \autoref{bmt basis} is also a sandwich basis, but we do not need this here.
\end{Remark}

The atoms have a useful interpretation in terms of \emph{branching graphs} of tensor products. Let $B_{\epsilon}\in \Fund(\mathcal{C})$. Its connected components (although these components are not themselves objects in $\Fund(\mathcal{C})$) can all be reached by a sequence of elementary modifications of weights as in \eqref{eqn: fund decomp}. Explicitly, if $b_1\otimes b_2\otimes \dots \otimes b_m$ is the highest weight element of a component $C$ of $B_{\epsilon}$, then each prefix $b_1\otimes\dots\otimes b_k, k\le m$ is also highest weight, and $C$ corresponds to the sequence $(b_1,\dots,b_m)$. 

\begin{Example}\label{fig:sl3 weight branching}
For example, in the case of $\mathfrak{sl}_3$, the connected components of $B_1\otimes B_1\otimes B_2$ correspond to the sequences $\big((1,0),(1,0),(0,1)\big), \big((1,0),(1,0),(-1,0)\big),\big((1,0),(-1,1),(1,-1)\big)$ and $\big((1,0),(-1,1),(0,1)\big)$, as illustrated below.
\[
\begin{tikzpicture}[
>=Latex,
font=\small,
v/.style={inner sep=1pt},
blueedge/.style={->, very thick, blue!75!black, shorten >=3pt, shorten <=3pt},
purpleedge/.style={->, very thick, purple!75!black, shorten >=3pt, shorten <=3pt},
edgelab/.style={font=\scriptsize, inner sep=1pt}
]

\node[v] (a) at (0,0)       {$(0,0)$};
\node[v] (b) at (2.0,0)     {$(1,0)$};

\node[v] (c) at (4.2,0.9)   {$(2,0)$};
\node[v] (d) at (4.2,-0.9)  {$(0,1)$};

\node[v] (e) at (6.7,1.65)  {$(2,1)$};
\node[v] (f) at (6.7,0.55)  {$(1,0)$};
\node[v] (g) at (6.7,-0.55) {$(1,0)$};
\node[v] (h) at (6.7,-1.65) {$(0,2)$};

\draw[blueedge] (a) -- node[edgelab, above=3pt] {$(1,0)$} (b);

\draw[blueedge] (b) -- node[edgelab, above left=1pt] {$(1,0)$} (c);
\draw[blueedge] (b) -- node[edgelab, below left=1pt] {$(-1,1)$} (d);

\draw[purpleedge] (c) -- node[edgelab, above=3pt] {$(0,1)$} (e);
\draw[purpleedge] (c) -- node[edgelab, below=3pt] {$(-1,0)$} (f);

\draw[purpleedge] (d) -- node[edgelab, above=3pt] {$(1,-1)$} (g);
\draw[purpleedge] (d) -- node[edgelab, below=3pt] {$(0,1)$} (h);

\end{tikzpicture}.
\]
This is the branching graph for $B_1\otimes B_1\otimes B_2$ (a cutoff is displayed). Here we are indexing the atoms by weight.
\end{Example}

Thus, there is a natural correspondence between paths in the branching diagram, and special morphisms built from the atoms $\mathcal{A}$ which are their diagrammatic incarnations. Fix now an object $B_{\epsilon}=B_{\epsilon_1}\otimes\dots\otimes B_{\epsilon_m}$ and its associated branching graph with $m$ levels, as in \autoref{fig:sl3 weight branching} above. We associate a morphism to each vertex of the graph inductively as follows:  
\begin{enumerate}

\item We associate the empty morphism to $(0,0)$. 

\item Assume we have assigned a morphism to each vertex at level $k\ge 0$, i.e. each component of $B_{\epsilon_1}\otimes\dots \otimes B_{\epsilon_k}$. Now let $C$ be the component of $B_{\epsilon_1}\otimes\dots \otimes B_{\epsilon_{k+1}}$ which is obtained from a component $C'$ at level $k$ by the branch labeled by $b\in B_{\epsilon_{k+1}}$. Let $D'$ be the morphism associated to $C'$. Morally, we want to define the morphism associated to $C$ to be $(\id\otimes f_b)\circ(D'\otimes \id_{B_{\epsilon_{k+1}}})$. However, this composite may not be type correct, so we need to first use crossings to permute the factors of $B_{\epsilon_1}\otimes\dots \otimes B_{\epsilon_k}$ in a minimal way, so that the rightmost factors, after shuffling, together with the new factor $B_{\epsilon_{k+1}}$, are exactly the domain of $f_b$. Call this permutation $\beta$. Then we define the morphism associated to $C$ to be $$D:=(\id\otimes f_b)\circ((\beta\circ D')\otimes \id_{B_{\epsilon_{k+1}}}).$$ 
\end{enumerate}

For example, for the sequence $\big((1,0),(-1,1),(1,0),(1,0),(0,-1)\big)$ corresponding to a component of weight $(2,0)$ in $B_{1}^{\otimes 5}$ for $\mathfrak{sl}_3$, the construction is illustrated below:
\[
\begin{tikzpicture}[scale=0.85, baseline=(current bounding box.center)]
\tikzset{
web/.style={red!80!black, thick, line cap=round, line join=round},
warrow/.style={-stealth, red!80!black, thick, line cap=round, line join=round},
blabel/.style={text=blue, font=\large}
}

\node at (0,0) {\Large $\emptyset$};

\draw[->, thick] (0.5,0) -- (1.5,0)
node[midway, above, blabel] {$(1,0)$};

\draw[web] (2.0,-0.6) -- (2.0,0.6);
\draw[warrow] (2.0,-0.6) -- (2.0,0.2);

\draw[->, thick] (2.5,0) -- (3.5,0)
node[midway, above, blabel] {$(-1,1)$};

\draw[web] (3.9,-0.6) -- (4.2,-0.1) -- (4.5,-0.6);
\draw[web] (4.2,-0.1) -- (4.2,0.6);
\draw[warrow] (4.2,0.6) -- (4.2,0.1);

\draw[->, thick] (5.0,0) -- (6.0,0)
node[midway, above, blabel] {$(1,0)$};

\draw[web] (6.4,-0.6) -- (6.7,-0.1) -- (7.0,-0.6);
\draw[web] (6.7,-0.1) -- (6.7,0.6);
\draw[warrow] (6.7,0.6) -- (6.7,0.1);
\draw[web] (7.4,-0.6) -- (7.4,0.6);
\draw[warrow] (7.4,-0.6) -- (7.4,0.2);

\draw[->, thick] (7.9,0) -- (8.9,0)
node[midway, above, blabel] {$(1,0)$};

\draw[web] (9.3,-0.6) -- (9.6,-0.1) -- (9.9,-0.6);
\draw[web] (9.6,-0.1) -- (9.6,0.6);
\draw[warrow] (9.6,0.6) -- (9.6,0.1);
\draw[web] (10.3,-0.6) -- (10.3,0.6);
\draw[warrow] (10.3,-0.6) -- (10.3,0.2);
\draw[web] (10.8,-0.6) -- (10.8,0.6);
\draw[warrow] (10.8,-0.6) -- (10.8,0.2);

\draw[->, thick] (11.3,0) -- (12.1,0)
node[midway, above, blabel] {$(0,-1)$};

\node at (12.45,0) {\Large $=$};

\begin{scope}[shift={(12.8,0)}]

\draw[web]
(-0.22,-0.60)
.. controls (-0.14,-0.47) and (-0.03,-0.31) .. (0.10,-0.20);
\draw[warrow]
(-0.20,-0.56)
.. controls (-0.15,-0.48) and (-0.08,-0.38) .. (-0.02,-0.30);

\draw[web]
(0.43,-0.60)
.. controls (0.35,-0.47) and (0.23,-0.31) .. (0.10,-0.20);
\draw[warrow]
(0.40,-0.55)
.. controls (0.34,-0.46) and (0.27,-0.36) .. (0.20,-0.28);

\draw[web]
(1.00,-0.60)
.. controls (1.00,-0.39) and (0.99,-0.19) .. (0.93,-0.01)
.. controls (0.86,0.18) and (0.72,0.39) .. (0.60,0.60);
\draw[warrow]
(0.99,-0.40)
.. controls (0.99,-0.29) and (0.97,-0.18) .. (0.94,-0.07);

\draw[web]
(1.62,-0.60)
.. controls (1.62,-0.38) and (1.60,-0.17) .. (1.54,0.01)
.. controls (1.47,0.20) and (1.32,0.42) .. (1.20,0.60);
\draw[warrow]
(1.61,-0.40)
.. controls (1.60,-0.28) and (1.58,-0.16) .. (1.55,-0.05);

\draw[web]
(2.25,-0.60)
.. controls (2.27,-0.39) and (2.27,-0.18) .. (2.22,0.02)
.. controls (2.17,0.22) and (2.02,0.32) .. (1.76,0.34)
.. controls (1.45,0.36) and (1.12,0.31) .. (0.82,0.23)
.. controls (0.55,0.16) and (0.33,0.06) .. (0.20,-0.05)
.. controls (0.13,-0.11) and (0.10,-0.17) .. (0.10,-0.20);

\draw[warrow]
(2.25,-0.47)
.. controls (2.26,-0.34) and (2.26,-0.22) .. (2.24,-0.09);

\draw[warrow]
(0.48,0.12)
.. controls (0.36,0.06) and (0.25,-0.01) .. (0.17,-0.09);

\end{scope}

\end{tikzpicture}.
\]
Notice that, in the last step, it was necessary to apply $\beta_{2,1}$ twice to route the first strand to the right before applying the cap.

\begin{Definition}\label{distinguished-bots}
A morphism obtained in the correspondence above is called a \textbf{distinguished bottom diagram}. Denote by $\mathcal{D}_{\epsilon}$ the set of distinguished bottom diagrams with domain $B_{\epsilon}$. Call the flips of such diagrams the \textbf{distinguished top diagrams}. The \textbf{weight} of a distinguished bottom (resp. top) diagram is the top weight of its codomain (resp. domain).
\end{Definition}

Note that $\D_{\epsilon}$ is generated by $\mathcal{A}\cup \mathcal X$ via composition and tensor products. 
\begin{Proposition}\label{bijection branching}
There is a bijection between  $\mathcal{D}_{\epsilon}$ and the set $C_{\epsilon}$ of connected components of $B_{\epsilon}$, such that if $D$ corresponds to $C$, then $D$ decreases weight and $\wt(D)$ is the highest weight of $C$. Moreover, if $\wt(D)=\lambda$, the restriction of $D$ to the components of $B_{\epsilon}$ of highest weight $\lambda$ is nonzero on only one of these components, namely the component $C$ corresponding to $D$. 
\end{Proposition}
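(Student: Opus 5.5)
The plan is to prove the statement by induction on the length $m$ of $\epsilon$, following the inductive construction of distinguished bottom diagrams along the branching graph. The bijection is the construction itself. A component $C$ of $B_\epsilon$ with highest weight element $b_1\otimes\cdots\otimes b_m$ determines the path $(b_1,\dots,b_m)$, because every prefix is highest weight. Conversely, each path determines exactly one diagram $D_C$. This assignment is injective and surjective onto $\mathcal D_\epsilon$ by definition, provided we know that distinct paths give distinct morphisms. That follows from the last assertion of the statement: $D_C$ is nonzero on $C$ and zero on every other component of the same highest weight. So it suffices to prove, for each $C$, the following: (i) $D_C$ decreases weight; (ii) its codomain has top weight $\lambda_C$, the highest weight of $C$; and (iii) among the components of weight $\lambda_C$, $D_C$ is nonzero only on $C$.

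For the inductive step, write $C$ as obtained from $C'$ at level $k$ via $b\in B_{\epsilon_{k+1}}$, and let $D=(\id\otimes f_b)\circ((\beta\circ D')\otimes\id)$.

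By induction, $D'$ has codomain $B_{\epsilon'}$ of top weight $\lambda_{C'}$ and is nonzero on $C'$. Since $\beta$ is an isomorphism, it permutes the factors but preserves the top weight. The rightmost factors after shuffling are $B_{\epsilon(b)}$. By \autoref{atom}, $f_b$ replaces the weight $\lambda(b)$ of these factors by $\lambda(b)+\wt(b)$. Hence the codomain of $D$ has top weight $\lambda_{C'}+\wt(b)=\lambda_C$, which gives (ii).

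For (iii), I would track highest weight elements. Let $h'$ be the highest weight element of $C'$. By \autoref{hw element lemma} and the induction hypothesis, $D'(h')$ is the highest element of $B_{\epsilon'}$, and so $\beta D'(h')$ is the highest element of the permuted word. Write that highest element as $x\otimes c$, with $c$ the highest element of $B_{\epsilon(b)}$; this uses that the top component of a tensor product contains the tensor of the top components' highest elements. Then $x\otimes c\otimes b$ is highest weight. Moreover $c\otimes b$ generates precisely the component on which $f_b$ is nonzero, and $f_b$ sends it to the highest element of $B_{\delta(b)}$. Therefore $D(h'\otimes b)$ is the highest element of the codomain, so $D$ is nonzero on $C$.

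Conversely, take another component $\tilde C$ of weight $\lambda_C$ with path $(\tilde b_1,\dots,\tilde b_{k+1})$. Since $D$ is a crystal morphism, $D(\tilde h)$ is either zero or a highest weight element of weight $\lambda_C$, which is the top weight of the codomain. It must then be the highest element, and $\tilde h$ lies in a component mapped isomorphically onto the top. Now $D'\otimes\id$ maps $\tilde h=\tilde h'\otimes\tilde b_{k+1}$ to $D'(\tilde h')\otimes\tilde b_{k+1}$. For the final element to be the top element, $f_b$ must be nonzero on its last factors. This forces $\tilde b_{k+1}=b$: the component generated by $c\otimes b$ is determined by $b$, since the decomposition \eqref{eqn: fund decomp} is multiplicity-free in its labelling by $b$. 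It also forces $D'(\tilde h')$ to be highest in the top weight of $B_{\epsilon'}$. Hence $\tilde C'$ has weight $\lambda_{C'}$, and by induction $\tilde C'=C'$, so $\tilde C=C$.

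For (i), I need that the codomain weight is $\le$ the weight of $B_\epsilon$. This follows because the image of the highest element of $B_\epsilon$ is either zero or of weight at most the top weight. Alternatively, it follows because $\lambda_C\le$ the weight of $B_\epsilon$, and the codomain weight equals $\lambda_C$.

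The main obstacle is the step asserting that, inside $B_{\epsilon'}\otimes B_{\epsilon(b)}\otimes B_k$ after shuffling, the top element is of the form $x\otimes c$. The same step requires that $f_b$ kills every highest weight vector $y\otimes c''\otimes b'$ with $c''$ not the top element of $B_{\epsilon(b)}$ or with $b'\neq b$. To handle it, I would use the tensor rule for highest weight elements: $u\otimes v$ is highest weight iff $u$ is highest weight and $\varepsilon_i(v)\le\varphi_i(u)$ for all $i$. Applied to $c\otimes b$, this shows that the minimal $\varphi$ needed is exactly $\lambda(b)$, which pins down the component.
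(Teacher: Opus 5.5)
\textbf{Where you match the paper, and where the gap is.} Your bijection, the weight bookkeeping in (i) and (ii), and the non-vanishing half of (iii) are the paper's argument. For the non-vanishing, you follow $h'\otimes b$ through $\beta D'\otimes\id$ and then $\id\otimes f_b$ to the highest element of the codomain.

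The uniqueness half of (iii) is where you diverge, and there your key inference is not justified as written. From ``$D(\tilde h)$ is the top element'' you keep only that $f_b$ is nonzero on the last factors $c''\otimes\tilde b_{k+1}$. You then conclude $\tilde b_{k+1}=b$ because the component of $c\otimes b$ is ``determined by $b$''. Non-vanishing does not give this: $f_b$ is nonzero on the whole component generated by $c\otimes b$, and its elements need not end in $b$.

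The claim in your last paragraph is also false, so the tensor rule cannot prove it. That claim says $\id\otimes f_b$ kills every highest weight vector $y\otimes c''\otimes b'$ with $c''\neq c$ or $b'\neq b$. Here is a counterexample in $\mathfrak{sl}_3$. Take $B_1=\{1,2,3\}$ with $f_1(1)=2$ and $f_2(2)=3$, and let $b=2\in B_1$. The atom $f_b\colon B_1\otimes B_1\to B_2$ is supported on $\{1\otimes 2,\,1\otimes 3,\,2\otimes 3\}$. Let $y$ be the highest element of $B_2$. Then $y\otimes 1\otimes 3$ is highest weight, because $\varepsilon_1(1\otimes 3)=0$ and $\varepsilon_2(1\otimes 3)=1=\varphi_2(y)$. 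Yet $(\id\otimes f_b)(y\otimes 1\otimes 3)\neq 0$, with $c''=c$ and $b'=3\neq b$.

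\textbf{The repair.} It uses only what you already have. Write the codomain of $D$ as $Y\otimes B_{\delta(b)}$. Its highest element is $u_Y\otimes u_{\delta(b)}$, the tensor product of the highest elements of the two factors. So $D(\tilde h)$ being the highest element forces $f_b(c''\otimes\tilde b_{k+1})=u_{\delta(b)}$. Since $f_b$ is an isomorphism from the component of $c\otimes b$ onto the top component of $B_{\delta(b)}$, and zero elsewhere, the unique preimage of $u_{\delta(b)}$ is $c\otimes b$. This gives $c''=c$ and $\tilde b_{k+1}=b$. It also shows that $D'(\tilde h')$ is the highest element of the codomain of $D'$, after which your induction closes.

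\textbf{Comparison with the paper.} With this fix your route is valid, but it is longer than the paper's. The paper never traces backwards. It observes that $D$ is injective outside its kernel: atoms and crossings are, and this property survives $-\otimes\id$ and composition. It also observes that the top weight of the codomain is attained by a single element, so only the top component has that highest weight. Any component of highest weight $\lambda$ not killed by $D$ must then map isomorphically onto the top component, and injectivity allows at most one such component. Hence only non-vanishing on $C$ needs checking. No second induction and no analysis of the support of $f_b$ is required.
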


\begin{proof}
The first statement is clear from the construction. For the second statement, note that $D$ is clearly injective outside its kernel, being built from atoms and crossings which have this property, so it suffices to show that $D$ maps $C$ onto the top component of its codomain, where $C$ is the component of $B_{\epsilon}$ corresponding to $D$. As above, let $C'$ be the component corresponding to $D'$, the morphism built in the previous step, whose codomain we denote by $Y$. By induction, the highest weight element $c'\in C'$ is mapped by $D'$ onto the top component $T$ of $Y$, with highest weight element $t$. It follows that if $b\in B_{\epsilon_{m+1}}$ is such that $c'\otimes b$ is highest weight, then $D'\otimes \id_{B_{\epsilon_{m+1}}}$ maps it onto $t\otimes b$. Postcomposing with some crossings on the $T$-factor sends it to $t'\otimes b$, where $t'$ is still the highest element, and postcomposing now with $\id\otimes f_b$ sends it to the highest element in the codomain of $D$. This completes the proof, since $c'\otimes b$ is the highest weight element of $C$, where $C$ is obtained from $C'$ by the atom $f_b$.
\end{proof}


For $\lambda\in \Lambda_+$, let $\D_{\epsilon, \lambda}:=\{D\in \D_{\epsilon}\mid \wt(D)=\lambda\}$ be the distinguished bottom diagrams with domain $B_{\epsilon}$ and weight $\lambda$. Given $f\in D_{\epsilon',\lambda}, g\in D_{\epsilon,\lambda}$, we form the composite $\overline{f}\circ P\circ g:B_{\epsilon}\to B_{\epsilon'}$ which we say is \textbf{of weight $\lambda$}, where $P$ is some permutation built from crossings. The next result does not depend on the particular choice of $P$; however, let us fix the convention that $P$ is such that whenever there are three strands with distinct labels, the crossing is `from the left' as illustrated below. (By \autoref{Type A crossing}, the choice is unique in types $A$ and $C$.)
\begin{gather}\label{left crossing convention}
\begin{tikzpicture}[thick, scale=0.6, baseline=(current bounding box.center)]
\draw[usual] (0,0) node[below] {$i$} -- (1.5,2);
\draw[usual] (1.5,0) node[below] {$k$} -- (0,2);
\draw[usual] (0.75,0) node[below] {$j$} .. controls (0.75,0.4) and (-0.3,0.6) .. (-0.3,1) .. controls (-0.3,1.4) and (0.75,1.6) .. (0.75,2);
\end{tikzpicture}.
\end{gather}
From now on, by a \emph{permutation} we always mean one in the above convention. 

\begin{Proposition}\label{bmt basis}
The set $\mathcal{B}(\epsilon,\epsilon')=\{\overline{f}\circ P\circ g\mid f\in \D_{\epsilon', \lambda}, g\in \D_{\epsilon,\lambda}, \lambda \in \Lambda_+\}$ is a basis of $\Hom_{\mathcal{C}}(B_{\epsilon},B_{\epsilon'})$.
\end{Proposition}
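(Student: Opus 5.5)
The plan is to compare $\mathcal{B}(\epsilon,\epsilon')$ with the standard projection--inclusion basis from the proof of \autoref{lem:weight-factorization}. That basis is indexed by pairs $(C,C')$ of connected components $C\subseteq B_\epsilon$ and $C'\subseteq B_{\epsilon'}$ of the same highest weight $\lambda$. Its element $e_{C',C}$ is the unique crystal morphism that sends $C$ isomorphically onto $C'$ and kills every other component. By \autoref{bijection branching}, $\mathcal{D}_{\epsilon,\lambda}$ is in bijection with the components of $B_\epsilon$ of highest weight $\lambda$, and likewise for $\epsilon'$. So $\mathcal{B}(\epsilon,\epsilon')$ has exactly $\sum_\lambda |\mathcal{D}_{\epsilon,\lambda}||\mathcal{D}_{\epsilon',\lambda}| = \dim\Hom(B_\epsilon,B_{\epsilon'})$ elements. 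Hence it suffices to prove linear independence, or spanning, via a unitriangularity argument.

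First I analyse a single element $\overline{f}\circ P\circ g$ with $f\in\mathcal{D}_{\epsilon',\lambda}$ and $g\in\mathcal{D}_{\epsilon,\lambda}$, corresponding to components $C'$ and $C$.
\begin{itemize}
\item By \autoref{bijection branching}, $g$ maps $C$ onto the top component $T$ of its codomain $B_\mu$, where $\mu$ is a word of weight $\lambda$. Moreover, $g$ is injective outside its kernel, so it kills every component other than its image components.
\item The permutation $P$ is an isomorphism built from crossings, so it sends the top component of $B_\mu$ to the top component of $B_{\mu'}$. Here $\mu'$ is the codomain word of $f$, which is a rearrangement of $\mu$. This needs a check: both codomains must have the same multiset of labels, because the weight $\lambda$ determines the multiplicities $\varepsilon$/$\varphi$ along the construction. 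Granting this, $P$ preserves the highest element, since it preserves weights and the highest element is the unique element of weight $\lambda$ in $B_\mu$.
\item Finally, $\overline{f}$ sends the top component back onto $C'$.
\end{itemize}
The composite is therefore $e_{C',C}$ plus a combination of terms $e_{D',D}$, coming from other components that $g$ maps nonzero into lower summands of $B_\mu$. Every such component has highest weight $\nu$ with $\nu<\lambda$ in dominance order, since these are proper summands of a word of top weight $\lambda$. The same holds on the $\overline{f}$ side.

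Next I order the pairs $(C',C)$ by the weight $\lambda$, using any linear extension of the dominance order. Within a fixed weight $\lambda$, the second part of \autoref{bijection branching} shows that $g$ is nonzero on exactly one component of weight $\lambda$. Likewise, $\overline{f}$ has image meeting exactly one weight-$\lambda$ component, namely $C'$. So the weight-$\lambda$ part of $\overline{f}P g$ is exactly $e_{C',C}$, with coefficient $1$ since all maps involved are bijections onto their images. All remaining terms $e_{D',D}$ have strictly smaller weight, because $g$ decreases weight and so kills components of weight not $\le\lambda$. The transition matrix from $\mathcal{B}(\epsilon,\epsilon')$ to $\{e_{C',C}\}$ is therefore unitriangular, which gives the basis property.

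The main obstacle is the bookkeeping in the first step. I need to confirm that $g$ annihilates all components of weight not $\le \lambda$, and that $P$ is well typed, meaning the codomain words of $f$ and $g$ agree up to permutation. For the weight claim, I would argue that the codomain of $g$ has top weight $\lambda$, so all its components have weight $\le\lambda$; any nonzero image of a component therefore has weight $\le\lambda$, with equality only for the distinguished component. For the typing claim, if the words are not determined by $\lambda$ alone, I would replace $P$ with any isomorphism between the two words preserving the top component. Only the top-component behaviour is used, so the argument does not depend on the choice of $P$, as the statement asserts.
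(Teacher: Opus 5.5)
Your argument is correct and is essentially the paper's. Both count the set via \autoref{bijection branching} and then get independence from the fact that $\overline{f}\circ P\circ g$, restricted to the weight-$\lambda$ components, is exactly the isomorphism $C\to C'$; the typing of $P$ that worried you is automatic, because the top weight $\lambda=\sum_j\omega_{\mu_j}$ of a word fixes its multiset of labels. Your unitriangularity with respect to a linear extension of dominance order is the right way to run the paper's one-line projection argument: projecting onto $C$ and $C'$ alone does not kill terms $\overline{f_i}\circ P_i\circ g_i$ of strictly larger weight (for $\mathfrak{sl}_2$ with $\epsilon=\epsilon'=(1,1)$, a putative relation $\overline{f_{-1}}\circ f_{-1}=c\,\id_{B_1\otimes B_1}$ survives projection onto the trivial summand with $c=1$), so one must start from a term of maximal weight, as your ordering does.
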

\begin{proof}
This set has the right cardinality by \autoref{bijection branching}. (Morally, $g$ is (up to lower components) the projection onto a component, and $\overline{f}$ is (up to lower components) the embedding of this component into $B_{\epsilon'}$). It is therefore enough to prove independence, but this follows from the second sentence in \autoref{bijection branching}: suppose for a contradiction that $\overline{f}\circ P\circ g=\sum \overline{f}_i\circ P_i\circ g_i$, where $\overline{f}\circ P\circ g$ does not appear on the right-hand side. By precomposing with the projection onto the unique component corresponding to $g$, and postcomposing with the projection onto the unique component corresponding to $f$, we get a contradiction.
\end{proof}

Write $S=\mathcal{A}\cup \overline{\mathcal{A}}\cup \mathcal X$. In particular, \autoref{bmt basis} tells us that $S$ generates all morphisms in $\Fund(\mathcal{C})$.



\begin{Definition}
A \emph{Jones--Wenzl projector} of type $\epsilon$, denoted $j_\epsilon$, is the endomorphism of $B_\epsilon$ which sends the top summand to itself and other summands to 0. 
\end{Definition}

If $\nu=\overline{f}\circ P\circ g\in \mathcal{B}(\epsilon,\epsilon')$,  define $\hat{\nu}:=\overline{f}\circ j_{\delta}\circ P\circ g$ where $P$ has codomain $B_{\delta}$.  
We have another basis, which is in many ways the more natural one:

\begin{Proposition}\label{wedderburn basis}
The set $\hat{\mathcal{B}}(\epsilon,\epsilon')=\{\hat{\nu}\mid \nu \in \mathcal{B}(\epsilon, \epsilon')\}$ is a matrix-unit basis of $\Hom_{\mathcal{C}}(B_{\epsilon},B_{\epsilon'})$ consisting of basic morphisms. Explicitly, if $\nu= \overline{f}\circ P\circ g \in \Hom_{\mathcal{C}}(B_{\epsilon}, B_{\epsilon'}), \, \mu = \overline{f'}\circ P'\circ g' \in \Hom_{\mathcal{C}}(B_{\epsilon'}, B_{\epsilon''})$, then writing $\eta= \overline{f'}\circ P''\circ g$ for any compatible permutation $P''$, we have \[\hat{\mu}\circ \hat{\nu} = \begin{cases} \hat{\eta} & \text{if \ } $g'=f$, \\  0 & \text{else}. \end{cases} \]
\end{Proposition}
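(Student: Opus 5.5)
The plan is to identify each $\hat{\nu}$ with an explicit matrix unit with respect to the decomposition of $B_\epsilon$ and $B_{\epsilon'}$ into connected components, and then read off both the basis property and the multiplication rule from that description.

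\textbf{Step 1: $\hat\nu$ is a single matrix unit.} Take $\nu=\overline{f}\circ P\circ g$ of weight $\lambda$, with $g\in\D_{\epsilon,\lambda}$ corresponding to the component $C_g$ of $B_\epsilon$ and $f\in\D_{\epsilon',\lambda}$ corresponding to $C_f\subseteq B_{\epsilon'}$. Since $P$ is an isomorphism and $P\circ g$ lands in $B_\delta$ of top weight $\lambda$, the map $j_\delta\circ P\circ g$ kills every component of $B_\epsilon$ except those sent onto the top summand of $B_\delta$.
\begin{enumerate}
\item By \autoref{bijection branching} and \autoref{hw element lemma}, the only component sent onto the top summand is $C_g$. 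Any other component sent nonzero by $g$ has highest weight strictly below $\lambda$, so it cannot reach the top summand. Among components of weight exactly $\lambda$, only $C_g$ survives $g$.
\item Hence $j_\delta\circ P\circ g$ is a basic morphism: it is zero off $C_g$ and maps $C_g$ isomorphically onto the top summand $T_\delta$.
\item Dually, $\overline{f}$ restricted to the top summand of its domain is the inverse of $f|_{C_f}$. So it maps that summand isomorphically onto $C_f$.
\end{enumerate}
Therefore $\hat\nu$ is the basic morphism sending $C_g$ isomorphically onto $C_f$ and everything else to $0$. A strict crystal isomorphism between connected highest weight crystals is unique, since it must send highest weight element to highest weight element. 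So $\hat\nu$ is exactly the matrix unit $E_{C_f,C_g}$, and in particular it is independent of $P$.

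\textbf{Step 2: basis.} Recall the argument in \autoref{lem:weight-factorization}: the matrix units $E_{C',C}$, over pairs of isomorphic components $C\subseteq B_\epsilon$, $C'\subseteq B_{\epsilon'}$, form a basis of the hom-space. The map $\nu\mapsto(C_f,C_g)$ is a bijection onto such pairs by \autoref{bijection branching}. Hence $\hat{\mathcal{B}}(\epsilon,\epsilon')$ is a basis consisting of basic morphisms.

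\textbf{Step 3: multiplication.} Multiply the matrix units: $E_{C_{f'},C_{g'}}\circ E_{C_f,C_g}$ equals $E_{C_{f'},C_g}$ if $C_{g'}=C_f$, and $0$ otherwise. Composing the two unique isomorphisms gives the unique one, so no scalar appears. The condition $C_{g'}=C_f$ is equivalent to $g'=f$ by the bijection. By Step 1, $E_{C_{f'},C_g}=\hat\eta$ for any compatible $P''$.

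\textbf{Main obstacle.} The step needing care is Step 1(a): showing $j_\delta\circ P\circ g$ really vanishes on the other components. The argument is that $g$ decreases weight, so a component sent nonzero but of highest weight below $\lambda$ lands in a lower summand of $B_\delta$. One must also check that crossings preserve the top summand; they do, being isomorphisms between objects of the same top weight.
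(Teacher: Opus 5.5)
Your proposal is correct and follows the same route as the paper. The paper's one-line proof cites exactly the second statement of \autoref{bijection branching} together with the definition of $j_\delta$, which is the idea recorded in \autoref{JW remark}. Your Step~1 unpacks this into the identification $\hat\nu=E_{C_f,C_g}$ (independent of $P$), and Steps~2--3 carry out the resulting matrix-unit bookkeeping that the paper leaves implicit.
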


\begin{proof}
This follows directly from the second statement in \autoref{bijection branching} and the definition of the Jones--Wenzl projectors.
\end{proof}

\begin{Remark}\label{JW remark}
The idea behind \autoref{wedderburn basis} is that if $ f\in \D_{\epsilon},f:B_{\epsilon}\to B_{\epsilon'}$, then $j_{\epsilon'}\circ f$ gives the projection onto the top summand of $B_{\epsilon}'$, and dually $\overline{f}\circ j_{\epsilon'}$ gives the embedding of this summand into $B_{\epsilon}$, cf. \cite[Lemma 4.16]{Alqady2025coboundary}. Of course, if $P:B_{\delta}\to B_{\delta'}$ is invertible, then $P\circ j_{\delta}=j_{\delta'}\circ P$, so there are no ambiguities when inserting Jones--Wenzl projectors. 
\end{Remark}

\subsection{Relations}\label{subsec:relations}

Having fixed the generators for $\Fund(\mathcal{C})$, we now describe the relations.
In the relations
below, we write all scalar coefficients as \(c_i\) to avoid overloading the
notation; these scalars are not meant to be the same from one relation to the
next.

First we have relations of the following types, which we call \emph{$H=I$ relations}.
\begin{gather}\tag{HI1}
\label{eq:HI1}
\begin{tikzpicture}[
baseline=(current bounding box.center),
x=1cm,y=1cm,
line width=0.8pt,
line cap=butt,
line join=miter,
lab/.style={inner sep=0pt,font=\large},
dots/.style={inner sep=0pt,font=\normalsize}
]
\def\s{0.16}
\def\Wrhs{1.70}
\begin{scope}[shift={(0,0)}]
\draw[usual] ({1.40+\s},0.55) -- ({3.05-\s},0.55) -- (3.05,1.20) -- (1.40,1.20) -- cycle;
\draw[usual] (0.00,1.55) -- (2.25,1.55) -- ({2.25-\s},2.20) -- ({0.00+\s},2.20) -- cycle;
\draw[usual] (1.90,-0.20) -- (1.90,0.55);
\draw[usual] (2.65,-0.20) -- (2.65,0.55);
\node[dots] at (2.275,0.175) {$\cdots$};
\draw[usual] (1.50,1.20) -- (1.50,1.55);
\draw[usual] (2.15,1.20) -- (2.15,1.55);
\node[dots] at (1.825,1.375) {$\cdots$};
\draw[usual] (0.60,2.20) -- (0.60,2.85);
\draw[usual] (1.65,2.20) -- (1.65,2.85);
\node[dots] at (1.125,2.525) {$\cdots$};
\draw[usual] (0.00,-0.20) -- (0.00,1.55);
\draw[usual] (0.35,-0.20) -- (0.35,1.55);
\draw[usual] (2.85,1.20) -- (2.85,2.85);
\draw[usual] (3.05,1.20) -- (3.05,2.85);
\node[dots] at (2.55,2.025) {$\cdots$};
\node[dots] at (0.85,0.675) {$\cdots$};
\node[lab] at (2.225,0.875) {$\bar{\bm{g}}$};
\node[lab] at (1.125,1.875) {$\bm{f}$};
\end{scope}
\node[lab] at (3.75,1.325) {$=$};
\node[lab] at (4.45,1.325) {$\displaystyle\sum c_i$};
\begin{scope}[shift={(5.35,0.60)}]
\draw[usual] (\s,0.9667) -- ({\Wrhs-\s},0.9667) -- (\Wrhs,1.45) -- (0,1.45) -- cycle;
\draw[usual] (\s,0.4833) rectangle ({\Wrhs-\s},0.9667);
\draw[usual] (0,0) -- (\Wrhs,0) -- ({\Wrhs-\s},0.4833) -- (\s,0.4833) -- cycle;
\node[lab, font=\small] at ({\Wrhs/2},1.208) {$\bar{\bm{f_i}}$};
\node[lab, font=\small] at ({\Wrhs/2},0.725) {$\bm{P_i}$};
\node[lab, font=\small] at ({\Wrhs/2},0.242) {$\bm{g_i}$};
\end{scope}
\end{tikzpicture}.
\end{gather}
\begin{gather}
\tag{HI2}\label{eq:HI2}
\begin{tikzpicture}[
  baseline=(current bounding box.center),
  x=1cm,y=1cm,
  line width=0.8pt,
  line cap=butt,
  line join=miter,
  lab/.style={inner sep=0pt,font=\large},
  dots/.style={inner sep=0pt,font=\normalsize}
]
\begin{scope}[shift={(0,0)}]
  \draw[usual] (-0.70,-0.20) -- (-0.70,1.55);
  \draw[usual] (-0.70,2.15) -- (-0.70,3.75);
  \draw[usual] (-0.20,-0.20) -- (-0.20,1.55);
  \draw[usual] (-0.20,2.15) -- (-0.20,3.75);
  \draw[usual,fill=white] (-0.90,1.55) rectangle (2.00,2.15);
  \node at (0.55,1.85) {$\bm{P}$};
  \draw[usual] (0.80,-0.20) -- (0.80,1.55);
  \draw[usual] (0.80,2.15) -- (0.80,2.50);
  \draw[usual] (1.90,-0.20) -- (1.90,0.55);
  \draw[usual] (3.30,-0.20) -- (3.30,0.55);
  \node[dots] at (2.60,0.175) {$\cdots$};
  \draw[usual] (1.70,0.55) -- (3.50,0.55) -- (4.10,1.20) -- (1.10,1.20) -- cycle;
  \node[lab] at (2.60,0.875) {$\bm{\overline{g}}$};
  \draw[usual] (1.80,1.20) -- (1.80,1.55);
  \draw[usual] (1.80,2.15) -- (1.80,2.50);
  \draw[usual] (2.40,1.20) -- (2.40,2.50);
  \draw[usual] (2.90,1.20) -- (2.90,3.75);
  \draw[usual] (3.40,1.20) -- (3.40,3.75);
  \node[dots] at (-0.45,1.375) {$\cdots$};
  \node[dots] at (0.30,1.375) {$\cdots$};
  \node[dots] at (1.30,1.375) {$\cdots$};
  \node[dots] at (-0.45,2.325) {$\cdots$};
  \node[dots] at (0.30,2.325) {$\cdots$};
  \node[dots] at (1.30,2.325) {$\cdots$};
  \node[dots] at (3.15,2.475) {$\cdots$};
  \draw[usual] (0.30,2.50) -- (2.70,2.50) -- (2.20,3.15) -- (0.80,3.15) -- cycle;
  \node[lab] at (1.50,2.825) {$\bm{f}$};
  \draw[usual] (1.10,3.15) -- (1.10,3.75);
  \draw[usual] (1.70,3.15) -- (1.70,3.75);
  \node[dots] at (1.40,3.37) {$\cdots$};
  \draw[usual] (2.90,3.15) -- (2.90,3.75);
  \draw[usual] (3.40,3.15) -- (3.40,3.75);
\end{scope}
\node[lab] at (5.10,1.775) {$=$};
\node[lab] at (5.85,1.775) {$\displaystyle\sum c_i$};
\begin{scope}[shift={(7.25,0.675)}]
  \draw[usual] (0.00,2.10) -- (1.80,2.10) -- (1.50,1.45) -- (0.30,1.45) -- cycle;
  \draw[usual] (0.30,0.78) rectangle (1.50,1.45);
  \draw[usual] (0.30,0.78) -- (1.50,0.78) -- (1.80,0.10) -- (0.00,0.10) -- cycle;
  \node[lab] at (0.90,1.78) {$\bar{\bm{f_i}}$};
  \node[lab] at (0.90,1.115) {$\bm{P_i}$};
  \node[lab] at (0.90,0.43) {$\bm{g_i}$};
\end{scope}
\end{tikzpicture}.
\end{gather}
\begin{gather}
\tag{HI3}\label{eq:HI3}
\begin{tikzpicture}[
baseline=(current bounding box.center),
x=1cm,y=1cm,
line width=0.8pt,
line cap=butt,
line join=miter,
lab/.style={inner sep=0pt,font=\large},
rhslab/.style={inner sep=0pt,font=\normalsize},
dots/.style={inner sep=0pt,font=\normalsize}
]
\def\W{2.10}
\def\w{1.40}
\def\H{0.58}
\def\xoff{0.42}
\def\pole{0.42}
\def\yf{1.58}
\def\yg{0.28}
\pgfmathsetmacro{\fbot}{\yf-\H/2}
\pgfmathsetmacro{\ftop}{\yf+\H/2}
\pgfmathsetmacro{\gbot}{\yg-\H/2}
\pgfmathsetmacro{\gtop}{\yg+\H/2}
\pgfmathsetmacro{\yeq}{(\yf+\yg)/2}
\def\RW{1.72}
\def\rw{1.28}
\def\RH{0.48}
\pgfmathsetmacro{\yrhs}{\yeq-1.5*\RH}
\draw[usual] ({-\W/2},\fbot) -- ({\W/2},\fbot)
-- ({\w/2},\ftop) -- ({-\w/2},\ftop) -- cycle;
\draw[usual] ({-\W/2},\gbot) rectangle ({\W/2},\gtop);
\draw[usual] (-\xoff,\gtop) -- (-\xoff,\fbot);
\draw[usual] ( \xoff,\gtop) -- ( \xoff,\fbot);
\node[dots] at (0,\yeq) {$\cdots$};
\draw[usual] (-\xoff,\ftop) -- (-\xoff,{\ftop+\pole});
\draw[usual] ( \xoff,\ftop) -- ( \xoff,{\ftop+\pole});
\node[dots] at (0,{\ftop+0.17}) {$\cdots$};
\draw[usual] (-\xoff,\gbot) -- (-\xoff,{\gbot-\pole});
\draw[usual] ( \xoff,\gbot) -- ( \xoff,{\gbot-\pole});
\node[dots] at (0,{\gbot-0.17}) {$\cdots$};
\node[lab] at (0,\yf) {$\bm{f}$};
\node[lab] at (0,\yg) {$\bm{P}$};
\node[lab] at (2.35,\yeq) {$=$};
\node[lab] at (3.20,\yeq) {$\displaystyle\sum c_i$};
\begin{scope}[shift={(4.10,\yrhs)}]
\draw[usual] ({(\RW-\rw)/2},{2*\RH}) -- ({(\RW+\rw)/2},{2*\RH})
-- (\RW,{3*\RH}) -- (0,{3*\RH}) -- cycle;
\draw[usual] ({(\RW-\rw)/2},\RH) rectangle ({(\RW+\rw)/2},{2*\RH});
\draw[usual] (0,0) -- (\RW,0)
-- ({(\RW+\rw)/2},\RH) -- ({(\RW-\rw)/2},\RH) -- cycle;
\node[rhslab] at ({\RW/2},{2.5*\RH}) {$\overline{\bm{f_i}}$};
\node[rhslab] at ({\RW/2},{1.5*\RH}) {$\bm{P_i}$};
\node[rhslab] at ({\RW/2},{0.5*\RH}) {$\bm{g_i}$};
\end{scope}
\end{tikzpicture}.
\end{gather}
Here \(P_i\) and \(P\) are permutations, possibly identities, \(f\) and \(g\)
are atoms, and \(f_i\) and \(g_i\) are distinguished bottom diagrams. The
left-hand side of \eqref{eq:HI1} represents an arbitrary interaction of an
atom \(f\) with the flip of an atom \(g\), in a composite of the form
\((\id\otimes f\otimes\id)\circ(\id\otimes\overline{g}\otimes\id)\). Thus one
atom may overhang the other on either side and we allow the horizontal
reflection of the displayed diagram. 

The left hand side of \eqref{eq:HI2} is the analogue of that of \eqref{eq:HI1}, but where we allow a permutation $P$ to be sandwiched in the middle: we assume that every strand in the domain (resp. codomain) of $P$ that is to the left of the domain of $\overline{g}$ (resp. the codomain of $f$) is connected to a strand in the domain of $f$ (resp. codomain of $\overline{g}$), and that moreover $P$ does not involve the rightmost strand in the domain of $f$ or the codomain of $\overline{g}$. Similar to in \eqref{eq:HI1}, we allow overhang by either atom on either side, though $P$ is always inserted from the left and does not reach the rightmost strand of $f$ or $\overline{g}$.

Finally, in
\eqref{eq:HI3}, we assume \(P\) is an arbitrary permutation that \textit{involves the last strand}. (An atom precomposed by a permutation missing the last strand is still a distinguished bottom diagram.)

By \autoref{bmt basis}, all such relations hold in \(\Fund(\mathcal{C})\), and
their coefficients can be obtained by direct computation. Their role is to
move flipped atoms past atoms, so that diagrams can be rewritten in sandwich
(bottom-permutation-top) form.

The second kind are the \emph{orientation-reversing relations}:
\begin{gather}
\tag{OR1}\label{eq:OR1}
\begin{tikzpicture}[
baseline=(current bounding box.center),
x=1cm,y=1cm,
line width=0.8pt,
line cap=butt,
line join=miter,
lab/.style={inner sep=0pt,font=\large},
dots/.style={inner sep=0pt,font=\normalsize}
]
\begin{scope}[shift={(0,0)}]
\draw[usual] (0.00,0.55) -- (3.30,0.55) -- (2.70,1.22) -- (0.60,1.22) -- cycle;
\node[lab] at (1.65,0.885) {$\bm{f}$};
\draw[usual] (1.30,1.22) -- (1.30,2.02);
\draw[usual] (2.00,1.22) -- (2.00,2.02);
\node[dots] at (1.65,1.62) {$\cdots$};
\draw[usual] (1.22,-0.20) -- (1.22,0.55);
\draw[usual] (2.08,-0.20) -- (2.08,0.55);
\node[dots] at (1.65,0.175) {$\cdots$};
\draw[usual] (2.85,0.55) -- (4.25,-0.20);
\draw[usual] (2.85,-0.20) -- (4.25,0.55) -- (4.25,2.02);
\end{scope}
\node[lab] at (5.20,0.95) {$=$};
\node[lab] at (5.95,0.95) {$\displaystyle\sum c_i$};
\begin{scope}[shift={(7.30,-0.15)}]
\draw[usual] (0.00,2.10) -- (1.80,2.10) -- (1.50,1.45) -- (0.30,1.45) -- cycle;
\draw[usual] (0.30,0.78) rectangle (1.50,1.45);
\draw[usual] (0.30,0.78) -- (1.50,0.78) -- (1.80,0.10) -- (0.00,0.10) -- cycle;
\node[lab] at (0.90,1.78) {$\bar{\bm{f_i}}$};
\node[lab] at (0.90,1.115) {$\bm{P_i}$};
\node[lab] at (0.90,0.43) {$\bm{g_i}$};
\end{scope}
\end{tikzpicture}.
\end{gather}
\begin{gather}
\tag{OR2}\label{eq:OR2}
\begin{tikzpicture}[
  baseline=(current bounding box.center),
  x=1cm,y=1cm,
  line width=0.8pt,
  line cap=butt,
  line join=miter,
  lab/.style={inner sep=0pt,font=\large},
  dots/.style={inner sep=0pt,font=\normalsize}
]
\begin{scope}[shift={(0,0)}]
  \draw[usual] (-0.70,-0.20) -- (-0.70,1.55);
  \draw[usual] (-0.70,2.15) -- (-0.70,3.75);
  \draw[usual] (-0.20,-0.20) -- (-0.20,1.55);
  \draw[usual] (-0.20,2.15) -- (-0.20,3.75);
  \draw[usual,fill=white] (-0.90,1.55) rectangle (2.00,2.15);
  \node at (0.55,1.85) {$\bm{P}$};
  \draw[usual] (0.80,-0.20) -- (0.80,1.55);
  \draw[usual] (0.80,2.15) -- (0.80,2.50);
  \draw[usual] (1.90,-0.20) -- (1.90,0.55);
  \draw[usual] (3.30,-0.20) -- (3.30,0.55);
  \node[dots] at (2.60,0.175) {$\cdots$};
  \draw[usual] (1.10,0.55) -- (4.10,0.55) -- (3.50,1.20) -- (1.70,1.20) -- cycle;
  \node[lab] at (2.60,0.875) {$\bm{g}$};
  \draw[usual] (1.80,1.20) -- (1.80,1.55);
  \draw[usual] (1.80,2.15) -- (1.80,2.50);
  \draw[usual] (2.40,1.20) -- (2.40,2.50);
  \draw[usual] (2.90,1.20) -- (2.90,3.75);
  \draw[usual] (3.40,1.20) -- (3.40,3.75);
  \node[dots] at (-0.45,1.375) {$\cdots$};
  \node[dots] at (0.30,1.375) {$\cdots$};
  \node[dots] at (1.30,1.375) {$\cdots$};
  \node[dots] at (-0.45,2.325) {$\cdots$};
  \node[dots] at (0.30,2.325) {$\cdots$};
  \node[dots] at (1.30,2.325) {$\cdots$};
  \node[dots] at (3.15,2.475) {$\cdots$};
  \draw[usual] (0.30,2.50) -- (2.70,2.50) -- (2.20,3.15) -- (0.80,3.15) -- cycle;
  \node[lab] at (1.50,2.825) {$\bm{f}$};
  \draw[usual] (1.10,3.15) -- (1.10,3.75);
  \draw[usual] (1.70,3.15) -- (1.70,3.75);
  \node[dots] at (1.40,3.37) {$\cdots$};
  \draw[usual] (2.90,3.15) -- (2.90,3.75);
  \draw[usual] (3.40,3.15) -- (3.40,3.75);
\end{scope}
\node[lab] at (5.10,1.775) {$=$};
\node[lab] at (5.85,1.70) {$\displaystyle\sum_i c_i$};
\begin{scope}[shift={(7.25,0.675)}]
  \draw[usual] (0.00,2.10) -- (1.80,2.10) -- (1.50,1.45) -- (0.30,1.45) -- cycle;
  \draw[usual] (0.30,0.78) rectangle (1.50,1.45);
  \draw[usual] (0.30,0.78) -- (1.50,0.78) -- (1.80,0.10) -- (0.00,0.10) -- cycle;
  \node[lab] at (0.90,1.78) {$\bar{\bm{f_i}}$};
  \node[lab] at (0.90,1.115) {$\bm{P_i}$};
  \node[lab] at (0.90,0.43) {$\bm{g_i}$};
\end{scope}
\end{tikzpicture}.
\end{gather}

\begin{gather}
\tag{OR3}\label{eq:OR3}
\begin{tikzpicture}[
baseline=(current bounding box.center),
x=1cm,y=1cm,
line width=0.8pt,
line cap=butt,
line join=miter,
lab/.style={inner sep=0pt,font=\large},
dots/.style={inner sep=0pt,font=\normalsize}
]
\begin{scope}[shift={(0,0)}]
\draw[usual] (1.15,2.45) -- (4.15,2.45) -- (3.55,3.15) -- (1.75,3.15) -- cycle;
\node[lab] at (2.65,2.80) {$\bm{f}$};
\draw[usual] (2.25,3.15) -- (2.25,3.95);
\draw[usual] (3.05,3.15) -- (3.05,3.95);
\node[dots] at (2.65,3.55) {$\cdots$};
\draw[usual] (0.70,3.95) -- (0.70,1.55)
-- (2.00,0.85) -- (2.00,0.55)
-- (3.30,-0.15) -- (3.30,-0.65);
\draw[usual] (2.00,2.45) -- (2.00,1.55)
-- (0.70,0.85) -- (0.70,-0.65);
\node[dots] at (2.65,2.00) {$\cdots$};
\draw[usual] (3.30,2.45) -- (3.30,0.55)
-- (2.00,-0.15) -- (2.00,-0.65);
\end{scope}
\node[lab] at (4.95,1.55) {$=$};
\node[lab] at (5.70,1.55) {$\displaystyle\sum c_i$};
\begin{scope}[shift={(7.05,0.45)}]
\draw[usual] (0.00,2.10) -- (1.80,2.10) -- (1.50,1.45) -- (0.30,1.45) -- cycle;
\draw[usual] (0.30,0.78) rectangle (1.50,1.45);
\draw[usual] (0.30,0.78) -- (1.50,0.78) -- (1.80,0.10) -- (0.00,0.10) -- cycle;
\node[lab] at (0.90,1.78) {$\bar{\bm{f_i}}$};
\node[lab] at (0.90,1.115) {$\bm{P_i}$};
\node[lab] at (0.90,0.43) {$\bm{g_i}$};
\end{scope}
\end{tikzpicture}.
\end{gather}
\begin{gather}
\tag{OR4}\label{eq:OR4}
\begin{tikzpicture}[
baseline=(current bounding box.center),
x=1cm,y=1cm,
lab/.style={inner sep=0pt,font=\large},
boxlab/.style={inner sep=1.2pt,fill=white,font=\Large},
box/.style={usual}
]
\begin{scope}[shift={(0,0)}]
\draw[usual] (0,0) node[below] {$i$} -- (1.5,2);
\draw[usual] (1.5,0) node[below] {$k$} -- (0,2);
\draw[usual] (0.75,0) node[below] {$j$}
.. controls (0.75,0.4) and (1.8,0.6) .. (1.8,1)
.. controls (1.8,1.4) and (0.75,1.6) .. (0.75,2);
\end{scope}
\node[lab] at (3.10,1.00) {$=$};
\node[lab] at (3.85,1.00) {$\displaystyle\sum c_i$};
\begin{scope}[shift={(5.20,-0.05)}]
\draw[box] (0.00,2.10) -- (1.80,2.10) -- (1.50,1.45) -- (0.30,1.45) -- cycle;
\draw[box] (0.30,0.78) rectangle (1.50,1.45);
\draw[box] (0.30,0.78) -- (1.50,0.78) -- (1.80,0.10) -- (0.00,0.10) -- cycle;
\node[boxlab] at (0.90,1.78) {$\bm{\bar f_i}$};
\node[boxlab] at (0.90,1.115) {$\bm{P_i}$};
\node[boxlab] at (0.90,0.43) {$\bm{g_i}$};
\end{scope}
\end{tikzpicture}.
\end{gather}

\begin{Remark}
The term `orientation-reversing' reflects the idea of flipping diagrams
over. The terminology \(H=I\) will be justified in \autoref{S:Examples}.
\end{Remark}

Again, \(f\) and \(g\) are arbitrary atoms, $P$ is a permutation (possibly identity) and the coefficients are determined
by direct computation in \(\Fund(\mathcal{C})\). The left hand side of \eqref{eq:OR2} is the analogue of that of \eqref{eq:HI2}: here every strand in the domain (resp. codomain) of $P$ to the left of that of $g$ (resp. $f$) is joined to a strand in the domain of $f$ (resp. codomain of $g$), and $P$ does not involve the rightmost strand of $f$. Here $f$ is not allowed to overhang $g$ on the right. By construction, none of
the four left-hand sides above can occur as a subdiagram of a distinguished
bottom diagram.

\begin{Lemma}\label{reduced diagrams are distinguished}
Let $D$ be a diagram generated by $\mathcal{A}\cup \mathcal X$. If $D$ contains none of the left-hand sides of \eqref{eq:OR1}, \eqref{eq:OR2}, 
\eqref{eq:OR3} and \eqref{eq:OR4} as a subdiagram, then $D$ is a distinguished bottom diagram, up to postcomposition by a permutation.

Dually, let $D$ be generated by $\overline{\mathcal{A}}\cup C$. If $D$ contains none of the vertical flips of these diagrams as a subdiagram, then $D$ is a distinguished top diagram, up to precomposition by a permutation.
\end{Lemma}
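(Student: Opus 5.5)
I would prove this by induction on the number of atoms in $D$, reading $D$ from bottom to top as a sequence of layers, each of the form $\id\otimes x\otimes\id$ with $x\in\mathcal{A}\cup\mathcal X$. The dual statement follows by applying the flip $f\mapsto\overline{f}$: it reverses composition, exchanges $\mathcal{A}$ with $\overline{\mathcal A}$, and exchanges bottom with top diagrams. So it suffices to treat the first statement.

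The inductive claim is as follows. Suppose $D$ is built from atoms and crossings and avoids the four left-hand sides. Then $D=Q\circ D'$, where $D'\in\D_\epsilon$ is a distinguished bottom diagram and $Q$ is a permutation. If $D$ has no atoms, $D$ is itself a permutation, and we take $D'$ to be the empty or identity bottom diagram, i.e. the identity.

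For the inductive step, let $f_b$ be the topmost atom of $D$. Everything below it is again a diagram avoiding the forbidden pieces, so by induction it has the form $Q\circ D'$. Hence $D=Q''\circ(\id\otimes f_b\otimes\id)\circ Q\circ D'$, and we must show that $(\id\otimes f_b\otimes\id)\circ Q\circ D'$ is again of the required form.

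Recall the recursive construction of distinguished bottom diagrams. The next atom is applied to the rightmost strand, which is a fresh input strand, together with strands coming from the top component, after shuffling them by a minimal permutation. We examine the strands entering $f_b$ and run through the ways this can fail.
\begin{enumerate}
\item If the rightmost input strand of $f_b$ is an output strand of $D'$ rather than a fresh input strand, then $f_b$ sits on top of an atom output. Either it sits directly on it, giving \eqref{eq:OR1}, or it sits through a permutation, giving \eqref{eq:OR2} or \eqref{eq:OR3}.
\item If the inputs of $f_b$ reach it through a non-minimal permutation, or through one not in the fixed left-crossing convention \eqref{left crossing convention}, then a local comparison shows that the diagram contains a right-crossing triple, i.e. the left-hand side of \eqref{eq:OR4}, or an instance of \eqref{eq:OR3}.
\item The remaining possibility is that $f_b$ takes some fresh strand $B_{\epsilon_{k+1}}$ as its last input and some strands of the current codomain of $D'$, permuted minimally. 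Then $f_b$ is exactly a branching step. Further input strands to the right of $B_{\epsilon_{k+1}}$, which did not yet interact with anything, are absorbed into $Q''$.
\end{enumerate}

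One point needs care. In the construction, $f_b$ must hit the top component, so its left inputs must be exactly the strands labelling $B_{\epsilon(b)}$. This holds because $f_b$ maps the top component onto the top component, so the composite is again of the form $\D_{\epsilon}$ (cf. the proof of \autoref{bijection branching}). One also has to check that the crossings in $Q$ that do not feed $f_b$ commute up past it and can be pushed into $Q''$.

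I expect case (b) to be the main obstacle: showing that every non-minimal, or non-left-convention, routing of strands into an atom necessarily exhibits one of the local pictures \eqref{eq:OR3} or \eqref{eq:OR4}. I would approach it by choosing a reduced expression for the permutation. The idea is that a non-minimal permutation contains either a double crossing of the same pair of strands, forming a bigon that pinches against the atom as in \eqref{eq:OR3}, or a right-hand Reidemeister configuration as in \eqref{eq:OR4}. This reduces the step to a word-combinatorics argument in the symmetric group.
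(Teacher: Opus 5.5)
\textbf{There is a genuine gap: your case analysis is not exhaustive.} You peel off the \emph{topmost} atom, but distinguished bottom diagrams are built strand by strand from left to right. The topmost atom of $D$ need not be the last step of that recursion.

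In case (c), the last input of $f_b$ is an untouched strand $k+1$ of $D'$. But $D'$ may already contain atoms on strands to the right of $k+1$, together with the crossings of their minimal permutations, and those crossings can run underneath the inputs of $f_b$. So your claim that the strands to the right ``did not yet interact with anything'' is false in general.

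Here is a concrete failure. Take $\mathfrak{sl}_3$ and $\epsilon=(2,1,1,1)$, and let $D'$ be the distinguished bottom diagram of the path $(0,1),(1,1),(2,1),(2,0)$. In it, the first strand is moved right past strands $2$ and $3$ by two crossings, and is then capped with strand $4$ by the atom $B_2\otimes B_1\to\mathbf 1$. Now put the merge $B_1\otimes B_1\to B_2$ on the output strands $2,3$ of $D'$; by interchange it can be drawn as the topmost atom.
\begin{enumerate}
\item Here $Q=\id$, both inputs of $f_b$ are untouched strands, and there is no non-minimal routing. So neither (a) nor (b) applies, and (c) would declare the composite distinguished.
\item It is not. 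The distinguished bottom diagrams of weight $(0,1)$ on this domain are those of the paths $(0,1),(1,1),(0,2),(0,1)$, then $(0,1),(1,1),(1,0),(0,1)$, and $(0,1),(0,0),(1,0),(0,1)$. None of them is this diagram.
\item The composite is excluded only because the merge, together with the second crossing inside $D'$, forms the left-hand side of \eqref{eq:OR1}. That crossing belongs to a \emph{later} step of the recursion.
\end{enumerate}
Your cases inspect only $Q$ and whether the last input of $f_b$ is an atom output, so they cannot see such pieces. They also do not rule out that the other inputs of $f_b$ are outputs of later-step atoms routed leftwards across strand $k+1$. In that situation $f_b$ cannot be moved to step $k+1$ at all.

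\textbf{How to repair it.} Induct in the order in which the recursion runs, as the paper does. Induct on the number of domain strands, and delete the last strand together with what is attached to it; the resulting $D_0$ still avoids the forbidden pieces, so by induction it is distinguished up to a final permutation. Then the only thing to show is that the atom on the last strand attaches to the final strands of the codomain of $D_0$, i.e.\ that it is exactly the last branching step. This is where \eqref{eq:OR1}--\eqref{eq:OR3} are used, with \eqref{eq:OR4} normalising diagrams made only of crossings.

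Within your framework you would instead have to choose $f_b$ to be the atom whose last input has the largest index, and prove that only crossings lie above it. So the real difficulty is not your case (b), but this mismatch between ``topmost'' and ``last in the recursion''.
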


\begin{proof}
It is enough to prove the first statement. We induct on the number of strands in the domain. Since \eqref{eq:OR4} is excluded, every diagram made only of crossings is one of our chosen permutations.

Remove the last strand of $D$, together with all atoms and crossings attached to it, and call the resulting diagram $D_0$. By induction, $D_0$ is a distinguished bottom diagram up to a final permutation. If the last strand meets $D_0$ only by crossings, this final permutation just changes, and there is nothing to prove.

Otherwise the last strand meets $D_0$ through an atom. The exclusions \eqref{eq:OR1}, \eqref{eq:OR2},
and \eqref{eq:OR3} force this atom to be attached to the final strands in the codomain of $D_0$, exactly as in the recursive construction of distinguished bottom diagrams. Hence $D$ is again a distinguished bottom diagram up to postcomposition by a permutation.
\end{proof}

Let $\mathcal{F}$ be the $\kk$-linear monoidal category generated by the objects $B_1,\dots,B_n$ and the morphisms in $S$, modulo the relations $\beta_{i,j}\circ\beta_{j,i}=\id$, the $H=I$ relations \eqref{eq:HI1}, \eqref{eq:HI2}, \eqref{eq:HI3}  
the orientation-reversing relations \eqref{eq:OR1}, \eqref{eq:OR2}, 
\eqref{eq:OR3}, \eqref{eq:OR4}, and the vertical reflections of all these relations. Note that some of the relations include the horizontal reflections of the pictures, see the discussions in the paragraphs below them for details.

\begin{Theorem}\label{iso of categories}
The natural $\kk$-linear monoidal functor $\mathcal{F}\to \Fund(\mathcal{C})$ is an isomorphism. Consequently, after Cauchy completion, this gives a presentation of $\gcrys$, which is semisimple with simple objects $B(\lambda)$, $\lambda\in\Lambda_+$.
\end{Theorem}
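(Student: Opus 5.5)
The plan is the standard ``spanning set plus dimension count'' argument. First, the functor $\Phi\colon\mathcal{F}\to\Fund(\mathcal{C})$ is well defined, because every defining relation holds in $\Fund(\mathcal{C})$. The relations $\beta_{i,j}\circ\beta_{j,i}=\id$ hold by the choice of crossings. The $H=I$ and orientation-reversing relations, together with their reflections, hold by construction: their coefficients are determined by expanding the left-hand side in the basis of \autoref{bmt basis}. The vertical flips hold because $f\mapsto\overline{f}$ is a contravariant involution on the maps involved. Since the objects of both categories are the tensor words $B_\epsilon$, $\Phi$ is bijective on objects. Moreover, $S=\mathcal{A}\cup\overline{\mathcal{A}}\cup\mathcal X$ generates $\Fund(\mathcal{C})$, as noted after \autoref{bmt basis}, so $\Phi$ is full. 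It therefore remains to show that $\Phi$ is faithful. For this it suffices to prove that, for each pair $\epsilon,\epsilon'$, the images in $\mathcal{F}$ of the diagrams $\overline{f}\circ P\circ g$ with $(f,P,g)$ as in $\mathcal{B}(\epsilon,\epsilon')$ span $\Hom_{\mathcal{F}}(B_\epsilon,B_{\epsilon'})$. These elements map bijectively onto a basis, so they must then be linearly independent in $\mathcal{F}$, and $\Phi$ is an isomorphism on hom-spaces.

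The core step is a rewriting argument in $\mathcal{F}$. Every morphism is a linear combination of words in $S$, each drawn as a diagram. I would define a complexity for such a diagram: the number of pairs (flipped atom $\overline{g}$, atom $f$) with $\overline{g}$ below $f$, refined by a secondary count of crossings and positions. The first goal is to show that every diagram is a combination of diagrams in which all atoms lie below all flipped atoms, so the diagram is a composite $T\circ M\circ D$ with $D$ generated by $\mathcal{A}\cup\mathcal X$, $M$ a permutation, and $T$ generated by $\overline{\mathcal{A}}\cup\mathcal X$. Take a lowest occurrence of some $\overline{g}$ sitting below an atom $f$ that it meets, possibly through a permutation. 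After isotoping, this configuration is exactly the left-hand side of \eqref{eq:HI1}, \eqref{eq:HI2}, or \eqref{eq:HI3}, or of a horizontal reflection. I expect to sort the cases by whether the connecting permutation is empty, inserted from the left without touching the last strand, or touches the last strand. Replacing this configuration by the sandwich right-hand side moves atoms down and flipped atoms up, so the chosen complexity decreases. If $\overline{g}$ and $f$ share no strand, they commute by the interchange law.

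Next, I would put the bottom part $D$ in normal form. Whenever $D$ contains a left-hand side of \eqref{eq:OR1}--\eqref{eq:OR4}, I rewrite it as a sum of sandwich diagrams. This may create new interactions between flipped atoms and atoms, which are then handled again by the $H=I$ step. I would make termination work by inducting on the number of strands in the domain together with the number of atoms; the $\overline{f_i}$ produced by these relations are distinguished top diagrams on fewer or equal strands. Once no orientation-reversing pattern remains, \autoref{reduced diagrams are distinguished} shows that $D$ is a distinguished bottom diagram $g$ up to a permutation. Dually, $T$ is $\overline{f}$ up to a permutation, with $f$ a distinguished bottom diagram. The middle permutation is then normalised to the left-crossing convention \eqref{left crossing convention} using \eqref{eq:OR4}. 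A final application of \autoref{bmt basis} in $\Fund(\mathcal{C})$ shows that the term survives only if $\wt f=\wt g$; otherwise the codomains of $g$ and $f$ do not match. I note a gap here: establishing that this also holds inside $\mathcal{F}$, rather than only in $\Fund(\mathcal{C})$, needs an additional argument.

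The main obstacle I expect is termination together with the case analysis in the rewriting. I have to show that every pattern ``flipped atom below atom'', possibly separated by crossings, can be isotoped into one of the precise shapes \eqref{eq:HI1}--\eqref{eq:HI3}. I would first try to use \eqref{eq:OR4} and $\beta\beta^{-1}=\id$ to push crossings into the normal position required in \eqref{eq:HI2}. I also have to define an ordering that strictly decreases, for which I would try lexicographic order on (total number of atoms and flipped atoms in non-sandwich position, number of strands through the middle). Given faithfulness, the consequences are as follows. $\mathcal{F}^c\cong\Fund(\mathcal{C})^c\cong\gcrys$, since every $B(\lambda)$ is a summand of some $B_\epsilon$. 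Semisimplicity with simples $B(\lambda)$ follows from the matrix-unit basis of \autoref{wedderburn basis}, which exhibits each endomorphism algebra as a product of matrix algebras indexed by weights, with $\End(B(\lambda))=\kk$.
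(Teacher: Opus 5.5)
Your plan is essentially the paper's proof. The functor is well defined because all relations hold in $\Fund(\mathcal{C})$, and the real content is showing that the diagrams of $\mathcal{B}(\epsilon,\epsilon')$ span in $\mathcal{F}$, which both you and the paper do by rewriting with the orientation-reversing relations, \autoref{reduced diagrams are distinguished}, and the $H=I$ relations. The paper applies the orientation-reversing relations before the $H=I$ relations and is no more explicit than you about termination.

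The gap you flag about $\wt f=\wt g$ is not real. In any well-typed composite $\overline{f}\circ P\circ g$, already in $\mathcal{F}$, the codomains of $g$ and $f$ differ by a permutation. They therefore have the same multiset of labels and the same top weight $\sum_i\omega_{\delta_i}$, so the weights agree automatically.
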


\begin{proof}
The functor sends $B_i$ to the corresponding crystal and sends each generator in $S$ to the corresponding morphism in $\Fund(\mathcal{C})$. All defining relations of $\mathcal{F}$ hold in $\Fund(\mathcal{C})$ by construction. It remains to compare Hom-spaces.

The diagrams in $\mathcal{B}(\epsilon,\epsilon')$, viewed as morphisms in $\mathcal{F}$, are linearly independent, since their images form part of the basis from \autoref{bmt basis}. Thus it remains to prove that they span.

Let $h$ be a diagram in $\mathcal{F}$. Using the orientation-reversing relations and their vertical reflections, we may assume that $h$ contains none of the left-hand sides of \eqref{eq:OR1}, \eqref{eq:OR2},
\eqref{eq:OR3}, \eqref{eq:OR4}, nor their vertical reflections, as a subdiagram. By \autoref{reduced diagrams are distinguished}, the remaining pieces are distinguished bottom diagrams, distinguished top diagrams, permutations, or local configurations of the kind appearing in the left-hand sides of \eqref{eq:HI1}, \eqref{eq:HI2}, \eqref{eq:HI3}, up to padding with identities (the bottommost layer of a distinguished bottom diagram is always a padded atom precomposed with some permutation missing the last strand of the atom part).

The $H=I$ relations move flipped atoms past atoms. Repeating this finitely many times, $h$ becomes a sum of diagrams of the form $\overline{f}\circ P\circ g$, where $f$ and $g$ are distinguished bottom diagrams and $P$ is a permutation. These are precisely the diagrams in $\mathcal{B}(\epsilon,\epsilon')$. Hence they span, and the functor is an isomorphism.

The final statement follows from the standing identification $\Fund(\mathcal C)^c\cong_\otimes\mathcal C=\gcrys$ and the definition of $\gcrys$ as the semisimple crystal category with simple objects $B(\lambda)$, $\lambda\in\Lambda_+$.
\end{proof}

\begin{Example}\label{E:SL2}
In the case of $\g=\mathfrak{sl}_2$, $\Fund(\mathcal{C})$ is generated by a single object $B_1$, three morphisms which are $\id_{B_1}$, the cup, and the cap. There are no orientation-reversing relations, and the only $H=I$ relations are the `zigzag relation' and circle evaluation, both of type \eqref{eq:HI1}:
\begin{equation*}
\begin{tikzpicture}[
baseline=(current bounding box.center),
x=.48cm,y=.48cm,
line width=.85pt,
line cap=round,
line join=round
]

\begin{scope}[shift={(0,0)}]
\draw[usual]
(0,1.6) -- (0,-.35)
to[out=-90,in=180] (.5,-.75)
to[out=0,in=-90] (1,-.35)
-- (1,.35)
to[out=90,in=180] (1.5,.75)
to[out=0,in=90] (2,.35)
-- (2,-1.6);
\end{scope}

\node at (3,0) {$=$};

\node at (4,0) {$0$};

\node at (5,0) {$=$};

\begin{scope}[shift={(6,0)}]
\draw[usual]
(0,-1.6) -- (0,.35)
to[out=90,in=180] (.5,.75)
to[out=0,in=90] (1,.35)
-- (1,-.35)
to[out=-90,in=180] (1.5,-.75)
to[out=0,in=-90] (2,-.35)
-- (2,1.6);
\end{scope}

\end{tikzpicture}.
\end{equation*}
\begin{equation*}
\begin{tikzpicture}[baseline=(current bounding box.center)]
\draw[usual] (0,0) circle [radius=0.5];
\node at (1.15,0) {$=1$};
\end{tikzpicture}.
\end{equation*}
This recovers \cite[Theorem 4.7]{Alqady2025coboundary}.
\end{Example}

\subsection{A smaller set of generating morphisms}\label{subsec:smaller-generators}

We saw in \autoref{G2 atom dependence} that there are relations between atoms. In fact, the example in \autoref{G2 atom dependence} generalizes and every atom can be obtained from \emph{atoms on two strands}, i.e. atoms whose domain is of the form $B_i\otimes B_j$. Let $\mathcal{A}^*$ denote the set of all two-strand atoms with domain $B_i\otimes B_j, i\le j$. Denote $\mathcal{A}^*\cup \overline{\mathcal{A}^*}\cup \mathcal X$ by $S^*$.

\begin{Lemma} \label{two strand highest element switch lemma} Let $B,C\in \CC$, where $u_B, u_C$ denote the highest elements of $B,C$, respectively. Let $\beta: B\otimes C\to C\otimes B$ be any isomorphism. Then if $u_B\otimes c$ lies in the top component of $B\otimes C$, its image under $\beta$ is $c\otimes u_B$.
\end{Lemma}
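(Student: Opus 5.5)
The plan is to reduce everything to the uniqueness of crystal isomorphisms between connected highest weight crystals, together with an intrinsic description of the elements of the form \(u_B\otimes c\) inside the top component.

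\emph{Step 1: reduce to the top components.} Write \(\lambda,\mu\) for the weights of \(B,C\). The top component \(T\subset B\otimes C\) is the component generated by \(u_B\otimes u_C\). It is isomorphic to \(B(\lambda+\mu)\) and occurs with multiplicity one, since \(\lambda+\mu\) is the unique maximal weight of \(B\otimes C\). The same holds for the top component \(T'\subset C\otimes B\), generated by \(u_C\otimes u_B\). A strict isomorphism \(\beta\) sends components isomorphically onto components of the same highest weight, so \(\beta(T)=T'\). Moreover \(\beta|_T\) is the unique crystal isomorphism \(T\to T'\): such a map must send the highest weight element to the highest weight element, and it commutes with the \(f_i\), which generate \(T\). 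In particular the choice of \(\beta\) is irrelevant, and it suffices to construct one bijection \(\psi\colon T\to T'\) commuting with all \(e_i,f_i\) and satisfying \(\psi(u_B\otimes c)=c\otimes u_B\).

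\emph{Step 2: an intrinsic characterization.} I would show that
\[
\{x\in T : x=u_B\otimes c\}
\]
is determined by the abstract crystal \(T\cong B(\lambda+\mu)\) together with \(\lambda\), and that the same description applied to \(T'\) gives \(\{c\otimes u_B\in T'\}\).

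The first tool is the tensor rule: \(e_i(u_B\otimes c)\) is either \(u_B\otimes e_ic\) or \(0\), because \(e_iu_B=0\). So the set in question, together with \(0\), is stable under all \(e_i\) and contains the highest element.

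For the converse I would compare \(\varepsilon\)-values. For \(x=b\otimes c\) one has
\[
\varepsilon_i(b\otimes c)=\max\bigl(\varepsilon_i(b),\,\varepsilon_i(c)-\langle \wt b,\alpha_i^\vee\rangle\bigr).
\]
Hence \(x\) has first factor \(u_B\) exactly when each \(e_i\)-string through \(x\) "sees" only the second factor and the shift equals \(\lambda_i\). Dually, \(c\otimes u_B\) is characterized in \(C\otimes B\) by the \(\varphi\)-version of this formula.

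I would try to turn this into one statement of the form
\[
x=u_B\otimes c \iff \text{a condition on the } e\text{-paths from } x \text{ to the highest element of } T.
\]
The plan is then to check that the condition for \(C\otimes B\) and first factor \(u_B\) on the right is the same abstract condition. A cleaner alternative is to pass through \(B(\lambda)\otimes B(\mu)\hookrightarrow B(\lambda+\mu)\otimes(\text{lower})\) and use the known description of \(u_\lambda\otimes B(\mu)\cap B(\lambda+\mu)\) as a Demazure-type subcrystal, which is symmetric in the roles of the two factors.

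\emph{Step 3: conclude.} Once the two subsets are matched abstractly, \(\psi\) is forced by weights. The element \(u_B\otimes c\) has weight \(\lambda+\wt(c)\), and within this distinguished subset the map \(c\mapsto u_B\otimes c\) is injective. The candidate image \(c\otimes u_B\) is the unique element of the corresponding subset of \(T'\) with the same \(e\)-path data. Combined with the uniqueness from Step 1, this gives \(\beta(u_B\otimes c)=c\otimes u_B\).

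I expect the main obstacle to be Step 2, namely identifying the intrinsic characterization of first-factor-highest elements of the top component and proving it is symmetric under swapping the order of the factors. I would first try the \(e\)-path description. If that becomes unwieldy, I would fall back on the Demazure-crystal description of \(T\cap(u_B\otimes C)\).
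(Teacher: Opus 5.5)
\textbf{Verdict: there is a genuine gap, in Step 2 — the statement you plan to prove there is false.} Your Step 1 is correct and is needed.

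\textbf{Counterexample to Step 2.} Take $\mathfrak{sl}_2$ with $B=C=B_1=\{b_+,b_-\}$ and $f_1b_+=b_-$. Kashiwara's tensor rule gives the top component
\[
T=T'=\{b_+\otimes b_+,\; b_-\otimes b_+,\; b_-\otimes b_-\},
\]
while $b_+\otimes b_-$ spans the trivial component. So $T\cap(u_B\otimes C)=\{b_+\otimes b_+\}$ has one element, but $T'\cap(C\otimes u_B)=\{b_+\otimes b_+,\,b_-\otimes b_+\}$ has two. Here $\lambda=\mu$ and $T=T'$, so no description in terms of the abstract crystal and $\lambda$ can cut out both sets. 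The same example shows that the Demazure-type fallback is not symmetric in the two factors either. In general one has $c\otimes u_B\in T'$ for every $c$ in the component of $u_C$, whereas $T\cap(u_B\otimes C)$ is usually much smaller. Only the inclusion $\beta\bigl(T\cap(u_B\otimes C)\bigr)\subseteq T'\cap(C\otimes u_B)$ holds, so there is no matching of subsets to establish.

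\textbf{Step 3 also fails as written.} Weights do not separate elements of a weight space. Moreover, $\beta|_T$ is already determined by Step 1, so the task is to compute it on the particular element $u_B\otimes c$. That needs element-level information, not a correspondence of sets.

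\textbf{The missing idea: transport an explicit lowering path.} You already have half of it. Your observation $e_i(u_B\otimes c)\in\{u_B\otimes e_ic,\,0\}$ shows the following. If you raise $u_B\otimes c$ until it is highest weight, the first factor is never touched, and you end at the unique highest weight element $u_B\otimes u_C$ of $T$. Reversing this path gives
\[
u_B\otimes c=f_{i_r}\cdots f_{i_1}(u_B\otimes u_C),
\]
with $c_k:=f_{i_k}\cdots f_{i_1}u_C\neq 0$ for all $k$ and $c_r=c$. Since $\beta$ commutes with the $f_i$ and $\beta(u_B\otimes u_C)=u_C\otimes u_B$ (your Step 1), we get
\[
\beta(u_B\otimes c)=f_{i_r}\cdots f_{i_1}(u_C\otimes u_B).
\]
Now apply the tensor rule on the $C\otimes B$ side. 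We have $\varepsilon_{i_k}(u_B)=0<\varphi_{i_k}(c_{k-1})$, because $f_{i_k}c_{k-1}=c_k\neq 0$. Hence every $f_{i_k}$ acts on the first factor, and the result is $c\otimes u_B$.

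This is the paper's two-line proof. It needs neither an intrinsic characterization, nor the $\varepsilon$-formula, nor Demazure crystals. It also explains why only one inclusion is true.
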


\begin{proof}
We have $u_B\otimes c=f_{i_r}\dots f_{i_{1}}(u_B\otimes u_C)$ for a sequence of lowering operators $f_{i_j}$ which all act \emph{on the second factor}. Thus $\beta(u_B\otimes c) = f_{i_r}\dots f_{i_{1}}(u_C\otimes u_B)$, where the lowering operators now all must act on the first factor since $u_B$ is highest. This gives the result.  
\end{proof}

\begin{Lemma} \label{pairwise jw lemma} Suppose the labels of $B_{\epsilon}$ are weakly increasing, $B_{\epsilon}=B_{a_1}^{\otimes r_1}\otimes\dots\otimes B_{a_s}^{\otimes r_s}, a_1<\dots <a_s$. Let $B_{a_r}^*$ denote the leftmost factor of $B_{a_r}$, say at position $r^*$. For $i<r^*$, let $\alpha_{i,r}$ be the composite of the sequence of crossings which moves $B_{a_r}^*$ leftwards until it is immediately to the right of $B_{\epsilon_i}$. Let $j_{i,r}:= \id\otimes j_{(\epsilon_i,a_r)}\otimes \id$ be the Jones--Wenzl projector applied to the now adjacent copy of $B_{\epsilon_i}\otimes B_{a_r}$ padded by identity, and define $\gamma_{i,r}=\alpha_{i,r}^{-1}\circ j_{i,r}\circ \alpha_{i,r}$. If $\epsilon_i=\epsilon_{i-1}$, also define $j_{i-1,i}$ to be the two-strand Jones--Wenzl projector applied to $B_{\epsilon_{i-1}}\otimes B_{\epsilon_i}$, padded by identity. 

Then, we have that $b$ lies in the top component of $B_{\epsilon}$ if and only if $\gamma_{i,r}(b)\neq 0$ for $ 1<r\le s, i<r^*$ and $j_{i-1,i}(b)\neq 0$ for all $i$ for which $j_{i-1,i}$ is defined.
\end{Lemma}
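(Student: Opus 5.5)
We prove the two implications separately. Throughout we use the description of the top component of $B_\epsilon$ from the branching graph: an element $b=b_1\otimes\cdots\otimes b_m$ lies in the top component if and only if it can be reached from the highest element $u_{\epsilon}=u_{\epsilon_1}\otimes\cdots\otimes u_{\epsilon_m}$ by lowering operators. The weight of the top component is $\sum_i\omega_{\epsilon_i}$. We also use the fact that each two-strand projector $j_{(p,q)}$ is the identity on the top component $B(\omega_p+\omega_q)\subset B_p\otimes B_q$ and zero on the other components.

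The ``only if'' direction is the easy one. Let $b$ be in the top component and suppose $\gamma_{i,r}(b)=0$. The morphism $\alpha_{i,r}$ is an isomorphism of crystals, so it maps the top component of $B_\epsilon$ to the top component of the permuted word. The projector $j_{i,r}$ is a padded two-strand Jones--Wenzl projector. Padded with identities, it kills exactly those components of the permuted word whose branching path, after grouping the two adjacent factors, passes through a non-top summand of $B_{\epsilon_i}\otimes B_{a_r}$. The top component does not pass through such a summand, because its weight is the maximal possible one. Hence $j_{i,r}$ is the identity on the top component, and so $\gamma_{i,r}(b)=b\neq 0$. The same argument applies to $j_{i-1,i}$. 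A cleaner way to say this: the top component is the image of the full Jones--Wenzl projector $j_\epsilon$, and every padded two-strand projector absorbs $j_\epsilon$ by weight considerations.

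For the ``if'' direction I would argue by induction on the length $m$, peeling off the last factor. Write $b=b'\otimes b_m$. Every condition that does not involve position $m$ is a condition on $b'$ alone, since the crossings and projectors act only on the prefix. By induction, $b'$ therefore lies in the top component of $B_{\epsilon'}$, where $\epsilon'$ is $\epsilon$ with the last letter removed. Let $a=\epsilon_m$. The element $b$ then lies in some component of $B(\lambda')\otimes B_a$, with $\lambda'=\sum_{i<m}\omega_{\epsilon_i}$, and it remains to show that $b$ lies in the $B(\lambda'+\omega_a)$ summand. By the tensor product rule, this happens if and only if $b_m$ is reachable from $u_a$ using only lowering operators $f_i$ whose strings are not absorbed by $b'$. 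Here I would use \autoref{two strand highest element switch lemma} together with the condition involving the leftmost copy $B_{a}^*$ (or $j_{m-1,m}$ when $\epsilon_{m-1}=a$). Moving factors past one another with crossings transports membership in top components of two-strand subwords. The conditions $\gamma_{i,r}(b)\neq0$ then say that, for each earlier label $\epsilon_i$, the pair formed by $b_i$ (suitably transported) and the relevant $B_a$ letter lies in $B(\omega_{\epsilon_i}+\omega_a)$. I would reduce the problem to the case where $b'$ is highest weight, by commuting lowering operators through the tensor factors. In that case $b'=u_{\epsilon'}$, and the claim becomes: $u_{\epsilon'}\otimes b_m$ is highest weight in the top summand if and only if $b_m=u_a$, and each pairwise condition detects exactly the colours $i$ with $\varepsilon_i(b_m)>0$ and $\langle\omega_{\epsilon_k},\alpha_i^\vee\rangle>0$.

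The main obstacle is this last reduction. Membership in a top component is a global condition, and I must show that it is detected by pairwise conditions. This requires a ``Pieri-type'' locality statement: $b'\otimes c$ fails to be in the top summand exactly when there is an $i$-string bracketing between $c$ and a single earlier factor. I would first try to prove it by the signature (bracketing) rule for $\varepsilon_i,\varphi_i$ on tensor products, checking that an unmatched bracket can always be attributed to a single earlier factor. The weakly increasing ordering is what ensures that a crossing moving $B_{a_r}^*$ leftwards does not create new brackets. If this fails in general, I would fall back on the case-by-case tableau descriptions of the fundamental crystals in types $A,B,C$ and $\mathrm{G}_2$.
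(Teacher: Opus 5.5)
Your ``only if'' direction is fine and is the paper's argument. Since $\Top(X\otimes Y)=\Top(\Top(X)\otimes\Top(Y))$, every padded two-strand projector, conjugated by crossings, fixes the top component. The ``if'' direction has a genuine gap. You set up the right induction, but you leave its decisive step open: showing that the conditions at position $m$ force $b$ into the top summand. You defer this to an unproven ``Pieri-type locality'' statement via the signature rule, with a fallback to tableau models in types $A,B,C,\mathrm{G}_2$. That fallback cannot prove the lemma, which is type-free and is used in that generality in \autoref{efficient pres prop}. Three further points are unsupported. The reduction ``by commuting lowering operators through the tensor factors'' is not an actual argument. Your claim that each pairwise condition ``detects exactly the colours $i$ with $\varepsilon_i(b_m)>0$ and $\langle\omega_{\epsilon_k},\alpha_i^\vee\rangle>0$'' is not justified. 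And the weakly increasing order has nothing to do with avoiding new brackets: it only guarantees that each position either starts a block, and so has $\gamma$-conditions against all earlier positions, or is immediately preceded by an equal label.

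No bracket analysis is needed. Every $\gamma_{i,r}$ and $j_{i-1,i}$ is a crystal endomorphism, so it is $0$ or the identity on each connected component, and you may therefore assume $b$ is a highest weight element. Then every prefix of $b$ is highest weight, and induction gives $b_1\otimes\cdots\otimes b_{m-1}=u_1\otimes\cdots\otimes u_{m-1}$. Two cases remain.
\begin{enumerate}
\item Suppose $\epsilon_{m-1}=\epsilon_m=k$. Then $j_{m-1,m}(b)\neq 0$ puts $u_k\otimes b_m$ in $B(2\omega_k)$. The only element of $B(2\omega_k)$ with first factor $u_k$ is $u_k\otimes u_k$, since any lowering operator applied to $u_k\otimes u_k$ either acts on the first factor or gives $0$.
\item Suppose $m=r^*$ starts a new block. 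The nonvanishing of $\gamma_{m-1,r}(b),\dots,\gamma_{2,r}(b)$ lets you apply \autoref{two strand highest element switch lemma} step by step. This moves $b_m$ unchanged to position $2$, giving $u_1\otimes b_m\otimes u_2\otimes\cdots\otimes u_{m-1}$. That element is the image of a highest weight element under a crystal isomorphism, so its prefix $u_1\otimes b_m$ is highest weight. By $\gamma_{1,r}(b)\neq 0$ it also lies in the top component of $B_{\epsilon_1}\otimes B_{a_r}$, whose only highest weight element is $u_1\otimes u_{a_r}$. Hence $b_m=u_{a_r}$.
\end{enumerate}
The idea you are missing is that the pairwise conditions are only ever applied to pairs whose left factor is a highest element. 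For such a pair, being highest weight and lying in the top component determines it completely, so nothing global has to be localized.
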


The projector $\gamma_{i,r}$ is illustrated below:
\[
\begin{tikzpicture}[
thick,
scale=0.8,
baseline=(current bounding box.center),
line cap=round,
line join=round
]
\def\xL{-1.25}
\def\xi{0}
\def\xA{1.25}
\def\xB{2.50}
\def\xC{3.75}
\def\xr{5.00}
\def\xR{6.25}

\def\yb{0}
\def\yone{0.80}
\def\ytwo{1.60}
\def\ythree{2.40}
\def\yboxb{2.65}
\def\yboxt{3.35}
\def\yfour{4.15}
\def\yfive{4.95}
\def\ysix{5.75}
\def\ytop{6.25}

\node at (-1.85,3.125) {$\cdots$};
\node at (6.85,3.125) {$\cdots$};

\draw[usual] (\xL,\yb) -- (\xL,\ytop);

\draw[usual] (\xi,\yb) -- (\xi,\yboxb);
\draw[usual] (\xi,\yboxt) -- (\xi,\ytop);

\draw[usual]
(\xA,\yb) -- (\xA,\ytwo) -- (\xB,\ythree)
-- (\xB,\yboxt) -- (\xA,\yfour) -- (\xA,\ytop);

\draw[usual]
(\xB,\yb) -- (\xB,\yone) -- (\xC,\ytwo)
-- (\xC,\yfour) -- (\xB,\yfive) -- (\xB,\ytop);

\draw[usual]
(\xC,\yb) -- (\xr,\yone)
-- (\xr,\yfive) -- (\xC,\ysix) -- (\xC,\ytop);

\draw[usual]
(\xr,\yb) -- (\xC,\yone) -- (\xB,\ytwo) -- (\xA,\ythree)
-- (\xA,\yboxb);

\draw[usual]
(\xA,\yboxt) -- (\xB,\yfour) -- (\xC,\yfive)
-- (\xr,\ysix) -- (\xr,\ytop);

\draw[usual] (\xR,\yb) -- (\xR,\ytop);

\draw[usual, fill=white] (-0.45,\yboxb) rectangle (1.70,\yboxt);
\node at (0.625,3.00) {$j_{(\epsilon_i,a_r)}$};

\node[below] at (\xi,\yb) {$\epsilon_i$};
\node[below] at (\xr,\yb) {$a_r$};
\end{tikzpicture}
.
\]

\begin{proof}
Since all projectors under consideration are crystal morphisms and act by
identity or zero on connected components, it is enough to check the claim on
highest weight elements of connected components.

If $b$ is in the top component, it certainly survives all projectors as if we write $\Top(X)$ for the top component of $X$, we have $\Top(X\otimes Y)= \Top(\Top(X)\otimes \Top(Y))$. It thus suffices to show that if $b=b_1\otimes\dots b_m$ is a highest weight element such that $\gamma_{i,r}(b)\neq 0, j_{i-1,i}(b)\neq 0$ for all $r,i$, then it is the highest element $u_1\otimes\dots \otimes u_m$ of $B_{\epsilon}$. Clearly $b_1=u_1$, as every prefix of highest weight element is highest weight. Inductively assume that we have $b_1=u_1,\dots, b_{i-1}=u_{i-1}$. We have two cases. If $i$ lies in the same constant label block as $i-1$, say $\epsilon_{i-1}=\epsilon_{i}=k$ then the pair $u_{i-1}\otimes b_i$ lies in the top component of a copy of $B_k\otimes B_k$. However, if we write $u$ for the highest element of $B_k$, we have $B(2\omega_k)\cap (u\otimes B_k)=\{u\otimes u\}$, since lowering operators applied to $u\otimes u$ can only act on the first factor. This shows $b_i=u_i$. It remains to consider the case where $i$ belongs to a new block. In this case, apply crossings to move $b_i$ until it is in the second position, so the word is 
\[u_1\otimes b_i\otimes u_2\otimes \dots \otimes u_{i-1}\otimes \dots \]
Here we have repeatedly used \autoref{two strand highest element switch lemma}, which is permitted because by assumption $u_j\otimes b_i, j<i$ always lie in the top component.
Now $u_1\otimes b_i$ is a prefix of a highest weight element so it is of highest weight, and by assumption it lies in the top component of $B_{\epsilon_1}\otimes B_{\epsilon_{i}}$. Thus, we must have $b_i=u_i$. This completes the induction.
\end{proof}

\begin{Proposition}\label{efficient pres prop}
The category $\Fund(\mathcal{C})$ is generated by the objects $B_1,\dots, B_n$ and the morphisms $S^*$. 
\end{Proposition}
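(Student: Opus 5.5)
By \autoref{bmt basis} the set $S=\mathcal{A}\cup\overline{\mathcal{A}}\cup\mathcal X$ already generates $\Fund(\mathcal{C})$. Since $\overline{(\cdot)}$ reverses composition, it is enough to show that every atom $f_b\colon B_{\epsilon(b)}\otimes B_k\to B_{\delta(b)}$ lies in the subcategory generated by $S^*$.

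I would induct on the number of strands $m+1$ of the domain of $f_b$, where $m=\sum_i\varepsilon_i(b)$. If $m\le1$, then $f_b$ is either an identity or a two-strand atom. A two-strand atom with domain $B_k\otimes B_j$ and $j<k$ is obtained from one with $j\le k$ by precomposing with the crossing $\beta_{k,j}$, so it is generated by $S^*$.

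For $m\ge2$, the model is \autoref{G2 atom dependence}. I write $B_{\epsilon(b)}=B_{\epsilon'}\otimes B_a$, where $B_a$ is its last factor, and let $c=c'\otimes u_a$ be its highest element. I then look at the two-strand product $B_a\otimes B_k$. The element $u_a\otimes b$ is highest weight there: it is a suffix-compatible piece of the highest weight element $c\otimes b$, and $\varepsilon_i(b)\le\langle\omega_a,\alpha_i^\vee\rangle$ fails only away from $i=a$. Its component is the image of some two-strand distinguished bottom diagram $h\colon B_a\otimes B_k\to B_{\delta'}$, which is either a two-strand atom or an identity. I form
\[
D=(\id_{B_{\epsilon'}}\otimes h)\colon B_{\epsilon(b)}\otimes B_k\to B_{\epsilon'}\otimes B_{\delta'}.
\]
This $D$ sends $c\otimes b$ to $c'\otimes u_{\delta'}$, which is highest weight of weight $\lambda(b)+\wt(b)$. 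Next I use crossings to sort the labels of $B_{\epsilon'}\otimes B_{\delta'}$ weakly increasingly. By \autoref{two strand highest element switch lemma}, highest elements are carried along, as long as the pairs involved lie in top components. The highest weight element then lies in some component of a tensor word with fewer strands than the domain of $f_b$, or with the same number of strands but a smaller multiset of labels. I compose with the distinguished bottom diagram for that component. That diagram is built from atoms with fewer strands, hence from $S^*$ by induction. The result is a basic morphism $B_{\epsilon(b)}\otimes B_k\to B_{\delta(b)}$ that sends $c\otimes b$ to the highest element, so by \autoref{hw element lemma} and \autoref{bijection branching} it agrees with $f_b$. The basic property holds because everything is injective outside its kernel, and the image of the component of $c\otimes b$ is the top component.

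\textbf{Main obstacle.} The hard part is the reduction step. I must show that after $h$ and the sorting crossings, the relevant highest weight element is reached through atoms with strictly fewer strands, and that $D$ kills every other component on which $f_b$ vanishes. For the second point, $f_b$ is nonzero on a single component, so I only need $D$ to vanish on the components of $B_{\epsilon(b)}\otimes B_k$ of the same highest weight other than the one through $c\otimes b$. I would prove this by projecting with \autoref{pairwise jw lemma}: a component surviving the composite has highest weight element $c''\otimes b''$, and the top-component conditions on adjacent pairs force $c''=c$ and then $b''=b$. For the strand count I would measure progress by the pair (number of strands, $\sum_i\varepsilon_i$ of the new element), which strictly decreases at each step. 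If this fails in some type, I would instead choose the factor $B_a$ to be one with $\varepsilon_a(b)>0$ and treat exceptional cases by the explicit relation of \autoref{G2 atom dependence}.
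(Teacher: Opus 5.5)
Your proposal has genuine gaps. The reduction step rests on a false claim. In the Kashiwara convention used here, prefixes of highest weight elements are highest weight, but suffixes are not. Indeed, $u_a\otimes b$ is highest weight in $B_a\otimes B_k$ if and only if $\varepsilon_i(b)\le\delta_{i,a}$ for all $i$, i.e.\ if and only if $m=\sum_i\varepsilon_i(b)\le 1$. So the claim fails in every case where you apply it. For example, in $\mathfrak{sl}_4$ with $b=\{2,4\}\in B_2$ and $a=3$, one has $e_1(u_3\otimes b)=u_3\otimes\{1,4\}\neq 0$. Consequently $D(c\otimes b)=c'\otimes u_{\delta'}$ is also wrong.

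What you actually need is weaker: some non-identity two-strand atom $h=f_{b'}$ on $B_a\otimes B_k$ satisfies $h(u_a\otimes b)\neq 0$, i.e.\ $u_a\otimes b\notin\Top(B_a\otimes B_k)$. Then $D(c\otimes b)$ is a nonzero highest weight element of weight $\lambda(b)+\wt(b)$. This is true but needs an argument. An element $u_a\otimes y$ of the top component is reached from $u_a\otimes u_k$ by lowering operators acting on the second factor. Along such a path the second factor has $\varepsilon_a=0<1=\varphi_a(u_a)$ for weight reasons, so $f_a$ never acts there. Hence $\varepsilon_a(y)=0$, whereas $\varepsilon_a(b)\ge 1$. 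With this repair your choice of pair is a legitimate, slightly more direct alternative to the paper, which finds its bad adjacent pair via \autoref{pairwise jw lemma}.

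Second, the composite you build is not basic, hence is not $f_b$. Composition preserves injectivity outside the kernel, but not support on a single component, so lower components survive. Take $\mathfrak{sl}_5$ with $b=\{2,4\}\in B_2$, so $f_b\colon B_1\otimes B_3\otimes B_2\to B_2\otimes B_4$. Your recipe gives $(m\otimes\id_{B_4})\circ(\id_{B_1}\otimes h)$, with $h\colon B_3\otimes B_2\to B_1\otimes B_4$ and $m\colon B_1\otimes B_1\to B_2$. This map sends $\{1\}\otimes\overline{h}(\{2\}\otimes\{1,3,4,5\})$ to $\{1,2\}\otimes\{1,3,4,5\}$, the highest weight element of the $B(\omega_1)$-summand of $B_2\otimes B_4$. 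Components of the same highest weight, which your ``main obstacle'' paragraph worries about, are automatically harmless; it is these lower ones that break the argument.

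You have two ways to repair this. You can postcompose with $j_{\delta(b)}$ and show that Jones--Wenzl projectors are generated by $S^*$, via the product formula plus \autoref{2 strand JW}; this is where $\overline{\mathcal{A}^*}$ is needed. Alternatively, as the paper does with its $g_\alpha$, write $f_b=\bigl(f_b\circ(\id\otimes\overline{h})\bigr)\circ(\id\otimes h)$ and recurse on the basic morphism $f_b\circ(\id\otimes\overline{h})$ rather than on atoms. That recursion ends in a projector composed with a permutation.

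Finally, induction on the number of strands does not work. The word $B_{\epsilon'}\otimes B_{\delta'}$ need not have fewer strands, since two-strand atoms can have two-strand codomains (e.g.\ $B_2\otimes B_2\to B_1\otimes B_3$ in $\mathfrak{sl}_4$). Nothing bounds the strand count of the atoms in the distinguished bottom diagram you compose with. What strictly drops is the top weight of the domain: $\lambda(b)+\wt(b')<\lambda(b)+\omega_k$. Moreover, every atom in a distinguished bottom diagram on a word has domain top weight of height $\langle\,\cdot\,,\rho^\vee\rangle$ at most that of the word. So you should induct on that height instead.
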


\begin{proof}
It is enough to prove that any atom can be generated by $\mathcal{A}^*\cup \mathcal X$. Let $f$ be an atom with domain $B_\epsilon$, which by precomposing with crossings we may assume has labels weakly increasing. 
We may assume $f$ is non-identity, so that it kills the top summand of $B_{\epsilon}$. Let $h_c$ be the highest weight element of the image of $f$ (which is unique by \autoref{bijection branching}). 
We have $j_\epsilon (h_c)=0$, so by \autoref{pairwise jw lemma}, we either have $\gamma_{i,r}(h_c)=0$ for some $r$, or $j_{i-1,i}(h_c)=0$ for some $i$. In either case, applying crossings if necessary, we obtain that there is some adjacent pair of factors $B_{\epsilon_{i-1}}\otimes B_{\epsilon_i}$ such that applying the identity-padded Jones--Wenzl projector on these two factors kills $h_c$.
It follows that $f$ can be written in the form $\sum_{\alpha} g_\alpha \circ (\id\otimes \alpha\otimes \id)$, where $\alpha$ is an atom on the  two strands $i-1,i$, and $g_{\alpha}$ has domain of weight strictly less than that of $B_{\epsilon}$. After using crossings to make the domain of $g_{\alpha}$ have weakly increasing labels, we may iterate the process above, and
after finitely many steps we obtain $f$ from two-strand atoms and crossings. Finally, observe that any atom with domain $B_i\otimes B_j, i>j$ is an atom with domain $B_j\otimes B_i$ precomposed with $\beta_{i,j}$.     
\end{proof}

\begin{Lemma}\label{lem:type-a-two-strand-atom-count}
In type \(A_{n-1}\), the number of two-strand atoms is
\(\binom{n+1}{3}=n(n^2-1)/6\). If we only keep those whose domain labels are
weakly increasing, this number becomes
\[
	\left\lfloor \tfrac{n(2n^2+3n-2)}{24}\right\rfloor .
\]
\end{Lemma}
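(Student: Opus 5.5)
The plan is to reduce the count to a combinatorial count of subsets of \(\{1,\dots,n\}\), using the description of crystals in type \(A_{n-1}\) from \autoref{atom examples}. An element \(b\in B_k\) is a \(k\)-subset of \(\{1,\dots,n\}\), a column. In type \(A\) we have \(\varepsilon_i(b)=1\) exactly when \(i+1\in b\) and \(i\notin b\), and \(\varepsilon_i(b)=0\) otherwise. By \autoref{atom}, the atom \(f_b\) has domain \(B_{\epsilon(b)}\otimes B_k\), where \(B_{\epsilon(b)}\) has length \(\sum_i\varepsilon_i(b)\). So \(f_b\) is a two-strand atom exactly when \(\sum_i\varepsilon_i(b)=1\).

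Decompose \(b\) into maximal runs of consecutive integers. Each run not starting at \(1\) contributes exactly one \(i\) with \(\varepsilon_i(b)=1\), namely \(i=\min(\text{run})-1\). The condition \(\sum_i\varepsilon_i(b)=1\) therefore leaves two cases.
\begin{enumerate}
\item A single run \([a,c]\) with \(2\le a\le c\le n\). There are \(\binom{n}{2}\) of these.
\item Two runs \([1,c_1]\) and \([a,c_2]\) with \(a\ge c_1+2\).
\end{enumerate}
In both cases \(k<n\) holds automatically, since \(b\neq\{1,\dots,n\}\). In the second case, set \(p=c_1\), \(q=a-1\), \(r=c_2\). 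These satisfy \(1\le p<q<r\le n\), and this is a bijection onto such triples, so the case contributes \(\binom{n}{3}\). Pascal's rule then gives \(\binom{n}{2}+\binom{n}{3}=\binom{n+1}{3}=n(n^2-1)/6\).

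For the second count, the domain of \(f_b\) is \(B_i\otimes B_k\) with \(i=a-1\) and \(k=|b|\), and we need \(i\le k\).
\begin{enumerate}
\item In the one-run case, \(k=c-a+1\). The condition is \(a-1\le c-a+1\), that is, \(c\ge 2a-2\). For each \(a\) we count \(c\in[\max(a,2a-2),n]\).
\item In the two-run case, \(k=c_1+c_2-a+1\). The condition becomes \(q\le p+r-q\), that is, \(2q\le p+r\), over triples \(p<q<r\).
\end{enumerate}
I would evaluate both sums by splitting on the parity of \(n\), or by treating the summands as quasi-polynomials in \(n\) with period dividing \(4\). The goal is the closed form \(\lfloor n(2n^2+3n-2)/24\rfloor\).

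The main obstacle is the second count. Summing the conditions \(c\ge 2a-2\) and \(2q\le p+r\) produces piecewise-polynomial expressions. Combining them into a single floor expression requires checking that the error term stays in \([0,1)\). I would first get the exact quasi-polynomial for each residue of \(n\) modulo \(4\), or modulo \(12\) if needed, and compare it with \(n(2n^2+3n-2)/24\). As a sanity check, I would test small \(n\): for \(n=2\) the two-strand atoms are the cup and the identity, giving \(1=\lfloor 24/24\rfloor\), and for \(n=3\) both sides give \(3\) and \(\lfloor 69/24\rfloor=2\).

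Here I would check carefully whether the identity atom \(f_b\) with \(\varepsilon(b)=0\) is excluded. For \(n=3\), the weakly increasing cases are the cup \(B_1\otimes B_2\), the \(1\otimes1\to2\) merge, and \(B_2\otimes B_2\to B_1\), which gives \(3\), not \(2\). So the statement's conventions must be pinned down first, in particular whether one-strand atoms are counted and whether \(i\le j\) is meant strictly. I would then adjust the case analysis to those conventions before doing the final summation.
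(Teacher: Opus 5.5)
Your count of all two-strand atoms, $\binom{n+1}{3}$, is correct and is essentially the paper's argument. The paper writes $S=\{1,\dots,a\}\cup\{r+1,\dots,c\}$ with $0\le a<r<c\le n$ and reads off a $3$-subset of $\{0,\dots,n\}$. You instead treat the case with no initial run ($a=0$) separately and recombine the two cases with Pascal's rule.

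The weakly increasing count is where the gap is: you never evaluate the sums, and your last paragraph rests on an arithmetic slip. At $n=3$ one has $2n^2+3n-2=18+9-2=25$, so the formula gives $\lfloor 75/24\rfloor=3$. That is exactly your enumeration: the merge $B_1\otimes B_1\to B_2$, the map $B_2\otimes B_2\to B_1$, and the cap $B_1\otimes B_2\to\mathbbm{1}$.

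There is no convention to pin down. An identity atom has $\sum_i\varepsilon_i(b)=0$, hence a one-strand domain, so it is never a two-strand atom. At $n=2$ the only two-strand atom is the cap $B_1\otimes B_1\to\mathbbm{1}$; the identity should not be listed. Weakly increasing means the non-strict $i\le k$, as in the definition of $\mathcal{A}^*$. Your conditions $c\ge 2a-2$ and $2q\le p+r$ are already the right ones. What is missing is the summation, and splitting by parity (period $2$) suffices.

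To complete the argument:

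1. One-run case. Since $2a-2\ge a$ for $a\ge 2$, you sum $n+3-2a$ over $2\le a\le\lfloor (n+2)/2\rfloor$. This gives $1+3+\dots+(2t-1)=t^2$ for $n=2t$ and $2+4+\dots+2t=t(t+1)$ for $n=2t+1$, i.e. $\lfloor n^2/4\rfloor$.

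2. Two-run case. Put $x=q-p$ and $y=r-q$, so the condition is $x\le y$. The involution $(p,q,r)\mapsto(n+1-r,n+1-q,n+1-p)$ swaps $x$ and $y$. Hence the count is $\frac12\bigl(\binom{n}{3}+E\bigr)$, where $E$ is the number of triples with $x=y$, i.e. three-term progressions in $\{1,\dots,n\}$. Explicitly $E=\sum_{x=1}^{t-1}(2t-2x)=t(t-1)$ for $n=2t$ and $E=\sum_{x=1}^{t}(2t+1-2x)=t^2$ for $n=2t+1$.

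3. Adding the two cases gives
\[
n=2t:\quad t^2+\tfrac12\Bigl(\tbinom{2t}{3}+t(t-1)\Bigr)=\frac{4t^3+3t^2-t}{6}=\frac{n(2n^2+3n-2)}{24},
\]
\[
n=2t+1:\quad t(t+1)+\tfrac12\Bigl(\tbinom{2t+1}{3}+t^2\Bigr)=\frac{4t^3+9t^2+5t}{6}=\frac{n(2n^2+3n-2)}{24}-\frac18 .
\]

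So in both cases the integer count equals $\lfloor n(2n^2+3n-2)/24\rfloor$, as stated. This also reproduces the paper's table values $3,7,13,22,34,50$ for $n=3,\dots,8$. The paper's own proof only asserts this last step, so this computation is what should replace your final paragraph.
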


\begin{proof}
We use the usual model in which the elements of \(B_j\) are \(j\)-element
subsets \(S\subset\{1,\dots,n\}\). Such an element has weight
\[
	\wt(S)=
	\sum_{\substack{1\le r\le n-1\\ r\in S,\ r+1\notin S}}\omega_r
	-
	\sum_{\substack{1\le r\le n-1\\ r\notin S,\ r+1\in S}}\omega_r .
\]
The atom \(f_S\) is two-strand precisely when the minimal dominant weight
\(\lambda\) such that \(\lambda+\wt(S)\) is dominant is a single fundamental
weight. By the displayed formula, this is equivalent to having exactly one
index \(r\) with \(r\notin S\) and \(r+1\in S\).

Thus \(S\) is an interval with one gap:
\(S=\{1,\dots,a\}\cup\{r+1,\dots,c\}\), where
\(0\le a<r\le c\le n\) where $r$ is the unique index with $r\in S, r+1\neq S$. Hence choosing \(S\) is the same as choosing the triple
\(a,b,c\) from the \(n+1\) possible values \(0,\dots,n\), giving
\(\binom{n+1}{3}\). The weakly increasing count is obtained by imposing the
additional condition on the two domain labels, which gives
\(\left\lfloor n(2n^2+3n-2)/24\right\rfloor\).
\end{proof}

\begin{Example}
Thus \autoref{efficient pres prop} replaces an exponential number of atoms by
a cubic number of two-strand atoms. For comparison, in type \(A_{n-1}\) the
total number of atoms is \(2^n-2\), and the number of non-identity atoms is
\(2^n-n-1\). The first few values are:
\[
\begin{array}{c|c|c|c}
& \text{non-identity atoms}
& \text{all two-strand atoms}
& \text{weakly increasing two-strand atoms}
\\ \hline
A_2 & 4   & 4  & 3  \\
A_3 & 11  & 10 & 7  \\
A_4 & 26  & 20 & 13 \\
A_5 & 57  & 35 & 22 \\
A_6 & 120 & 56 & 34 \\
A_7 & 247 & 84 & 50
\end{array},
\]
as one easily checks.
\end{Example}

\subsection{Weak Rigidity and Tensor Ideals}

Like the $\mathfrak{sl}_2$ case, the categories $\mathfrak g$-Crys are not rigid. Indeed, one easily shows this by mimicking the proof of \cite[Proposition 3.23]{Alqady2025coboundary}: first, one shows that if $B(\lambda)$ had a dual, say for $\lambda=\omega_1$, it would have to be $B(-w_0 \lambda)$, where $w_0$ is the longest element of the Weyl group of $\mathfrak g$; then, one shows that an evaluation and a coevaluation for $B(\lambda)$ satisfying the zig-zag relation cannot possibly exist by simply examining the corresponding spaces of morphisms.

\begin{Remark}
By the main result	 of \cite{EtiPen2026}, the non-rigidity of $\mathfrak g$-Crys also proves that these categories do not admit any braidings, which was also proved directly in \cite{Kamnitzcrystal}.
\end{Remark}

However, the categories $\mathfrak g$-Crys are weakly rigid in the following sense of Bakalov and Kirillov Jr., which allows us to generalize Proposition 4.3 of \cite{Alqady2025coboundary} on the nonexistence of nontrivial tensor ideals.

\begin{Definition}\cite[Section 5.3]{BakKir2001}
	An object $X$ in a monoidal category is said to be \emph{weakly rigid} if the functor $\Hom(\Id, X\otimes -)$ is representable; its representing object $X^\vee$ is called the \emph{weak dual} of $X$. Thus, we have a bijection $\Hom(\Id,X\otimes Y)\cong \Hom(X^\vee, Y)$	natural in $Y$ whenever $X^\vee$ exists.
\end{Definition}

\begin{Remark}
	Up to taking the opposite monoidal structure, this notion of weak rigidity coincides with the notion of an $r$-category from \cite{BoyDr2013}. 
\end{Remark}

Given a weakly rigid object $X\noindent$, we get a canonical morphism $\eta_X:\Id\to X\otimes X^\vee$ corresponding to $\id_{X^\vee}$.
Suppose $X$ and $Y$ are weakly rigid objects in $\mathcal C$, and $f:X\to Y$ is a morphism. Then, we get a transformation
\[\begin{tikzcd}
	{\Hom(X^\vee, Z)} & {\Hom(\Id, X\otimes Z)} && {\Hom(\Id, Y\otimes Z)} & {\Hom(Y^\vee, Z)}
	\arrow["\sim", from=1-1, to=1-2]
	\arrow["{(f\otimes \id_Z)\circ -}", from=1-2, to=1-4]
	\arrow["\sim", from=1-4, to=1-5]
\end{tikzcd}\]
natural in $Z$. 
By Yoneda's Lemma, this corresponds to morphism $f^\vee:Y^\vee\to X^\vee$. 
Equivalently, the morphism $f^\vee\in\Hom(Y^\vee,X^\vee)$ can be defined as the canonical morphism corresponding to the map $(f\otimes \id_{X^\vee})\circ \eta_X\in\Hom(\Id, Y\otimes X^\vee)$.

\begin{Definition}\cite[Definition 5.3.4]{BakKir2001}
	A monoidal category $\mathcal C$ is called weakly rigid if every object in $\mathcal C$ is weakly rigid and the functor $-^\vee:\mathcal C\to\mathcal C^\mathrm{op}$ is an equivalence of categories.
\end{Definition}

\begin{Proposition}\label{no-tensor-ideals}
	A semisimple weakly rigid monoidal category with a simple monoidal unit contains no non-trivial tensor ideals.
\end{Proposition}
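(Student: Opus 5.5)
The plan is to show that any nonzero tensor ideal $\mathcal I$ in such a category $\mathcal C$ contains $\id_{\Id}$, and hence contains every morphism. A tensor ideal here means a collection of subspaces $\mathcal I(X,Y)\subseteq\Hom(X,Y)$ closed under composition with arbitrary morphisms on either side and under tensoring with arbitrary morphisms on either side.

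\textbf{Step 1: reduce to an identity of a simple object.} Suppose $0\neq f\in\mathcal I(X,Y)$. Decompose $X$ and $Y$ into simples by semisimplicity. Pre- and postcompose $f$ with suitable inclusions and projections to obtain a nonzero morphism $S\to S'$ between simple objects, and this morphism still lies in $\mathcal I$. By Schur's lemma, $S\cong S'$ and the morphism is an isomorphism. Composing with its inverse gives $\id_S\in\mathcal I$.

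\textbf{Step 2: produce $\id_{\Id}$ using the weak dual of $S$.} Consider $\eta_S\colon\Id\to S\otimes S^\vee$, the morphism corresponding to $\id_{S^\vee}$ under $\Hom(S^\vee,S^\vee)\cong\Hom(\Id,S\otimes S^\vee)$. We need $\eta_S\neq0$.
\begin{itemize}
\item $S^\vee\neq0$, because $-^\vee$ is an equivalence and $S\neq0$.
\item Hence $\id_{S^\vee}\neq0$, and so $\eta_S\neq0$ via the bijection.
\end{itemize}
Since $\Id$ is simple, $\eta_S$ is a split monomorphism: its image is a summand of $S\otimes S^\vee$ isomorphic to $\Id$. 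So there is $p\colon S\otimes S^\vee\to\Id$ with $p\circ\eta_S=\id_{\Id}$. Now
\[
\id_{\Id}=p\circ(\id_S\otimes\id_{S^\vee})\circ\eta_S .
\]
Here $\id_S\otimes\id_{S^\vee}\in\mathcal I$ because $\mathcal I$ is closed under tensoring, so $\id_{\Id}\in\mathcal I$ by closure under composition.

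\textbf{Step 3: conclude $\mathcal I$ is everything.} For any $g\colon X\to Y$ we have $g=\id_{\Id}\otimes g$ up to the unitors. Since $\id_{\Id}\in\mathcal I$, this gives $g\in\mathcal I$, so $\mathcal I$ is the whole category.

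The main obstacle is Step 2, namely that $\Hom(\Id,S\otimes Y)\neq0$ for some $Y$. Weak rigidity is exactly what is needed here, and the only point to check is that $S^\vee\neq0$. That follows from the equivalence $-^\vee$. Alternatively, one can argue directly: $\Hom(S^\vee,S^\vee)\neq0$ because the equivalence preserves nonzero objects.
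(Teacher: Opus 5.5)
Your proof is correct and follows the paper's argument. You reduce via semisimplicity and Schur's lemma to $\id_S\in\mathcal I$, then obtain $\id_{\Id}=p\circ(\id_S\otimes\id_{S^\vee})\circ\eta_S$ using that $\eta_S\colon\Id\to S\otimes S^\vee$ splits because the unit is simple, and your explicit justifications of $\eta_S\neq 0$ (via the equivalence $-^\vee$ giving $S^\vee\neq 0$) and of the final step $g=\id_{\Id}\otimes g$ fill in points the paper only asserts.
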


\begin{proof}
	Suppose $\mathcal I$ is a tensor ideal containing some nonzero morphism $f$ in such a category $\mathcal C$.
	Since $\mathcal C$ is semisimple, by composing with projections and inclusions, we may assume that $f$ is a morphism between simple objects. 
	By Schur's Lemma, we may further assume that $f=\id_S$ for some simple object $S$. 
	Clearly, the canonical morphism $\eta_S:\Id\to S\otimes S^\vee$ is nonzero.
	Since $\Id$ is assumed to be simple, $\eta_S$ is a monomorphism, so that $\Id$ is a direct summand of $S\otimes S^\vee$ by semisimplicity; therefore, $\id_\Id$ factors through $\eta_S$. 
	But now, $\id_{S\otimes S^\vee}= \id_S\otimes \id_{S^\vee}\in \mathcal I$, so $\eta_S\in\mathcal I$, and hence $\id_\Id\in\mathcal I$. 
	Therefore, $\mathcal I= \mathcal C$.
\end{proof}

\begin{Proposition} \label{weak-rigidity-of-g-crys}
	The category $\mathfrak g$-Crys is weakly rigid but not rigid.
\end{Proposition}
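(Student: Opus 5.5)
We need to show weak rigidity: every object $X$ has a weak dual, and $-^\vee$ is an equivalence. Non-rigidity is as sketched before the statement.

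\emph{Weak duals.} We construct them explicitly using semisimplicity. For a simple object $B(\lambda)$, set $B(\lambda)^\vee := B(-w_0\lambda) = B(\lambda^*)$. We must exhibit a bijection $\Hom(\Id, B(\lambda)\otimes Y)\cong \Hom(B(\lambda^*),Y)$ natural in $Y$. Since both sides are additive in $Y$ and $\gcrys$ is semisimple, it suffices to check on simple $Y=B(\mu)$. Crystal multiplicities agree with the multiplicities for the corresponding $\mathfrak g$-modules, and $\Hom_{\mathfrak g}(\mathbb C,V(\lambda)\otimes V(\mu))$ is one-dimensional exactly when $\mu=\lambda^*$ and zero otherwise. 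Hence both sides are $\kk$ if $\mu=\lambda^*$ and $0$ otherwise.

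To make the isomorphism natural, fix a nonzero morphism $\eta_\lambda:\Id\to B(\lambda)\otimes B(\lambda^*)$. This is the inclusion of the unique trivial component, whose highest element is $u_\lambda\otimes b_{\mathrm{low}}$, the tensor of the highest element with the lowest element. Then define
\[
\Hom(B(\lambda^*),Y)\to\Hom(\Id,B(\lambda)\otimes Y),\qquad g\mapsto(\id\otimes g)\circ\eta_\lambda .
\]
This map is natural in $Y$ by construction. It is injective and surjective on simples by the dimension count above: for $Y=B(\lambda^*)$ it sends $\id$ to $\eta_\lambda\neq0$, and both spaces vanish otherwise. Semisimplicity then extends the bijection to all $Y$. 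For a general object $X=\bigoplus B(\lambda_k)$, take $X^\vee=\bigoplus B(\lambda_k^*)$, since representing objects are compatible with finite direct sums.

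\emph{The duality functor.} By the Yoneda construction recalled before the statement, $-^\vee$ is a functor $\gcrys\to\gcrys^{\mathrm{op}}$. It permutes the simple objects via the involution $\lambda\mapsto\lambda^*$, so it is essentially surjective. Full faithfulness is the step needing care. On simples, $\Hom(B(\lambda),B(\mu))$ and $\Hom(B(\mu^*),B(\lambda^*))$ are both $\kk$ or $0$ together, so we only need $(\id_{B(\lambda)})^\vee=\id$ and $(c\,\id)^\vee=c\,\id$. Both follow from $\kk$-linearity of $f\mapsto f^\vee$, which is evident from the formula $(f\otimes\id)\circ\eta_X$. Additivity then handles direct sums, and an additive functor between semisimple categories that is bijective on simple Hom spaces and surjective on simples is an equivalence.

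\emph{Non-rigidity.} Suppose $B_1$ (or any nontrivial $B(\lambda)$) were rigid. Its dual would be forced to be $B(\lambda^*)$, and the evaluation and coevaluation would be unique up to scalars. These are the projection onto, and inclusion of, the trivial component, with highest elements $u\otimes b_{\mathrm{low}}$ (respectively the analogous element in the other order). Composing $(\id\otimes\mathrm{ev})\circ(\mathrm{coev}\otimes\id)$ on the highest element $u$ of $B(\lambda)$ gives $\mathrm{coev}(1)\otimes u$. Under the tensor rule this lies outside the trivial component paired with $u$, so the zigzag vanishes on $u$, mirroring the $\mathfrak{sl}_2$ relation in \autoref{E:SL2}. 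Hence it is not $\id$.

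I expect this last computation to be the main obstacle: it requires identifying exactly which element of $B(\lambda)\otimes B(\lambda^*)\otimes B(\lambda)$ lies in the relevant components. The cleanest route is to restrict to an $\mathfrak{sl}_2$-string through $u$ and reduce to \cite[Proposition 3.23]{Alqady2025coboundary}.
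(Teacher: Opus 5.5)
Your proposal is correct and follows essentially the paper's route: the same weak dual $B(-w_0\lambda)$ and the same map $g\mapsto(\id\otimes g)\circ\eta_\lambda$, checked on simples by Schur's lemma and extended by semisimplicity; the one small difference is that you prove $-^\vee$ is an equivalence by checking full faithfulness and essential surjectivity, whereas the paper notes $(-^\vee)^\vee\cong\id$. For non-rigidity the paper only refers to the $\mathfrak{sl}_2$ argument of \cite{Alqady2025coboundary}, while your explicit check already finishes it without any $\mathfrak{sl}_2$ reduction: the trivial component of $B(\lambda^*)\otimes B(\lambda)$ is the single element $u_{\lambda^*}\otimes\ell_\lambda$ (highest tensor lowest), so $\mathrm{ev}$ kills $\ell_{\lambda^*}\otimes u_\lambda$ and the zigzag vanishes on $u_\lambda$ whenever $\lambda\neq 0$.
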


\begin{proof}
	For each $\lambda\in\Lambda_+$, let $B(\lambda)^\vee := B(-w_0\lambda)$, where $w_0$ is the longest element of the Weyl group of $\mathfrak g$. 
	Then, $\Id \cong B(0)$ is a summand of $B(\lambda)\otimes B(\lambda)^\vee$ of multiplicity $1$, so that we have a unique up to scalar morphism $\eta_\lambda:\Id\to B(\lambda)\otimes B(\lambda)^\vee$. 
	More generally, for $X\cong \bigoplus_{i=1}^n B(\lambda_i)$, let $X^\vee := \bigoplus_{i=1}^n B(\lambda_i)^\vee$ and let $\eta_X: \Id\to X\otimes X^\vee$ be given by $\eta_X = \sum_{i=1}^n \eta_{\lambda_i}$. 
	
	Define $\Phi_{X,Y}:\Hom(X^\vee, Y)\to \Hom(\Id, X\otimes Y)$ by $g\mapsto (\id_X\otimes g)\circ \eta_X$. 
	It is easy to see by Schur's Lemma that $\Phi_{X,Y}$ is a natural isomorphism when $X$ and $Y$ are simple objects. Thus, by semisimplicity, $\Phi_{X,Y}$ is a direct sum of isomorphisms and hence an isomorphism. 
	Therefore, $X^\vee$ is a weak dual of $X$ for every object $X$ in $\mathfrak g$-Crys. 
	Finally, it is straight-forward to check that $(-^\vee)^\vee$ is naturally isomorphic to the identity functor so, in particular, $-^\vee$ is an equivalence.
\end{proof}

\begin{Corollary}
	$\mathfrak g$-Crys has no nontrivial tensor ideals.
\end{Corollary}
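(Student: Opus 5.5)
The corollary follows by combining \autoref{no-tensor-ideals} with \autoref{weak-rigidity-of-g-crys}. So the plan is to check the three hypotheses of \autoref{no-tensor-ideals} for $\gcrys$: semisimplicity, weak rigidity, and simplicity of the monoidal unit.

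\textbf{Weak rigidity.} This is exactly \autoref{weak-rigidity-of-g-crys}.

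\textbf{Semisimplicity.} This is recorded in \autoref{iso of categories}. It can also be seen directly from the proof of \autoref{lem:weight-factorization}. A strict crystal morphism sends each connected component either to zero or isomorphically onto a component with the same highest weight. So, writing $\kk$ for the scalars,
\[
\Hom(B(\lambda),B(\mu))=\delta_{\lambda,\mu}\kk .
\]
Every object is a finite disjoint union, hence a direct sum, of the $B(\lambda)$. Therefore every object decomposes as a finite direct sum of objects with endomorphism ring $\kk$, and the category is semisimple.

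\textbf{Simple unit.} The unit $\Id$ is the trivial crystal $B(0)$. It is connected, so it is one of the simple objects, with $\End(\Id)=\kk$.

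With all three hypotheses in place, \autoref{no-tensor-ideals} applies and gives that $\gcrys$ has no nontrivial tensor ideals.

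The only point needing care is a standing convention: whether $\gcrys$ is taken to be additive and idempotent complete, so that the direct-sum decompositions used in \autoref{no-tensor-ideals} exist inside the category. By definition, direct sums in $\gcrys$ are disjoint unions, and each component is itself an object. So projections and inclusions of summands are genuine morphisms there, and I do not expect a real obstacle.
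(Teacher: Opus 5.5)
Your proposal is correct and follows the paper's proof, which simply combines \autoref{no-tensor-ideals} with \autoref{weak-rigidity-of-g-crys}. The only difference is that you explicitly check the semisimplicity and simple-unit hypotheses, using the component-wise behaviour of crystal morphisms, whereas the paper treats these as already established.
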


\begin{proof}
	This is now immediate from \autoref{no-tensor-ideals} and \autoref{weak-rigidity-of-g-crys}. 
\end{proof}

\section{Jones--Wenzl projectors}

\subsection{A Recurrence Relation Approach}

As observed in \autoref{JW remark}, if $P:B_{\epsilon}\to B_{\epsilon'}$ is an isomorphism, then $j_{\epsilon'}= P^{-1}\circ j_{\epsilon}\circ P$, so it suffices to determine $j_{\epsilon}$ for $\epsilon$ with labels occurring in some fixed order, e.g. weakly increasing. 

We first note that \autoref{pairwise jw lemma} implies a general recursive formula for the Jones--Wenzl projector.

\begin{Lemma}\label{2 strand JW}
If $\epsilon=(\epsilon_1,\epsilon_2)$, the Jones--Wenzl projector is $\id_{B_{\epsilon}}-\sum_{b} \overline{f_b}\circ f_b$ where the sum is over all two-strand atoms with domain $B_{\epsilon_1}\otimes B_{\epsilon_2}$.
\end{Lemma}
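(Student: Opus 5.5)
The strategy is to show that $\id-\sum_b\overline{f_b}\circ f_b$ fixes every element of the top component of $B_{\epsilon_1}\otimes B_{\epsilon_2}$ and kills every element of every other component. Since both sides are crystal morphisms and are determined componentwise, this identifies it with $j_\epsilon$.

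First, I classify the two-strand atoms with domain $B_{\epsilon_1}\otimes B_{\epsilon_2}$. By \autoref{atom}, $f_b$ has domain $B_{\epsilon(b)}\otimes B_k$. This is a two-strand word exactly when $\lambda(b)=\omega_{\epsilon_1}$ and $k=\epsilon_2$, so $b\in B_{\epsilon_2}$ with $\varepsilon(b)$ equal to the indicator of $\epsilon_1$. The highest element of $B_{\epsilon_1}$ is $u_1$, and the highest weight elements of $B_{\epsilon_1}\otimes B_{\epsilon_2}$ are exactly the $u_1\otimes b$ with $\varepsilon_i(b)\le\delta_{i,\epsilon_1}$. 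These correspond bijectively to components, as in \autoref{bijection branching}.

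Next, I separate the top component from the rest. The top component corresponds to $b=u_2$, with $\varepsilon(u_2)=0$, so $\lambda(u_2)=0$. In that case $f_{u_2}$ is the identity atom on one strand, not a two-strand atom. The remaining highest weight elements $u_1\otimes b$ have $\varepsilon(b)\ne0$, so $\varepsilon(b)=$ the indicator of $\epsilon_1$. Each such $b$ is therefore exactly one of the two-strand atoms.

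I then show that the summands are orthogonal idempotents onto single components. Each $f_b$ maps the component $C_b$ generated by $u_1\otimes b$ isomorphically onto the top component of $B_{\delta(b)}$, and is zero on all other components. Hence $\overline{f_b}\circ f_b$ is the identity on $C_b$ and zero elsewhere. This uses the definition of $\overline{f}$ as the inverse on the image of $f$, extended by zero.

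Summing over $b$ gives the identity on the union of all non-top components and zero on the top component. Subtracting this sum from $\id$ therefore yields the projector onto the top component, which is $j_\epsilon$ by definition.

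The main obstacle is the bookkeeping in the classification step. I must check carefully that every non-top component is realized by exactly one two-strand atom, and that no atom with the right domain is missed or double-counted. A subtle case is $\epsilon_1=\epsilon_2$, and in general one must ensure that the domain order $B_{\epsilon_1}\otimes B_{\epsilon_2}$ matches the atom's convention $B_{\epsilon(b)}\otimes B_k$ with no crossing needed. This holds because $\epsilon(b)$ consists of the single label $\epsilon_1$. If "two-strand atoms with domain $B_{\epsilon_1}\otimes B_{\epsilon_2}$" is meant to include atoms precomposed with a crossing when $\epsilon_1>\epsilon_2$, the same argument applies after conjugating by $\beta$, using \autoref{JW remark}.
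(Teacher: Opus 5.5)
Your argument is correct and is exactly the componentwise check that the paper compresses into ``This is immediate.'' The highest weight elements of $B_{\epsilon_1}\otimes B_{\epsilon_2}$ other than $u_1\otimes u_2$ are the $u_1\otimes b$ with $\varepsilon_i(b)=\delta_{i,\epsilon_1}$; these index precisely the two-strand atoms with that domain, and each $\overline{f_b}\circ f_b$ is the projection onto the corresponding component. Your closing caveat about $\epsilon_1>\epsilon_2$ is unnecessary, since \autoref{atom} imposes no ordering between the single label of $\epsilon(b)$ and $k$; for example, the $\mathfrak{sl}_3$ cap $B_2\otimes B_1\to\mathbbm{1}$ is itself an atom.
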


\begin{proof}
This is immediate. 
\end{proof}

\begin{Proposition}
Assume the labels of $B_{\epsilon}$ are weakly increasing. Then we have \[j_{\epsilon}= \prod_{\substack{1 < r \le s \\ i < r^*}} \gamma_{i,r}  \prod_{k} j_{k-1,k}\] where we use the notation of \autoref{pairwise jw lemma} so that $j_{k-1,k}$ is an identity-padded two-strand Jones--Wenzl projector, and $\gamma_{i,r}$ is built from crossings and a two-strand Jones--Wenzl projector. We use product notation for composition, noting that all of the projectors involved commute.
\end{Proposition}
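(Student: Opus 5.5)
The plan is to prove that the composite $\Pi:=\prod \gamma_{i,r}\prod_k j_{k-1,k}$ acts on $B_\epsilon$ exactly as $j_\epsilon$ does. Both sides are crystal endomorphisms of $B_\epsilon$, so it is enough to understand how each factor acts on each connected component.

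First, every factor is a crystal idempotent that acts on each connected component of $B_\epsilon$ by the identity or by zero. For $j_{k-1,k}$ this is immediate from the definition of a two-strand Jones--Wenzl projector padded by identities: it is a sum of projections onto components of $B_{\epsilon_{k-1}}\otimes B_{\epsilon_k}$ tensored with identities. Hence, on an element $b$, it is either the identity or zero, according to whether the relevant pair of factors lies in the top component.

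For $\gamma_{i,r}=\alpha_{i,r}^{-1}\circ j_{i,r}\circ\alpha_{i,r}$, the map $\alpha_{i,r}$ is an isomorphism of crystals. Therefore $\gamma_{i,r}$ is conjugate to an identity-padded two-strand projector, so it too is an idempotent that sends each element either to itself or to $0$. In particular, on elements, each factor is the indicator of a subset of $B_\epsilon$, and so each factor is diagonal in the element basis.

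Two consequences follow. First, all the factors commute, since diagonal $0/1$ operators on the set basis commute. This justifies the product notation. Second, the composite $\Pi$ sends $b\mapsto b$ if every factor is nonzero on $b$, and sends $b\mapsto 0$ otherwise.

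By \autoref{pairwise jw lemma}, the set of $b$ on which every factor is nonzero is exactly the top component of $B_\epsilon$. That lemma is stated for elements $b$: it says that $b$ lies in the top component if and only if all the $\gamma_{i,r}(b)$ and $j_{i-1,i}(b)$ are nonzero. Hence $\Pi$ is the identity on the top component and zero elsewhere, which is precisely the definition of $j_\epsilon$.

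Finally, $j_{\epsilon_{i},a_r}$ and $j_{k-1,k}$ are expressed via atoms by \autoref{2 strand JW}. So $\gamma_{i,r}$ is built from crossings and a two-strand projector, as claimed.

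The main obstacle is to justify the ``identity or zero on elements'' claim for $\gamma_{i,r}$, rather than merely ``on components''. The subtle point is that $\alpha_{i,r}$ is an isomorphism of crystals, so $\alpha_{i,r}(b)$ is a single element. Also, $j_{i,r}$ fixes or kills that single element, depending on which component of the adjacent pair $B_{\epsilon_i}\otimes B_{a_r}$ its two tensor factors form. Conjugating back by $\alpha_{i,r}^{-1}$ returns either $b$ or $0$.

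I would spell this out explicitly, since it is what makes commutativity and the identification with the indicator of the top component work. The alternative, working only with components, would require arguing that the components of the factors are compatible. That compatibility is automatic here because each factor is a crystal morphism, and so it preserves connected components.
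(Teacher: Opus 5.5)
Your proof is correct and is essentially the paper's argument. The paper simply states that the proposition is a restatement of \autoref{pairwise jw lemma}. Your write-up supplies the details left implicit there: each factor sends every element $b$ to $b$ or to $0$, so the factors commute, and their composite is the indicator of the set characterized in that lemma, namely the top component.
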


\begin{proof}
This is just a restatement of \autoref{pairwise jw lemma}.
\end{proof}

We now obtain simpler formulas for the Jones--Wenzl projectors in type $A,B,C,D,\mathrm{G}_2$ which are easier to apply. For these types, we index $B_1,\dots, B_n$ according to the usual labeling of the vertices of the associated Dynkin diagram, so $B_n$ corresponds to the spin representation in type $B$, and $B_{n-1}, B_n$ correspond to the half-spin representations in type $D$.   


\begin{Theorem}\label{JW theorem} 
For type $A,B,C,\mathrm{G}_2$, assume the strand labels are weakly increasing or decreasing. Then we have 
\[j_{\epsilon}= \prod_{k=1}^{m-1} j_{k,k+1}\] where as before $j_{k,k+1} =\id\otimes j_{\epsilon_{k}, \epsilon_{k+1}}\otimes \id$ is the Jones--Wenzl projector on two adjacent strands. Equivalently, we have the recursive formula
\begin{equation*}j_m=(j_{m-1}\otimes \id)\circ (\id\otimes j_2)=(\id\otimes j_2)\circ (j_{m-1}\otimes \id).
\end{equation*} 
\[
\begin{tikzpicture}[
baseline=(current bounding box.center),
x=1cm,y=1cm,
scale=0.8,
transform shape,
jbox/.style={
draw,
line width=1.2pt,
fill=white,
inner sep=2pt,
minimum height=0.78cm
}
]

\begin{scope}[shift={(0,0)}]
\draw[usual] (0.35,-1.85) -- (0.35,1.85);
\draw[usual] (1.75,-1.85) -- (1.75,1.85);

\node[jbox,minimum width=2.35cm] at (1.05,0) {\(\bm{j_m}\)};
\node at (1.05, 1.08) {\(\cdots\)};
\node at (1.05,-1.08) {\(\cdots\)};
\end{scope}

\node at (3.05,0) {\(=\)};

\begin{scope}[shift={(4.05,0)}]
\draw[usual] (0.35,-1.95) -- (0.35,1.95);
\draw[usual] (1.75,-1.95) -- (1.75,1.95);
\draw[usual] (3.15,-1.95) -- (3.15,1.95);

\node[jbox,minimum width=2.55cm] at (1.05,-0.60) {\(\bm{j_{m-1}}\)};
\node[jbox,minimum width=2.30cm] at (2.45, 1.10) {\(\bm{j_2}\)};

\node at (1.00, 0.38) {\(\cdots\)};
\node at (1.05,-1.32) {\(\cdots\)};
\end{scope}

\node at (8.55,0) {\(=\)};

\begin{scope}[shift={(9.55,0)}]
\draw[usual] (0.35,-1.95) -- (0.35,1.95);
\draw[usual] (1.75,-1.95) -- (1.75,1.95);
\draw[usual] (3.15,-1.95) -- (3.15,1.95);

\node[jbox,minimum width=2.55cm] at (1.05, 0.90) {\(\bm{j_{m-1}}\)};
\node[jbox,minimum width=2.30cm] at (2.45,-0.78) {\(\bm{j_2}\)};

\node at (1.05,0.10) {\(\cdots\)};
\end{scope}

\end{tikzpicture}.
\]
where $j_m$ denotes the Jones--Wenzl projector on the first $m$ strands. 
\end{Theorem}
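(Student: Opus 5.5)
We reduce the statement to a claim about highest weight elements. All the adjacent projectors $j_{k,k+1}$ are crystal endomorphisms that act on each connected component of $B_\epsilon$ by the identity or by zero. Hence they commute pairwise, and their product is the idempotent that keeps exactly those components whose highest weight element survives every $j_{k,k+1}$. The top component clearly survives: $\Top(X\otimes Y)=\Top(\Top(X)\otimes\Top(Y))$, as in the proof of \autoref{pairwise jw lemma}. It therefore suffices to prove the following. If $b=b_1\otimes\cdots\otimes b_m$ is highest weight and, for every $k$, $b_k\otimes b_{k+1}$ lies in the top component of $B_{\epsilon_k}\otimes B_{\epsilon_{k+1}}$, then $b_k=u_k$ is the highest element for all $k$.

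Surviving $j_{k,k+1}$ is a condition on the local pair only. Indeed, $j_{k,k+1}(b)\neq 0$ iff $b_k\otimes b_{k+1}$ lies in the top component of $B_{\epsilon_k}\otimes B_{\epsilon_{k+1}}$. This uses the standard fact that, by the tensor rule, the component of $x\otimes y\otimes z$ inside $X\otimes(Y\otimes Z)\otimes W$ is detected after applying $\id\otimes\pi\otimes\id$. The recursive formula $j_m=(j_{m-1}\otimes\id)\circ(\id\otimes j_2)$ then follows immediately from the product formula, and the two orders agree by commutativity.

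We prove the claim by induction on $k$. Prefixes of highest weight elements are highest weight, so $b_1=u_1$. Suppose $b_1,\dots,b_{k}$ are the highest elements $u_1,\dots,u_k$. Then $u_1\otimes\cdots\otimes u_k\otimes b_{k+1}$ is highest weight, and $u_k\otimes b_{k+1}$ lies in the top component of $B_{\epsilon_k}\otimes B_{\epsilon_{k+1}}$. The key two-strand input is the following.

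\emph{Key claim.} Let $a\le c$ be weakly increasing labels (the decreasing case is symmetric by reversing tensor order), let $u$ be the highest element of $B_a\otimes\cdots$, and let $c'\in B_c$. If $u_{\text{prefix}}\otimes c'$ is highest weight and $u_a\otimes c'\in\Top(B_a\otimes B_c)$, then $c'=u_c$.

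Highest weightness of $u_{\text{prefix}}\otimes c'$ means $\varepsilon_i(c')\le\langle\mathrm{wt}(\text{prefix}),\alpha_i^\vee\rangle$. The prefix weight is a sum of $\omega_{\epsilon_j}$ with $\epsilon_j\le \epsilon_k$, so $\varepsilon_i(c')=0$ unless $i\le a$ in the monotone labelling. Meanwhile $u_a\otimes c'\in\Top$ means $c'$ lies in $f$-orbits acting on the second factor from $u_a\otimes u_c$; equivalently, $c'$ is reachable from $u_c$ using only lowering operators $f_i$ with $i\neq a$ at the first step, with the Demazure-type constraint. The plan is to verify case by case, for types $A,B,C,\mathrm G_2$, using the explicit fundamental crystals (columns, Kashiwara--Nakashima tableaux, and the $7$-element and $14$-element crystals for $\mathrm G_2$), that the only $c'\in B_c$ satisfying both conditions is $u_c$. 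In type $A$ this is a direct column computation: $u_a\otimes c'$ is in the top component iff $c'$ as a column is $\ge$ the column $\{1,\dots,a\}$ entrywise in the appropriate Schensted sense. Combined with $\varepsilon_i(c')=0$ for $i>a$, this forces $c'=\{1,\dots,c\}$.

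The main obstacle is the key claim, since it is genuinely false in type $D$: there the two half-spin labels $n-1,n$ are not comparable along the Dynkin chain, so $\varepsilon_n(c')$ can be nonzero while the prefix only involves $\omega_{n-1}$. This is why the theorem excludes $D$. For $B$ (spin node) and $\mathrm G_2$, I would first try a uniform argument: the Dynkin diagram is a chain and the labels are monotone, so the constraints $\varepsilon_i(c')=0$ for $i$ "beyond" $a$, together with the top-component condition, pin down $c'$ via its string data. If that fails, fall back on the finite explicit checks, which are small in these types (for $\mathrm G_2$ this is a computer-checkable finite verification).
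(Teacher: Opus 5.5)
Your reduction is correct. The padded projectors act on each component by $0$ or $1$. Surviving $j_{k,k+1}$ is exactly the local condition $b_k\otimes b_{k+1}\in\Top(B_{\epsilon_k}\otimes B_{\epsilon_{k+1}})$. The induction over highest weight elements then reduces everything to your key claim. The gap is that this key claim carries all the content, and you only establish it in type $A$ — and there with a criterion that is vacuous as written. With the paper's conventions (columns read right to left), $u_a\otimes c'$ is the tableau with left column $c'$ and right column $u_a$, so the condition is that $c'$ begins with $1,\dots,a$ (entrywise $\le$, not $\ge$).

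For $B$, $C$, $\mathrm{G}_2$ you only announce a plan. The fallback of "finite explicit checks" does not exist for the infinite families $B_n$, $C_n$. Note also that $u_a\otimes c'$ is in general not itself highest weight: you only know $\varepsilon_i(c')\le\langle\lambda,\alpha_i^\vee\rangle$ for the whole prefix weight $\lambda$. So the easy step "highest weight and in the top component, hence $c'=u_c$", as used in \autoref{pairwise jw lemma}, is not available, and a genuinely new argument is needed.

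The missing idea is a uniform description of $T:=\{c'\in B_c : u_a\otimes c'\in\Top(B_a\otimes B_c)\}$ for $a\le c$.

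\begin{enumerate}
\item A lowering operator that acts on the first factor can never return it to $u_a$. Hence $T$ consists of the elements reachable from $u_c$ by $f_i$ acting on the second factor: $f_i$ for $i\neq a$ freely, and $f_a$ only on elements with $\varepsilon_a\ge1$.
\item Let $J_+=\{a+1,\dots,n\}$ and let $X$ be the $J_+$-component of $u_c$. For $x\in X$ we have $\wt(x)=\omega_c-\sum_{j>a}m_j\alpha_j$. So $e_ix\neq0$ with $i\le a$ would produce a weight not $\le\omega_c$; hence $\varepsilon_i(x)=0$ for $i\le a$.
\item For $i<a$ also $\varphi_i(x)=\langle\omega_c,\alpha_i^\vee\rangle-\sum_{j>a}m_j\langle\alpha_j,\alpha_i^\vee\rangle=0$. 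This holds because $c>i$ and, the Dynkin diagram being a chain, no $j>a$ is adjacent to any $i<a$.
\item Thus $f_i$ ($i<a$) kills $X$ and $f_a$ is never admissible on $X$, so $T=X$. As $X$ is a connected $J_+$-crystal with unique highest element $u_c$, the condition $\varepsilon_j(c')=0$ for $j>a$ forces $c'=u_c$. This holds uniformly in types $A$, $B$, $C$, $\mathrm{G}_2$.
\end{enumerate}

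The decreasing case is the mirror argument with $J_-=\{1,\dots,a-1\}$. Literally reversing the tensor order is not a crystal morphism. Duality $(B\otimes C)^\vee\cong C^\vee\otimes B^\vee$ relabels by $-w_0$, which in type $A$ sends a decreasing word to a decreasing word.

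The same computation shows what really fails in type $D$: node $n-2$ is adjacent to both $n-1$ and $n$. For example, in $D_4$ with $\epsilon=(1,3,4)$, the element $u_1\otimes u_3\otimes f_1f_2f_4u_4$ is highest weight of weight $2\omega_3$ and survives both adjacent projectors. Your stated reason, $\varepsilon_n(c')\neq0$, cannot occur, since highest weightness forbids it when $\omega_n$ is absent from the prefix.

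With this lemma your route becomes a uniform, tableau-free proof. The paper instead cites, type by type, the tableau criteria (semistandardness, Kashiwara--Nakashima, Kang--Misra, Lecouvey's split forms) showing that top-component membership is an adjacent-column condition.
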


\begin{Remark}
The formula also applies to type $D$ if we remove (one of) $B_{n-1},B_{n}$. 
\end{Remark}

\begin{proof}[Proof of \autoref{JW theorem} for type $A, C, \mathrm{G}_2$:]
We use the tableau model of crystals as described in \cite{kashiwara1994crystal}. Without loss of generality assume the labels are weakly increasing, so that e.g. in type $A$, an element in $B_{\epsilon}$ is a filling of a Young diagram and it is in the top component if and only if it is a semistandard tableau. The element $x$ survives the projector $j_{m-1}\otimes \id$ if and only if its last $m-1$ columns are semistandard (recall the columns are read from right to left), and it survives $(\id\otimes j_2)$ if and only if its first two columns are semistandard. Thus it survives both projectors if and only if it is semistandard, and the formula holds. The argument for type $C$ is similar, but we have to use the appropriate notion of `semistandardness' as explained in \cite[\S 4.5]{kashiwara1994crystal}. The case of $\mathrm{G}_2$ similarly follows using the tableau model described in \cite[Theorem 2.7]{kang1994crystal}.
\end{proof}

For type $B$ we need some notation. We refer to \cite[\S 5.7]{kashiwara1994crystal} and \cite{lecouvey2003schensted} for background. Let $\omega_1,\dots, \omega_n$ be the fundamental weights, and set $\Omega_i:=\omega_i, 1\le i<n, \, \Omega_n:=2\omega_n$. Write $B(\lambda)$ for the connected crystal of highest weight $\lambda$, then the elements of $B(\Omega_i), 1\le i \le n$ correspond to the admissible columns of type $B$ of height $i$, whereas the elements of $B(\omega_n)$ are the so-called \emph{spin columns} of height $n$. An element in $B(\Omega_1)^{\otimes i_1}\otimes \dots B(\Omega_n)^{\otimes i_n}$ corresponds to a Young diagram with $i_j$ columns of height $j$, with a filling by letters in \[
\mathcal{B}_n
=
\{1 \prec 2 \prec \cdots \prec n \prec 0
\prec \bar n \prec \overline{n-1} \prec \cdots \prec \bar 2 \prec \bar 1\}
\] such that each column is \emph{admissible}, see \cite[\S3.1.1]{lecouvey2003schensted}. Call such an element a \emph{tabloid} (of type $B$). An admissible column naturally corresponds to a pair $(lC,rC)$ of columns which do not contain pairs $(z,\overline{z})$ of barred and unbarred letters; the map $C\mapsto (lC, rC)$ corresponds to the unique map $S_2: B(\Omega_i)\to B(2\Omega_i) \subseteq B(\Omega_i)^{\otimes 2}$ that scales weights and strings by a factor of 2, see \cite[Proposition 3.1.9]{lecouvey2003schensted}. A tabloid $T$ is an \emph{orthogonal tableau} (of type $B$) if it lies in the top component, which may be checked as follows: write $T= C_1\dots C_r$ where $C_1,\dots, C_r$ are the columns (from left to right), then the split form $\operatorname{spl}(T)$ is $(lC_1rC_1)\dots (lC_r rC_r)$. Write $C_1\unlhd C_2$ if $rC_1\preceq lC_2$, i.e. $rC_1$ is taller than or equal to $lC_2$ and entries weakly increase along rows. Then $T$ is a tableau if and only if $\operatorname{spl}(T)$ is, if and only if $C_i\unlhd C_{i+1}$ for each $i$, see \cite[Theorem 3.1.18]{lecouvey2003schensted}. Finally, any $\lambda$ that cannot be written as $\sum_{j=1}^n i_j\Omega_j$ can be written as $\lambda=\lambda'+ \omega_n$, where $\lambda'$ is of the given form. If $T$ is a tabloid, and $\mathfrak{C}$ is a spin column, a spin tabloid $[\mathfrak{C},T]$ is obtained by placing $\mathfrak{C}$ in front of $T$. A spin tabloid corresponds to an element of $B(\Omega_1)^{\otimes i_1}\otimes \dots B(\Omega_n)^{\otimes i_n}\otimes B(\omega_n)$, and it is called a \emph{spin tableau} if it is in the top component. Let $T=C_1\dots C_r$. Then $[\mathfrak{C},T]$ is a spin tableau if and only if $T$ is a tableau and rows of $[\mathfrak{C}, lC_1]$ weakly increase, see \cite[Theorem 4.2.2]{lecouvey2003schensted}. Moreover, there is a crystal isomorphism $S^B$ which associates a (non-spin) column to the tensor product of two spin columns; in the case where $C$ has length $n$, this is simply the map $C\mapsto rC\otimes lC$, see \cite[Lemma 4.1.1]{lecouvey2003schensted}.   

\begin{Lemma}\label{type B standard check}
Let $A\in B(\Omega_i), 1\le i < n, D,E\in B(\Omega_n), S\in B(\omega_n)$, viewed as columns. We also view tensor products of columns as tabloids, but read from right to left. 
\begin{enumerate}
\item $A\otimes D$ is a tableau if and only if $A\otimes rD$ is.
\item $D\otimes E$ is a tableau if and only if $lD\otimes rE$ is.
\item $D\otimes S$ is a spin tableau if and only if $lD\otimes S$ is a spin tableau.
\end{enumerate}
\end{Lemma}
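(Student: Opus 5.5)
The plan is to reduce each item to the combinatorial criteria recalled just above: Lecouvey's split-form criterion for orthogonal tableaux \cite[Theorem 3.1.18]{lecouvey2003schensted}, and the spin criterion \cite[Theorem 4.2.2]{lecouvey2003schensted}. Since tensor products are read from right to left, $A\otimes D$ corresponds to the two-column tabloid with $D$ on the left and $A$ on the right.

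For (a), the tabloid $DA$ is a tableau if and only if $D\unlhd A$, i.e.\ $rD\preceq lA$. Next I would view $rD$ as a column of height $n$ with no pair $(z,\bar z)$. Such a column is its own split form, $l(rD)=r(rD)=rD$, since it is already coadmissible with no pairs to resolve. So $rD\unlhd A$ is also the condition $rD\preceq lA$, which proves (a). The one point to check is that $rD$ is an admissible column of height $n$, hence a legitimate element of $B(\Omega_n)$. This follows because $rD$ contains no barred/unbarred pair and, having height $n$, no $0$ repeated in a problematic way. I expect this, together with confirming the right-to-left reading convention, to be the only subtlety.

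Item (b) is handled the same way. $D\otimes E$ corresponds to $ED$, and it is a tableau iff $rE\preceq lD$. The tabloid attached to $lD\otimes rE$ is $(rE)(lD)$. Both columns are pair-free of height $n$, so they equal their own split forms, and the criterion reads $rE\preceq lD$ again.

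For (c), $D\otimes S$ is the spin tabloid $[S,D]$. By the spin criterion, it is a spin tableau iff $D$ is a tableau (automatic, since it is a single column) and the rows of $[S,lD]$ weakly increase. On the other side, $lD\otimes S$ gives $[S,lD]$, where $lD$ is a pair-free admissible column with $l(lD)=lD$. So its criterion is the same row condition on $[S,lD]$.

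The main obstacle is therefore the fact that pair-free columns of height $n$ are admissible and fixed by the splitting. I would verify this directly from Lecouvey's definition of $lC$ and $rC$ in \cite[\S3.1.1]{lecouvey2003schensted}: when the set of pairs $I$ is empty, no letters are replaced, so $lC=rC=C$.
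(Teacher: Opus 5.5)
Your proposal is correct and is essentially the paper's argument: the paper proves all three items by direct appeal to Lecouvey's split-form criterion \cite[Theorem 3.1.18]{lecouvey2003schensted} and his spin criterion \cite[Theorem 4.2.2]{lecouvey2003schensted}, together with the fact that $lD$ and $rD$ are their own split forms. One fix is needed in your justification. The self-splitting fact requires columns that are both pair-free \emph{and} free of the letter $0$, because in type $B$ the $0$'s are split as well: for instance, $\{1,0\}$ in type $B_2$ is pair-free but is not its own split. So replace your remark about $0$ not being ``repeated in a problematic way'' with the observation that $lD$ and $rD$ contain no $0$ at all, since under $S^B$ they are spin columns.
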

\begin{proof}
These follow from \cite[Theorem 3.1.18, Theorem 4.2.2]{lecouvey2003schensted} as explained above.
\end{proof}

\begin{proof}[Proof of \autoref{JW theorem} for type B] Consider an element $x:=C_1\otimes\dots C_p\otimes S_1\otimes \dots \otimes S_r$ of $B_{\epsilon}=B_{\epsilon_1}\otimes\dots\otimes B_{\epsilon_{p}}\otimes B_n^{\otimes r}$, where $1\le \epsilon_i < n$ for $1\le i \le p$. We claim that it lies inside the top component if and only if each pair of adjacent elements lies in the top component of the corresponding tensor product, which implies our formula. The `only if' direction is clear, so assume all adjacent elements lie in top components. In particular, this is true of each $S_{2k-1}\otimes S_{2k}$, which under the isomorphism $S^B$ described above we may identify with a column $D_k\in B(\Omega_n)$ with $lD_k=S_{2k}, rD_k=S_{2k-1}$ ($D_k$ has height $n$ by weight considerations). If $r$ is even, we can identify $x$ with an orthogonal tabloid, and if $r$ is odd, we can identify it with a spin tabloid. By \autoref{type B standard check}, $C_p\otimes D_1$ is a tableau if and only if $C_p\otimes rD_1=C_p\otimes S_1$ is, which is true by assumption. Similarly, the other items of \autoref{type B standard check} tell us that $D_k\otimes D_{k+1}$, and (if $r=2m+1$) $D_m\otimes S_r$ is a (spin) tableau. This proves the theorem. 
\end{proof}

\begin{Example}
In the case of $\mathfrak{sl}_2$, the two-strand Jones--Wenzl projector is \[
\begin{tikzpicture}[
baseline=(current bounding box.center),
x=0.55cm,y=0.55cm,
line width=0.8pt
]
\draw[usual] (0,-1.0) -- (0,1.0);
\draw[usual] (1,-1.0) -- (1,1.0);
\end{tikzpicture}
\;-\;
\begin{tikzpicture}[
baseline=(current bounding box.center),
x=0.55cm,y=0.55cm,
line width=0.8pt
]
\draw[usual] (0,0.95) .. controls (0,0.55) and (1,0.55) .. (1,0.95);
\draw[usual] (0,-0.95) .. controls (0,-0.55) and (1,-0.55) .. (1,-0.95);
\end{tikzpicture}
\]
and the recursive formula in \autoref{JW theorem} is equivalent to the one given in \cite[Proposition 3.20]{Alqady2025coboundary}:
\[
\begin{tikzpicture}[
baseline=(current bounding box.center),
x=1cm,y=1cm, scale = 0.8,
jbox/.style={
draw,
line width=1.2pt,
fill=white,
inner sep=2pt,
minimum width=1.55cm,
minimum height=0.62cm
}
]

\begin{scope}[shift={(0,0)}]
\draw[usual] (0,-1.85) -- (0,1.85);
\draw[usual] (1.20,-1.85) -- (1.20,1.85);

\node[jbox] at (0.60,0) {\(\bm{j_n}\)};
\node at (0.60, 1.10) {\(\cdots\)};
\node at (0.60,-1.10) {\(\cdots\)};
\end{scope}

\node at (1.95,0) {\(=\)};

\begin{scope}[shift={(2.75,0)}]
\draw[usual] (0,-1.85) -- (0,1.85);
\draw[usual] (1.20,-1.85) -- (1.20,1.85);
\draw[usual] (2.00,-1.85) -- (2.00,1.85);

\node[jbox] at (0.60,0) {\(\bm{j_{n-1}}\)};
\node at (0.60, 1.10) {\(\cdots\)};
\node at (0.60,-1.10) {\(\cdots\)};
\end{scope}

\node at (5.45,0) {\(-\)};

\begin{scope}[shift={(6.25,0)}]
\draw[usual] (0.00,-1.85) -- (0.00,1.85);
\draw[usual] (0.85,-1.85) -- (0.85,1.85);

\draw[usual] (1.45,1.85) -- (1.45,0.62);
\draw[usual] (1.45,-0.62) -- (1.45,-1.85);

\draw[usual] (2.15,1.85) -- (2.15,0.62);
\draw[usual] (2.15,-0.62) -- (2.15,-1.85);

\draw[usual]
(1.45,0.62)
.. controls (1.45,0.28) and (2.15,0.28) ..
(2.15,0.62);

\draw[usual]
(1.45,-0.62)
.. controls (1.45,-0.28) and (2.15,-0.28) ..
(2.15,-0.62);

\node[jbox,minimum width=2.00cm] at (0.72, 1.05) {\(\bm{j_{n-1}}\)};
\node[jbox,minimum width=2.00cm] at (0.72,-1.05) {\(\bm{j_{n-1}}\)};

\node at (0.50, 1.58) {\(\cdots\)};
\node at (0.50, 0.00) {\(\cdots\)};
\node at (0.50,-1.58) {\(\cdots\)};
\end{scope}

\end{tikzpicture}.
\]
This is in stark contrast to the full representation category: the
Jones--Wenzl recursion is coefficient-free, with only signs appearing.
\end{Example}

For type $D$ let us set $\Omega_i:=\omega_i$ for $1\le i \le n-2$, $\Omega_{n-1}:=\omega_{n-1}+\omega_n$, $\Omega_n:=2\omega_n$, and $\overline{\Omega}_n:=2\omega_{n-1}$. Denote by $\Omega_+$ the set of dominant weights generated by $\Omega_1,\dots,\Omega_n$ and $\overline{\Omega}_n$. Then any dominant weight $\lambda$ generated by the fundamental weights $\{\omega_i\mid 1\le i \le n\}$ which is not in $\Omega_+$ can be written uniquely as either $\lambda = \omega_n+\lambda'$, with $\lambda'\in \Omega_+$ and $\overline{\Omega}_n$ not appearing in $\lambda'$, or $\lambda = \omega_{n-1}+\lambda'$, with $\lambda' \in \Omega_+$ and $\Omega_n$ not appearing in $\lambda'$. Suppose first that $\lambda \in \Omega_+$, then writing it as the corresponding tensor product of the crystals $B(\Omega_i)$ and $B(\overline{\Omega}_n)$, we can associate an element in this crystal with an \emph{orthogonal tabloid} (of type $D$) with $i_j$ columns of height $j$, as before (see \cite[\S 6.7]{kashiwara1994crystal}, \cite[\S 3.1.2]{lecouvey2003schensted}). As before, this tabloid is a \emph{tableau} (lies in the top component) if it satisfies admissibility conditions which can be checked on adjacent columns. As for type $B$, there is a procedure that associates a tableau $T=C_1\dots C_r$ with its split form $\operatorname{spl}(T)=lC_1rC_1\dots lC_rrC_r$, and $T$ is a tabloid if and only if $\operatorname{spl}(T)$ is. An element of $B(\omega_{n-1})$ or $B(\omega_{n})$ corresponds to a spin column $\mathfrak{C}$, which has height $n$, and as before it can be appended in front of a tabloid $T$ to form a spin tabloid $[\mathfrak{C}, T]$. A spin tabloid is a \emph{spin tableau} if it lies in the top component; this can be checked by conditions on adjacent columns of $T=C_1C_2\dots C_n$ and $\mathfrak{C}lC_1$.
There is a crystal isomorphism $S^D$ which associates a (non-spin) column to the tensor product of two spin columns; when the column has height $n$, this is simply $C\mapsto rC\otimes lC$, see \cite[Lemma 4.1.2]{lecouvey2003schensted}.

Assume now that the labels of $B_{\epsilon}$ are arranged as follows. Let $a,b$ denote the number of times $n-1$ and $n$ occur. If $b\ge a$, write $b-a=2q+r, r\in \{0,1\}$, and write \[B_{\epsilon}=B_{\epsilon_1}\otimes\dots \otimes B_{\epsilon_s}\otimes (B_{n-1}\otimes B_{n})^{\otimes a} \otimes (B_{n}\otimes B_n)^{\otimes q}\otimes B_n^{\otimes r},\] where $\epsilon_1\le \dots \le \epsilon_s \le n-2$. If $a>b$, write $a-b=2q+r, r\in \{0,1\}$, and write 
\[B_{\epsilon}=B_{\epsilon_1}\otimes\dots \otimes B_{\epsilon_s}\otimes (B_{n-1}\otimes B_{n})^{\otimes b} \otimes (B_{n-1}\otimes B_{n-1})^{\otimes q}\otimes B_{n-1}^{\otimes r},\] where $\epsilon_1\le \dots \le \epsilon_s \le n-2$. Using shorthands $x,y,z$ for the tensor product pairs respectively, $\epsilon$ corresponds to a word of the form $\epsilon_1\dots \epsilon_s x^ay^qn^r$ or $\epsilon_1\dots \epsilon_s x^bz^q(n-1)^r$.

Assume the labels of $B_{\epsilon}$ are arranged as above. Write $\sigma_{i}$ for the identity-padded Jones--Wenzl projector applied to the tensor product corresponding to the $i$-th two-letter adjacent subword $uv, u,v\in \{1,\dots n, x,y,z\}$ in the unique form described above. Note that if $\epsilon_{i}, \epsilon_{i+1}\le n-2$, this is just the adjacent two-strand Jones--Wenzl projector $j_{i,i+1}$ from before. If $u,v \in \{x,y,z\}$, then this corresponds to a four-strand projector, and if $u\in \{x,y,z\}, v\in \{n-1,n\}$ or $u\in \{1,\dots,n-2\}, v\in \{x,y,z\}$ this is a three-strand projector.   
\begin{Theorem}
In type $D$, assume the labels of $B_{\epsilon}$ are arranged as above. Then we have \[j_\epsilon = \prod_i \sigma_i.\]
\end{Theorem}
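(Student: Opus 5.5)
I will follow the type $B$ argument closely, with the type $D$ ingredients taken from \cite{lecouvey2003schensted}. Each $\sigma_i$ is an identity-padded Jones--Wenzl projector, so it is a crystal endomorphism acting by $0$ or $1$ on each connected component. Moreover, every $\sigma_i$ acts as the identity on the top component of $B_\epsilon$, since $\Top(X\otimes Y)=\Top(\Top(X)\otimes\Top(Y))$. Hence the $\sigma_i$ commute, and $\prod_i\sigma_i$ is the projector onto the union of components killed by no $\sigma_i$. It therefore suffices to prove: an element $x\in B_\epsilon$ lies in the top component if and only if, for each adjacent two-letter subword $uv$ of the grouped word $\epsilon_1\dots\epsilon_s x^a y^q n^r$ (or $\epsilon_1\dots\epsilon_s x^b z^q (n-1)^r$), the corresponding tensor factor of $x$ lies in the top component of that two-, three- or four-strand product. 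The ``only if'' direction is the one just noted, so only the converse needs work.

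For the converse, I first translate $x$ into (spin) tabloid language. Write $x=C_1\otimes\dots\otimes C_s\otimes X_1\otimes\dots$, where the $C_i$ are ordinary columns of height $\le n-2$ and the $X_k$ are the blocks labelled $x,y,z$ (pairs of spin columns), possibly followed by one lone spin column $S$. Each block projector $\sigma$ applied to an $x$-, $y$- or $z$-pair is nonzero on this pair only if the pair lies in the top component $B(\Omega_{n-1})$, $B(\Omega_n)$ or $B(\overline{\Omega}_n)$ respectively. Via the isomorphism $S^D$ of \cite[Lemma 4.1.2]{lecouvey2003schensted}, such a pair is then identified with a single non-spin column $D_k$ of height $n-1$ or $n$, with $lD_k$ and $rD_k$ read off from the two spin columns. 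Here I need that the four-strand projector on a block $uv$ with $u,v\in\{x,y,z\}$ in particular forces each of the two pairs into its top component; this holds because that projector factors through the top component of each pair. After this identification $x$ becomes an orthogonal tabloid of type $D$ (if $r=0$) or a spin tabloid $[\mathfrak C,T]$ (if $r=1$).

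I would then prove an analogue of \autoref{type B standard check} for type $D$, using \cite[Theorem 3.1.18, Theorem 4.2.2]{lecouvey2003schensted} and the split-form criterion, in three parts:
\begin{enumerate}
\item $C\otimes D$, with $C$ of height $\le n-2$ and $D$ coming from an $x$-, $y$- or $z$-pair, is a tableau if and only if the three-strand element $C\otimes(\text{pair})$ lies in the top component.
\item $D_k\otimes D_{k+1}$ is a tableau if and only if the four-strand element lies in the top component.
\item $D_m\otimes\mathfrak C$ is a spin tableau if and only if the corresponding three-strand element does.
\end{enumerate}
These hold essentially by definition of the $\sigma_i$, since the $\sigma_i$ act exactly on the groupings that become single columns. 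With the lemma in hand, the adjacent-column criterion for tableaux (and spin tableaux) says that $x$ is a (spin) tableau, i.e.\ lies in the top component.

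The main obstacle is this type $D$ check lemma. The difficulty is making sure that the specific ordering chosen (all $x$-blocks first, then the $y$- or $z$-blocks, then the lone spin column) matches Lecouvey's conventions. Concretely, the column heights in the tabloid must be weakly decreasing in the reading order, so that $\Omega_{n-1}$-columns precede $\Omega_n$- or $\overline{\Omega}_n$-columns. The admissibility between the height-$(n-1)$ columns and the height-$n$ columns, and between a spin column and an adjacent $lC_1$, must also be local in the two neighbouring columns. I would verify this by checking which split halves $lD$ and $rD$ are compared in the relation $\unlhd$, and confirming that each comparison involves only strands inside a single $\sigma_i$. This is also why the groupings $x,y,z$ are necessary: the two spin columns of an $x$-pair cannot be separated, since the individual spin strands of types $n-1$ and $n$ do not satisfy a two-strand criterion on their own.
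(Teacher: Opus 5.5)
Your proposal is correct and follows essentially the same route as the paper. Both arguments identify each $x$-, $y$- and $z$-pair with a single column via $S^D$, then use that Lecouvey's criterion for (spin) tableaux is a condition on adjacent columns, so surviving every $\sigma_i$ is equivalent to lying in the top component. Your write-up is more explicit than the paper's in two places: the argument that the $\sigma_i$ are diagonal and commute, and the observation, via $\Top(X\otimes Y)=\Top(\Top(X)\otimes\Top(Y))$, that a three- or four-strand $\sigma_i$ forces each pair it contains into its top component.

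That last step needs every block to lie in some two-letter subword. This fails only when the grouped word is a single letter $x$, $y$ or $z$ (e.g.\ $\epsilon=(n-1,n)$): there the product of the $\sigma_i$ is empty but $j_\epsilon\neq\id$. This degenerate case is a gap in the statement itself, not something your argument introduces.
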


\begin{proof}
This is similar to the proof of \autoref{JW theorem} for type $B$. By the isomorphism $S^D$, we may identify a tableau in $B_{n-1}\otimes B_n$ with a tableau in $B(\Omega_{n-1})$, a tableau in $B_{n-1}\otimes B_{n-1}$ with one in $B(\overline{\Omega}_n)$, and a tableau in $B_n\otimes B_n$ with one in $B(\Omega_n)$. Surviving the projectors $\sigma_i$ then is equivalent to the two adjacent columns being a (spin) tableau.
\end{proof}

\subsection{A M\"obius Inversion Approach}

Similar to \cite[\S 3.4]{Alqady2025coboundary}, Jones--Wenzl projectors may be computed using M\"obius inversion, but in the general case more care is needed. 
For example, unlike for $\mathfrak{sl}_2$, the endomorphism algebras of $\Fund(\mathfrak{sl}_3)$ are not in general monoid algebras with respect to the basis $\mathcal{B}(\epsilon,\epsilon)=\{\overline{f}\circ P\circ g \}$. 
In particular, composition of such basis diagrams may be a linear combination of diagrams (see \eqref{eq:SL3-9} for an example).
However, we can directly consider the poset of idempotents without worrying about a monoid structure for the bases. 
The following lemma shows that we have a bijection between idempotent basis elements and distinguished bottom diagrams.

\begin{Lemma}
A basis element $e\in \mathcal B(\epsilon, \epsilon)$ is an idempotent if and only if $e = \overline{f}\circ f$ for some $f\in \mathcal D_{\epsilon}$.
\end{Lemma}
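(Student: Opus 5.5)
Write $e=\overline{f}\circ P\circ g$ with $f,g\in\D_{\epsilon,\lambda}$ and $P$ a permutation, and compute how $e$ acts on the connected components of $B_\epsilon$ using \autoref{bijection branching}. Both $f$ and $g$ are injective outside their kernels, and $P$ is an isomorphism. Let $C_f$ and $C_g$ be the components of highest weight $\lambda$ corresponding to $f$ and $g$. Then $g$ maps $C_g$ isomorphically onto the top component $T$ of its codomain, and $\overline{f}$ maps $T$ back onto $C_f$.

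For the ``if'' direction, take $e=\overline f\circ f$ (so $P=\id$). On the image of $f$, the map $\overline f$ inverts $f$, so $f\circ\overline f\circ f=f$. Therefore $e^2=\overline f\circ(f\circ\overline f\circ f)=\overline f\circ f=e$. This needs only the definition of $\overline{(-)}$.

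For the ``only if'' direction, suppose $e$ is idempotent. First I would show $C_f=C_g$.
\begin{itemize}
\item The image of $e$ is contained in $\operatorname{im}\overline f$. This is a sum of components of weight $\geq\lambda$, and among those of weight exactly $\lambda$ only $C_f$ appears.
\item The morphism $e$ restricted to $C_g$ is nonzero: it carries $C_g$ isomorphically onto $C_f$.
\item Since $e^2=e$, the element $e(x)$ for the highest weight element $x$ of $C_g$ must be fixed by $e$. So $e$ is nonzero on $C_f$.
\item A crystal morphism that is nonzero on a component of highest weight $\lambda$ is nonzero on the corresponding component of the domain, and $g$ is nonzero on a weight-$\lambda$ component only at $C_g$. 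Hence $C_f=C_g$.
\end{itemize}
By the bijection of \autoref{bijection branching}, this gives $f=g$.

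It remains to show $P=\id$, i.e.\ that $P$ is an identity and not merely a permutation $B_\delta\to B_\delta$ of repeated labels. For this I would compare the two maps on $C_f$. We have $e|_{C_f}=\overline f\circ P|_T\circ f|_{C_f}$, and $P$ is an automorphism of the crystal $B_\delta$ preserving the top component $T$. Since $T$ is connected with a unique highest element, $P|_T=\id_T$. So $e$ and $\overline f\circ f$ agree on $C_f$, and both are idempotent.

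The main obstacle is that they might still differ on lower components (weight $<\lambda$) that $P$ permutes nontrivially. At that point I would rely on the convention that basis elements use the fixed canonical permutation $P$ determined by the domain and codomain words. When $f=g$ the domain and codomain of $P$ are the same word, so the canonical choice is the identity, giving $e=\overline f\circ f$ as a basis element. I would then check against the explicit description of the basis in \autoref{bmt basis} that this convention is indeed how $P$ is fixed.
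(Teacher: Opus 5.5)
Your proof is correct, but it reaches the ``only if'' direction by a somewhat different path than the paper.

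The paper argues globally. For any component $C$ on which $e$ is nonzero, idempotence together with injectivity of $e$ outside its kernel forces $e(C)=C$. Hence $e$ is the identity on its support, and the paper then identifies this projection with $\overline{f}\circ f$ in one sentence.

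You instead apply idempotence only to $C_g$. You get $C_f=C_g$ from the uniqueness clause of \autoref{bijection branching} applied to $g$, rather than from injectivity of $e$. You then conclude $f=g$ via the bijection, and use the paper's standing convention on permutations to see that the permutation from the codomain of $f$ to itself is the identity.

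The paper's route is shorter and shows at once that an idempotent basis element is a projection onto a union of components. Yours identifies the index of the basis element, which is what the statement asserts. It also makes explicit the facts $f=g$ and $P=\id$, which the paper's final identification leaves implicit; a priori the support of $e$ could be a proper part of the support of $f$.

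Your caution about $P$ is warranted. Since Reidemeister~3 can fail, a composite of crossings from a word to itself may fix the top component yet permute lower ones. So it is the canonical choice of $P$, not just $P|_T=\id_T$, that closes the argument.

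There are two small slips. First, $\operatorname{im}\overline{f}$ consists of components of highest weight $\le\lambda$, not $\ge\lambda$, because $\lambda$ is the top weight of the codomain of $f$; only the weight-$\lambda$ part is used, so nothing breaks. Second, $g$ maps $C_g$ onto the top component of its own codomain, and it is $P$ that carries this to the top component of the codomain of $f$.
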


\begin{proof}
For the ``if'' direction, note that $f\circ \overline{f}\circ f = f$ since $\overline f$ is the inverse of $f$ on its image. Thus, $$e^2 = \overline{f}\circ f\circ \overline f \circ f = \overline{f}\circ f = e.$$ 
Now suppose that $e=\overline f \circ P \circ g\in \mathcal B(\epsilon, \epsilon)$ is an idempotent.
If $e$ is nonzero on some component $C\subset B_{\epsilon}$,
then $e$ sends $C$ to an isomorphic component $C'\subset B_{\epsilon}$.
But $e^2 = e$ only if $e(C') = C' = e(C)$, which implies that $C=C'$ since $e$ is injective outside its kernel by construction.
It follows that $e$ is just the identity outside its kernel which coincides with the projection $\overline f\circ f$.
\end{proof}

Thus, instead of defining a poset structure on the set of idempotents, we define it for distinguished bottom diagrams for simplicity.
The following definition simply formalizes the notion of the building blocks of distinguished bottom diagrams from \autoref{distinguished-bots}.

\begin{Definition}
	A \emph{bonded atom} $a$ (of type $\epsilon$) is defined to be a diagram with domain $\epsilon=(\epsilon_1, \epsilon_2, \dots, \epsilon_k)$ which consists of a single identity-padded atom $f_b$, which is obtained from a branching labelled $b\in B_{\epsilon_i}$ at $\epsilon_i$, precomposed with a minimal permutation $P$ of $B_{\epsilon_1}\otimes B_{\epsilon_2}\otimes \dots B_{\epsilon_{i-1}}$ to guarantee $f_b$ has a domain with the correct labels. In other words,
	$a = (\id\otimes f_b\otimes \id) \circ (P \otimes \id),$
	where $P$ is a minimal permutation (possibly, the identity) matching the domain of $f_b$ and the leftmost strands  of $B_{\epsilon_1}\otimes \dots\otimes   B_{\epsilon_i}$.
\end{Definition}

\begin{Definition}\label{poset def}
	For $f,g\in \mathcal{D}_\epsilon$, we say $f\preceq g$ if there is a sequence of $s\geq 0$ weight decreasing (i.e. non-identity) bonded atoms $a_1, a_2, \dots, a_s$ such that $f = a_s \circ \dots \circ a_2\circ a_1 \circ g$ in $\mathfrak g$-Crys.
\end{Definition}

\begin{Proposition}
	$(\mathcal D_\epsilon, \preceq)$ is a poset.  
\end{Proposition}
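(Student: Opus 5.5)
Reflexivity is the case $s=0$. Transitivity follows by concatenation: if $f=a_s\circ\dots\circ a_1\circ g$ and $g=a'_t\circ\dots\circ a'_1\circ h$, then $f=a_s\circ\dots\circ a_1\circ a'_t\circ\dots\circ a'_1\circ h$, which is again a sequence of non-identity bonded atoms. The one subtlety is domains. Each $a_j$ must have the type of the codomain of the preceding map, and these codomains are words in the $B_i$, so the concatenated chain remains type correct. The whole content of the proposition is therefore antisymmetry.

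For antisymmetry I would attach to each $f\in\mathcal D_\epsilon$ a numerical invariant that strictly drops along a non-identity bonded atom, using the weight $\wt(f)$ of \autoref{distinguished-bots}, that is, the top weight of the codomain of $f$. A non-identity bonded atom $a=(\id\otimes f_b\otimes\id)\circ(P\otimes\id)$ has $P$ a permutation, which does not change the top weight. The atom $f_b$ is non-identity exactly when $b$ is not the highest element of its fundamental crystal. In that case $f_b$ kills the top summand of its domain $B_{\epsilon(b)}\otimes B_k$, so the codomain weight $\lambda(b)+\wt(b)$ is strictly smaller in dominance order than $\lambda(b)+\omega_k$, the top weight of the domain. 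Padding with identities preserves strictness: the top weight of $X\otimes Y\otimes Z$ is the sum of the top weights of the three factors, since $\Top(X\otimes Y)=\Top(\Top(X)\otimes\Top(Y))$ as used in the proof of \autoref{pairwise jw lemma}. Hence the codomain of $a$ has top weight $\mu-\beta$ for some nonzero $\beta\in\sum\Z_{\ge 0}\alpha_i$, where $\mu$ is the top weight of its domain.

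Now suppose $f\preceq g$ and $g\preceq f$. Then $\wt(f)\le \wt(g)\le \wt(f)$ in dominance order, with strict inequality whenever the corresponding chain is nonempty. Since dominance order is a partial order on $\Lambda$, both chains must have $s=0$, so $f=g$ as morphisms in $\mathfrak g$-Crys. They are then equal as elements of $\mathcal D_\epsilon$, because the bijection of \autoref{bijection branching} with components is injective: equal morphisms are nonzero on the same unique component of their weight.

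The step most likely to need care is the strictness claim, namely that a non-identity atom strictly lowers the top weight. I would derive it from \autoref{hw element lemma} together with the fact, immediate from \autoref{atom}, that $f_b$ is the identity precisely when $\varepsilon_i(b)=0$ for all $i$, which happens only for the highest element. If $b$ is not highest, then $\wt(b)<\omega_k$, so $\lambda(b)+\wt(b)<\lambda(b)+\omega_k$.
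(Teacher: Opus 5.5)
Your proof is correct and follows the same route as the paper: reflexivity and transitivity are immediate, and antisymmetry comes from comparing $\wt(f)$ and $\wt(g)$ along the two chains of bonded atoms. Your check that a non-identity bonded atom strictly lowers the top weight in dominance order fills in what the paper's ``whence $s=s'=0$'' leaves implicit. The same holds for your remark, via \autoref{bijection branching}, that equal morphisms give the same element of $\mathcal D_\epsilon$.
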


\begin{proof}
	Reflexivity and transitivity are clear. To see that $\preceq$ is antisymmetric, suppose that $f\preceq g$ and $g\preceq f$. 
	Thus, $f = a_s \dots  a_1 g$ and $g=a_{s'}' \dots a_1' f$ for weight-decreasing atoms $a_1, \dots, a_s$ and $a_1',\dots, a_{s'}'$.
	Then $\wt(f) \leq \wt(g)$ and $\wt(g)\leq \wt(f)$ since all $a_i$'s are weight-decreasing, by assumption.
	Therefore, $\wt(f)=\wt(g)$ whence $s=s'=0$; in other words, $f=g$.
\end{proof}

\begin{Remark}
	It is not in general true that if $f = a_s \dots a_2 a_1 g$ then the shortest path in $\mathcal D_\epsilon$ is of length $s$. Composition with even a single bonded atom might result in a diagram which is not in $\mathcal D_\epsilon$, but which is equivalent via some $\mathfrak g$-Crys relations to another diagram $a_s\dots a_1 f$ where $a_r\dots a_1f$ and $a_s\dots a_1 f$ are both in $\mathcal D_\epsilon$ for some $s>r\geq 1$. 
	See the poset in \autoref{mobius-example} for an example.
\end{Remark}
 
Note that by construction, every diagram in $\mathcal D_\epsilon$ can be obtained from $\id_\epsilon$ by composing with a sequence of thick atoms. Thus, $\id_\epsilon$ is the unique maximum element of the poset $\mathcal D_\epsilon$. 
Define $\mu(f) := \mu(f, \id_\epsilon)$, where $\mu(x,y)$ denotes the M\"obius function of the poset $(\mathcal D_\epsilon, \preceq)$.
The function $\mu(f)$ may be computed on all $f\in \mathcal D_{\epsilon}$ recursively via
$
\mu(\id_\epsilon) = 1$ and $\mu(f) = - \sum_{g\succ f} \mu(g)$.

\begin{Proposition}\label{Mobius JW}
The Jones-Wenzl Idempotents $j_\epsilon$ are given by the following formula.
$$j_\epsilon = \sum_{f\in \mathcal D_\epsilon} \mu(f)\; \overline f\circ f.$$
\end{Proposition}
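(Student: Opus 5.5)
The plan is to evaluate both sides on connected components of $B_\epsilon$ and reduce the identity to the defining property of the Möbius function. For $f\in\mathcal D_\epsilon$ write $e_f:=\overline f\circ f$. As in the preceding lemma, $e_f$ is the identity on the components of $B_\epsilon$ not killed by $f$ and zero on the others. So $e_f$ is the diagonal idempotent onto $K_f$, the set of components $C\in C_\epsilon$ with $f(C)\neq 0$. The set $K_f$ is determined by $f$ alone: $f$ is injective outside its kernel and built from atoms and crossings. All the $e_f$ commute, since they are diagonal with respect to the decomposition into components. Hence for each component $C$ the coefficient of $\id_C$ in the right-hand side is
\[
\sum_{f\in\mathcal D_\epsilon,\ C\in K_f}\mu(f).
\]
We must show this equals $1$ if $C$ is the top component and $0$ otherwise.

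The key step is to identify $\{f\in\mathcal D_\epsilon : C\in K_f\}$ with an up-set of the poset. Let $d_C\in\mathcal D_\epsilon$ be the distinguished bottom diagram corresponding to $C$ under \autoref{bijection branching}. I claim that $C\in K_f$ if and only if $d_C\preceq f$.

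For "if": suppose $d_C=a_s\circ\dots\circ a_1\circ f$. Since $d_C(C)\neq0$, we get $f(C)\neq 0$.

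For "only if": suppose $f(C)\neq0$. Then $f$ maps $C$ isomorphically onto a component $C'$ of its codomain. By construction of distinguished bottom diagrams, the component $C$ corresponds to a path in the branching graph. That path extends the path of $f$, meaning its prefix up to the level reached by $f$ agrees with $f$'s. One then reads off bonded atoms $a_1,\dots,a_s$ from the remaining branch steps on the codomain of $f$; these are the branchings of $C'$, viewed through $f$. One checks that $a_s\circ\dots\circ a_1\circ f$ is nonzero on $C$ and has weight equal to the weight of $C$. The identity-step atoms can be dropped, since they are identities. By the second statement of \autoref{bijection branching} together with \autoref{bmt basis} (a basic morphism of the right weight, nonzero exactly on $C$ among top-weight components, agrees with $d_C$ in $\gcrys$), this composite equals $d_C$, so $d_C\preceq f$.

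I expect this last identification to be the main obstacle. Composing $f$ with bonded atoms yields a diagram that need not literally be distinguished, as the remark after \autoref{poset def} warns. The fix is that $\preceq$ is defined by equality in $\gcrys$, so it suffices to match the morphisms component-wise. Both morphisms are injective outside their kernels. They agree on $C$ because both send the highest weight element of $C$ to the highest element of the codomain, by \autoref{hw element lemma}. I also need that they vanish on the same components, which requires a little care with the "up to permutation" convention.

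Granting the claim, the coefficient of $\id_C$ is $\sum_{f\succeq d_C}\mu(f,\id_\epsilon)$. By the recursion $\mu(\id_\epsilon)=1$, $\mu(f)=-\sum_{g\succ f}\mu(g)$, this sum is $\delta_{d_C,\id_\epsilon}$. Since $\id_\epsilon$ corresponds to the top component, the right-hand side is the identity on the top component and zero on all others, which is $j_\epsilon$.
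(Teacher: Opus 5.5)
Your route is the paper's. Both arguments run Möbius inversion of $\mathbb I(g)=\overline g\circ g$ against the projections $\mathbb J(f)$ onto single components. You correctly note that the inversion needs $\overline g\circ g=\sum_{f\preceq g}\mathbb J(f)$ for \emph{every} $g\in\mathcal{D}_\epsilon$, which is your claim $C\in K_g\Leftrightarrow d_C\preceq g$; the paper's proof only records the case $g=\id_\epsilon$. Your diagonal reduction, the ``if'' direction and the final Möbius step are fine. The ``only if'' direction carries all the content, and your argument for it does not go through.

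First, $C$'s own path in the branching graph of $B_\epsilon$ need not share a prefix with that of $f$. In \autoref{mobius-example}, the $(1,1)$-diagram with path $((1,0),(0,1),(0,-1),(0,1))$ lies below the $(0,3)$-diagram with path $((1,0),(0,1),(-1,1),(0,1))$. The relevant chain comes from the path of $C'=f(C)$ in the codomain of $f$, so your claim is really $d_{C'}\circ f=d_C$.

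Your justification of this equality treats $d_C$ as a basic morphism, but distinguished bottom diagrams are not basic. \autoref{bijection branching} controls them only on components of highest weight $\wt(d_C)$, and they are typically nonzero on lower components. For $\mathfrak{sl}_3$, $\id_{B_1}\otimes(\text{merge})$ on $B_1^{\otimes 3}$ is nonzero on a $(1,1)$-component and on the trivial one; the same holds for $d_{C'}\circ f$.

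Your fallback, agreement on $C$ plus equal vanishing sets, is still not enough. Two crystal morphisms that are injective outside the same kernel can send a lower component to different isomorphic components of the codomain. Compare $\id_{B_1^{\otimes 3}}$ with the automorphism swapping its two $(1,1)$-components.

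What you actually need is twofold. First, $d_{C'}\circ f$ and $d_C$ must send every component of $B_\epsilon$ to the same component, or both kill it. Second, their codomain words must coincide rather than merely being rearrangements of the same labels, since $\preceq$ demands literal equality. This is a genuine compatibility lemma between the recursive constructions of $f$, $d_{C'}$ and $d_C$, including the crossings inserted in each. It is not automatic: composites of atoms and crossings are in general not single distinguished diagrams, as \eqref{eq:SL3-9} shows. Without such a lemma, proved for instance by induction on the number of strands, the up-set identification and hence the proof remain incomplete.
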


\begin{proof}
Consider the two functions $\mathbb{I}: \mathcal D_\epsilon \to \End(B_\epsilon)$ and $\mathbb{J}: \mathcal D_\epsilon \to \End(B_\epsilon)$ defined by $\mathbb{I}(f) = \overline f\circ f$ and $\mathbb{J}(f) = \overline f \circ j_\epsilon \circ f$.
By \autoref{bijection branching}, every component of $B_\epsilon$ appears as a top component in a unique $f\in \mathcal D_\epsilon$. Thus, we have $\id_\epsilon = \mathbb{I}(\id_\epsilon) = \sum_{f\,\preceq\, \id_\epsilon} \mathbb{J}(f)$.
The claim thus follows from the last equality by the M\"obius Inversion Formula for $\mathcal D_\epsilon$.
\end{proof}

Note that \autoref{Mobius JW} is valid for any Lie type and any $\epsilon$ (not necessarily ordered). Essentially, the role of M\"obius inversion here is simply performing an inclusion-exclusion computation on the components appearing in the image of each idempotent, and thereby isolating the top one. 

\begin{Example}\label{mobius-example} We use the presentation of $\Fund(\mathfrak{sl}_3)$ computed below. 
Consider the $\mathfrak{sl}_3$ crystal $B_{\epsilon} = B_1\otimes B_2\otimes B_1\otimes B_2$.
Then, $B_\epsilon$ has the following branching graph and corresponding distinguished bottom diagrams.
\[\begin{tikzpicture}[scale=0.55]
	\begin{pgfonlayer}{nodelayer}
		\node [style=none] (72) at (10, 12) {$(2,2)$};
		\node [style=none] (73) at (10, 9) {$(3,0)$};
		\node [style=none] (74) at (10, 6) {$(1,1)$};
		\node [style=none] (75) at (10, 3) {$(0,3)$};
		\node [style=none] (76) at (10, 0) {$(1,1)$};
		\node [style=none] (77) at (10, -3) {$(1,1)$};
		\node [style=none] (78) at (10, -6) {$(0,0)$};
		\node [style=none] (79) at (10, -9) {$(1,1)$};
		\node [style=none] (80) at (10, -12) {$(0,0)$};
		\node [style=none] (81) at (5, 9) {$(2,1)$};
		\node [style=none] (82) at (5, 1.5) {$(0,2)$};
		\node [style=none] (83) at (5, -4.5) {$(1,0)$};
		\node [style=none] (84) at (5, -10.5) {$(1,0)$};
		\node [style=none] (85) at (0.5, 1.5) {$(1,1)$};
		\node [style=none] (86) at (0.5, -10.5) {$(0,0)$};
		\node [style=none] (87) at (-4, -4.5) {$(1,0)$};
		\node [style=none] (88) at (-8, -4.5) {$(0,0)$};
		\node [style=none] (89) at (-7, -4.5) {};
		\node [style=none] (90) at (-5, -4.5) {};
		\node [style=none] (91) at (1.5, 2) {};
		\node [style=none] (92) at (4, 8.5) {};
		\node [style=none] (93) at (1.5, 1.5) {};
		\node [style=none] (94) at (4, 1.5) {};
		\node [style=none] (95) at (1.5, 1) {};
		\node [style=none] (96) at (4, -4) {};
		\node [style=none] (97) at (1.5, -10.5) {};
		\node [style=none] (98) at (4, -10.5) {};
		\node [style=none] (99) at (6, 9) {};
		\node [style=none] (100) at (6, 9.5) {};
		\node [style=none] (101) at (6, 8.5) {};
		\node [style=none] (102) at (9, 11.5) {};
		\node [style=none] (103) at (9, 9) {};
		\node [style=none] (104) at (9, 6.5) {};
		\node [style=none] (105) at (6, 1.75) {};
		\node [style=none] (106) at (6, 1.25) {};
		\node [style=none] (107) at (9, 2.75) {};
		\node [style=none] (108) at (9, 0.25) {};
		\node [style=none] (109) at (6, -4.25) {};
		\node [style=none] (110) at (6, -4.75) {};
		\node [style=none] (111) at (9, -3.25) {};
		\node [style=none] (112) at (9, -5.75) {};
		\node [style=none] (113) at (6, -10.25) {};
		\node [style=none] (114) at (6, -10.75) {};
		\node [style=none] (115) at (9, -9.25) {};
		\node [style=none] (116) at (9, -11.75) {};
		\node [style=none] (117) at (-3, -4.25) {};
		\node [style=none] (118) at (-3, -4.75) {};
		\node [style=none] (119) at (-0.5, 1) {};
		\node [style=none] (120) at (-0.5, -10) {};
		\node [style=none] (140) at (12, 0.25) {};
		\node [style=none] (141) at (12.75, 0) {};
		\node [style=none] (142) at (12, -1) {};
		\node [style=none] (143) at (12.5, -1) {};
		\node [style=none] (144) at (13, -1) {};
		\node [style=none] (145) at (12.5, -0.75) {};
		\node [style=none] (146) at (12, -0.75) {};
		\node [style=none] (147) at (12, -0.25) {};
		\node [style=none] (148) at (12.5, -0.25) {};
		\node [style=none] (149) at (13, -0.25) {};
		\node [style=none] (150) at (12.75, 0.25) {};
		\node [style=none] (151) at (13.5, 0.25) {};
		\node [style=none] (152) at (13.5, -1) {};
		\node [style=none] (153) at (13.125, 0.5) {};
		\node [style=none] (154) at (13.125, 1) {};
		\node [style=none] (155) at (12, 1) {};
		\node [style=none] (156) at (12, 4) {};
		\node [style=none] (157) at (12.75, 3.25) {};
		\node [style=none] (158) at (12, 2) {};
		\node [style=none] (159) at (12.5, 2) {};
		\node [style=none] (160) at (13, 2) {};
		\node [style=none] (161) at (12.5, 2.5) {};
		\node [style=none] (162) at (12, 2.5) {};
		\node [style=none] (163) at (12, 3) {};
		\node [style=none] (164) at (12.5, 3) {};
		\node [style=none] (165) at (13, 3) {};
		\node [style=none] (166) at (12.75, 4) {};
		\node [style=none] (167) at (13.5, 4) {};
		\node [style=none] (168) at (13.5, 2) {};
		\node [style=none] (169) at (12.5, 10) {};
		\node [style=none] (170) at (13.25, 9.25) {};
		\node [style=none] (171) at (12.5, 8) {};
		\node [style=none] (172) at (13, 8) {};
		\node [style=none] (173) at (13.5, 8) {};
		\node [style=none] (174) at (13, 8.5) {};
		\node [style=none] (175) at (12.5, 8.5) {};
		\node [style=none] (176) at (12.5, 9) {};
		\node [style=none] (177) at (13, 9) {};
		\node [style=none] (178) at (13.5, 9) {};
		\node [style=none] (179) at (13.25, 10) {};
		\node [style=none] (180) at (12, 10) {};
		\node [style=none] (181) at (12, 8) {};
		\node [style=none] (182) at (12, 13) {};
		\node [style=none] (183) at (12.5, 13) {};
		\node [style=none] (184) at (13, 13) {};
		\node [style=none] (185) at (13.5, 13) {};
		\node [style=none] (186) at (12, 11) {};
		\node [style=none] (187) at (12.5, 11) {};
		\node [style=none] (188) at (13, 11) {};
		\node [style=none] (189) at (13.5, 11) {};
		\node [style=none] (190) at (12, 7) {};
		\node [style=none] (191) at (12.5, 7) {};
		\node [style=none] (194) at (12, 5) {};
		\node [style=none] (195) at (12.5, 5) {};
		\node [style=none] (196) at (13, 5) {};
		\node [style=none] (197) at (13.5, 5) {};
		\node [style=none] (198) at (12, -2) {};
		\node [style=none] (199) at (13.5, -2) {};
		\node [style=none] (200) at (12, -4) {};
		\node [style=none] (201) at (13.5, -4) {};
		\node [style=none] (202) at (12.5, -4) {};
		\node [style=none] (203) at (13, -4) {};
		\node [style=none] (204) at (13.5, -8) {};
		\node [style=none] (205) at (13, -8) {};
		\node [style=none] (206) at (13.5, -10) {};
		\node [style=none] (207) at (13, -10) {};
		\node [style=none] (208) at (12.5, -10) {};
		\node [style=none] (209) at (12, -10) {};
		\node [style=none] (212) at (13.5, -13) {};
		\node [style=none] (213) at (13, -13) {};
		\node [style=none] (214) at (12.5, -13) {};
		\node [style=none] (215) at (12, -13) {};
		\node [style=none] (218) at (12, -7) {};
		\node [style=none] (219) at (13.5, -7) {};
		\node [style=none] (220) at (13, -7) {};
		\node [style=none] (221) at (12.5, -7) {};
		\node [style=none] (222) at (-6, -4) {$(1,0)$};
		\node [style=none] (223) at (1.75, 6) {$(1,0)$};
		\node [style=none] (224) at (3, -10) {$(1,0)$};
		\node [style=none] (225) at (3, 2) {$(-1,1)$};
		\node [style=none] (226) at (1.75, -2) {$(0,-1)$};
		\node [style=none] (227) at (-2.75, -1) {$(0,1)$};
		\node [style=none] (228) at (-2.75, -8) {$(-1,0)$};
		\node [style=none] (229) at (7, -9) {$(0,1)$};
		\node [style=none] (230) at (7, -12) {$(-1,0)$};
		\node [style=none] (231) at (7, -6) {$(-1,0)$};
		\node [style=none] (232) at (7, -3) {$(0,1)$};
		\node [style=none] (233) at (7, 3) {$(0,1)$};
		\node [style=none] (234) at (7, 0) {$(1,-1)$};
		\node [style=none] (235) at (7, 11.25) {$(0,1)$};
		\node [style=none] (236) at (7, 6.75) {$(-1,0)$};
		\node [style=none] (237) at (7.75, 9.5) {$(1,-1)$};
	\end{pgfonlayer}
	\begin{pgfonlayer}{edgelayer}
		\draw [style=redarrow] (100.center) to (102.center);
		\draw [style=redarrow] (99.center) to (103.center);
		\draw [style=redarrow] (101.center) to (104.center);
		\draw [style=redarrow] (105.center) to (107.center);
		\draw [style=redarrow] (106.center) to (108.center);
		\draw [style=redarrow] (109.center) to (111.center);
		\draw [style=redarrow] (110.center) to (112.center);
		\draw [style=redarrow] (113.center) to (115.center);
		\draw [style=redarrow] (114.center) to (116.center);
		\draw [style=bluearrow] (91.center) to (92.center);
		\draw [style=bluearrow] (93.center) to (94.center);
		\draw [style=bluearrow] (95.center) to (96.center);
		\draw [style=bluearrow] (97.center) to (98.center);
		\draw [style=bluearrow] (89.center) to (90.center);
		\draw [style=redarrow] (117.center) to (119.center);
		\draw [style=redarrow] (118.center) to (120.center);
		\draw [style=thickstrand] (142.center) to (146.center);
		\draw [style=thickstrand](146.center) to (148.center);
		\draw [style=thickstrand](143.center) to (145.center);
		\draw [style=thickstrand](145.center) to (147.center);
		\draw [style=thickstrand](147.center) to (140.center);
		\draw [style=thickstrand](144.center) to (149.center);
		\draw [style=thickstrand](149.center) to (141.center);
		\draw [style=thickstrand](148.center) to (141.center);
		\draw [style=thickstrand](141.center) to (150.center);
		\draw [style=thickstrand](151.center) to (152.center);
		\draw [style=thickstrand](150.center) to (153.center);
		\draw [style=thickstrand](151.center) to (153.center);
		\draw [style=thickstrand](153.center) to (154.center);
		\draw [style=thickstrand](140.center) to (155.center);
		\draw [style=thickstrand](158.center) to (162.center);
		\draw [style=thickstrand](162.center) to (164.center);
		\draw [style=thickstrand](159.center) to (161.center);
		\draw [style=thickstrand](161.center) to (163.center);
		\draw [style=thickstrand](163.center) to (156.center);
		\draw [style=thickstrand](160.center) to (165.center);
		\draw [style=thickstrand](165.center) to (157.center);
		\draw [style=thickstrand](164.center) to (157.center);
		\draw [style=thickstrand](157.center) to (166.center);
		\draw [style=thickstrand](167.center) to (168.center);
		\draw [style=thickstrand](171.center) to (175.center);
		\draw [style=thickstrand](175.center) to (177.center);
		\draw [style=thickstrand](172.center) to (174.center);
		\draw [style=thickstrand](174.center) to (176.center);
		\draw [style=thickstrand](176.center) to (169.center);
		\draw [style=thickstrand](173.center) to (178.center);
		\draw [style=thickstrand](178.center) to (170.center);
		\draw [style=thickstrand](177.center) to (170.center);
		\draw [style=thickstrand](170.center) to (179.center);
		\draw [style=thickstrand](180.center) to (181.center);
		\draw [style=thickstrand](182.center) to (186.center);
		\draw [style=thickstrand](183.center) to (187.center);
		\draw [style=thickstrand](184.center) to (188.center);
		\draw [style=thickstrand](185.center) to (189.center);
		\draw [style=thickstrand](190.center) to (194.center);
		\draw [style=thickstrand](191.center) to (195.center);
		\draw [style=thickstrand, bend left=90, looseness=3.00] (196.center) to (197.center);
		\draw [style=thickstrand](198.center) to (200.center);
		\draw [style=thickstrand](199.center) to (201.center);
		\draw [style=thickstrand, bend left=90, looseness=3.00] (202.center) to (203.center);
		\draw [style=thickstrand](204.center) to (206.center);
		\draw [style=thickstrand](205.center) to (207.center);
		\draw [style=thickstrand, bend right=90, looseness=3.00] (208.center) to (209.center);
		\draw [style=thickstrand, bend right=90, looseness=3.00] (214.center) to (215.center);
		\draw [style=thickstrand, bend left=90, looseness=2.75] (213.center) to (212.center);
		\draw [style=thickstrand, bend right=90, looseness=3.00] (220.center) to (221.center);
		\draw [style=thickstrand, bend left=90, looseness=2.00] (218.center) to (219.center);
	\end{pgfonlayer}
\end{tikzpicture}\]
The poset $(\mathcal D_\epsilon, \preceq)$ has the following Hasse diagram and M\"obius function values.
\[\begin{tikzpicture}[scale=0.45]
	\begin{pgfonlayer}{nodelayer}
		\node [style=none] (0) at (-0.75, 1.25) {};
		\node [style=none] (1) at (0, 1) {};
		\node [style=none] (2) at (-0.75, 0) {};
		\node [style=none] (3) at (-0.25, 0) {};
		\node [style=none] (4) at (0.25, 0) {};
		\node [style=none] (5) at (-0.25, 0.25) {};
		\node [style=none] (6) at (-0.75, 0.25) {};
		\node [style=none] (7) at (-0.75, 0.75) {};
		\node [style=none] (8) at (-0.25, 0.75) {};
		\node [style=none] (9) at (0.25, 0.75) {};
		\node [style=none] (10) at (0, 1.25) {};
		\node [style=none] (11) at (0.75, 1.25) {};
		\node [style=none] (12) at (0.75, 0) {};
		\node [style=none] (13) at (0.375, 1.5) {};
		\node [style=none] (14) at (0.375, 2) {};
		\node [style=none] (15) at (-0.75, 2) {};
		\node [style=none] (16) at (-3, 7.5) {};
		\node [style=none] (17) at (-2.25, 6.75) {};
		\node [style=none] (18) at (-3, 5.5) {};
		\node [style=none] (19) at (-2.5, 5.5) {};
		\node [style=none] (20) at (-2, 5.5) {};
		\node [style=none] (21) at (-2.5, 6) {};
		\node [style=none] (22) at (-3, 6) {};
		\node [style=none] (23) at (-3, 6.5) {};
		\node [style=none] (24) at (-2.5, 6.5) {};
		\node [style=none] (25) at (-2, 6.5) {};
		\node [style=none] (26) at (-2.25, 7.5) {};
		\node [style=none] (27) at (-1.5, 7.5) {};
		\node [style=none] (28) at (-1.5, 5.5) {};
		\node [style=none] (29) at (2, 7.5) {};
		\node [style=none] (30) at (2.75, 6.75) {};
		\node [style=none] (31) at (2, 5.5) {};
		\node [style=none] (32) at (2.5, 5.5) {};
		\node [style=none] (33) at (3, 5.5) {};
		\node [style=none] (34) at (2.5, 6) {};
		\node [style=none] (35) at (2, 6) {};
		\node [style=none] (36) at (2, 6.5) {};
		\node [style=none] (37) at (2.5, 6.5) {};
		\node [style=none] (38) at (3, 6.5) {};
		\node [style=none] (39) at (2.75, 7.5) {};
		\node [style=none] (40) at (1.5, 7.5) {};
		\node [style=none] (41) at (1.5, 5.5) {};
		\node [style=none] (42) at (1.5, 13) {};
		\node [style=none] (43) at (2, 13) {};
		\node [style=none] (44) at (2.5, 13) {};
		\node [style=none] (45) at (3, 13) {};
		\node [style=none] (46) at (1.5, 11) {};
		\node [style=none] (47) at (2, 11) {};
		\node [style=none] (48) at (2.5, 11) {};
		\node [style=none] (49) at (3, 11) {};
		\node [style=none] (50) at (3.75, 2) {};
		\node [style=none] (51) at (4.25, 2) {};
		\node [style=none] (52) at (3.75, 0) {};
		\node [style=none] (53) at (4.25, 0) {};
		\node [style=none] (54) at (4.75, 0) {};
		\node [style=none] (55) at (5.25, 0) {};
		\node [style=none] (56) at (-5.5, 2) {};
		\node [style=none] (57) at (-4, 2) {};
		\node [style=none] (58) at (-5.5, 0) {};
		\node [style=none] (59) at (-4, 0) {};
		\node [style=none] (60) at (-5, 0) {};
		\node [style=none] (61) at (-4.5, 0) {};
		\node [style=none] (62) at (9.75, 2) {};
		\node [style=none] (63) at (9.25, 2) {};
		\node [style=none] (64) at (9.75, 0) {};
		\node [style=none] (65) at (9.25, 0) {};
		\node [style=none] (66) at (8.75, 0) {};
		\node [style=none] (67) at (8.25, 0) {};
		\node [style=none] (68) at (5.75, -5.5) {};
		\node [style=none] (69) at (6.25, -5.5) {};
		\node [style=none] (70) at (6.75, -5.5) {};
		\node [style=none] (71) at (7.25, -5.5) {};
		\node [style=none] (72) at (-3.25, -5.5) {};
		\node [style=none] (73) at (-1.75, -5.5) {};
		\node [style=none] (74) at (-2.25, -5.5) {};
		\node [style=none] (75) at (-2.75, -5.5) {};
		\node [style=none] (76) at (2.25, 10.25) {};
		\node [style=none] (77) at (-2.25, 8.25) {};
		\node [style=none] (78) at (2.25, 8.25) {};
		\node [style=none] (79) at (-2.25, 4.75) {};
		\node [style=none] (80) at (2.25, 4.75) {};
		\node [style=none] (81) at (-0.5, 2.5) {};
		\node [style=none] (82) at (0.25, 2.5) {};
		\node [style=none] (83) at (-4.75, 2.5) {};
		\node [style=none] (84) at (4.5, 2.5) {};
		\node [style=none] (85) at (9, 2.5) {};
		\node [style=none] (86) at (-2.75, -3.5) {};
		\node [style=none] (87) at (-2.25, -3.5) {};
		\node [style=none] (88) at (6.25, -3.5) {};
		\node [style=none] (89) at (6.75, -3.5) {};
		\node [style=none] (90) at (-4.75, -0.5) {};
		\node [style=none] (91) at (0, -0.5) {};
		\node [style=none] (92) at (4.5, -0.5) {};
		\node [style=none] (93) at (9, -0.5) {};
		\node [style=none] (95) at (2.25, 10.25) {};
		\node [style=none] (96) at (0.5, 12) {{\textcolor{spinach}{$1$}}};
		\node [style=none] (97) at (-3.75, 6.5) {{\textcolor{spinach}{$-1$}}};
		\node [style=none] (98) at (0.75, 6.5) {{\textcolor{spinach}{$-1$}}};
		\node [style=none] (99) at (-6, 1) {{\textcolor{spinach}{$0$}}};
		\node [style=none] (100) at (-1.25, 1) {{\textcolor{spinach}{$1$}}};
		\node [style=none] (101) at (3.25, 1) {{\textcolor{spinach}{$0$}}};
		\node [style=none] (102) at (7.5, 1) {{\textcolor{spinach}{$-1$}}};
		\node [style=none] (103) at (-4, -5) {{\textcolor{spinach}{$0$}}};
		\node [style=none] (104) at (5, -5) {{\textcolor{spinach}{$1$}}};
	\end{pgfonlayer}
	\begin{pgfonlayer}{edgelayer}
		\draw [style=thickstrand] (2.center) to (6.center);
		\draw [style=thickstrand] (6.center) to (8.center);
		\draw [style=thickstrand] (3.center) to (5.center);
		\draw [style=thickstrand] (5.center) to (7.center);
		\draw [style=thickstrand] (7.center) to (0.center);
		\draw [style=thickstrand] (4.center) to (9.center);
		\draw [style=thickstrand] (9.center) to (1.center);
		\draw [style=thickstrand] (8.center) to (1.center);
		\draw [style=thickstrand] (1.center) to (10.center);
		\draw [style=thickstrand] (11.center) to (12.center);
		\draw [style=thickstrand] (10.center) to (13.center);
		\draw [style=thickstrand] (11.center) to (13.center);
		\draw [style=thickstrand] (13.center) to (14.center);
		\draw [style=thickstrand] (0.center) to (15.center);
		\draw [style=thickstrand] (18.center) to (22.center);
		\draw [style=thickstrand] (22.center) to (24.center);
		\draw [style=thickstrand] (19.center) to (21.center);
		\draw [style=thickstrand] (21.center) to (23.center);
		\draw [style=thickstrand] (23.center) to (16.center);
		\draw [style=thickstrand] (20.center) to (25.center);
		\draw [style=thickstrand] (25.center) to (17.center);
		\draw [style=thickstrand] (24.center) to (17.center);
		\draw [style=thickstrand] (17.center) to (26.center);
		\draw [style=thickstrand] (27.center) to (28.center);
		\draw [style=thickstrand] (31.center) to (35.center);
		\draw [style=thickstrand] (35.center) to (37.center);
		\draw [style=thickstrand] (32.center) to (34.center);
		\draw [style=thickstrand] (34.center) to (36.center);
		\draw [style=thickstrand] (36.center) to (29.center);
		\draw [style=thickstrand] (33.center) to (38.center);
		\draw [style=thickstrand] (38.center) to (30.center);
		\draw [style=thickstrand] (37.center) to (30.center);
		\draw [style=thickstrand] (30.center) to (39.center);
		\draw [style=thickstrand] (40.center) to (41.center);
		\draw [style=thickstrand] (42.center) to (46.center);
		\draw [style=thickstrand] (43.center) to (47.center);
		\draw [style=thickstrand] (44.center) to (48.center);
		\draw [style=thickstrand] (45.center) to (49.center);
		\draw [style=thickstrand] (50.center) to (52.center);
		\draw [style=thickstrand] (51.center) to (53.center);
		\draw [style=thickstrand, bend left=90, looseness=3.00] (54.center) to (55.center);
		\draw [style=thickstrand] (56.center) to (58.center);
		\draw [style=thickstrand] (57.center) to (59.center);
		\draw [style=thickstrand, bend left=90, looseness=3.00] (60.center) to (61.center);
		\draw [style=thickstrand] (62.center) to (64.center);
		\draw [style=thickstrand] (63.center) to (65.center);
		\draw [style=thickstrand, bend right=90, looseness=3.00] (66.center) to (67.center);
		\draw [style=thickstrand, bend left=90, looseness=3.00] (70.center) to (71.center);
		\draw [style=thickstrand, bend right=90, looseness=2.75] (69.center) to (68.center);
		\draw [style=thickstrand, bend right=90, looseness=3.00] (74.center) to (75.center);
		\draw [style=thickstrand, bend left=90, looseness=2.00] (72.center) to (73.center);
		\draw [style=arrow] (76.center) to (77.center);
		\draw [style=arrow] (76.center) to (78.center);
		\draw [style=arrow] (79.center) to (83.center);
		\draw [style=arrow] (79.center) to (81.center);
		\draw [style=arrow] (80.center) to (82.center);
		\draw [style=arrow] (80.center) to (84.center);
		\draw [style=arrow] (76.center) to (85.center);
		\draw [style=arrow] (90.center) to (86.center);
		\draw [style=arrow] (91.center) to (87.center);
		\draw [style=arrow] (92.center) to (88.center);
		\draw [style=arrow] (93.center) to (89.center);
	\end{pgfonlayer}
\end{tikzpicture}\]

Thus, we have 
\[\begin{tikzpicture}[scale=0.55]
	\begin{pgfonlayer}{nodelayer}
		\node [style=none] (0) at (7, -0.5) {};
		\node [style=none] (1) at (7.75, -0.75) {};
		\node [style=none] (2) at (7, -1.75) {};
		\node [style=none] (3) at (7.5, -1.75) {};
		\node [style=none] (4) at (8, -1.75) {};
		\node [style=none] (5) at (7.5, -1.5) {};
		\node [style=none] (6) at (7, -1.5) {};
		\node [style=none] (7) at (7, -1) {};
		\node [style=none] (8) at (7.5, -1) {};
		\node [style=none] (9) at (8, -1) {};
		\node [style=none] (10) at (7.75, -0.5) {};
		\node [style=none] (11) at (8.5, -0.5) {};
		\node [style=none] (12) at (8.5, -1.75) {};
		\node [style=none] (13) at (8.125, -0.25) {};
		\node [style=none] (14) at (8.125, 0.25) {};
		\node [style=none] (15) at (7, 0.25) {};
		\node [style=none] (16) at (-3.5, 0.25) {};
		\node [style=none] (17) at (-2.75, -0.5) {};
		\node [style=none] (18) at (-3.5, -1.75) {};
		\node [style=none] (19) at (-3, -1.75) {};
		\node [style=none] (20) at (-2.5, -1.75) {};
		\node [style=none] (21) at (-3, -1.25) {};
		\node [style=none] (22) at (-3.5, -1.25) {};
		\node [style=none] (23) at (-3.5, -0.75) {};
		\node [style=none] (24) at (-3, -0.75) {};
		\node [style=none] (25) at (-2.5, -0.75) {};
		\node [style=none] (26) at (-2.75, 0.25) {};
		\node [style=none] (27) at (-2, 0.25) {};
		\node [style=none] (28) at (-2, -1.75) {};
		\node [style=none] (29) at (0.5, 0.25) {};
		\node [style=none] (30) at (1.25, -0.5) {};
		\node [style=none] (31) at (0.5, -1.75) {};
		\node [style=none] (32) at (1, -1.75) {};
		\node [style=none] (33) at (1.5, -1.75) {};
		\node [style=none] (34) at (1, -1.25) {};
		\node [style=none] (35) at (0.5, -1.25) {};
		\node [style=none] (36) at (0.5, -0.75) {};
		\node [style=none] (37) at (1, -0.75) {};
		\node [style=none] (38) at (1.5, -0.75) {};
		\node [style=none] (39) at (1.25, 0.25) {};
		\node [style=none] (40) at (0, 0.25) {};
		\node [style=none] (41) at (0, -1.75) {};
		\node [style=none] (42) at (-7, 1.75) {};
		\node [style=none] (43) at (-6.5, 1.75) {};
		\node [style=none] (44) at (-6, 1.75) {};
		\node [style=none] (45) at (-5.5, 1.75) {};
		\node [style=none] (46) at (-7, -1.75) {};
		\node [style=none] (47) at (-6.5, -1.75) {};
		\node [style=none] (48) at (-6, -1.75) {};
		\node [style=none] (49) at (-5.5, -1.75) {};
		\node [style=none] (62) at (5, 1.75) {};
		\node [style=none] (63) at (4.5, 1.75) {};
		\node [style=none] (64) at (5, -1.75) {};
		\node [style=none] (65) at (4.5, -1.75) {};
		\node [style=none] (66) at (4, -1.75) {};
		\node [style=none] (67) at (3.5, -1.75) {};
		\node [style=none] (68) at (10.5, -2) {};
		\node [style=none] (69) at (11, -2) {};
		\node [style=none] (70) at (11.5, -2) {};
		\node [style=none] (71) at (12, -2) {};
		\node [style=none] (79) at (-4.5, 0) {$-$};
		\node [style=none] (80) at (-1, 0) {$-$};
		\node [style=none] (90) at (2.5, 0) {$-$};
		\node [style=none] (91) at (6, 0) {$+$};
		\node [style=none] (93) at (9.5, 0) {$+$};
		\node [style=none] (95) at (4, 1.75) {};
		\node [style=none] (96) at (3.5, 1.75) {};
		\node [style=none] (97) at (7, 0.5) {};
		\node [style=none] (98) at (7.75, 0.75) {};
		\node [style=none] (99) at (7, 1.75) {};
		\node [style=none] (100) at (7.5, 1.75) {};
		\node [style=none] (101) at (8, 1.75) {};
		\node [style=none] (102) at (7.5, 1.5) {};
		\node [style=none] (103) at (7, 1.5) {};
		\node [style=none] (104) at (7, 1) {};
		\node [style=none] (105) at (7.5, 1) {};
		\node [style=none] (106) at (8, 1) {};
		\node [style=none] (107) at (7.75, 0.5) {};
		\node [style=none] (108) at (8.5, 0.5) {};
		\node [style=none] (109) at (8.5, 1.75) {};
		\node [style=none] (110) at (8.125, 0.25) {};
		\node [style=none] (111) at (8.125, -0.25) {};
		\node [style=none] (112) at (7, -0.25) {};
		\node [style=none] (113) at (-3.5, -0.5) {};
		\node [style=none] (114) at (-2.75, 0.25) {};
		\node [style=none] (115) at (-3.5, 1.5) {};
		\node [style=none] (116) at (-3, 1.5) {};
		\node [style=none] (117) at (-2.5, 1.5) {};
		\node [style=none] (118) at (-3, 1) {};
		\node [style=none] (119) at (-3.5, 1) {};
		\node [style=none] (120) at (-3.5, 0.5) {};
		\node [style=none] (121) at (-3, 0.5) {};
		\node [style=none] (122) at (-2.5, 0.5) {};
		\node [style=none] (123) at (-2.75, -0.5) {};
		\node [style=none] (124) at (-2, -0.5) {};
		\node [style=none] (125) at (-2, 1.5) {};
		\node [style=none] (126) at (0.5, -0.5) {};
		\node [style=none] (127) at (1.25, 0.25) {};
		\node [style=none] (128) at (0.5, 1.5) {};
		\node [style=none] (129) at (1, 1.5) {};
		\node [style=none] (130) at (1.5, 1.5) {};
		\node [style=none] (131) at (1, 1) {};
		\node [style=none] (132) at (0.5, 1) {};
		\node [style=none] (133) at (0.5, 0.5) {};
		\node [style=none] (134) at (1, 0.5) {};
		\node [style=none] (135) at (1.5, 0.5) {};
		\node [style=none] (136) at (1.25, -0.5) {};
		\node [style=none] (137) at (0, -0.5) {};
		\node [style=none] (138) at (0, 1.5) {};
		\node [style=none] (139) at (10.5, 2) {};
		\node [style=none] (140) at (11, 2) {};
		\node [style=none] (141) at (11.5, 2) {};
		\node [style=none] (142) at (12, 2) {};
		\node [style=none] (143) at (-9, 0) {$j_\epsilon=$};
	\end{pgfonlayer}
	\begin{pgfonlayer}{edgelayer}
		\draw [style=thickstrand] (2.center) to (6.center);
		\draw [style=thickstrand] (6.center) to (8.center);
		\draw [style=thickstrand] (3.center) to (5.center);
		\draw [style=thickstrand] (5.center) to (7.center);
		\draw [style=thickstrand] (7.center) to (0.center);
		\draw [style=thickstrand] (4.center) to (9.center);
		\draw [style=thickstrand] (9.center) to (1.center);
		\draw [style=thickstrand] (8.center) to (1.center);
		\draw [style=thickstrand] (1.center) to (10.center);
		\draw [style=thickstrand] (11.center) to (12.center);
		\draw [style=thickstrand] (10.center) to (13.center);
		\draw [style=thickstrand] (11.center) to (13.center);
		\draw [style=thickstrand] (13.center) to (14.center);
		\draw [style=thickstrand] (0.center) to (15.center);
		\draw [style=thickstrand] (18.center) to (22.center);
		\draw [style=thickstrand] (22.center) to (24.center);
		\draw [style=thickstrand] (19.center) to (21.center);
		\draw [style=thickstrand] (21.center) to (23.center);
		\draw [style=thickstrand] (23.center) to (16.center);
		\draw [style=thickstrand] (20.center) to (25.center);
		\draw [style=thickstrand] (25.center) to (17.center);
		\draw [style=thickstrand] (24.center) to (17.center);
		\draw [style=thickstrand] (17.center) to (26.center);
		\draw [style=thickstrand] (27.center) to (28.center);
		\draw [style=thickstrand] (31.center) to (35.center);
		\draw [style=thickstrand] (35.center) to (37.center);
		\draw [style=thickstrand] (32.center) to (34.center);
		\draw [style=thickstrand] (34.center) to (36.center);
		\draw [style=thickstrand] (36.center) to (29.center);
		\draw [style=thickstrand] (33.center) to (38.center);
		\draw [style=thickstrand] (38.center) to (30.center);
		\draw [style=thickstrand] (37.center) to (30.center);
		\draw [style=thickstrand] (30.center) to (39.center);
		\draw [style=thickstrand] (40.center) to (41.center);
		\draw [style=thickstrand] (42.center) to (46.center);
		\draw [style=thickstrand] (43.center) to (47.center);
		\draw [style=thickstrand] (44.center) to (48.center);
		\draw [style=thickstrand] (45.center) to (49.center);
		\draw [style=thickstrand] (62.center) to (64.center);
		\draw [style=thickstrand] (63.center) to (65.center);
		\draw [style=thickstrand, bend right=90, looseness=3.00] (66.center) to (67.center);
		\draw [style=thickstrand, bend left=90, looseness=3.00] (70.center) to (71.center);
		\draw [style=thickstrand, bend right=90, looseness=2.75] (69.center) to (68.center);
		\draw [style=thickstrand, bend left=90, looseness=3.00] (95.center) to (96.center);
		\draw [style=thickstrand] (99.center) to (103.center);
		\draw [style=thickstrand] (103.center) to (105.center);
		\draw [style=thickstrand] (100.center) to (102.center);
		\draw [style=thickstrand] (102.center) to (104.center);
		\draw [style=thickstrand] (104.center) to (97.center);
		\draw [style=thickstrand] (101.center) to (106.center);
		\draw [style=thickstrand] (106.center) to (98.center);
		\draw [style=thickstrand] (105.center) to (98.center);
		\draw [style=thickstrand] (98.center) to (107.center);
		\draw [style=thickstrand] (108.center) to (109.center);
		\draw [style=thickstrand] (107.center) to (110.center);
		\draw [style=thickstrand] (108.center) to (110.center);
		\draw [style=thickstrand] (110.center) to (111.center);
		\draw [style=thickstrand] (97.center) to (112.center);
		\draw [style=thickstrand] (115.center) to (119.center);
		\draw [style=thickstrand] (119.center) to (121.center);
		\draw [style=thickstrand] (116.center) to (118.center);
		\draw [style=thickstrand] (118.center) to (120.center);
		\draw [style=thickstrand] (120.center) to (113.center);
		\draw [style=thickstrand] (117.center) to (122.center);
		\draw [style=thickstrand] (122.center) to (114.center);
		\draw [style=thickstrand] (121.center) to (114.center);
		\draw [style=thickstrand] (114.center) to (123.center);
		\draw [style=thickstrand] (124.center) to (125.center);
		\draw [style=thickstrand] (128.center) to (132.center);
		\draw [style=thickstrand] (132.center) to (134.center);
		\draw [style=thickstrand] (129.center) to (131.center);
		\draw [style=thickstrand] (131.center) to (133.center);
		\draw [style=thickstrand] (133.center) to (126.center);
		\draw [style=thickstrand] (130.center) to (135.center);
		\draw [style=thickstrand] (135.center) to (127.center);
		\draw [style=thickstrand] (134.center) to (127.center);
		\draw [style=thickstrand] (127.center) to (136.center);
		\draw [style=thickstrand] (137.center) to (138.center);
		\draw [style=thickstrand, bend right=90, looseness=3.00] (141.center) to (142.center);
		\draw [style=thickstrand, bend left=90, looseness=2.75] (140.center) to (139.center);
	\end{pgfonlayer}
\end{tikzpicture}\;\;.
\]
\end{Example}

\begin{Remark}
The definition of the partial order $\preceq$ on $\mathcal D_\epsilon$ is not canonical, but rather is chosen to make computing the poset relations and M\"obius function algorithmically easiest.
The only required condition of the partial order $\preceq$ to guarantee that \autoref{Mobius JW} holds is that $\mathbb I(\id_\epsilon) = \sum_{f\in\mathcal D_\epsilon} \mathbb J(f)$ or equivalently, by \autoref{bijection branching}, that $\id_\epsilon$ is the unique maximum element in $\mathcal{D}_\epsilon$. 
For example, a more natural choice is to define $f\preceq g$ to mean that $\mathrm{Im}(f)\subseteq \mathrm{Im}(g)$, which is a strictly coarser partial order than that of \autoref{poset def}; however, it seems to be a computationally expensive task to decide whether $\mathrm{Im}(f)\subseteq \mathrm{Im}(g)$, or equivalently whether $f\overline{g}g=f$, for $f,g\in\mathcal D_\epsilon$ in general.
\end{Remark}

\section{Examples}\label{S:Examples}

We now apply \autoref{iso of categories} to give complete presentations when the rank of $\mathfrak{g}$ is one or two. Recall that we have already seen the Temperley--Lieb case in \autoref{E:SL2}, so we start with $\mathfrak{sl}_3$. Below, isomorphic means $\cong_\otimes$.

The examples below should be thought of as \(q=0\) degenerations, or
\emph{crystal versions}, of familiar diagrammatic categories:
\begin{enumerate}

\item The Temperley--Lieb case in \autoref{E:SL2} is a crystal version of
\cite{RuTeWe-sl2,TeLi-the-tl-paper}.

\item The rank-two cases are crystal versions of \cite{Ku-spiders-rank-2}.

\item The \(\mathrm{SO}_3\) case is a crystal version of \cite{Ya-invariant-graphs}.

\item The Motzkin case is a crystal version of \cite{benkart2014motzkin}.

\end{enumerate}

\begin{Remark}
The crystal version of $\mathfrak{gl}_2$-webs (see, e.g., \cite{BeHoPuWe-sl2gl2,Tu-web-reps}) is straightforward to obtain and is left to the reader. A more interesting direction would be to find clean crystal presentations for higher ranks, crystallizing, for instance, the webs in \cite{CaKaMo-webs-skew-howe}, or for other monoidal categories (such as those considered in \cite{KhSiTu-monoidal-cryptography}, most notably the Brauer calculus \cite{Br-brauer-algebra-original}).
\end{Remark}

\subsection{\texorpdfstring{$\mathfrak{sl}_3$}{sl3}}

We use the notation introduced before in \autoref{atom examples} for $\Fund(\mathfrak{sl}_3\text{-}\mathrm{Crys})$.

\begin{Proposition}
The category $\Fund(\mathfrak{sl}_3\text{-}\mathrm{Crys})$ is isomorphic to the $\kk$-linear monoidal category generated by $B_1, B_2$ with generating morphisms
\[
\upvert,\downvert,\trimergedown,  \vtrimergedown, \cupL,\capL,\pcrossing,\mcrossing,
\]
subject to the following relations and their vertical and horizontal reflections, for all orientations that make sense, where \eqref{eq:SL3-0} is in particular to be taken as a definition for the two non-generator atoms denoted \trisplit, \capR. 
\tikzset{slthreerel/.style={baseline=(current bounding box.center),line cap=round,line join=round,usual/.style={line width=2pt,color=black}},slpunct/.style={inner sep=0pt,anchor=base,font=\normalsize}}
\begin{equation*}\tag{SL3.0}\label{eq:SL3-0}
\begin{tikzpicture}[slthreerel,x=.50cm,y=.50cm]
\path[use as bounding box] (-1.0,-.05) rectangle (13.2,3.1);
\coordinate (t) at (0,2.95);
\coordinate (v) at (0,1.45);
\coordinate (bl) at (-.75,.25);
\coordinate (br) at (.75,.25);
\draw[usual] (t) -- (v);
\draw[usual] (v) -- (bl);
\draw[usual] (v) -- (br);
\node at (1.8,1.55) {$=$};
\begin{scope}[shift={(3.0,0)}]
\coordinate (L0) at (0,.25);
\coordinate (L1) at (0,1.20);
\coordinate (L2) at (0,2.05);
\coordinate (T) at (0,2.95);
\coordinate (V) at (1.50,.58);
\coordinate (R0) at (1.50,.25);
\coordinate (A1) at (.92,1.20);
\coordinate (B1) at (2.08,1.20);
\coordinate (A2) at (.92,2.05);
\coordinate (B2) at (2.08,2.05);
\coordinate (C1) at (.92,2.50);
\coordinate (C2) at (2.08,2.50);
\draw[usual] (L0) -- (L1);
\draw[usual] (R0) -- (V);
\draw[usual] (V) -- (A1);
\draw[usual] (V) -- (B1);
\draw[usual] (L1) -- (A2);
\draw[usual] (A1) -- (L2);
\draw[usual] (B1) -- (B2);
\draw[usual] (L2) -- (T);
\draw[usual] (A2) -- (C1);
\draw[usual] (B2) -- (C2);
\draw[usual] (C1) to[out=90,in=180] (1.50,2.85);
\draw[usual] (1.50,2.85) to[out=0,in=90] (C2);
\end{scope}
\node[slpunct] at (7.35,.25) {$,\qquad$};
\draw[usual] (8.15,.25) -- (8.15,2.20);
\draw[usual] (9.65,.25) -- (9.65,2.20);
\draw[usual] (8.15,2.20) to[out=90,in=180] (8.90,2.95);
\draw[usual] (8.90,2.95) to[out=0,in=90] (9.65,2.20);
\node at (10.35,1.35) {$=$};
\begin{scope}[shift={(11.3,0)}]
\draw[usual] (0,.25) -- (1.50,2.20);
\draw[usual] (1.50,.25) -- (0,2.20);
\draw[usual] (0,2.20) to[out=90,in=180] (.75,2.95);
\draw[usual] (.75,2.95) to[out=0,in=90] (1.50,2.20);
\end{scope}
\node[slpunct] at (13.0,.25) {$,$};
\end{tikzpicture}
\end{equation*}
\begin{equation*}\tag{SL3.1}\label{eq:SL3-1}
\begin{tikzpicture}[slthreerel,x=.58cm,y=.58cm]
\path[use as bounding box] (-.35,-.15) rectangle (5.05,3.0);
\draw[usual] (0,0) -- (1.6,1.4);
\draw[usual] (1.6,1.4) -- (0,2.8);
\draw[usual] (1.6,0) -- (0,1.4);
\draw[usual] (0,1.4) -- (1.6,2.8);
\node at (2.6,1.4) {$=$};
\begin{scope}[shift={(3.35,0)}]
\draw[usual] (0,0) -- (0,2.8);
\draw[usual] (1.2,0) -- (1.2,2.8);
\end{scope}
\node[slpunct] at (4.85,0) {$.$};
\end{tikzpicture}
\end{equation*}

\begin{equation*}\tag{SL3.2}\label{eq:SL3-2}
\begin{tikzpicture}[slthreerel,x=.72cm,y=.72cm]
\path[use as bounding box] (-.75,-1.05) rectangle (3.65,1.05);
\draw[usual] (0,0) circle [radius=.65];
\node at (1.45,0) {$=$};
\node at (2.35,0) {$1$};
\node[slpunct] at (2.8,-0.2) {$.$};
\end{tikzpicture}
\end{equation*}

\begin{equation*}\tag{SL3.3}\label{eq:SL3-3}
\begin{tikzpicture}[slthreerel,x=.74cm,y=.74cm]
\path[use as bounding box] (-.85,-.15) rectangle (3.25,2.95);

\coordinate (m) at (0,1.05);
\coordinate (u) at (0,1.75);
\draw[usual] (-.55,1.4) -- (m);
\draw[usual] (.55,1.4) -- (m);
\draw[usual] (0,0) -- (m);
\draw[usual] (-.55,1.4) -- (u);
\draw[usual] (.55,1.4) -- (u);
\draw[usual] (0,2.8) -- (u);
\node at (1.55,1.4) {$=$};
\begin{scope}[shift={(2.55,0)}]
\draw[usual] (0,0) -- (0,2.8);
\end{scope}
\node[slpunct] at (3.05,0) {$.$};
\end{tikzpicture}
\end{equation*}

\begin{equation*}\tag{SL3.4}\label{eq:SL3-4}
\begin{tikzpicture}[slthreerel,x=.82cm,y=.82cm]
\path[use as bounding box] (-.65,-.75) rectangle (5.95,1.75);

\begin{scope}[shift={(0,0)}]
\coordinate (v) at (0,.35);
\coordinate (l) at (-.45,1.05);
\coordinate (r) at (.45,1.05);
\coordinate (fr) at (.95,1.05);
\draw[usual] (l) -- (v);
\draw[usual] (l) -- (-.45,1.55);
\draw[usual] (r) -- (v);
\draw[usual] (0,-.55) -- (v);
\draw[usual] (r) to[out=90,in=180] (.70,1.55);
\draw[usual] (.70,1.55) to[out=0,in=90] (fr);
\draw[usual] (fr) -- (.95,-.55);
\end{scope}

\node at (1.6,.50) {$=$};

\begin{scope}[shift={(2.35,0)}]
\draw[usual] (0,1.25) -- (0,.25);
\draw[usual] (0,.25) to[out=270,in=180] (.45,-.25);
\draw[usual] (.45,-.25) to[out=0,in=270] (.9,.25);
\draw[usual] (.9,.25) -- (.9,.75);
\draw[usual] (.9,.75) to[out=90,in=180] (1.35,1.25);
\draw[usual] (1.35,1.25) to[out=0,in=90] (1.8,.75);
\draw[usual] (1.8,.75) -- (1.8,-.55);
\end{scope}

\node at (4.75,.50) {$=$};
\node at (5.30,.50) {$0.$};

\end{tikzpicture}
\end{equation*}

\begin{equation*}\tag{SL3.5}\label{eq:SL3-5}
\begin{tikzpicture}[slthreerel,x=.86cm,y=.86cm]
\path[use as bounding box] (-.35,-.15) rectangle (4.75,1.95);
\draw[usual] (-.10,0) -- (.30,.9);
\draw[usual] (.30,.9) -- (-.10,1.8);
\draw[usual] (1.45,0) -- (1.05,.9);
\draw[usual] (1.05,.9) -- (1.45,1.8);
\draw[usual] (.30,.9) -- (1.05,.9);
\node at (2.15,.9) {$=$};
\begin{scope}[shift={(3.10,0)}]
\draw[usual] (0,0) to[out=90,in=180] (.575,.45);
\draw[usual] (.575,.45) to[out=0,in=90] (1.15,0);
\draw[usual] (0,1.8) to[out=270,in=180] (.575,1.35);
\draw[usual] (.575,1.35) to[out=0,in=270] (1.15,1.8);
\end{scope}
\node[slpunct] at (4.55,0) {$.$};
\end{tikzpicture}
\end{equation*}

\begin{equation*}\tag{SL3.6}\label{eq:SL3-6}
\begin{tikzpicture}[slthreerel,x=.60cm,y=.60cm]
\path[use as bounding box] (-.15,-.1) rectangle (8.45,3.05);
\draw[usual] (0,2.2) to[out=90,in=180] (1.0,2.9);
\draw[usual] (1.0,2.9) to[out=0,in=90] (2.0,2.2);
\draw[usual] (0,2.2) -- (0,0);
\draw[usual] (2.0,2.2) -- (2.0,1.15);
\draw[usual] (2.0,1.15) -- (1.15,0);
\draw[usual] (2.0,1.15) -- (2.85,0);
\node at (4.15,1.4) {$=$};
\begin{scope}[shift={(5.6,0)}]
\draw[usual] (0,2.2) to[out=90,in=180] (1.0,2.9);
\draw[usual] (1.0,2.9) to[out=0,in=90] (2.0,2.2);
\draw[usual] (0,2.2) -- (0,1.15);
\draw[usual] (0,1.15) -- (-.85,0);
\draw[usual] (0,1.15) -- (.85,0);
\draw[usual] (2.0,2.2) -- (2.0,0);
\end{scope}
\node[slpunct] at (8.15,0) {$.$};
\end{tikzpicture}
\end{equation*}

\begin{equation*}\tag{SL3.7}\label{eq:SL3-7}
\begin{tikzpicture}[slthreerel,x=.52cm,y=.52cm]
\path[use as bounding box] (-.2,-.2) rectangle (18.95,3.55);
\draw[usual] (0,0) -- (0,.8);
\draw[usual] (1,0) -- (2,.8);
\draw[usual] (2,0) -- (1,.8);
\draw[usual] (0,.8) -- (1,1.6);
\draw[usual] (1,.8) -- (0,1.6);
\draw[usual] (2,.8) -- (2,1.6);
\draw[usual] (1,1.6) -- (1.5,2.4);
\draw[usual] (2,1.6) -- (1.5,2.4);
\draw[usual] (0,1.6) -- (0,3.2);
\draw[usual] (1.5,2.4) -- (1.5,3.2);
\node at (3.65,1.6) {$=$};
\begin{scope}[shift={(5.2,0)}]
\draw[usual] (0,0) -- (.55,1.6);
\draw[usual] (1.1,0) -- (.55,1.6);
\draw[usual] (.55,1.6) -- (.55,3.2);
\draw[usual] (2.1,0) -- (2.1,3.2);
\end{scope}
\node at (8.7,1.6) {$+$};
\begin{scope}[shift={(10.2,0)}]
\draw[usual] (.55,0) -- (.55,1.6);
\draw[usual] (.55,1.6) -- (0,3.2);
\draw[usual] (.55,1.6) -- (1.1,3.2);
\draw[usual] (2.0,0) to[out=90,in=180] (2.55,.40);
\draw[usual] (2.55,.40) to[out=0,in=90] (3.1,0);
\end{scope}
\node at (14.65,1.6) {$-$};
\begin{scope}[shift={(16.15,0)}]
\coordinate (u) at (1.20,1.65);
\coordinate (a) at (.45,1.10);
\coordinate (b) at (1.95,1.10);
\coordinate (l) at (.45,.52);
\coordinate (s) at (1.20,2.40);
\draw[usual] (u) -- (a);
\draw[usual] (u) -- (b);
\draw[usual] (b) -- (1.95,0);
\draw[usual] (a) -- (l);
\draw[usual] (l) -- (-.25,0);
\draw[usual] (l) -- (1.15,0);
\draw[usual] (u) -- (s);
\draw[usual] (s) -- (.35,3.2);
\draw[usual] (s) -- (2.05,3.2);
\end{scope}
\node[slpunct] at (18.65,0) {$.$};
\end{tikzpicture}
\end{equation*}

\begin{equation*}\tag{SL3.8}\label{eq:SL3-8}
\begin{tikzpicture}[slthreerel,x=.58cm,y=.58cm]
\path[use as bounding box] (-.1,-.1) rectangle (8.95,3.05);
\draw[usual] (0,0) -- (0,2.0);
\draw[usual] (0,2.0) to[out=90,in=180] (.95,2.7);
\draw[usual] (.95,2.7) to[out=0,in=90] (1.9,2.0);
\draw[usual] (1.9,2.0) -- (3.25,0);
\draw[usual] (1.9,0) -- (3.25,2.0);
\draw[usual] (3.25,2.0) -- (3.25,2.8);
\node at (4.65,1.4) {$=$};
\begin{scope}[shift={(6.35,0)}]
\coordinate (u) at (1.2,2.05);
\coordinate (a) at (.45,1.35);
\coordinate (b) at (1.95,1.35);
\coordinate (l) at (.45,.65);
\draw[usual] (1.2,2.8) -- (u);
\draw[usual] (u) -- (a);
\draw[usual] (u) -- (b);
\draw[usual] (b) -- (1.95,0);
\draw[usual] (a) -- (l);
\draw[usual] (l) -- (-.25,0);
\draw[usual] (l) -- (1.15,0);
\end{scope}
\node[slpunct] at (8.65,0) {$.$};
\end{tikzpicture}
\end{equation*}

\begin{equation*}\tag{SL3.9}\label{eq:SL3-9}
\begin{tikzpicture}[slthreerel,x=.44cm,y=.44cm]
\path[use as bounding box] (-1.25,-.2) rectangle (23.75,4.25);
\coordinate (M) at (0,3);
\coordinate (ML) at (-1,2);
\coordinate (MR) at (1,2);
\draw[usual] (0,4) -- (M);
\draw[usual] (3,4) -- (3,2);
\draw[usual] (M) -- (ML);
\draw[usual] (M) -- (MR);
\draw[usual] (1,2) -- (3,0);
\draw[usual] (3,2) -- (1,0);
\draw[usual] (ML) -- (-1,0);
\node at (4.7,2) {$=$};
\begin{scope}[shift={(6.2,0)}]
\draw[usual] (0,4) -- (0,2);
\draw[usual] (3,4) -- (3,3);
\draw[usual] (4,2) -- (4,0);
\draw[usual] (0,2) -- (2,0);
\draw[usual] (2,2) -- (0,0);
\draw[usual] (3,3) -- (2,2);
\draw[usual] (3,3) -- (4,2);
\end{scope}
\node at (11.65,2) {$-$};
\begin{scope}[shift={(13.2,0)}]
\coordinate (M) at (.8,2);
\draw[usual] (.8,0) -- (M);
\draw[usual] (M) -- (0,4);
\draw[usual] (M) -- (1.6,4);
\draw[usual] (2.55,0) to[out=90,in=180] (3.10,.45);
\draw[usual] (3.10,.45) to[out=0,in=90] (3.65,0);
\end{scope}
\node at (17.9,2) {$+$};
\begin{scope}[shift={(19.5,0)}]
\coordinate (M) at (2.55,2);
\draw[usual] (0,0) to[out=90,in=180] (.55,.45);
\draw[usual] (.55,.45) to[out=0,in=90] (1.1,0);
\draw[usual] (2.55,0) -- (M);
\draw[usual] (M) -- (1.75,4);
\draw[usual] (M) -- (3.35,4);
\end{scope}
\node[slpunct] at (23.35,0) {$.$};
\end{tikzpicture}
\end{equation*}
\end{Proposition}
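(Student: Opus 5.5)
The plan is to specialize \autoref{iso of categories} to $\mathfrak{g}=\mathfrak{sl}_3$, and then show that the listed relations generate all the relations \eqref{eq:HI1}--\eqref{eq:HI3} and \eqref{eq:OR1}--\eqref{eq:OR4} needed there.

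\textbf{Generators.} By \autoref{atom examples} there are six atoms. The two identities $\upvert,\downvert$ are trivial. The remaining non-identity atoms are the two merges (domains $B_1\otimes B_1$ and $B_2\otimes B_2$) and the two caps (domains $B_1\otimes B_2$ and $B_2\otimes B_1$). The crossings $\pcrossing,\mcrossing$ are forced, since the decomposition is multiplicity free. By \autoref{efficient pres prop}, only the two-strand atoms with weakly increasing domain labels are needed, together with crossings. The two atoms $\trisplit$ and $\capR$ are then the ones with decreasing domain, and \eqref{eq:SL3-0} expresses them as a generator precomposed with a crossing. Each relation is a crystal identity between basic morphisms, so I first verify them on highest weight elements of the relevant components. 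This is a finite computation with columns of height one and two, using the explicit type $A$ crossing from \autoref{Type A crossing}. Hence the functor from the presented category $\mathcal{F}'$ to $\Fund(\mathfrak{sl}_3\text{-}\mathrm{Crys})$ is well defined and full.

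\textbf{Faithfulness.} It suffices to prove that every defining relation of $\mathcal{F}$ from \autoref{iso of categories} follows in $\mathcal{F}'$. Equivalently, using the spanning argument in the proof of \autoref{iso of categories}, I show directly that every diagram reduces in $\mathcal{F}'$ to a combination of sandwich diagrams $\overline{f}\circ P\circ g$. Linear independence then comes from \autoref{bmt basis}. Since all atoms have at most two inputs, every left-hand side of \eqref{eq:HI1}--\eqref{eq:OR4} involves at most about four strands. So there are only finitely many local configurations up to reflection, and I sort them as follows.
\begin{itemize}
\item \eqref{eq:OR4}: Reidemeister 3, which holds in type $A$ by the earlier lemma, together with \eqref{eq:SL3-1}.
\item \eqref{eq:OR1}--\eqref{eq:OR3}, an atom meeting a crossing in the wrong place: these are handled by \eqref{eq:SL3-0}, \eqref{eq:SL3-6}, \eqref{eq:SL3-8} and \eqref{eq:SL3-9}, which slide a merge or cap through a crossing.
\item \eqref{eq:HI1}, a flipped atom followed by an atom: the cases are the circle \eqref{eq:SL3-2}, the bigon \eqref{eq:SL3-3}, the zigzags \eqref{eq:SL3-4}, the $H=I$ move \eqref{eq:SL3-5}, and overlapping merge--split configurations, which give \eqref{eq:SL3-7}.
\item \eqref{eq:HI2} and \eqref{eq:HI3}, which sandwich a permutation: I reduce these to the previous cases by first using \eqref{eq:SL3-8}, \eqref{eq:SL3-9} and Reidemeister moves to push the crossings out of the middle.
\end{itemize}

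\textbf{Main obstacle.} The hard step is completeness, namely checking that the list of local left-hand sides is exhaustive after applying all vertical and horizontal reflections and orientations. I would do this by enumerating the interactions between pairs of atoms by their domain and codomain labels in $\{1,2\}$, and for each one computing its expansion in the basis of \autoref{bmt basis}. This is exactly the computation performed by the code of \cite{HT}. I then check that each expansion matches one of \eqref{eq:SL3-0}--\eqref{eq:SL3-9}, or follows from them by rotating with caps and cups via \eqref{eq:SL3-4}. The coefficients, for example the signs in \eqref{eq:SL3-7} and \eqref{eq:SL3-9}, are found by evaluating both sides on the four or so highest weight elements of $B_1^{\otimes 3}$ or $B_1\otimes B_1\otimes B_2$.
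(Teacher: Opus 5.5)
Your overall route is the paper's: specialize \autoref{iso of categories} and match the listed relations with the needed $H=I$ and orientation-reversing relations by finite crystal computation. Your faithfulness criterion is also the right one. But the sorting you give is essentially the whole content of the proof here, and it contains concrete errors. In addition, one of your proposed tools is invalid.

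First, \trisplit{} is not an atom with decreasing domain. The non-identity $\mathfrak{sl}_3$ atoms are:
\begin{itemize}
\item the merge $B_1\otimes B_1\to B_2$ (\trimergedown) and the merge $B_2\otimes B_2\to B_1$ (\trisplit);
\item the cap $B_1\otimes B_2\to\mathbbm{1}$ (\capL) and the cap $B_2\otimes B_1\to\mathbbm{1}$ (\capR).
\end{itemize}
Only \capR{} has decreasing labels. So the stated generators are not the set $S^*$ of \autoref{efficient pres prop}: the $(2,2)$-merge \trisplit{} and its flip are missing. Only the second equation of \eqref{eq:SL3-0} is a crossing precomposition. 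The first equation is the nontrivial identity $\trisplit=(\id\otimes\capR)\circ(\beta_{2,1}\otimes\id)\circ(\id\otimes\overline{m})$, where $m$ is the $(1,1)$-merge. This is an instance of \eqref{eq:HI2}, which is why the paper lists \eqref{eq:SL3-0} among the $H=I$ relations rather than the orientation-reversing ones. Fullness still holds, since you verify this identity, but not for the reason you give.

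Likewise, \eqref{eq:SL3-7} has no split on its left-hand side. It is a merge precomposed with two crossings that carry the rightmost strand across both of its legs. That is the \eqref{eq:OR3} configuration, not a merge--split overlap in \eqref{eq:HI1}.

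Finally, \eqref{eq:OR4} is vacuous for $\mathfrak{sl}_3$. Crossings exist only between strands with different labels, so three pairwise crossing strands would need three labels. You should therefore not invoke Reidemeister~3, which is not among the listed relations. The same applies to your use of ``Reidemeister moves'' for \eqref{eq:HI2}: only \eqref{eq:SL3-1} is available.

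Second, and more seriously, you cannot close the completeness check by ``rotating with caps and cups via \eqref{eq:SL3-4}''. That relation says the zigzag equals $0$, not the identity: the category is not rigid. Attaching a cup or cap to both sides of a relation does give a valid relation. However, its left-hand side cannot be straightened to the rotated diagram, because any manipulation up to planar isotopy through cups and caps silently uses zigzag $=\id$. This is exactly why the statement imposes only vertical and horizontal reflections, together with orientation reversal, and not rotations.

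So the completeness step must show the following. Every instance of \eqref{eq:HI1}--\eqref{eq:OR3} for the six $\mathfrak{sl}_3$ atoms is either literally one of \eqref{eq:SL3-0}--\eqref{eq:SL3-9} up to those symmetries, or follows from them by composition and tensoring. That is precisely the direct computation the paper's short proof relies on.
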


\begin{proof}
This follows from \autoref{iso of categories} and direct computation of crystal morphisms. The relations \eqref{eq:SL3-0}, \eqref{eq:SL3-2}, \eqref{eq:SL3-3}, \eqref{eq:SL3-4} and \eqref{eq:SL3-5} are $H=I$ relations, and the relations \eqref{eq:SL3-6}, \eqref{eq:SL3-7}, \eqref{eq:SL3-8} and \eqref{eq:SL3-9} are orientation-reversing relations. Note that \eqref{eq:SL3-8} is an orientation-reversing relation of the form \eqref{eq:OR1}, while its horizontal reflection is of the form \eqref{eq:OR2}.
\end{proof}

\begin{Remark}
The presentations of $\Fund(\mathfrak{g}\text{-}\mathrm{Crys})$ generated by applying \autoref{iso of categories} are not necessarily minimal. 
For example, in the above presentation of $\Fund(\mathfrak{sl}_3\text{-}\mathrm{Crys})$, relation \eqref{eq:SL3-7} is redundant: it follows from precomposing the horizonal reflection of \eqref{eq:SL3-9} by the permutation switching the first and second strands, followed by applying \eqref{eq:SL3-8} to the middle factor on the right-hand side.
\end{Remark}

Let us denote the two equivalent expressions in \eqref{eq:SL3-6} by $\tcupL$ or $\tcupR$. 
As in \cite[\S 3.2]{Alqady2025coboundary}, we are able to give an explicit formula for the Jones--Wenzl projector of type $\epsilon$, assuming that the labels of $\epsilon=(\epsilon_1,\dots, \epsilon_m)$ are weakly increasing. Fix such an $\epsilon$. Define an \emph{apt quintuple} to be a quintuple $\mathcal{I}:=(I_1,\dots, I_5), I_1,\dots, I_5 \subseteq \{1,\dots, m\}$, such that $I_1,\dots, I_5$ record a collection of possible indices $i$ such that beginning at $\epsilon_i$ a copy of $\trimergedown, \, \trisplit, \, \capL,  \, \tupL, \, \tcupR$, respectively, may be placed on the domain of $B_{\epsilon}$ (e.g. this means if $i\in I_1$, then $\epsilon_i=\epsilon_{i+1}=1$, if $i\in I_3$, then $\epsilon_i=1, \epsilon_{i+1}=2$, and so on), such that these placements are all compatible and assemble to a bottom diagram $f$ (so that, for example, if $i\in I_k$ for some $1\le k\le 5$, then $i+1\notin I_k$). We then define $c_{\mathcal{I}}$ to be $\overline{f}\circ f$. For example, for $\epsilon=(1,1,1)$, the four apt quintuples are $\emptyset^5, (\{1\},\emptyset^4), (\{2\},\emptyset^4), (\emptyset^3,\{1\},\emptyset)$, corresponding to the four terms below. For each $\mathcal{I}$, we assign a number $\sigma(\mathcal{I})=\sum_{i=1}^3 \lvert I_i\rvert$. 

\begin{gather}\label{jw4 sl3}
\begin{tikzpicture}[anchorbase,scale=0.9]
\begin{scope}
\foreach \x in {0,0.35,0.7} {
\draw[usual,slup] (\x,0) -- (\x,1.0);
}
\end{scope}
\node at (1.4,0.5) {$-$};
\begin{scope}[xshift=2.0cm]
\draw[usual,slup] (-0.45,0) -- (0,0.35);
\draw[usual,slup] (0.45,0) -- (0,0.35);
\draw[usual,sldown] (-0.45,1.0) -- (0,0.7);
\draw[usual,sldown] (0.45,1.0) -- (0,0.7);
\draw[usual,sldown] (0,0.35) -- (0,0.7);
\draw[usual,slup] (0.9,0) -- (0.9,1.0);
\end{scope}
\node at (3.7,0.5) {$-$};
\begin{scope}[xshift=4.3cm]
\draw[usual,slup] (0,0) -- (0,1.0);
\draw[usual,slup] (0.45,0) -- (0.9,0.35);
\draw[usual,slup] (1.35,0) -- (0.9,0.35);
\draw[usual,sldown] (0.45,1.0) -- (0.9,0.7);
\draw[usual,sldown] (1.35,1.0) -- (0.9,0.7);
\draw[usual,sldown] (0.9,0.35) -- (0.9,0.7);
\end{scope}
\node at (6.0,0.5) {$+$};
\begin{scope}[xshift=6.6cm]
\node at (0,0.2) {\tupL};
\node at (0,0.8) {\tdownL};
\end{scope}
\end{tikzpicture}.
\end{gather}

\begin{Proposition}\label{sl3 jw}
Assume $\epsilon$ has labels weakly increasing. With notation as above, we have \[j_{\epsilon} = \sum_{\mathcal{I} \text{ apt}} (-1)^{\sigma(\mathcal{I})} c_{\mathcal{I}}.\]
\end{Proposition}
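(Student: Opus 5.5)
The plan is to start from \autoref{JW theorem}, which applies to type $A_2$ with weakly increasing labels. It gives $j_\epsilon=\prod_{k=1}^{m-1} j_{k,k+1}$. Each factor is then expanded with \autoref{2 strand JW}.

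For $\mathfrak{sl}_3$ the two-strand atoms with weakly increasing domain are few. On $B_1\otimes B_1$ there is only the merge $\trimergedown$. On $B_1\otimes B_2$ there is only the cap $\capL$. On $B_2\otimes B_2$ there is only the (dual) merge, which the paper also writes $\trisplit$. So each factor has the form $j_{k,k+1}=\id-\overline{a_k}\circ a_k$, where $a_k$ is the unique two-strand atom on strands $k,k+1$; when $\epsilon_k=2$ and $\epsilon_{k+1}=1$ there is no atom and $a_k=0$. Expanding the product gives
\[
j_\epsilon=\sum_{K}(-1)^{\lvert K\rvert}\prod_{k\in K}\overline{a_k}\circ a_k ,
\]
where $K$ runs over sets of adjacent positions. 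The factors commute, so the order in each product is irrelevant.

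Next I would sort the subsets $K$ by their connected runs of consecutive positions. A run of length one, $\{k\}$, contributes exactly a placement of type $I_1$, $I_2$ or $I_3$, with sign $-1$. Distinct runs are far enough apart that their contributions simply tensor together, so they assemble into the composite $\overline f\circ f$ of the corresponding bottom diagram $f$.

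A run of length two, $\{k,k+1\}$, is where the tup pieces come from. I would show that $(\overline{a_k}a_k)(\overline{a_{k+1}}a_{k+1})$ equals $\overline{t}\circ t$, where $t$ is the bottom diagram $\tupL$ or $\tcupR$ on the three strands. If the labels are $(1,1,2)$ or $(1,2,2)$, the product is instead zero or absorbed into other terms. For the computation I would apply the $H=I$ relations \eqref{eq:SL3-0}, \eqref{eq:SL3-3} and \eqref{eq:SL3-4} together with \eqref{eq:SL3-6} in the middle of the composite. This run carries sign $(-1)^2=+1$, which matches the rule that $\sigma(\mathcal I)$ does not count $I_4$ and $I_5$. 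The case $\epsilon=(1,1,1)$ serves as a sanity check: there the expansion should reproduce the four terms of \eqref{jw4 sl3}.

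The main obstacle is runs of length at least three. I must show that these contribute exactly what the apt-quintuple sum predicts: either they vanish, or they cancel against or coincide with products of shorter runs. Whether they vanish, for instance through the zigzag-type relation \eqref{eq:SL3-4}, or reduce to something else, needs checking.

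Rather than push diagrams around, I would argue on crystal elements. Every term on either side is a combination of idempotents $\overline f\circ f$, and each such idempotent acts on a component $C$ as the identity or as zero. By \autoref{Mobius JW} and \autoref{bijection branching} it therefore suffices to compare, for each component $C$, the signed count of apt quintuples whose diagram $f$ satisfies $\overline f f|_C=\id$ with the signed count of subsets $K$ with $\prod_{k\in K}\overline{a_k}a_k|_C=\id$. I would then check that these two counts agree by a local sign-reversing involution on runs. Concretely, the involution toggles the leftmost run of length at least three, or merges or splits it into a tup piece, and I expect this to reduce to a finite check on words of length at most four.
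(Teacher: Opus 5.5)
Your route is the paper's: expand the product of two-strand projectors from \autoref{JW theorem} and simplify. Your treatment of isolated singletons and of the $(1,1,1)$ pair is right. However, the proposal leaves open exactly the step that makes the expansion match the apt sum. That step is the value of $E_K:=\prod_{k\in K}\overline{a_k}\circ a_k$ in two situations: when $K$ contains three consecutive indices, and when $K$ contains two consecutive indices whose strands carry labels $(1,1,2)$ or $(1,2,2)$ (you write ``zero or absorbed''). Until this is settled, you have not shown that $K\mapsto\mathcal I$ accounts for every nonzero term. The sign-reversing involution you sketch is not specified, and in fact it has nothing to act on.

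The missing fact is that all these products vanish. This is immediate on crystal elements, because every $E_k$ acts diagonally. Write $B_1=\{1<2<3\}$ and $B_2=\{12<13<23\}$, ordered along the strings. With the paper's tensor convention:
\begin{itemize}
\item if $\epsilon_k=\epsilon_{k+1}$, then $E_kx=x$ iff $x_k<x_{k+1}$;
\item if $(\epsilon_k,\epsilon_{k+1})=(1,2)$, then $E_kx=x$ iff $x_k\otimes x_{k+1}=1\otimes 23$, the trivial component.
\end{itemize}
Consequently $E_kE_{k+1}=0$ for labels $(1,1,2)$, since it would force $x_k<x_{k+1}=1$. Likewise $E_kE_{k+1}=0$ for labels $(1,2,2)$, since it would force $23=x_{k+1}<x_{k+2}$. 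Moreover $E_kE_{k+1}E_{k+2}=0$ always. Inside a constant-label block it needs a strictly increasing chain of four elements in a three-element set. Across the block boundary it contains one of the two patterns just excluded.

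Hence $E_K\neq 0$ only if every maximal run of $K$ has length one, or has length two with all three strands carrying the same label. Singletons give $I_1,I_2,I_3$. A same-label pair gives $\overline t\circ t$ for the corresponding tup. Both sides are the indicator of $1\otimes 2\otimes 3$, respectively $12\otimes 13\otimes 23$; diagrammatically this is \eqref{eq:SL3-5} followed by \eqref{eq:SL3-6}. So $K\mapsto\mathcal I$ is a bijection onto apt quintuples with $E_K=c_{\mathcal I}$. It also satisfies $(-1)^{|K|}=(-1)^{\sigma(\mathcal I)}$, because each tup absorbs two indices of $K$. This finishes the proof with no involution or M\"obius inversion needed.

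A small slip: for $(\epsilon_k,\epsilon_{k+1})=(2,1)$ there is an atom, namely the second cap, so $a_k=0$ is false there. This case never arises for weakly increasing labels, so it does not affect the argument.
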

For example, \eqref{jw4 sl3} gives $j_{(1,1,1)}$. 
\begin{proof}
The two-strand Jones--Wenzl projector is either identity minus cup/cap or identity minus merge/split. By induction, it is easy to see that applying the recursive formula given in \autoref{JW theorem} and using the $H=I$ relations to simplify, yields the claimed formula.
\end{proof}

\subsection{\texorpdfstring{$\mathfrak{sp}_4$}{sp4}}

We now describe the next interesting case in rank two.

\tikzset{
sp4rel/.style={baseline={([yshift=-0.5ex]current bounding box.center)},line cap=round,line join=round},
sp4one/.style={line width=2.0pt,color=black},
sp4two/.style={line width=2.0pt,color=cyan}
}

\setlength{\abovedisplayskip}{1.8em plus 0.25em minus 0.1em}
\setlength{\belowdisplayskip}{1.8em plus 0.25em minus 0.1em}
\setlength{\abovedisplayshortskip}{1.8em plus 0.25em minus 0.1em}
\setlength{\belowdisplayshortskip}{1.8em plus 0.25em minus 0.1em}

\begin{Proposition}
The category \(\Fund(\mathfrak{sp}_4 \text{-} \mathrm{Crys})\) is isomorphic to the $\kk$-linear monoidal category generated
by objects $B_1, B_2$ and morphisms
\[
\spfourcapone,\qquad
\spfourcaptwo,\qquad
\spfourmergeaa,\qquad
\spfourmergeonetwo,\qquad
\spfourcrosstwoone,\qquad
\spfourswitchdown,
\]
together with their vertical reflections
\[
\spfourcupone,\qquad
\spfourcuptwo,\qquad
\spfoursplitaa,\qquad
\spfoursplitonetwo,\qquad
\spfourcrossonetwo,\qquad
\spfourswitchup,
\]
with relations as follows (where we also impose the vertical reflections), where \eqref{eq:SP4-0} may be regarded as a definition (of the atom with domain $(2,1)$). The left-hand side of \eqref{eq:SP4-1} is the other non-identity atom.
\begin{equation*}\tag{SP4.0}\label{eq:SP4-0}
\begin{tikzpicture}[sp4rel,scale=0.9]
\path[use as bounding box] (-.85,-.20) rectangle (4.10,1.90);
\begin{scope}
\draw[sp4two] (-0.55,0) node[below] {$2$} -- (0,0.75);
\draw[sp4one] ( 0.55,0) node[below] {$1$} -- (0,0.75);
\draw[sp4one] (0,0.75) -- (0,1.65) node[above] {$1$};
\end{scope}

\node at (1.55,0.85) {$=$};

\begin{scope}[xshift=3.0cm]
\draw[sp4two] (-0.55,0) node[below] {$2$} -- (0,0.45);
\draw[sp4two] (0,0.45) -- (0.55,0.90);
\draw[sp4one] (0.55,0) node[below] {$1$} -- (0,0.45);
\draw[sp4one] (0,0.45) -- (-0.55,0.90);
\draw[sp4one] (-0.55,0.90) -- (0,1.35);
\draw[sp4two] ( 0.55,0.90) -- (0,1.35);
\draw[sp4one] (0,1.35) -- (0,1.65) node[above] {$1$};
\end{scope}
\end{tikzpicture}
.
\end{equation*}

\begin{equation*}\tag{SP4.1}\label{eq:SP4-1}
\begin{array}{c@{\quad=\quad}c}
\begin{tikzpicture}[sp4rel,x=.62cm,y=.62cm]
\path[use as bounding box] (-.25,-.15) rectangle (2.25,3.05);
\draw[sp4one] (0,0) -- (0,1.45);
\draw[sp4one] (1,0) -- (1.50,.75);
\draw[sp4two] (2,0) -- (1.50,.75);
\draw[sp4one] (1.50,.75) -- (1.50,1.45);
\draw[sp4one] (0,1.45) -- (.75,2.20);
\draw[sp4one] (1.50,1.45) -- (.75,2.20);
\draw[sp4two] (.75,2.20) -- (.75,2.85);
\end{tikzpicture}
&
\begin{tikzpicture}[sp4rel,x=.62cm,y=.62cm]
\path[use as bounding box] (-.25,-.15) rectangle (2.25,3.05);
\draw[sp4one] (0,0) -- (0,1.05);
\draw[sp4one] (1,0) -- (2,1.05);
\draw[sp4two] (2,0) -- (1,1.05);
\draw[sp4one] (0,1.05) -- (1,2.10) -- (1,2.25);
\draw[sp4two] (1,1.05) -- (0,2.10) -- (0,2.85);
\draw[sp4one] (2,1.05) -- (2,2.25);
\draw[sp4one] (1,2.25)
to[out=90,in=180] (1.50,2.85)
to[out=0,in=90] (2,2.25);
\end{tikzpicture}
\end{array}
.
\end{equation*}

\begin{equation*}\tag{SP4.2}\label{eq:SP4-2}
\begin{tikzpicture}[sp4rel,scale=0.95]
\path[use as bounding box] (-.10,-.70) rectangle (4.70,.70);
\draw[sp4one] (.55,0) circle (.55);
\node at (1.55,0) {$=$};
\node at (2.30,0) {$1$};
\node at (3.05,0) {$=$};
\draw[sp4two] (4.05,0) circle (.55);
\end{tikzpicture}
.
\end{equation*}

\begin{equation*}\tag{SP4.3}\label{eq:SP4-3}
\begin{tikzpicture}[sp4rel,scale=0.95]
\coordinate (b) at (0,0);
\coordinate (m) at (0,0.75);
\coordinate (l) at (-0.55,1.35);
\coordinate (r) at (0.55,1.35);
\coordinate (u) at (0,1.95);
\coordinate (t) at (0,2.70);

\draw[sp4two] (b) -- (m);
\draw[sp4one] (m) -- (l);
\draw[sp4one] (m) -- (r);
\draw[sp4one] (l) -- (u);
\draw[sp4one] (r) -- (u);
\draw[sp4two] (u) -- (t);

\node at (1.70,1.35) {$=$};

\begin{scope}[xshift=2.70cm]
\draw[sp4two] (0,0) -- (0,2.70);
\end{scope}
\end{tikzpicture}
.
\end{equation*}

\begin{equation*}\tag{SP4.4}\label{eq:SP4-4}
\begin{array}{c@{\quad=\quad}c@{\qquad\begin{tikzpicture}[sp4rel,x=.72cm,y=.72cm]\path[use as bounding box] (0,-.20) rectangle (0,3.05);\node[inner sep=0pt] at (0,0) {$,$};\end{tikzpicture}\qquad}c@{\quad=\quad}c}
\begin{tikzpicture}[sp4rel,x=.72cm,y=.72cm]
\path[use as bounding box] (-.25,-.20) rectangle (1.35,3.05);
\draw[sp4one] (0,0)
-- (.55,.70) -- (1.10,1.40)
-- (.55,2.10) -- (0,2.80);
\draw[sp4two] (1.10,0)
-- (.55,.70) -- (0,1.40)
-- (.55,2.10) -- (1.10,2.80);
\end{tikzpicture}
&
\begin{tikzpicture}[sp4rel,x=.72cm,y=.72cm]
\path[use as bounding box] (-.25,-.20) rectangle (1.35,3.05);
\draw[sp4one] (0,0) -- (0,2.80);
\draw[sp4two] (1.10,0) -- (1.10,2.80);
\end{tikzpicture}
&
\begin{tikzpicture}[sp4rel,x=.72cm,y=.72cm]
\path[use as bounding box] (-.25,-.20) rectangle (1.35,3.05);
\draw[sp4two] (0,0)
-- (.55,.70) -- (1.10,1.40)
-- (.55,2.10) -- (0,2.80);
\draw[sp4one] (1.10,0)
-- (.55,.70) -- (0,1.40)
-- (.55,2.10) -- (1.10,2.80);
\end{tikzpicture}
&
\begin{tikzpicture}[sp4rel,x=.72cm,y=.72cm]
\path[use as bounding box] (-.25,-.20) rectangle (1.35,3.05);
\draw[sp4two] (0,0) -- (0,2.80);
\draw[sp4one] (1.10,0) -- (1.10,2.80);
\end{tikzpicture}
\end{array}
.
\end{equation*}

\begin{equation*}\tag{SP4.5}\label{eq:SP4-5}
\begin{array}{c@{\quad=\quad}c@{\qquad\qquad}c@{\quad=\quad}c}
\begin{tikzpicture}[sp4rel,x=.86cm,y=.86cm]
\path[use as bounding box] (-.25,-.20) rectangle (1.25,1.25);
\draw[sp4two] (0,0) -- (.33,.50);
\draw[sp4one] (.33,.50) -- (0,1);
\draw[sp4two] (1,0) -- (.67,.50);
\draw[sp4one] (.67,.50) -- (1,1);
\draw[sp4one] (.33,.50) -- (.67,.50);
\end{tikzpicture}
&
\begin{tikzpicture}[sp4rel,x=.86cm,y=.86cm]
\path[use as bounding box] (-.25,-.20) rectangle (1.25,1.25);
\draw[sp4two] (0,0)
to[out=45,in=135] (1,0);
\draw[sp4one] (0,1)
to[out=315,in=225] (1,1);
\end{tikzpicture}
&
\begin{tikzpicture}[sp4rel,x=.86cm,y=.86cm]
\path[use as bounding box] (-.25,-.20) rectangle (1.25,1.25);
\draw[sp4two] (0,0) -- (.33,.50);
\draw[sp4one] (.33,.50) -- (0,1);
\draw[sp4one] (1,0) -- (.67,.50);
\draw[sp4two] (.67,.50) -- (1,1);
\draw[sp4one] (.33,.50) -- (.67,.50);
\end{tikzpicture}
&
\begin{tikzpicture}[sp4rel,x=.86cm,y=.86cm]
\path[use as bounding box] (-.25,-.20) rectangle (1.25,1.25);
\draw[sp4two] (0,0) -- (.50,.33);
\draw[sp4one] (.50,.33) -- (1,0);
\draw[sp4one] (0,1) -- (.50,.67);
\draw[sp4two] (.50,.67) -- (1,1);
\draw[sp4one] (.50,.33) -- (.50,.67);
\end{tikzpicture}
\end{array}
.
\end{equation*}

\begin{equation*}\tag{SP4.6}\label{eq:SP4-6}
\begin{array}{c@{\quad=\quad}c@{\quad-\quad}c@{\quad-\quad}c}
\begin{tikzpicture}[sp4rel,x=.82cm,y=.82cm]
\path[use as bounding box] (-.75,-.20) rectangle (.75,2.25);
\draw[sp4one] (-.50,0) -- (-.50,.50);
\draw[sp4one] ( .50,0) -- ( .50,.50);
\draw[densely dashed,line width=1.1pt] (-.50,.50) -- (.50,.50);
\draw[sp4two] (-.50,.50) -- (-.50,1.00);
\draw[sp4two] ( .50,.50) -- ( .50,1.00);
\draw[sp4two] (-.50,1.00) -- (-.50,1.50);
\draw[sp4two] ( .50,1.00) -- ( .50,1.50);
\draw[densely dashed,line width=1.1pt] (-.50,1.50) -- (.50,1.50);
\draw[sp4one] (-.50,1.50) -- (-.50,2.00);
\draw[sp4one] ( .50,1.50) -- ( .50,2.00);
\end{tikzpicture}
&
\begin{tikzpicture}[sp4rel,x=.82cm,y=.82cm]
\path[use as bounding box] (-.75,-.20) rectangle (.75,2.25);
\draw[sp4one] (-.50,0) -- (-.50,2.00);
\draw[sp4one] ( .50,0) -- ( .50,2.00);
\end{tikzpicture}
&
\begin{tikzpicture}[sp4rel,x=.82cm,y=.82cm]
\path[use as bounding box] (-.75,-.20) rectangle (.75,2.25);
\draw[sp4one] (-.50,0)
to[out=90,in=90] (.50,0);
\draw[sp4one] (-.50,2.00)
to[out=270,in=270] (.50,2.00);
\end{tikzpicture}
&
\begin{tikzpicture}[sp4rel,x=.82cm,y=.82cm]
\path[use as bounding box] (-.75,-.20) rectangle (.75,2.25);
\draw[sp4one] (-.50,0) -- (0,.70);
\draw[sp4one] ( .50,0) -- (0,.70);
\draw[sp4two] (0,.70) -- (0,1.30);
\draw[sp4one] (-.50,2.00) -- (0,1.30);
\draw[sp4one] ( .50,2.00) -- (0,1.30);
\end{tikzpicture}
\end{array}
.
\end{equation*}

\begin{equation*}\tag{SP4.7}\label{eq:SP4-7}
\begin{array}{c@{\;=\;}c@{\;=\;}c@{\;=\;}c@{\;=\;}c@{\;=\;}c@{\;=\;}c}
\begin{tikzpicture}[sp4rel,x=.50cm,y=.50cm]
\path[use as bounding box] (-1.00,-1.12) rectangle (1.65,2.12);
\coordinate (v) at (0,.35);
\coordinate (l) at (-.55,1.05);
\coordinate (r) at (.55,1.05);
\coordinate (fr) at (1.30,1.05);
\draw[sp4one] (l) -- (v);
\draw[sp4one] (l) -- (-.55,1.70);
\draw[sp4one] (r) -- (v);
\draw[sp4two] (0,-.75) -- (v);
\draw[sp4one] (r) to[out=90,in=180] (.92,1.62)
to[out=0,in=90] (fr);
\draw[sp4one] (fr) -- (1.30,-.75);
\end{tikzpicture}
&
\begin{tikzpicture}[sp4rel,x=.50cm,y=.50cm]
\path[use as bounding box] (-1.00,-1.12) rectangle (1.65,2.12);
\coordinate (v) at (0,.35);
\coordinate (l) at (-.55,1.05);
\coordinate (r) at (.55,1.05);
\coordinate (fr) at (1.30,1.05);
\draw[sp4one] (l) -- (v);
\draw[sp4one] (l) -- (-.55,1.70);
\draw[sp4two] (r) -- (v);
\draw[sp4one] (0,-.75) -- (v);
\draw[sp4two] (r) to[out=90,in=180] (.92,1.62)
to[out=0,in=90] (fr);
\draw[sp4two] (fr) -- (1.30,-.75);
\end{tikzpicture}
&
\begin{tikzpicture}[sp4rel,x=.50cm,y=.50cm]
\path[use as bounding box] (-1.65,-1.12) rectangle (1.00,2.12);
\coordinate (v) at (0,.35);
\coordinate (l) at (-.55,1.05);
\coordinate (r) at (.55,1.05);
\coordinate (fl) at (-1.30,1.05);
\draw[sp4one] (r) -- (v);
\draw[sp4one] (r) -- (.55,1.70);
\draw[sp4one] (l) -- (v);
\draw[sp4two] (0,-.75) -- (v);
\draw[sp4one] (l) to[out=90,in=0] (-.92,1.62)
to[out=180,in=90] (fl);
\draw[sp4one] (fl) -- (-1.30,-.75);
\end{tikzpicture}
&
\begin{tikzpicture}[sp4rel,x=.50cm,y=.50cm]
\path[use as bounding box] (-1.65,-1.12) rectangle (1.00,2.12);
\coordinate (v) at (0,.35);
\coordinate (l) at (-.55,1.05);
\coordinate (r) at (.55,1.05);
\coordinate (fl) at (-1.30,1.05);
\draw[sp4one] (r) -- (v);
\draw[sp4one] (r) -- (.55,1.70);
\draw[sp4two] (l) -- (v);
\draw[sp4one] (0,-.75) -- (v);
\draw[sp4two] (l) to[out=90,in=0] (-.92,1.62)
to[out=180,in=90] (fl);
\draw[sp4two] (fl) -- (-1.30,-.75);
\end{tikzpicture}
&
0
&
\begin{tikzpicture}[sp4rel,x=.50cm,y=.50cm]
\path[use as bounding box] (-.40,-1.12) rectangle (2.35,2.12);
\draw[sp4two] (0,1.70) -- (0,.25)
to[out=270,in=180] (.50,-.25)
to[out=0,in=270] (1.00,.25)
-- (1.00,.75)
to[out=90,in=180] (1.50,1.25)
to[out=0,in=90] (2.00,.75)
-- (2.00,-.75);
\end{tikzpicture}
&
\begin{tikzpicture}[sp4rel,x=.50cm,y=.50cm]
\path[use as bounding box] (-.40,-1.12) rectangle (2.35,2.12);
\draw[sp4one] (0,1.70) -- (0,.25)
to[out=270,in=180] (.50,-.25)
to[out=0,in=270] (1.00,.25)
-- (1.00,.75)
to[out=90,in=180] (1.50,1.25)
to[out=0,in=90] (2.00,.75)
-- (2.00,-.75);
\end{tikzpicture}
\end{array}
.
\end{equation*}

\begin{equation*}\tag{SP4.8}\label{eq:SP4-8}
\begin{gathered}
\begin{tikzpicture}[sp4rel,x=.58cm,y=.58cm]
\path[use as bounding box] (-.25,-.15) rectangle (2.25,2.55);
\draw[sp4one] (0,0) -- (0,.50);
\draw[sp4one] (1,0) -- (1,.50);
\draw[densely dashed,line width=1.1pt] (0,.50) -- (1,.50);
\draw[sp4two] (0,.50) -- (0,2.35);
\draw[sp4two] (1,.50) -- (1,1.65);
\draw[sp4two] (2,0) -- (2,1.65);
\draw[sp4two] (1,1.65)
to[out=90,in=180] (1.50,2.35)
to[out=0,in=90] (2,1.65);
\end{tikzpicture}
\;=0=\;
\begin{tikzpicture}[sp4rel,x=.58cm,y=.58cm]
\path[use as bounding box] (-.25,-.15) rectangle (2.25,2.55);
\draw[sp4two] (0,0) -- (0,1.65);
\draw[sp4one] (1,0) -- (1,.50);
\draw[sp4one] (2,0) -- (2,.50);
\draw[densely dashed,line width=1.1pt] (1,.50) -- (2,.50);
\draw[sp4two] (1,.50) -- (1,1.65);
\draw[sp4two] (2,.50) -- (2,2.35);
\draw[sp4two] (0,1.65)
to[out=90,in=180] (.50,2.35)
to[out=0,in=90] (1,1.65);
\end{tikzpicture}
\\[2.55em]
\begin{array}{c@{\quad=\quad}c@{\qquad\begin{tikzpicture}[sp4rel,x=.52cm,y=.52cm]\path[use as bounding box] (0,-.15) rectangle (0,2.55);\node[inner sep=0pt] at (0,0) {$,$};\end{tikzpicture}\qquad}c@{\quad=\quad}c}
\begin{tikzpicture}[sp4rel,x=.52cm,y=.52cm]
\path[use as bounding box] (-.75,-.15) rectangle (2.65,2.55);
\draw[sp4one] (0,0) -- (0,.55);
\draw[sp4one] (1,0) -- (1,.55);
\draw[densely dashed,line width=1.1pt] (0,.55) -- (1,.55);
\draw[sp4two] (0,.55) -- (0,2.35);
\draw[sp4two] (1,.55) -- (1,1.25);
\draw[sp4one] (2,0) -- (2,1.25);
\draw[sp4two] (1,1.25) -- (1.50,1.85);
\draw[sp4one] (2,1.25) -- (1.50,1.85);
\draw[sp4one] (1.50,1.85) -- (1.50,2.35);
\end{tikzpicture}
&
\begin{tikzpicture}[sp4rel,x=.52cm,y=.52cm]
\path[use as bounding box] (-.75,-.15) rectangle (2.65,2.55);
\draw[sp4one] (1,0) to[out=90,in=90] (2,0);
\draw[sp4one] (0,0) -- (0,1.35);
\draw[sp4two] (-.50,2.35) -- (0,1.35);
\draw[sp4one] (.50,2.35) -- (0,1.35);
\end{tikzpicture}
&
\begin{tikzpicture}[sp4rel,x=.52cm,y=.52cm]
\path[use as bounding box] (-.25,-.15) rectangle (2.65,2.55);
\draw[sp4one] (0,0) -- (0,1.25);
\draw[sp4one] (1,0) -- (1,.55);
\draw[sp4one] (2,0) -- (2,.55);
\draw[densely dashed,line width=1.1pt] (1,.55) -- (2,.55);
\draw[sp4two] (1,.55) -- (1,1.25);
\draw[sp4two] (2,.55) -- (2,2.35);
\draw[sp4one] (0,1.25) -- (.50,1.85);
\draw[sp4two] (1,1.25) -- (.50,1.85);
\draw[sp4one] (.50,1.85) -- (.50,2.35);
\end{tikzpicture}
&
\begin{tikzpicture}[sp4rel,x=.52cm,y=.52cm]
\path[use as bounding box] (-.25,-.15) rectangle (2.65,2.55);
\draw[sp4one] (0,0) to[out=90,in=90] (1,0);
\draw[sp4one] (2,0) -- (2,1.35);
\draw[sp4one] (1.50,2.35) -- (2,1.35);
\draw[sp4two] (2.50,2.35) -- (2,1.35);
\end{tikzpicture}
\end{array}
\end{gathered}
.
\end{equation*}

\begin{equation*}\tag{SP4.9}\label{eq:SP4-9}
\begin{array}{c@{\quad=\quad}c@{\quad=\quad}c@{\quad=\quad}c}
\begin{tikzpicture}[sp4rel,x=.86cm,y=.86cm]
\path[use as bounding box] (-.85,-.20) rectangle (.85,2.05);
\draw[sp4two] (0,0) -- (0,.70);
\draw[sp4one] (0,.70) -- (-.55,1.30);
\draw[sp4one] (0,.70) -- (.55,1.30);
\draw[sp4one] (-.55,1.30)
to[out=90,in=180] (0,1.85)
to[out=0,in=90] (.55,1.30);
\end{tikzpicture}
&
\begin{tikzpicture}[sp4rel,x=.86cm,y=.86cm]
\path[use as bounding box] (-.85,-.20) rectangle (.85,2.05);
\draw[sp4two] (-.55,0) -- (-.55,.65);
\draw[sp4two] ( .55,0) -- ( .55,.65);
\draw[densely dashed,line width=1.1pt] (-.55,.65) -- (.55,.65);
\draw[sp4one] (-.55,.65) -- (-.55,1.30);
\draw[sp4one] ( .55,.65) -- ( .55,1.30);
\draw[sp4one] (-.55,1.30)
to[out=90,in=180] (0,1.85)
to[out=0,in=90] (.55,1.30);
\end{tikzpicture}
&
0
&
\begin{tikzpicture}[sp4rel,x=.86cm,y=.86cm]
\path[use as bounding box] (-.85,-.20) rectangle (.85,2.05);
\draw[sp4two] (-.55,.45)
to[out=270,in=180] (0,0)
to[out=0,in=270] (.55,.45);
\draw[sp4two] (-.55,.45) -- (-.55,1.20);
\draw[sp4two] ( .55,.45) -- ( .55,1.20);
\draw[densely dashed,line width=1.1pt] (-.55,1.20) -- (.55,1.20);
\draw[sp4one] (-.55,1.20) -- (-.55,1.85);
\draw[sp4one] ( .55,1.20) -- ( .55,1.85);
\end{tikzpicture}
\end{array}
.
\end{equation*}

\begin{equation*}\tag{SP4.10}\label{eq:SP4-10}
\begin{array}{c@{\quad=\quad}c@{\quad+\quad}c@{\quad+\quad}c}
\begin{tikzpicture}[sp4rel,x=.55cm,y=.55cm]
\path[use as bounding box] (-.25,-.15) rectangle (2.25,2.85);
\draw[sp4one] (0,0) -- (0,1.45) -- (.50,2.05) -- (.50,2.65);
\draw[sp4two] (2,0) -- (1.50,.72) -- (1.00,1.45) -- (.50,2.05);
\draw[sp4one] (1,0) -- (1.50,.72) -- (2.00,1.45) -- (2.00,2.65);
\end{tikzpicture}
&
\begin{tikzpicture}[sp4rel,x=.55cm,y=.55cm]
\path[use as bounding box] (-.25,-.15) rectangle (2.25,2.85);
\draw[sp4one] (0,0) -- (.50,.60);
\draw[sp4one] (1,0) -- (.50,.60);
\draw[sp4two] (.50,.60) -- (.50,1.15);
\draw[sp4two] (2,0) -- (2,1.15);
\draw[sp4two] (.50,1.15) to[out=90,in=90] (2,1.15);
\draw[sp4one] (.50,2.65) to[out=270,in=270] (2,2.65);
\end{tikzpicture}
&
\begin{tikzpicture}[sp4rel,x=.55cm,y=.55cm]
\path[use as bounding box] (-.25,-.15) rectangle (2.25,2.85);
\draw[sp4one] (0,0) -- (.50,.60);
\draw[sp4one] (1,0) -- (.50,.60);
\draw[sp4two] (.50,.60) -- (.50,1.35);
\draw[sp4two] (2,0) -- (2,1.35);
\draw[densely dashed,line width=1.1pt] (.50,1.35) -- (2,1.35);
\draw[sp4one] (.50,1.35) -- (.50,2.65);
\draw[sp4one] (2,1.35) -- (2,2.65);
\end{tikzpicture}
&
\begin{tikzpicture}[sp4rel,x=.55cm,y=.55cm]
\path[use as bounding box] (-.25,-.15) rectangle (2.25,2.85);
\draw[sp4one] (0,0) to[out=90,in=90] (1,0);
\draw[sp4two] (1.50,0) -- (1.50,1.55);
\draw[sp4one] (.75,2.65) -- (1.50,1.55);
\draw[sp4one] (2.25,2.65) -- (1.50,1.55);
\end{tikzpicture}
\end{array}
.
\end{equation*}

\begin{equation*}\tag{SP4.11}\label{eq:SP4-11}
\begin{array}{c@{\quad=\quad}c@{\quad-\quad}c@{\quad+\quad}c}
\begin{tikzpicture}[sp4rel,x=.52cm,y=.52cm]
\path[use as bounding box] (-.25,-.15) rectangle (2.25,2.85);
\draw[sp4one] (0,0) -- (0,1.25);
\draw[sp4two] (1,0) -- (2,1.25) -- (2,2.65);
\draw[sp4one] (2,0) -- (1,1.25);
\draw[sp4one] (0,1.25) -- (.50,1.85);
\draw[sp4one] (1,1.25) -- (.50,1.85);
\draw[sp4two] (.50,1.85) -- (.50,2.65);
\end{tikzpicture}
&
\begin{tikzpicture}[sp4rel,x=.52cm,y=.52cm]
\path[use as bounding box] (-.25,-.15) rectangle (2.25,2.85);
\draw[sp4one] (0,0) -- (1,1.25);
\draw[sp4two] (1,0) -- (0,1.25) -- (0,2.65);
\draw[sp4one] (2,0) -- (2,1.25);
\draw[sp4one] (1,1.25) -- (1.50,1.85);
\draw[sp4one] (2,1.25) -- (1.50,1.85);
\draw[sp4two] (1.50,1.85) -- (1.50,2.65);
\end{tikzpicture}
&
\begin{tikzpicture}[sp4rel,x=.52cm,y=.52cm]
\path[use as bounding box] (-.25,-.15) rectangle (2.25,2.85);
\draw[sp4one] (0,0) -- (0,1.35);
\draw[sp4two] (1,0) -- (1.50,.72);
\draw[sp4one] (2,0) -- (1.50,.72);
\draw[sp4one] (1.50,.72) -- (1.50,1.35);
\draw[densely dashed,line width=1.1pt] (0,1.35) -- (1.50,1.35);
\draw[sp4two] (0,1.35) -- (0,2.65);
\draw[sp4two] (1.50,1.35) -- (1.50,2.65);
\end{tikzpicture}
&
\begin{tikzpicture}[sp4rel,x=.52cm,y=.52cm]
\path[use as bounding box] (-.25,-.15) rectangle (2.25,2.85);
\draw[sp4one] (0,0) -- (.50,.72);
\draw[sp4two] (1,0) -- (.50,.72);
\draw[sp4one] (.50,.72) -- (.50,1.35);
\draw[sp4one] (2,0) -- (2,1.35);
\draw[densely dashed,line width=1.1pt] (.50,1.35) -- (2,1.35);
\draw[sp4two] (.50,1.35) -- (.50,2.65);
\draw[sp4two] (2,1.35) -- (2,2.65);
\end{tikzpicture}
\end{array}
.
\end{equation*}

\begin{equation*}\tag{SP4.12}\label{eq:SP4-12}
\begin{array}{c@{\quad=\quad}c@{\quad}c@{\quad}c@{\quad=\quad}c}
\begin{tikzpicture}[sp4rel,x=.62cm,y=.62cm]
\path[use as bounding box] (-.25,-.15) rectangle (2.25,2.15);
\draw[sp4one] (0,0) -- (0,1.45);
\draw[sp4two] (1,0) -- (2,1.45) -- (2,1.95);
\draw[sp4one] (2,0) -- (1,1.45);
\draw[sp4one] (0,1.45)
to[out=90,in=180] (.50,1.95)
to[out=0,in=90] (1,1.45);
\end{tikzpicture}
&
\begin{tikzpicture}[sp4rel,x=.62cm,y=.62cm]
\path[use as bounding box] (-.25,-.15) rectangle (2.25,2.15);
\draw[sp4one] (0,0) -- (.50,.60);
\draw[sp4two] (1,0) -- (.50,.60);
\draw[sp4one] (.50,.60) -- (.50,.95);
\draw[sp4one] (2,0) -- (2,.95);
\draw[sp4one] (.50,.95) -- (1.25,1.55);
\draw[sp4one] (2,.95) -- (1.25,1.55);
\draw[sp4two] (1.25,1.55) -- (1.25,1.95);
\end{tikzpicture}
&
\begin{tikzpicture}[sp4rel,x=.62cm,y=.62cm]
\path[use as bounding box] (0,-.15) rectangle (0,2.15);
\node[inner sep=0pt] at (0,0) {$,$};
\end{tikzpicture}
&
\begin{tikzpicture}[sp4rel,x=.62cm,y=.62cm]
\path[use as bounding box] (-.25,-.15) rectangle (2.25,2.15);
\draw[sp4one] (0,0) -- (0,.95);
\draw[sp4two] (1,0) -- (1.50,.60);
\draw[sp4one] (2,0) -- (1.50,.60);
\draw[sp4one] (1.50,.60) -- (1.50,.95);
\draw[sp4one] (0,.95) -- (.75,1.55);
\draw[sp4one] (1.50,.95) -- (.75,1.55);
\draw[sp4two] (.75,1.55) -- (.75,1.95);
\end{tikzpicture}
&
\begin{tikzpicture}[sp4rel,x=.62cm,y=.62cm]
\path[use as bounding box] (-.25,-.15) rectangle (2.25,2.15);
\draw[sp4one] (0,0) -- (1,1.45);
\draw[sp4two] (1,0) -- (0,1.45) -- (0,1.95);
\draw[sp4one] (2,0) -- (2,1.45);
\draw[sp4one] (1,1.45)
to[out=90,in=180] (1.50,1.95)
to[out=0,in=90] (2,1.45);
\end{tikzpicture}
\end{array}
.
\end{equation*}

\begin{equation*}\tag{SP4.13}\label{eq:SP4-13}
\begin{array}{c@{\quad=\quad}c}
\begin{tikzpicture}[sp4rel,x=.62cm,y=.62cm]
\path[use as bounding box] (-.25,-.15) rectangle (2.25,2.15);
\draw[sp4two] (0,0) -- (0,1.45);
\draw[sp4one] (1,0) -- (2,1.45) -- (2,1.95);
\draw[sp4two] (2,0) -- (1,1.45);
\draw[sp4two] (0,1.45)
to[out=90,in=180] (.50,1.95)
to[out=0,in=90] (1,1.45);
\end{tikzpicture}
&
\begin{tikzpicture}[sp4rel,x=.62cm,y=.62cm]
\path[use as bounding box] (-.25,-.15) rectangle (2.25,2.15);
\draw[sp4two] (0,0) -- (.50,.60);
\draw[sp4one] (1,0) -- (.50,.60);
\draw[sp4one] (.50,.60) -- (.50,.95);
\draw[sp4two] (2,0) -- (2,.95);
\draw[sp4one] (.50,.95) -- (1.25,1.55);
\draw[sp4two] (2,.95) -- (1.25,1.55);
\draw[sp4one] (1.25,1.55) -- (1.25,1.95);
\end{tikzpicture}
\end{array}
.
\end{equation*}

\begin{equation*}\tag{SP4.14}\label{eq:SP4-14}
\begin{aligned}
\begin{tikzpicture}[sp4rel,x=.40cm,y=.40cm]
\path[use as bounding box] (-1.10,-.15) rectangle (2.75,3.65);
\draw[sp4two] (0,0) -- (0,1.20) -- (0,1.65);
\draw[sp4one] (1,0) -- (2,1.20) -- (2,3.45);
\draw[sp4two] (2,0) -- (1,1.20) -- (1,1.65);
\draw[sp4two] (0,1.65) -- (0,2.05);
\draw[sp4two] (1,1.65) -- (1,2.05);
\draw[densely dashed,line width=1.1pt] (0,2.05) -- (1,2.05);
\draw[sp4one] (0,2.05) -- (0,3.45);
\draw[sp4one] (1,2.05) -- (1,3.45);
\end{tikzpicture}
&=\quad
\begin{tikzpicture}[sp4rel,x=.40cm,y=.40cm]
\path[use as bounding box] (-1.10,-.15) rectangle (2.75,3.65);
\draw[sp4two] (0,0) -- (1,1.05) -- (1,1.35);
\draw[sp4one] (1,0) -- (0,1.05) -- (0,3.45);
\draw[sp4two] (2,0) -- (2,1.35);
\draw[sp4two] (1,1.35) to[out=90,in=90] (2,1.35);
\draw[sp4one] (1,3.45) to[out=270,in=270] (2,3.45);
\end{tikzpicture}
\quad+\quad
\begin{tikzpicture}[sp4rel,x=.40cm,y=.40cm]
\path[use as bounding box] (-1.10,-.15) rectangle (2.75,3.65);
\draw[sp4two] (0,0) -- (1,.85) -- (1,1.15);
\draw[sp4one] (1,0) -- (0,.85) -- (0,1.65);
\draw[sp4two] (2,0) -- (2,1.15);
\draw[sp4two] (1,1.15) to[out=90,in=90] (2,1.15);
\draw[sp4two] (0,1.65) -- (-.50,2.25);
\draw[sp4one] (0,1.65) -- (.50,2.25);
\draw[sp4one] (.50,2.25) -- (.50,3.45);
\draw[sp4two] (-.50,2.25) -- (-.50,2.70);
\draw[sp4one] (-.50,2.70) -- (-1,3.25);
\draw[sp4one] (-.50,2.70) -- (0,3.25);
\draw[sp4one] (-1,3.25) -- (-1,3.45);
\draw[sp4one] (0,3.25) -- (0,3.45);
\end{tikzpicture}
\quad-\quad
\begin{tikzpicture}[sp4rel,x=.40cm,y=.40cm]
\path[use as bounding box] (-1.10,-.15) rectangle (2.75,3.65);
\draw[sp4two] (0,0) -- (0,1.65);
\draw[sp4one] (1,0) -- (1.50,.65);
\draw[sp4two] (2,0) -- (1.50,.65);
\draw[sp4one] (1.50,.65) -- (1.50,3.45);
\draw[sp4one] (0,1.65) -- (-.50,2.20);
\draw[sp4one] (0,1.65) -- (.50,2.20);
\draw[sp4one] (-.50,2.20) -- (-.50,3.45);
\draw[sp4one] (.50,2.20) -- (.50,3.45);
\end{tikzpicture}
\quad+\quad
\begin{tikzpicture}[sp4rel,x=.40cm,y=.40cm]
\path[use as bounding box] (-1.10,-.15) rectangle (2.75,3.65);
\draw[sp4two] (0,0) -- (1,1.05) -- (1,1.65);
\draw[sp4one] (1,0) -- (0,1.05) -- (0,3.45);
\draw[sp4two] (2,0) -- (2,1.65);
\draw[sp4two] (1,1.65) -- (1,2.05);
\draw[sp4two] (2,1.65) -- (2,2.05);
\draw[densely dashed,line width=1.1pt] (1,2.05) -- (2,2.05);
\draw[sp4one] (1,2.05) -- (1,3.45);
\draw[sp4one] (2,2.05) -- (2,3.45);
\end{tikzpicture}
\\[1.7em]
&+\quad
\begin{tikzpicture}[sp4rel,x=.40cm,y=.40cm]
\path[use as bounding box] (-1.10,-.15) rectangle (2.75,3.65);
\draw[sp4two] (0,0) -- (.50,.65);
\draw[sp4one] (1,0) -- (.50,.65);
\draw[sp4one] (.50,.65) -- (.50,3.45);
\draw[sp4two] (2,0) -- (2,1.65);
\draw[sp4one] (2,1.65) -- (1.50,2.20);
\draw[sp4one] (2,1.65) -- (2.50,2.20);
\draw[sp4one] (1.50,2.20) -- (1.50,3.45);
\draw[sp4one] (2.50,2.20) -- (2.50,3.45);
\end{tikzpicture}
\quad-\quad
\begin{tikzpicture}[sp4rel,x=.40cm,y=.40cm]
\path[use as bounding box] (-1.10,-.15) rectangle (2.75,3.65);
\draw[sp4two] (0,0) -- (.50,.60);
\draw[sp4one] (1,0) -- (.50,.60);
\draw[sp4one] (.50,.60) -- (.50,1.15);
\draw[sp4two] (2,0) -- (2,1.15);
\draw[sp4one] (.50,1.15) -- (1.25,1.70);
\draw[sp4two] (2,1.15) -- (1.25,1.70);
\draw[sp4one] (1.25,1.70) -- (1.25,2.10);
\draw[sp4two] (1.25,2.10) -- (.50,2.65);
\draw[sp4one] (1.25,2.10) -- (2,2.65);
\draw[sp4one] (2,2.65) -- (2,3.45);
\draw[sp4two] (.50,2.65) -- (.50,2.90);
\draw[sp4one] (.50,2.90) -- (0,3.45);
\draw[sp4one] (.50,2.90) -- (1,3.45);
\end{tikzpicture}
\quad-\quad
\begin{tikzpicture}[sp4rel,x=.40cm,y=.40cm]
\path[use as bounding box] (-1.10,-.15) rectangle (2.75,3.65);
\draw[sp4two] (0,0) -- (.50,.60);
\draw[sp4one] (1,0) -- (.50,.60);
\draw[sp4one] (.50,.60) -- (.50,1.15);
\draw[sp4two] (2,0) -- (2,1.15);
\draw[sp4one] (.50,1.15) -- (1.25,1.70);
\draw[sp4two] (2,1.15) -- (1.25,1.70);
\draw[sp4one] (1.25,1.70) -- (2,2.55);
\draw[sp4one] (2,2.55) -- (2,3.45);
\draw[sp4one] (0,3.45) to[out=270,in=270] (1,3.45);
\end{tikzpicture}
\end{aligned}
.
\end{equation*}

\begin{equation*}\tag{SP4.15}\label{eq:SP4-15}
\begin{array}{c@{\quad=\quad}c@{\quad+\quad}c}
\begin{tikzpicture}[sp4rel,x=.58cm,y=.58cm]
\path[use as bounding box] (-.25,-.15) rectangle (2.75,2.85);
\draw[sp4two] (0,0) -- (0,1.20);
\draw[sp4two] (1,0) -- (2,1.20) -- (2,2.65);
\draw[sp4one] (2,0) -- (1,1.20);
\draw[sp4two] (0,1.20) -- (.50,1.90);
\draw[sp4one] (1,1.20) -- (.50,1.90);
\draw[sp4one] (.50,1.90) -- (.50,2.65);
\end{tikzpicture}
&
\begin{tikzpicture}[sp4rel,x=.58cm,y=.58cm]
\path[use as bounding box] (-.25,-.15) rectangle (2.75,2.85);
\draw[sp4one] (1.50,2.65) -- (2,1.90);
\draw[sp4two] (2.50,2.65) -- (2,1.90);
\draw[sp4one] (2,1.90) -- (2,0);
\draw[sp4two] (0,0) to[out=90,in=90] (1,0);
\end{tikzpicture}
&
\begin{tikzpicture}[sp4rel,x=.58cm,y=.58cm]
\path[use as bounding box] (-.25,-.15) rectangle (2.75,2.85);
\draw[sp4one] (0,2.65) -- (0,1.30);
\draw[sp4two] (1.50,2.65) -- (1.50,1.90);
\draw[sp4one] (1.50,1.90) -- (1,1.30);
\draw[sp4one] (1.50,1.90) -- (2,1.30) -- (2,0);
\draw[sp4one] (0,1.30) -- (0,.90);
\draw[sp4one] (1,1.30) -- (1,.90);
\draw[densely dashed,line width=1.1pt] (0,.90) -- (1,.90);
\draw[sp4two] (0,.90) -- (0,0);
\draw[sp4two] (1,.90) -- (1,0);
\end{tikzpicture}
\end{array}
.
\end{equation*}

\begin{equation*}\tag{SP4.16}\label{eq:SP4-16}
\begin{array}{c@{\quad=\quad}c}
\begin{tikzpicture}[sp4rel,x=.58cm,y=.58cm]
\path[use as bounding box] (-.25,-.15) rectangle (2.25,2.95);
\draw[sp4two] (0,0) -- (0,1.05);
\draw[sp4two] (1,0) -- (2,1.05);
\draw[sp4one] (2,0) -- (1,1.05);
\draw[sp4two] (0,1.05) -- (1,2.10) -- (1,2.25);
\draw[sp4one] (1,1.05) -- (0,2.10) -- (0,2.75);
\draw[sp4two] (2,1.05) -- (2,2.25);
\draw[sp4two] (1,2.25)
to[out=90,in=180] (1.50,2.75)
to[out=0,in=90] (2,2.25);
\end{tikzpicture}
&
\begin{tikzpicture}[sp4rel,x=.58cm,y=.58cm]
\path[use as bounding box] (-.25,-.15) rectangle (2.25,2.95);
\draw[sp4two] (0,0) -- (0,.75);
\draw[sp4two] (1,0) -- (1,.75);
\draw[densely dashed,line width=1.1pt] (0,.75) -- (1,.75);
\draw[sp4one] (0,.75) -- (0,2.75);
\draw[sp4one] (1,.75) -- (1,2.25);
\draw[sp4one] (2,0) -- (2,2.25);
\draw[sp4one] (1,2.25)
to[out=90,in=180] (1.50,2.75)
to[out=0,in=90] (2,2.25);
\end{tikzpicture}
\end{array}
.
\end{equation*}

\begin{equation*}\tag{SP4.17}\label{eq:SP4-17}
\begin{aligned}
\begin{tikzpicture}[sp4rel,x=.54cm,y=.54cm]
\path[use as bounding box] (-.25,-.15) rectangle (2.25,3.05);
\draw[sp4one] (0,0) -- (0,1.05);
\draw[sp4one] (1,0) -- (2,1.05);
\draw[sp4two] (2,0) -- (1,1.05);
\draw[sp4one] (0,1.05) -- (1,1.82);
\draw[sp4two] (1,1.05) -- (0,1.82) -- (0,2.85);
\draw[sp4one] (2,1.05) -- (2,1.82);
\draw[sp4one] (1,1.82) -- (1.50,2.42);
\draw[sp4one] (2,1.82) -- (1.50,2.42);
\draw[sp4two] (1.50,2.42) -- (1.50,2.85);
\end{tikzpicture}
&=\quad
\begin{tikzpicture}[sp4rel,x=.54cm,y=.54cm]
\path[use as bounding box] (-.25,-.15) rectangle (2.25,3.05);
\draw[sp4one] (0,0) -- (0,1.55);
\draw[sp4one] (1,0) -- (1.50,.75);
\draw[sp4two] (2,0) -- (1.50,.75);
\draw[sp4one] (1.50,.75) -- (1.50,1.55);
\draw[densely dashed,line width=1.1pt] (0,1.55) -- (1.50,1.55);
\draw[sp4two] (0,1.55) -- (0,2.85);
\draw[sp4two] (1.50,1.55) -- (1.50,2.85);
\end{tikzpicture}
\quad+\quad
\begin{tikzpicture}[sp4rel,x=.54cm,y=.54cm]
\path[use as bounding box] (-.25,-.15) rectangle (2.25,3.05);
\draw[sp4one] (0,0) -- (.50,1.05);
\draw[sp4one] (1,0) -- (.50,1.05);
\draw[sp4two] (.50,1.05) -- (.50,2.85);
\draw[sp4two] (2,0) -- (2,2.85);
\end{tikzpicture}
\quad-\quad
\begin{tikzpicture}[sp4rel,x=.54cm,y=.54cm]
\path[use as bounding box] (-.25,-.15) rectangle (2.25,3.05);
\draw[sp4one] (0,0) -- (.50,.65);
\draw[sp4one] (1,0) -- (.50,.65);
\draw[sp4two] (.50,.65) -- (.50,1.30);
\draw[sp4two] (2,0) -- (2,1.30);
\draw[densely dashed,line width=1.1pt] (.50,1.30) -- (2,1.30);
\draw[sp4one] (.50,1.30) -- (.50,2.05);
\draw[sp4one] (2,1.30) -- (2,2.05);
\draw[densely dashed,line width=1.1pt] (.50,2.05) -- (2,2.05);
\draw[sp4two] (.50,2.05) -- (.50,2.85);
\draw[sp4two] (2,2.05) -- (2,2.85);
\end{tikzpicture}
\end{aligned}
.
\end{equation*}

\begin{equation*}\tag{SP4.18}\label{eq:SP4-18}
\begin{array}{c@{\quad}c@{\quad}c@{\quad}c@{\quad}c@{\quad}c@{\quad}c}
\begin{tikzpicture}[sp4rel,x=.46cm,y=.46cm]
\path[use as bounding box] (-.25,-.15) rectangle (2.25,3.35);
\draw[sp4two] (0,0) -- (0,1.05);
\draw[sp4two] (1,0) -- (2,1.05);
\draw[sp4one] (2,0) -- (1,1.05);
\draw[sp4two] (0,1.05) -- (1,2.05) -- (1,2.40);
\draw[sp4one] (1,1.05) -- (0,2.05) -- (0,3.15);
\draw[sp4two] (2,1.05) -- (2,2.40);
\draw[densely dashed,line width=1.1pt] (1,2.40) -- (2,2.40);
\draw[sp4one] (1,2.40) -- (1,3.15);
\draw[sp4one] (2,2.40) -- (2,3.15);
\end{tikzpicture}
&
=
&
\begin{tikzpicture}[sp4rel,x=.46cm,y=.46cm]
\path[use as bounding box] (-.25,-.15) rectangle (2.25,3.35);
\draw[sp4two] (0,0) to[out=90,in=90] (1,0);
\draw[sp4one] (0,3.15) to[out=270,in=270] (1,3.15);
\draw[sp4one] (2,0) -- (2,3.15);
\end{tikzpicture}
&
+
&
\begin{tikzpicture}[sp4rel,x=.46cm,y=.46cm]
\path[use as bounding box] (-.25,-.15) rectangle (2.25,3.35);
\draw[sp4two] (0,0) -- (0,1.25);
\draw[sp4two] (1,0) -- (1,1.25);
\draw[densely dashed,line width=1.1pt] (0,1.25) -- (1,1.25);
\draw[sp4one] (0,1.25) -- (0,3.15);
\draw[sp4one] (1,1.25) -- (1,3.15);
\draw[sp4one] (2,0) -- (2,3.15);
\end{tikzpicture}
&
-
&
\begin{tikzpicture}[sp4rel,x=.46cm,y=.46cm]
\path[use as bounding box] (-.25,-.15) rectangle (2.25,3.35);
\draw[sp4two] (0,0) -- (0,.70);
\draw[sp4two] (1,0) -- (1,.70);
\draw[densely dashed,line width=1.1pt] (0,.70) -- (1,.70);
\draw[sp4one] (0,.70) -- (0,3.15);
\draw[sp4one] (1,.70) -- (1,1.45);
\draw[sp4one] (2,0) -- (2,1.45);
\draw[sp4one] (1,1.45) -- (1.50,2.05);
\draw[sp4one] (2,1.45) -- (1.50,2.05);
\draw[sp4two] (1.50,2.05) -- (1.50,2.50);
\draw[sp4one] (1.50,2.50) -- (1,3.15);
\draw[sp4one] (1.50,2.50) -- (2,3.15);
\end{tikzpicture}
\\[1.8em]
&
+
&
\begin{tikzpicture}[sp4rel,x=.46cm,y=.46cm]
\path[use as bounding box] (-.85,-.15) rectangle (2.25,3.35);
\draw[sp4two] (1,0) -- (1.50,.70);
\draw[sp4one] (2,0) -- (1.50,.70);
\draw[sp4one] (1.50,.70) -- (1.50,3.15);
\draw[sp4two] (0,0) -- (0,1.55);
\draw[sp4one] (0,1.55) -- (-.55,2.25) -- (-.55,3.15);
\draw[sp4one] (0,1.55) -- (.55,2.25) -- (.55,3.15);
\end{tikzpicture}
&
-
&
\begin{tikzpicture}[sp4rel,x=.46cm,y=.46cm]
\path[use as bounding box] (-.25,-.15) rectangle (2.25,3.35);
\draw[sp4two] (0,0) -- (0,.70);
\draw[sp4two] (1,0) -- (1,.70);
\draw[densely dashed,line width=1.1pt] (0,.70) -- (1,.70);
\draw[sp4one] (0,.70) -- (0,3.15);
\draw[sp4one] (1,.70) -- (1,1.25);
\draw[sp4one] (2,0) -- (2,1.25);
\draw[sp4one] (1,1.25) to[out=90,in=90] (2,1.25);
\draw[sp4one] (1,3.15) to[out=270,in=270] (2,3.15);
\end{tikzpicture}
&
-
&
\begin{tikzpicture}[sp4rel,x=.46cm,y=.46cm]
\path[use as bounding box] (-1.25,-.15) rectangle (2.25,3.35);
\draw[sp4two] (0,0) -- (0,.70);
\draw[sp4two] (1,0) -- (1,.70);
\draw[densely dashed,line width=1.1pt] (0,.70) -- (1,.70);
\draw[sp4one] (1,.70) -- (1,1.20);
\draw[sp4one] (2,0) -- (2,1.20);
\draw[sp4one] (1,1.20) to[out=90,in=90] (2,1.20);
\draw[sp4one] (0,.70) -- (0,1.62);
\draw[sp4two] (0,1.62) -- (-.55,2.22);
\draw[sp4one] (0,1.62) -- (.55,2.22) -- (.55,3.15);
\draw[sp4two] (-.55,2.22) -- (-.55,2.55);
\draw[sp4one] (-.55,2.55) -- (-1.10,3.15);
\draw[sp4one] (-.55,2.55) -- (0,3.15);
\end{tikzpicture}
\end{array}
.
\end{equation*}

\begin{equation*}\tag{SP4.19}\label{eq:SP4-19}
\begin{array}{c@{\quad=\quad}c@{\qquad\begin{tikzpicture}[sp4rel,x=.62cm,y=.62cm]\path[use as bounding box] (0,-.15) rectangle (0,3.00);\node[inner sep=0pt] at (0,0) {$,$};\end{tikzpicture}\qquad}c@{\quad=\quad}c}
\begin{tikzpicture}[sp4rel,x=.62cm,y=.62cm]
\path[use as bounding box] (-.25,-.15) rectangle (2.25,3.00);
\draw[sp4one] (1,0) -- (1.50,.70);
\draw[sp4one] (2,0) -- (1.50,.70);
\draw[sp4two] (1.50,.70) -- (1.50,1.35);
\draw[sp4one] (0,0) -- (0,1.35);
\draw[sp4one] (0,1.35) -- (.75,2.05);
\draw[sp4two] (1.50,1.35) -- (.75,2.05);
\draw[sp4one] (.75,2.05) -- (.75,2.85);
\end{tikzpicture}
&
\begin{tikzpicture}[sp4rel,x=.62cm,y=.62cm]
\path[use as bounding box] (-.25,-.15) rectangle (2.25,3.00);
\draw[sp4one] (0,0) -- (.50,.70);
\draw[sp4one] (1,0) -- (.50,.70);
\draw[sp4two] (.50,.70) -- (.50,1.35);
\draw[sp4one] (2,0) -- (2,1.35);
\draw[sp4two] (.50,1.35) -- (1.25,2.05);
\draw[sp4one] (2,1.35) -- (1.25,2.05);
\draw[sp4one] (1.25,2.05) -- (1.25,2.85);
\end{tikzpicture}
&
\begin{tikzpicture}[sp4rel,x=.62cm,y=.62cm]
\path[use as bounding box] (-.25,-.15) rectangle (2.25,3.00);
\draw[sp4one] (1,0) -- (1.50,1.20);
\draw[sp4two] (2,0) -- (1.50,1.20);
\draw[sp4one] (1.50,1.20) -- (1.50,2.20);
\draw[sp4one] (0,0) -- (0,2.20);
\draw[sp4one] (0,2.20)
to[out=90,in=180] (.75,2.85)
to[out=0,in=90] (1.50,2.20);
\end{tikzpicture}
&
\begin{tikzpicture}[sp4rel,x=.62cm,y=.62cm]
\path[use as bounding box] (-.25,-.15) rectangle (2.25,3.00);
\draw[sp4one] (0,0) -- (.50,1.20);
\draw[sp4one] (1,0) -- (.50,1.20);
\draw[sp4two] (.50,1.20) -- (.50,2.20);
\draw[sp4two] (2,0) -- (2,2.20);
\draw[sp4two] (.50,2.20)
to[out=90,in=180] (1.25,2.85)
to[out=0,in=90] (2,2.20);
\end{tikzpicture}
\\[1.5em]
\begin{tikzpicture}[sp4rel,x=.62cm,y=.62cm]
\path[use as bounding box] (-.25,-.15) rectangle (2.25,2.50);
\draw[sp4two] (1,0) -- (1.50,1.05);
\draw[sp4one] (2,0) -- (1.50,1.05);
\draw[sp4one] (1.50,1.05) -- (1.50,1.85);
\draw[sp4one] (0,0) -- (0,1.85);
\draw[sp4one] (0,1.85)
to[out=90,in=180] (.75,2.35)
to[out=0,in=90] (1.50,1.85);
\end{tikzpicture}
&
\begin{tikzpicture}[sp4rel,x=.62cm,y=.62cm]
\path[use as bounding box] (-.25,-.15) rectangle (2.25,2.50);
\draw[sp4one] (0,0) -- (.50,1.05);
\draw[sp4two] (1,0) -- (.50,1.05);
\draw[sp4one] (.50,1.05) -- (.50,1.85);
\draw[sp4one] (2,0) -- (2,1.85);
\draw[sp4one] (.50,1.85)
to[out=90,in=180] (1.25,2.35)
to[out=0,in=90] (2,1.85);
\end{tikzpicture}
&
\begin{tikzpicture}[sp4rel,x=.62cm,y=.62cm]
\path[use as bounding box] (-.25,-.15) rectangle (2.25,2.50);
\draw[sp4one] (1,0) -- (1.50,1.05);
\draw[sp4one] (2,0) -- (1.50,1.05);
\draw[sp4two] (1.50,1.05) -- (1.50,1.85);
\draw[sp4two] (0,0) -- (0,1.85);
\draw[sp4two] (0,1.85)
to[out=90,in=180] (.75,2.35)
to[out=0,in=90] (1.50,1.85);
\end{tikzpicture}
&
\begin{tikzpicture}[sp4rel,x=.62cm,y=.62cm]
\path[use as bounding box] (-.25,-.15) rectangle (2.25,2.50);
\draw[sp4two] (0,0) -- (.50,1.05);
\draw[sp4one] (1,0) -- (.50,1.05);
\draw[sp4one] (.50,1.05) -- (.50,1.85);
\draw[sp4one] (2,0) -- (2,1.85);
\draw[sp4one] (.50,1.85)
to[out=90,in=180] (1.25,2.35)
to[out=0,in=90] (2,1.85);
\end{tikzpicture}
\end{array}
.
\end{equation*}

\begin{equation*}\tag{SP4.20}\label{eq:SP4-20}
\begin{array}{c@{\quad=\quad}c@{\quad-\quad}c@{\quad-\quad}c}
\begin{tikzpicture}[sp4rel,x=.54cm,y=.54cm]
\path[use as bounding box] (-.25,-.15) rectangle (2.25,3.05);
\draw[sp4one] (1,0) -- (1.50,.70);
\draw[sp4one] (2,0) -- (1.50,.70);
\draw[sp4two] (1.50,.70) -- (1.50,1.45);
\draw[sp4two] (0,0) -- (0,1.45);
\draw[densely dashed,line width=1.1pt] (0,1.45) -- (1.50,1.45);
\draw[sp4one] (0,1.45) -- (0,2.85);
\draw[sp4one] (1.50,1.45) -- (1.50,2.85);
\end{tikzpicture}
&
\begin{tikzpicture}[sp4rel,x=.54cm,y=.54cm]
\path[use as bounding box] (-.25,-.15) rectangle (2.25,3.05);
\draw[sp4two] (0,0) -- (1,1.05);
\draw[sp4one] (1,0) -- (0,1.05) -- (0,2.85);
\draw[sp4one] (2,0) -- (2,1.05);
\draw[sp4two] (1,1.05) -- (1.50,1.75);
\draw[sp4one] (2,1.05) -- (1.50,1.75);
\draw[sp4one] (1.50,1.75) -- (1.50,2.85);
\end{tikzpicture}
&
\begin{tikzpicture}[sp4rel,x=.54cm,y=.54cm]
\path[use as bounding box] (-.65,-.15) rectangle (2.25,3.05);
\draw[sp4one] (1,0) to[out=90,in=90] (2,0);
\draw[sp4two] (0,0) -- (0,1.45);
\draw[sp4one] (0,1.45) -- (-.55,2.15) -- (-.55,2.85);
\draw[sp4one] (0,1.45) -- (.55,2.15) -- (.55,2.85);
\end{tikzpicture}
&
\begin{tikzpicture}[sp4rel,x=.54cm,y=.54cm]
\path[use as bounding box] (-.25,-.15) rectangle (2.25,3.05);
\draw[sp4two] (0,0) -- (.50,.70);
\draw[sp4one] (1,0) -- (.50,.70);
\draw[sp4one] (.50,.70) -- (.50,1.35);
\draw[sp4one] (2,0) -- (2,1.35);
\draw[sp4one] (.50,1.35) to[out=90,in=90] (2,1.35);
\draw[sp4one] (.50,2.85) to[out=270,in=270] (2,2.85);
\end{tikzpicture}
\end{array}
.
\end{equation*}

\begin{equation*}\tag{SP4.21}\label{eq:SP4-21}
\begin{tikzpicture}[sp4rel,x=.54cm,y=.54cm]
\path[use as bounding box] (-.25,-.15) rectangle (2.25,3.05);
\draw[sp4one] (0,0) -- (0,2.10);
\draw[sp4two] (1,0) -- (1,.85);
\draw[sp4two] (2,0) -- (2,.85);
\draw[densely dashed,line width=1.1pt] (1,.85) -- (2,.85);
\draw[sp4one] (1,.85) -- (1,2.10);
\draw[sp4one] (2,.85) -- (2,2.85);
\draw[sp4one] (0,2.10)
to[out=90,in=180] (.50,2.65)
to[out=0,in=90] (1,2.10);
\end{tikzpicture}
\quad=\quad
\begin{tikzpicture}[sp4rel,x=.54cm,y=.54cm]
\path[use as bounding box] (-.25,-.15) rectangle (2.25,3.05);
\draw[sp4one] (0,0) -- (.50,.85);
\draw[sp4two] (1,0) -- (.50,.85);
\draw[sp4one] (.50,.85) -- (.50,1.45);
\draw[sp4two] (2,0) -- (2,1.45);
\draw[sp4one] (.50,1.45) -- (1.25,2.10);
\draw[sp4two] (2,1.45) -- (1.25,2.10);
\draw[sp4one] (1.25,2.10) -- (1.25,2.85);
\end{tikzpicture}
.
\end{equation*}

\begin{equation*}\tag{SP4.22}\label{eq:SP4-22}
\begin{tikzpicture}[sp4rel,x=.54cm,y=.54cm]
\path[use as bounding box] (-.25,-.15) rectangle (2.25,3.05);
\draw[sp4one] (0,0) -- (0,1.20) -- (.50,1.80);
\draw[sp4two] (1,0) -- (1,.85);
\draw[sp4two] (2,0) -- (2,.85);
\draw[densely dashed,line width=1.1pt] (1,.85) -- (2,.85);
\draw[sp4one] (1,.85) -- (1,1.20) -- (.50,1.80);
\draw[sp4two] (.50,1.80) -- (.50,2.85);
\draw[sp4one] (2,.85) -- (2,2.85);
\end{tikzpicture}
\quad=\quad
\begin{tikzpicture}[sp4rel,x=.54cm,y=.54cm]
\path[use as bounding box] (-.25,-.15) rectangle (2.25,3.05);
\draw[sp4one] (0,0) -- (1,1.05);
\draw[sp4two] (1,0) -- (0,1.05) -- (0,2.85);
\draw[sp4one] (1,1.05) -- (1,1.35) -- (1.50,1.95);
\draw[sp4two] (2,0) -- (2,1.35) -- (1.50,1.95);
\draw[sp4one] (1.50,1.95) -- (1.50,2.85);
\end{tikzpicture}
\quad-\quad
\begin{tikzpicture}[sp4rel,x=.54cm,y=.54cm]
\path[use as bounding box] (-.70,-.15) rectangle (2.25,3.05);
\draw[sp4two] (1,0) to[out=90,in=90] (2,0);
\draw[sp4one] (0,0) -- (0,1.25);
\draw[sp4two] (0,1.25) -- (-.55,1.95) -- (-.55,2.85);
\draw[sp4one] (0,1.25) -- (.55,1.95) -- (.55,2.85);
\end{tikzpicture}
\qquad
\begin{tikzpicture}[sp4rel,x=.54cm,y=.54cm]
\path[use as bounding box] (0,-.15) rectangle (0,3.05);
\node[inner sep=0pt] at (0,0) {$,$};
\end{tikzpicture}
\qquad
\begin{tikzpicture}[sp4rel,x=.54cm,y=.54cm]
\path[use as bounding box] (-.25,-.15) rectangle (2.25,3.05);
\draw[sp4two] (0,0) -- (0,1.20) -- (.50,1.80);
\draw[sp4two] (1,0) -- (1,.85);
\draw[sp4two] (2,0) -- (2,.85);
\draw[densely dashed,line width=1.1pt] (1,.85) -- (2,.85);
\draw[sp4one] (1,.85) -- (1,1.20) -- (.50,1.80);
\draw[sp4one] (.50,1.80) -- (.50,2.85);
\draw[sp4one] (2,.85) -- (2,2.85);
\end{tikzpicture}
\quad=\quad
\begin{tikzpicture}[sp4rel,x=.54cm,y=.54cm]
\path[use as bounding box] (-.25,-.15) rectangle (2.25,3.05);
\draw[sp4two] (0,0) -- (0,.85);
\draw[sp4two] (1,0) -- (1,.85);
\draw[densely dashed,line width=1.1pt] (0,.85) -- (1,.85);
\draw[sp4one] (0,.85) -- (0,2.85);
\draw[sp4one] (1,.85) -- (1,1.20) -- (1.50,1.80);
\draw[sp4two] (2,0) -- (2,1.20) -- (1.50,1.80);
\draw[sp4one] (1.50,1.80) -- (1.50,2.85);
\end{tikzpicture}
.
\end{equation*}

\begin{equation*}\tag{SP4.23}\label{eq:SP4-23}
\begin{array}{c@{\quad=\quad}c@{\quad+\quad}c}
\begin{tikzpicture}[sp4rel,x=.54cm,y=.54cm]
\path[use as bounding box] (-.25,-.15) rectangle (2.40,3.75);
\draw[sp4two] (0,0) -- (0,1.35);
\draw[sp4two] (1.60,0) -- (1.60,.75);
\draw[sp4one] (1.60,.75) -- (1.05,1.35);
\draw[sp4one] (1.60,.75) -- (2.15,1.35);
\draw[sp4two] (0,1.35) -- (.525,1.85) -- (1.05,2.35);
\draw[sp4one] (1.05,1.35) -- (.525,1.85) -- (0,2.35);
\draw[sp4one] (0,2.35) -- (0,3.55);
\draw[sp4one] (2.15,1.35) -- (2.15,2.35);
\draw[sp4two] (1.05,2.35) -- (1.60,2.95);
\draw[sp4one] (2.15,2.35) -- (1.60,2.95);
\draw[sp4one] (1.60,2.95) -- (1.60,3.55);
\end{tikzpicture}
&
\begin{tikzpicture}[sp4rel,x=.54cm,y=.54cm]
\path[use as bounding box] (-.25,-.15) rectangle (1.45,3.75);
\draw[sp4two] (0,0) -- (0,1.55);
\draw[sp4two] (1.20,0) -- (1.20,1.55);
\draw[densely dashed,line width=1.1pt] (0,1.55) -- (1.20,1.55);
\draw[sp4one] (0,1.55) -- (0,3.55);
\draw[sp4one] (1.20,1.55) -- (1.20,3.55);
\end{tikzpicture}
&
\begin{tikzpicture}[sp4rel,x=.54cm,y=.54cm]
\path[use as bounding box] (-.25,-.15) rectangle (1.45,3.75);
\draw[sp4two] (0,0) to[out=90,in=90] (1.20,0);
\draw[sp4one] (0,3.55) to[out=270,in=270] (1.20,3.55);
\end{tikzpicture}
\end{array}
.
\end{equation*}

\begin{equation*}\tag{SP4.24}\label{eq:SP4-24}
\begin{tikzpicture}[sp4rel,x=.58cm,y=.58cm]
\path[use as bounding box] (-.25,-.15) rectangle (4.35,3.35);
\draw[sp4two] (0,0) -- (0,.85);
\draw[sp4two] (1,0) -- (1,.85);
\draw[sp4two] (2,0) -- (2,.85);
\draw[densely dashed,line width=1.1pt] (1,.85) -- (2,.85);
\draw[sp4one] (1,.85) -- (.50,1.40) -- (0,1.95);
\draw[sp4two] (0,.85) -- (.50,1.40) -- (1,1.95);
\draw[sp4one] (2,.85) -- (2,1.95);
\draw[sp4one] (0,1.95) -- (0,3.10);
\draw[sp4two] (1,1.95) -- (1.50,2.50);
\draw[sp4one] (2,1.95) -- (1.50,2.50);
\draw[sp4one] (1.50,2.50) -- (1.50,3.10);
\node at (3.10,1.55) {$=0$};
\end{tikzpicture}
.
\end{equation*}

\begin{equation*}\tag{SP4.25}\label{eq:SP4-25}
\begin{array}{c@{\quad=\quad}c@{\quad-\quad}c}
\begin{tikzpicture}[sp4rel,x=.58cm,y=.58cm]
  \path[use as bounding box] (-.25,-.15) rectangle (2.25,3.25);
  \draw[sp4one] (0,0) -- (0,.85);
  \draw[sp4one] (1,0) -- (1,.85);
  \draw[sp4one] (2,0) -- (2,.85);
  \draw[densely dashed,line width=1.1pt] (1,.85) -- (2,.85);
  \draw[sp4two] (1,.85) -- (.50,1.40) -- (0,1.95);
  \draw[sp4one] (0,.85) -- (.50,1.40) -- (1,1.95);
  \draw[sp4two] (2,.85) -- (2,1.95);
  \draw[sp4two] (0,1.95) -- (0,3.10);
  \draw[sp4one] (1,1.95) -- (1.50,2.50);
  \draw[sp4two] (2,1.95) -- (1.50,2.50);
  \draw[sp4one] (1.50,2.50) -- (1.50,3.10);
\end{tikzpicture}
&
\begin{tikzpicture}[sp4rel,x=.58cm,y=.58cm]
  \path[use as bounding box] (-.25,-.15) rectangle (2.25,3.25);
  \draw[sp4one] (0,0) -- (.50,1.40);
  \draw[sp4one] (1,0) -- (.50,1.40);
  \draw[sp4two] (.50,1.40) -- (.50,3.10);
  \draw[sp4one] (2,0) -- (2,3.10);
\end{tikzpicture}
&
\begin{tikzpicture}[sp4rel,x=.58cm,y=.58cm]
  \path[use as bounding box] (-.25,-.15) rectangle (2.25,3.25);
  \draw[sp4one] (0,0) -- (.50,0.70);
  \draw[sp4one] (1,0) -- (.50,0.70);
  \draw[sp4two] (.50,0.70) -- (.50,1.20);
  \draw[sp4two] (.50,1.20) -- (1.25,1.90);
  \draw[sp4one] (2,0) -- (2,1.20) -- (1.25,1.90);
  \draw[sp4one] (1.25,1.90) -- (1.25,2.40);
  \draw[sp4two] (1.25,2.40) -- (.50,3.10);
  \draw[sp4one] (1.25,2.40) -- (2,3.10);
\end{tikzpicture}
\end{array}
.
\end{equation*}

\end{Proposition}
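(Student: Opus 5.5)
The plan is to deduce the statement from \autoref{iso of categories} by matching the listed generators and relations with the abstract ones. For $\mathfrak{sp}_4$, $B_1$ is the $4$-element crystal and $B_2$ the $5$-element crystal. So by \autoref{prop:atom-count} there are $9$ atoms: two identities, and seven non-identity atoms indexed by the remaining elements of $B_1\sqcup B_2$. I would first compute $\lambda(b)=\sum_i\varepsilon_i(b)\omega_i$ for each $b$, which gives the domain $B_{\epsilon(b)}\otimes B_k$ and codomain $B_{\delta(b)}$ of $f_b$. The expected domains and codomains are the cups and caps on $B_1\otimes B_1$ and $B_2\otimes B_2$, the merges $B_1\otimes B_1\to B_2$ and $B_1\otimes B_2\to B_1$, the switch $B_1\otimes B_1\to B_2\otimes B_2$, the atom with domain $(2,1)$, and the remaining three-strand atom.

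Next I would show that the two atoms not in the generating list are expressed by the given generators. For the $(2,1)$ atom this is \eqref{eq:SP4-0}: it is a crossing followed by the $(1,2)$ merge. For the three-strand atom it is the left-hand side of \eqref{eq:SP4-1}, which factors through two-strand atoms, as guaranteed in general by \autoref{efficient pres prop}. Each identity is checked on a single highest weight element, using \autoref{bijection branching} and \autoref{two strand highest element switch lemma}. Since type $C$ is multiplicity free, the crossings $\beta_{1,2},\beta_{2,1}$ are unique, and \eqref{eq:SP4-4} records $\beta_{i,j}\circ\beta_{j,i}=\id$.

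Then I would go through the relation families of \autoref{iso of categories}. For every pair of atoms $f,g$ and every admissible overlap, I would write down the left-hand sides of \eqref{eq:HI1}--\eqref{eq:HI3} and \eqref{eq:OR1}--\eqref{eq:OR4}. Each one is expanded in the basis $\mathcal B(\epsilon,\epsilon')$ of \autoref{bmt basis} by evaluating both sides on the highest weight elements of all components of the domain; by \autoref{lem:weight-factorization} this suffices. The resulting list should match \eqref{eq:SP4-2}--\eqref{eq:SP4-25} together with their vertical reflections. The $H=I$ relations are the circle, bigon, zigzag, switch composites and the like, for example \eqref{eq:SP4-2}, \eqref{eq:SP4-3}, \eqref{eq:SP4-5}, \eqref{eq:SP4-6}, \eqref{eq:SP4-7}, \eqref{eq:SP4-9}, \eqref{eq:SP4-23}. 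The orientation-reversing ones are those containing a crossing or a permutation feeding an atom in a non-distinguished way, for example \eqref{eq:SP4-10}--\eqref{eq:SP4-22}, \eqref{eq:SP4-24} and \eqref{eq:SP4-25}. Because the number of strands in any left-hand side is bounded (at most four), this is a finite check, which the code in \cite{HT} can do.

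The main obstacle is completeness: showing that every abstract relation instance is either listed or follows from the listed ones. Many instances differ only by horizontal reflection, by sliding a crossing through an atom using \eqref{eq:SP4-12}, \eqref{eq:SP4-13}, \eqref{eq:SP4-19} or \eqref{eq:SP4-21}, or by substituting the definitions \eqref{eq:SP4-0} and \eqref{eq:SP4-1}. I would handle this by checking that the listed relations suffice to run the rewriting argument of \autoref{iso of categories} and \autoref{reduced diagrams are distinguished}. In other words, the spanning step only ever needs to eliminate the local configurations appearing on these left-hand sides, and linear independence is inherited from \autoref{bmt basis}.
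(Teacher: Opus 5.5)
Your plan matches the paper's proof, which simply invokes \autoref{iso of categories} and leaves the finite verification of the listed relations to routine computation, e.g.\ with the code in \cite{HT}; this includes \eqref{eq:SP4-0} and \eqref{eq:SP4-1}, which express the two non-generating atoms. Two small slips: as an atom the switch goes $B_2\otimes B_2\to B_1\otimes B_1$ (atoms decrease weight, and $2\omega_2-2\omega_1=\alpha_2$), with $B_1\otimes B_1\to B_2\otimes B_2$ being its flip; and identifying the left-hand side of \eqref{eq:SP4-1} with the three-strand atom takes two highest weight checks rather than one, because $\Hom(B_1\otimes B_1\otimes B_2,B_2)$ is two-dimensional.
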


\begin{proof}
This follows from \autoref{iso of categories} and routine computations (that the reader might want to do using the code in \cite{HT}). The objects labeled 1 and 2 correspond to the four-dimensional and five-dimensional fundamental representations, respectively. 
\end{proof}

\subsection{\texorpdfstring{$\mathrm{G}_2$}{G2}}

Here we have 19 non-identity atoms and 9 non-identity two-strand atoms. The relations are too numerous to list here; see \cite{HT} instead.

\subsection{\texorpdfstring{$\mathrm{SO}_3$}{SO3}}\label{S:SO3}

We now give an example where $\mathcal{C}$ is a proper subcategory of $\gcrys$.
We define 
$\mathrm{SO}_3$-Crys to be the subcategory of $\mathfrak{sl}_2$-Crys generated by the highest weight crystals whose cardinality is an odd number. Then $\Fund(\mathrm{SO}_3\text{-}\mathrm{Crys})$ is monoidally generated by the highest weight crystal $B(2\omega_1)$.

\tikzset{
so3rel/.style={baseline={([yshift=-0.5ex]current bounding box.center)},line cap=round,line join=round}
}
\newcommand{\soThreeGen}[1]{%
\begin{tikzpicture}[baseline={(0,0)},scale=0.95]
\path[use as bounding box] (-0.75,-0.60) rectangle (0.75,0.60);
#1%
\end{tikzpicture}%
}
\newcommand{\soThreeComma}{%
\begin{tikzpicture}[baseline={(0,0)},scale=0.95]
\path[use as bounding box] (-0.10,-0.60) rectangle (0.24,0.60);
\node[inner sep=0pt] at (0,-0.32) {$,$};
\end{tikzpicture}\mspace{16mu}%
}

\begin{Proposition}
The category $\Fund(\mathrm{SO}_3\text{-}\mathrm{Crys})$ is isomorphic to the $\kk$-linear monoidal category generated by a single object $B_1$ and generating morphisms
\[
\soThreeGen{\draw[usual] (0,-0.5) to (0,0.5);}
\soThreeComma
\soThreeGen{\draw[usual] (-0.55,-0.25) to[out=90,in=180] (0,0.25) to[out=0,in=90] (0.55,-0.25);}
\soThreeComma
\soThreeGen{\draw[usual] (-0.55,0.25) to[out=270,in=180] (0,-0.25) to[out=0,in=270] (0.55,0.25);}
\soThreeComma
\soThreeGen{\draw[usual] (0.433013,-0.5) to (0,0); \draw[usual] (0,0.5) to (0,0); \draw[usual] (-0.433013,-0.5) to (0,0);}
\soThreeComma
\soThreeGen{\draw[usual] (0.433013,0.5) to (0,0); \draw[usual] (0,-0.5) to (0,0); \draw[usual] (-0.433013,0.5) to (0,0);}
\]
with relations as follows, where we also impose the vertical reflections.
\begin{equation*}\tag{SO3.1}\label{eq:SO3-circle}
\begin{tikzpicture}[so3rel,scale=0.95]
\path[use as bounding box] (-0.70,-0.70) rectangle (2.65,0.70);
\draw[usual] (0,0) circle (0.48);
\node at (1.15,0) {$=$};
\node at (2.05,0) {$1$};
\end{tikzpicture}
.
\end{equation*}
\begin{equation*}\tag{SO3.2}\label{eq:SO3-digon}
\begin{tikzpicture}[so3rel,scale=0.78]
\path[use as bounding box] (-0.70,-0.10) rectangle (3.35,2.10);
\begin{scope}
\coordinate (b) at (0,0);
\coordinate (v) at (0,0.55);
\coordinate (l) at (-0.46,1.00);
\coordinate (r) at (0.46,1.00);
\coordinate (w) at (0,1.45);
\coordinate (t) at (0,2.00);
\draw[usual] (b) -- (v);
\draw[usual] (v) -- (l);
\draw[usual] (v) -- (r);
\draw[usual] (l) -- (w);
\draw[usual] (r) -- (w);
\draw[usual] (w) -- (t);
\end{scope}
\node at (1.35,1.00) {$=$};
\begin{scope}[xshift=2.55cm]
\draw[usual] (0,0) -- (0,2.00);
\end{scope}
\end{tikzpicture}
.
\end{equation*}
\begin{equation*}\tag{SO3.3}\label{eq:SO3-HI}
\begin{array}{c@{\quad=\quad}c@{\quad+\quad}c}
\begin{tikzpicture}[so3rel,scale=0.88]
\path[use as bounding box] (-0.18,-0.12) rectangle (1.18,1.12);
\draw[usual] (0,0) to (0.33,0.5) to (0,1);
\draw[usual] (1,0) to (0.67,0.5) to (1,1);
\draw[usual] (0.33,0.5) to (0.67,0.5);
\end{tikzpicture}
&
\begin{tikzpicture}[so3rel,scale=0.88]
\path[use as bounding box] (-0.18,-0.12) rectangle (1.18,1.12);
\draw[usual] (0,0) to (0.5,0.33) to (1,0);
\draw[usual] (0,1) to (0.5,0.67) to (1,1);
\draw[usual] (0.5,0.33) to (0.5,0.67);
\end{tikzpicture}
&
\begin{tikzpicture}[so3rel,scale=0.88]
\path[use as bounding box] (-0.18,-0.12) rectangle (1.18,1.12);
\draw[usual] (0,0) to[out=45,in=135] (1,0);
\draw[usual] (0,1) to[out=315,in=225] (1,1);
\end{tikzpicture}
\end{array}
.
\end{equation*}
\begin{equation*}\tag{SO3.4}\label{eq:SO3-zigzag}
\begin{array}{c@{\quad=\quad}c@{\quad=\quad}c}
\begin{tikzpicture}[so3rel,x=.78cm,y=.78cm]
\path[use as bounding box] (-.65,-.70) rectangle (1.45,1.00);
\draw[usual] (0.433,0.25) -- (0,0);
\draw[usual] (0,-0.50) -- (0,0);
\draw[usual] (-0.433,0.25) -- (0,0);
\draw[usual] (0.433,0.25)
to[out=90,in=180] (0.815,0.60)
to[out=0,in=90] (1.197,0.25);
\end{tikzpicture}
&
0
&
\begin{tikzpicture}[so3rel,x=.78cm,y=.78cm]
\path[use as bounding box] (-1.45,-.70) rectangle (.65,1.00);
\draw[usual] (0.433,0.25) -- (0,0);
\draw[usual] (0,-0.50) -- (0,0);
\draw[usual] (-0.433,0.25) -- (0,0);
\draw[usual] (-0.433,0.25)
to[out=90,in=0] (-0.815,0.60)
to[out=180,in=90] (-1.197,0.25);
\end{tikzpicture}
\end{array}
.
\end{equation*}
\begin{equation*}\tag{SO3.5}\label{eq:SO3-tadpole}
\begin{tikzpicture}[so3rel,scale=0.88]
\path[use as bounding box] (-0.65,-0.62) rectangle (1.80,0.92);
\draw[usual] (0.433013,0.25) to (0,0);
\draw[usual] (0,-0.5) to (0,0);
\draw[usual] (-0.433013,0.25) to (0,0);
\draw[usual] (-0.433013,0.25) to[out=90,in=180] (0,0.65) to[out=0,in=90] (0.433013,0.25);
\node at (0.95,0.05) {$=$};
\node at (1.55,0.05) {$0$};
\end{tikzpicture}
.
\end{equation*}
\begin{equation*}\tag{SO3.6}\label{eq:SO3-assoc}
\begin{array}{c@{\quad=\quad}c}
\begin{tikzpicture}[so3rel,x=.62cm,y=.62cm]
\path[use as bounding box] (-.25,-.15) rectangle (2.25,2.55);
\draw[usual] (0,0) -- (.50,.72);
\draw[usual] (1,0) -- (.50,.72);
\draw[usual] (.50,.72) -- (1,1.52);
\draw[usual] (2,0) -- (1,1.52);
\draw[usual] (1,1.52) -- (1,2.35);
\end{tikzpicture}
&
\begin{tikzpicture}[so3rel,x=.62cm,y=.62cm]
\path[use as bounding box] (-.25,-.15) rectangle (2.25,2.55);
\draw[usual] (1,0) -- (1.50,.72);
\draw[usual] (2,0) -- (1.50,.72);
\draw[usual] (1.50,.72) -- (1,1.52);
\draw[usual] (0,0) -- (1,1.52);
\draw[usual] (1,1.52) -- (1,2.35);
\end{tikzpicture}
\end{array}
.
\end{equation*}
\begin{equation*}\tag{SO3.7}\label{eq:SO3-cap-assoc}
\begin{array}{c@{\quad=\quad}c}
\begin{tikzpicture}[so3rel,x=.72cm,y=.72cm]
\path[use as bounding box] (-.25,-.15) rectangle (2.25,1.95);
\draw[usual] (1,0) -- (1.50,.62);
\draw[usual] (2,0) -- (1.50,.62);
\draw[usual] (1.50,.62) -- (1.50,1.15);
\draw[usual] (0,0) -- (0,1.15);
\draw[usual] (0,1.15)
to[out=90,in=180] (.75,1.62)
to[out=0,in=90] (1.50,1.15);
\end{tikzpicture}
&
\begin{tikzpicture}[so3rel,x=.72cm,y=.72cm]
\path[use as bounding box] (-.25,-.15) rectangle (2.25,1.95);
\draw[usual] (0,0) -- (.50,.62);
\draw[usual] (1,0) -- (.50,.62);
\draw[usual] (.50,.62) -- (.50,1.15);
\draw[usual] (2,0) -- (2,1.15);
\draw[usual] (.50,1.15)
to[out=90,in=180] (1.25,1.62)
to[out=0,in=90] (2,1.15);
\end{tikzpicture}
\end{array}
.
\end{equation*}
\end{Proposition}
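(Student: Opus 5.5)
We prove this with the same strategy as for $\mathfrak{sl}_3$ and $\mathfrak{sp}_4$: specialize \autoref{iso of categories} to the monoidal subcategory $\mathcal{C}=\mathrm{SO}_3\text{-}\mathrm{Crys}$ with the single generator $B_1:=B(2\omega_1)$, a three-element crystal $\{2,0,-2\}$. There are no crossings, since there is only one generating object. We first compute the atoms in the sense of \autoref{atom}, adapted to the subcategory. For $\lambda$ deep in the dominant chamber, $B(\lambda)\otimes B(2\omega_1)\cong B(\lambda+2\omega_1)\oplus B(\lambda)\oplus B(\lambda-2\omega_1)$. The three atoms are then:
\begin{enumerate}
\item the identity of $B_1$ (from the element $2$);
\item the trivalent merge $B_1\otimes B_1\to B_1$ onto the middle summand $B(2\omega_1)$ (from the element $0$, with $\varepsilon=1$; in the subcategory the minimal domain is $B_1$, weight $2\omega_1$);
\item the cap $B_1\otimes B_1\to\mathbbm 1$ (from the element $-2$).
\end{enumerate}
Their flips are the split and the cup. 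This matches the listed generators. Since $B_1\otimes B_1\cong B(4\omega_1)\oplus B(2\omega_1)\oplus B(0)$ is multiplicity free, every hom-space between words of length at most three is small, and all coefficients can be found by hand by evaluating on highest weight elements.

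Next we enumerate the $H=I$ relations \eqref{eq:HI1} (no permutations exist, so \eqref{eq:HI2}, \eqref{eq:HI3} are vacuous). The cases are as follows.
\begin{enumerate}
\item Cap composed with cup gives the circle \eqref{eq:SO3-circle}; the coefficient is $1$ because $\overline{f}$ is the inverse of $f$ on its image.
\item Merge composed with split gives the digon \eqref{eq:SO3-digon}.
\item Cap composed with split gives zero by weight, since $\Hom(B(2\omega_1),\mathbbm 1)=0$. This is the tadpole relation \eqref{eq:SO3-tadpole}.
\item Overhanging configurations on three strands give the zigzag vanishing \eqref{eq:SO3-zigzag}; cup-cap and split-cap overhangs are handled by \eqref{eq:SO3-cap-assoc} and its reflection.
\item The merge--split ``H'' on two strands gives \eqref{eq:SO3-HI}.
\end{enumerate}
For \eqref{eq:SO3-HI}, the endomorphism algebra of $B_1^{\otimes 2}$ has basis $\{\id,\ \text{split}\circ\text{merge},\ \text{cup}\circ\text{cap}\}$ by \autoref{bmt basis}. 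We must expand the H diagram in this basis, and we do so by evaluating both sides on the three highest weight elements $2\otimes2$, $2\otimes 0$, $2\otimes -2$ and tracking images using the crystal tensor rule. One should also check the zigzag: the snake sends the highest element of $B_1$ into the non-top summand, so it vanishes, as for $\mathfrak{sl}_2$.

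Then we handle the orientation-reversing relations. With no crossings, only \eqref{eq:OR1} (with trivial $P$) can occur, and its left-hand sides are compositions of atoms attached to strands not at the right end of a distinguished bottom diagram. In this setting these are the left-associated versus right-associated merges, and the cap applied after a merge on the right versus the left. These are exactly \eqref{eq:SO3-assoc} and \eqref{eq:SO3-cap-assoc}. Their truth is checked by noting that both sides are basic morphisms of the same weight sending the highest weight element of the same component to the highest element of the target. Hom-spaces $B_1^{\otimes3}\to B_1$ have the $B(2\omega_1)$-multiplicity $3$, though, so here we must verify on highest weight elements that both sides are nonzero on the same component. This identification of components is where I expect the main effort to lie.

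Finally, \autoref{reduced diagrams are distinguished} and the spanning argument of \autoref{iso of categories} go through verbatim, since the only input was the list of relations. This yields the presentation, after adding vertical reflections.
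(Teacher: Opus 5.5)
Your route is the paper's: its proof is just an appeal to \autoref{iso of categories}. Your adaptation of the atoms to the subcategory and your highest-weight checks of \eqref{eq:SO3-assoc} and \eqref{eq:SO3-cap-assoc} are fine. However, the way you match configurations to the listed relations contains a concrete error, and it leaves one case of the spanning argument uncovered. Write $m,s,c,u$ for merge, split, cap and cup.

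Relation \eqref{eq:SO3-zigzag} is not the cup--cap snake. It is the split--cap overhang $(\id\otimes c)\circ(s\otimes\id)=0$, together with its mirror $(c\otimes\id)\circ(\id\otimes s)=0$. So split--cap overhangs are handled by \eqref{eq:SO3-zigzag} itself. They are not handled by \eqref{eq:SO3-cap-assoc}: that relation and its reflection contain only atoms, or only flipped atoms, so neither can rewrite an atom-over-flip configuration directly. Meanwhile the genuine snake $Z=(c\otimes\id)\circ(\id\otimes u)$ is an \eqref{eq:HI1} configuration that the rewriting must be able to remove, and it is not among the defining relations. As written, your argument relies on a relation the presentation does not contain.

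The fix is short.
\begin{itemize}
\item By \eqref{eq:SO3-digon}, $Z=Z\circ m\circ s$.
\item By the interchange law, $Z\circ m=\bigl((c\circ(m\otimes\id))\otimes\id\bigr)\circ(\id\otimes\id\otimes u)$.
\item By \eqref{eq:SO3-cap-assoc}, replace $c\circ(m\otimes\id)$ by $c\circ(\id\otimes m)$. This gives $Z\circ m=(c\otimes\id)\circ\bigl(\id\otimes(m\otimes\id)\circ(\id\otimes u)\bigr)$.
\item The inner piece $(m\otimes\id)\circ(\id\otimes u)$ is the vertical reflection of the left side of \eqref{eq:SO3-zigzag}, hence $0$. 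So $Z=0$.
\item The mirror snake is handled the same way, using the right side of \eqref{eq:SO3-zigzag}.
\end{itemize}

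Two smaller points. First, \eqref{eq:OR1} cannot occur here at all, since its left-hand side contains a crossing. The configurations $m\circ(\id\otimes m)$ and $c\circ(\id\otimes m)$ behind \eqref{eq:SO3-assoc} and \eqref{eq:SO3-cap-assoc} are of \eqref{eq:OR2} type with $P=\id$: an atom whose rightmost input is the output of another atom. Second, rather than asserting that \autoref{reduced diagrams are distinguished} goes through verbatim, say directly why it holds here. In a diagram of merges and caps that avoids these two configurations, every atom's rightmost input is an original strand, which is exactly the recursive construction of distinguished bottom diagrams.
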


\begin{proof} 
This follows from \autoref{iso of categories}.
\end{proof}

\begin{Remark}
The relation \autoref{eq:SO3-HI} is the famous $H=I$ relation of SO3, which inspired the nomenclature.
\end{Remark}

By \eqref{eq:SO3-assoc} and \eqref{eq:SO3-cap-assoc}, the following type of shorthands are unambiguous:
\[
\begin{tikzpicture}[baseline={(base)},scale=1]
\coordinate (base) at (0,0);
\path[use as bounding box] (-0.12,-0.12) rectangle (1.12,0.62);
\draw[usual] (0,0) to[out=90,in=180] (0.50,0.50);
\draw[usual] (1,0) to[out=90,in=0] (0.50,0.50);
\draw[usual] (0.50,0) -- (0.50,0.50);
\end{tikzpicture}
\quad,\quad
\begin{tikzpicture}[baseline={(base)},scale=1]
\coordinate (base) at (0,0);
\path[use as bounding box] (-0.65,-0.12) rectangle (0.65,0.92);
\draw[usual] (0,0.38) -- (0,0.85);
\draw[usual] (0,0.38) -- (-0.50,0);
\draw[usual] (0,0.38) -- (0,0);
\draw[usual] (0,0.38) -- (0.50,0);
\end{tikzpicture}.
\]
Just as in \autoref{sl3 jw}, the Jones--Wenzl projector in this case can be constructed as an alternating sum built from all possible symmetric diagrams involving only \webcap, \tup \, and trees with an arbitrary number of branches, where we assign a sign of $-1$ to each pair of cup/cap or merge/split. This follows either from the recursive formula in \autoref{JW theorem}, which always holds in rank 1 (cf. \autoref{pairwise jw lemma}), or can be verified by a direct computation. For example, we have
\begin{equation*}
\begin{aligned}
j_3={}&
\soThreeJW{\draw[usual] (0,0) -- (0,1); \draw[usual] (0.55,0) -- (0.55,1); \draw[usual] (1.10,0) -- (1.10,1);}
\;-\;
\soThreeJW{\draw[usual] (0,1) to[out=270,in=180] (0.275,0.78) to[out=0,in=270] (0.55,1); \draw[usual] (0,0) to[out=90,in=180] (0.275,0.22) to[out=0,in=90] (0.55,0); \draw[usual] (1.10,0) -- (1.10,1);}
\;-\;
\soThreeJW{\draw[usual] (0,0) -- (0,1); \draw[usual] (0.55,1) to[out=270,in=180] (0.825,0.78) to[out=0,in=270] (1.10,1); \draw[usual] (0.55,0) to[out=90,in=180] (0.825,0.22) to[out=0,in=90] (1.10,0);}
\;-\;
\soThreeJW{\draw[usual] (0,1) -- (0.275,0.70); \draw[usual] (0.55,1) -- (0.275,0.70); \draw[usual] (0.275,0.70) -- (0.275,0.30); \draw[usual] (0.275,0.30) -- (0,0); \draw[usual] (0.275,0.30) -- (0.55,0); \draw[usual] (1.10,0) -- (1.10,1);}
\;-\;
\soThreeJW{\draw[usual] (0,0) -- (0,1); \draw[usual] (0.55,1) -- (0.825,0.70); \draw[usual] (1.10,1) -- (0.825,0.70); \draw[usual] (0.825,0.70) -- (0.825,0.30); \draw[usual] (0.825,0.30) -- (0.55,0); \draw[usual] (0.825,0.30) -- (1.10,0);}
\;+\;
\soThreeJW{\draw[usual] (0,1) -- (0.55,0.70); \draw[usual] (0.55,1) -- (0.55,0.70); \draw[usual] (1.10,1) -- (0.55,0.70); \draw[usual] (0.55,0.70) -- (0.55,0.30); \draw[usual] (0,0) -- (0.55,0.30); \draw[usual] (0.55,0) -- (0.55,0.30); \draw[usual] (1.10,0) -- (0.55,0.30);}
\;+\;
\soThreeJW{\draw[usual] (0,1) to[out=270,in=180] (0.55,0.74) to[out=0,in=270] (1.10,1); \draw[usual] (0.55,1) -- (0.55,0.74); \draw[usual] (0,0) to[out=90,in=180] (0.55,0.26) to[out=0,in=90] (1.10,0); \draw[usual] (0.55,0) -- (0.55,0.26);}.
\end{aligned}
\end{equation*}

\subsubsection{Coboundary structure}

In \cite[\S 4.2]{Alqady2025coboundary} a diagrammatic description of the crystal commutor of \cite{Kamnitzcrystal} is given for $\mathfrak{sl}_2$. This generalizes easily to the present case. 

Write $\m$ for $B^{\otimes m}$. Denote by \(\mathcal{D}_k\) the set of all distinguished bottom diagrams whose domain has $k$ strands. Given two diagrams $y\in\mathcal{D}_m,y'\in\mathcal{D}_n$, there is a unique $x\in\mathcal{D}_{m+n}$ such that $x_{\le m}=y$, $x_{>m}=y'$, where $x_{\le m}$ (resp. $x_{>m}$) is the diagram obtained from $x$ by deleting the strands $m+1,\dots, m+n$ (resp. $1,\dots,m$) and changing any resulting half cap or isolated edge into through strands. In such case we write $x=y \odot_{\epsilon_l} y'$ where $\epsilon_l=(l_1,l_2)$ records the number of cut-off branches ($l_1$) followed by split caps ($l_2$), with $l=l_1+l_2:=hk_m(x)$ being such that $th(x)+2l=th(x_{\le m})+th(x_{>m})$. Note that planarity implies branches must be cut off before caps. 

We write $\kappa_{m,n}(x)$ for the diagram $y'\odot_{\epsilon_l} y$; this is a bottom diagram with the same number of through strands, obtained by hooking the $hk_m$ rightmost through strands according to the sequence $(l_1,l_2)$. We write $\tau_{m,n}(x)$ for $\overline{\kappa_{m,n}(x)}\circ j_{th(x)}\circ x \in \Hom(\underline{m+n},\underline{m+n})$.  
As in \cite[Theorem 4.10]{Alqady2025coboundary}, we define the coboundary structure on $\Fund(\mathrm{SO}_3$-Crys$)$ by 
\begin{gather}\label{eq:so3 formula}
\sigma_{\m,\n}=\sum_{x\in\mathcal{D}_{m+n}}\tau_{m,n}(x)=\sum_{x\in\mathcal{D}_{m+n}} \overline{\kappa_{m,n}(x)}\circ j_{th(x)}\circ x,
\end{gather}
where $\sigma_{\m,\n}: \m\otimes \n \to \n \otimes \m$.

For example, if $\epsilon_2=(1,1)$, we have
\[
\begin{tikzpicture}[baseline={(current bounding box.center)},scale=0.6]
\begin{scope}[xshift=0cm,yshift=0cm]
\node[anchor=east] at (-2.0,0) {$x=$};
\draw[usual] (-1.4,0.9) -- (-1.4,-0.9);
\draw[usual] (0,0.9) -- (0,0.15);
\draw[usual] (0,0.15) -- (-0.9,-0.65);
\draw[usual] (0,0.15) -- (0.9,-0.65);
\draw[usual] (-0.35,-0.65) .. controls (-0.15,-0.35) and (0.15,-0.35) .. (0.35,-0.65);
\draw[densely dashed,gray] (0,1.15) -- (0,-1.05);
\end{scope}

\node at (1.1,0) {$=$};

\begin{scope}[xshift=2.55cm,yshift=0cm]
\foreach \x in {0,0.22,0.44} {
\draw[usual] (\x,0.9) -- (\x,-0.9);
}
\node at (1.3,0) {$\odot_{\epsilon_2}\quad$};
\foreach \x in {1.55,1.77} {
\draw[usual] (\x,0.9) -- (\x,-0.9);
}
\end{scope}

\begin{scope}[xshift=10.0cm,yshift=0cm]
\node[anchor=east] at (-0.2,0) {$\kappa_{3,2}(x)=$};
\begin{scope}[xshift=0.2cm,yshift=0cm]
\foreach \x in {0,0.22} {
\draw[usual] (\x,0.9) -- (\x,-0.9);
}
\node at (1.05,0) {$\odot_{\epsilon_2}\quad $};
\foreach \x in {1.25,1.47,1.69} {
\draw[usual] (\x,0.9) -- (\x,-0.9);
}
\node at (2.2,0) {$=$};
\end{scope}
\begin{scope}[xshift=4.6cm]
\draw[usual] (0,0.9) -- (0,0.15);
\draw[usual] (0,0.15) -- (-0.9,-0.65);
\draw[usual] (0,0.15) -- (0.9,-0.65);
\draw[usual] (-0.35,-0.65) .. controls (-0.15,-0.35) and (0.15,-0.35) .. (0.35,-0.65);
\draw[usual] (1.35,0.9) -- (1.35,-0.9);
\end{scope}
\end{scope}
\end{tikzpicture}\]

\begin{Proposition}
This formula \eqref{eq:so3 formula} defines the commutor of \cite{Kamnitzcrystal} on $\Fund(\mathrm{SO}_3$-\textrm{Crys}$)$.
\end{Proposition}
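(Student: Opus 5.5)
Since $\mathrm{SO}_3$-Crys is semisimple and every component of $\m\otimes\n$ is indexed by a unique $x\in\mathcal D_{m+n}$ (\autoref{bijection branching}), I would check \eqref{eq:so3 formula} one component at a time. Recall the Henriques--Kamnitzer commutor: for connected $A,B$, $\sigma_{A,B}(a\otimes b)=\xi_{B\otimes A}(\xi_B(b)\otimes\xi_A(a))$, with $\xi$ the Schützenberger involution. For $\mathfrak{sl}_2$ this says that $\sigma$ sends the highest weight element $h_x$ of the component $C_x$ to the highest weight element of some component of $\n\otimes\m$, and then extends as a crystal isomorphism. The other side is simple: by \autoref{wedderburn basis} and \autoref{JW remark}, the term $\overline{\kappa_{m,n}(x)}\circ j_{th(x)}\circ x$ projects onto $C_x$, kills every other component, and maps $C_x$ isomorphically onto $C_{\kappa_{m,n}(x)}$. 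Summing over $x$ therefore gives a crystal isomorphism $\m\otimes\n\to\n\otimes\m$. So everything reduces to one combinatorial claim: \emph{$\sigma_{\m,\n}(h_x)$ is the highest weight element of the component indexed by $\kappa_{m,n}(x)$}. Both maps are crystal morphisms between connected components, so agreeing on highest weight elements is enough.

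To prove the claim, I would first use naturality of $\sigma$ to reduce to connected tensor factors. Write $x=y\odot_{\epsilon_l}y'$ and put $p=th(y)$, $q=th(y')$. The element $h_x$ lies in $C_y\otimes C_{y'}\cong B(2p\omega_1)\otimes B(2q\omega_1)$, embedded via $\overline y\otimes\overline{y'}$. Naturality, $\sigma_{\m,\n}\circ(\overline y\otimes\overline{y'})=(\overline{y'}\otimes\overline y)\circ\sigma_{B(2p),B(2q)}$, then reduces the claim to two things:
\begin{enumerate}
\item computing $\sigma$ on highest weight elements of $B(2p\omega_1)\otimes B(2q\omega_1)$;
\item matching the answer with $\kappa$.
\end{enumerate}

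For (a) I would use the standard $\mathfrak{sl}_2$ fact, proved as in \cite[Theorem 4.10]{Alqady2025coboundary}. The highest weight elements of $B(a)\otimes B(b)$ are $u_a\otimes f^k u_b$, and the commutor sends this to $u_b\otimes f^k u_a$, because $\xi$ reverses strings and the resulting component has the same highest weight $a+b-2k$.

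For (b) I would read off how the pair $(y,y')$ is glued. The quantity $l=hk_m(x)$ records how many through strands of $y'$ hook onto $y$. Cutting off $l_1$ branches loses one unit of weight each, and splitting off $l_2$ caps loses two units each. Correspondingly, on the $\mathfrak{sl}_2$ side the power $f^k$ acting on the second tensor factor has $k=l_1+2l_2$. This is the step I expect to be the main obstacle: getting a faithful dictionary between the hooking data $(l_1,l_2)$ and the lowering exponent $k$. The ordering is forced by planarity (branches are cut before caps), and one must show that $y'\odot_{\epsilon_l}y$, hooked with the same sequence $(l_1,l_2)$ but with the roles of the two sides swapped, is exactly the distinguished bottom diagram whose branching path gives $u_{2q}\otimes f^k u_{2p}$.

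I would handle (b) by induction on $l$, using the recursive construction of distinguished bottom diagrams. Each hook is a single atom (a merge or a cap) applied to the rightmost through strand of the left block together with the first through strand of the right block. In the weight branching graph each such atom lowers the second factor by one step of the $\mathfrak{sl}_2$ string. Finally, checking that $j_{th(x)}$ is the correct insertion uses \autoref{JW theorem}, which holds in rank one.
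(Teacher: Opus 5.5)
Your proposal is correct and follows essentially the same route as the paper, which just adapts the $\mathfrak{sl}_2$ proof of \cite[Theorem 4.10]{Alqady2025coboundary}. That route is: reduce via the matrix-unit basis of \autoref{wedderburn basis} to highest weight elements, use naturality and the two-factor rule $u_a\otimes f^ku_b\mapsto u_b\otimes f^ku_a$, and match $k=l_1+2l_2$ with the hooking data, which is the three-branch modification of \cite[Lemma 4.15]{Alqady2025coboundary} that the paper alludes to.

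Two small fixes are needed:
\begin{enumerate}
\item A cap lowers the second factor by two steps of the string, not one, as your own count $k=l_1+2l_2$ requires.
\item \autoref{JW theorem} plays no role, because the effect of $j_{th(x)}$ is already settled by \autoref{JW remark}.
\end{enumerate}

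Step (b) is also cleaner if you check directly that the hooked diagram sends $h_y\otimes f^kh_{y'}$ to the highest element, rather than inducting through the recursive construction.
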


\begin{proof}
The proof of \cite[Theorem 4.10]{Alqady2025coboundary} can be straightforwardly adapted to the present setting, as it is mostly formal and relies only on \autoref{wedderburn basis} and the bijection between $\mathcal{D}_n$ and components of $B^{\otimes n}$, which in this case is illustrated below.
\[
\begin{tikzpicture}[
>=Latex,
font=\small,
v/.style={inner sep=1pt},
plusedge/.style={->, very thick, blue!75!black, shorten >=3pt, shorten <=3pt},
zeroedge/.style={->, very thick, teal!70!black, shorten >=3pt, shorten <=3pt},
minusedge/.style={->, very thick, purple!75!black, shorten >=3pt, shorten <=3pt},
edgelab/.style={font=\scriptsize, inner sep=1pt}
]
\node[v] (a) at (0,0)        {\soThreeBranchEmpty};
\node[v] (b) at (2.0,0)      {\soThreeBranchI};

\node[v] (c) at (4.4,1.35)   {\soThreeBranchII};
\node[v] (d) at (4.4,0)      {\soThreeBranchMerge};
\node[v] (e) at (4.4,-1.35)  {\soThreeBranchCap};

\node[v] (f) at (7.4,2.90)   {\soThreeBranchIII};
\node[v] (g) at (7.4,2.05)   {\soThreeBranchIMerge};
\node[v] (h) at (7.4,1.20)   {\soThreeBranchICap};
\node[v] (i) at (7.4,0.35)   {\soThreeBranchMergeI};
\node[v] (j) at (7.4,-0.50)  {\soThreeBranchTree};
\node[v] (k) at (7.4,-1.35)  {\soThreeBranchTup};
\node[v] (l) at (7.4,-2.35)  {\soThreeBranchCapI};

\draw[plusedge] (a) -- node[edgelab, above=3pt] {$+$} (b);

\draw[plusedge] (b) -- node[edgelab, above left=1pt] {$+$} (c);
\draw[zeroedge] (b) -- node[edgelab, above=3pt] {$0$} (d);
\draw[minusedge] (b) -- node[edgelab, below left=1pt] {$-$} (e);

\draw[plusedge] (c) -- node[edgelab, above left=1pt] {$+$} (f);
\draw[zeroedge] (c) -- node[edgelab, above=2pt] {$0$} (g);
\draw[minusedge] (c) -- node[edgelab, below=2pt] {$-$} (h);

\draw[plusedge] (d) -- node[edgelab, above=2pt] {$+$} (i);
\draw[zeroedge] (d) -- node[edgelab, above=2pt] {$0$} (j);
\draw[minusedge] (d) -- node[edgelab, below=2pt] {$-$} (k);

\draw[plusedge] (e) -- node[edgelab, below left=1pt] {$+$} (l);
\end{tikzpicture}.
\]
Here the highest weight of a summand is the net number of $+$'s of the branching path. This graph, with at most three possible branches at each point, suggests the necessary modification to \cite[Lemma 4.15]{Alqady2025coboundary}. 
\end{proof}

\subsection{Motzkin}\label{S:Motzkin}

We now give an example where $\CC=\gcrys$ but we use a nonstandard choice of generators, i.e. not the crystals of fundamental representations. Let $\CC =\mathfrak{sl}_2$-Crys, but instead of $B_1=B(\omega_1)$ we take the generator $B_1$ of $\Fund(\CC, B)$, $B=\{B_1\}$ to be the crystal corresponding to $B(\omega_1)\oplus B(0)$. Call $\Fund(\CC, B)$ the Motzkin crystal category, denoted by $\CrysMo$.

\begin{Proposition}
The category $\CrysMo$ is the $\kk$-linear monoidal category generated by a single object and the generating morphisms
\[
\motzTopDot
\motzComma
\motzBottomDot
\motzComma
\motzIdentity
\motzComma
\motzCup
\motzComma
\motzCap
\]
where the dots are morphisms from/to the monoidal unit $\mathbbm{1},$ subject to relations
\begin{equation*}\tag{M.1}\label{eq:motzkin-zigzag}
\begin{array}{c@{\quad=\quad}c@{\quad=\quad}c}
\motzSnakeLeft & 0 & \motzSnakeRight
\end{array}
.
\end{equation*}
\begin{equation*}\tag{M.2}\label{eq:motzkin-cap-dot}
\begin{array}{c@{\quad=\quad}c@{\quad=\quad}c}
\motzCapDotLeft & 0 & \motzCapDotRight
\end{array}
.
\end{equation*}
\begin{equation*}\tag{M.3}\label{eq:motzkin-circle}
\begin{array}{c@{\quad=\quad}c@{\quad=\quad}c}
\motzCircle & \motzDotDot & 1
\end{array}
.
\end{equation*}
\end{Proposition}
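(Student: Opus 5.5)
The plan is to compare the presented category, call it $\mathcal{M}$, with $\CrysMo$ directly: build a functor, check that the relations hold, show that a family of normal-form diagrams spans each Hom-space of $\mathcal{M}$, and show by counting that their images are linearly independent in $\CrysMo$.

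\textbf{The functor.} Write $B_1=B(\omega_1)\oplus B(0)$. The generators are sent as follows.
\begin{enumerate}
\item The top and bottom dots go to the inclusion of and projection onto the summand $B(0)$.
\item The drawn strand $\motzIdentity$ goes to the idempotent $e$ projecting $B_1$ onto its $B(\omega_1)$ summand. It is not $\id_{B_1}$; rather $\id_{B_1}=e+(\text{bottom dot})\circ(\text{top dot})$.
\item The cup and cap go to the $\mathfrak{sl}_2$ cup and cap of \autoref{E:SL2}, applied on the $B(\omega_1)$ summands and zero elsewhere.
\end{enumerate}

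\textbf{The relations hold.} Each one is a finite check in the sense of \autoref{bmt basis}.
\begin{enumerate}
\item \eqref{eq:motzkin-zigzag} is the $\mathfrak{sl}_2$ zigzag relation of \autoref{E:SL2}, which is $0$ in crystals.
\item \eqref{eq:motzkin-cap-dot} holds because the cap vanishes on $B(\omega_1)\otimes B(0)$ and on $B(0)\otimes B(\omega_1)$.
\item \eqref{eq:motzkin-circle} combines the $\mathfrak{sl}_2$ circle value $1$ with the fact that $(\text{top dot})\circ(\text{bottom dot})=\id_{\mathbbm{1}}$.
\end{enumerate}
We also need the implicit relations saying the strand is an idempotent absorbing dots, caps and cups. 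If the presentation intends these to be built in (the strand acting as identity in $\mathcal{M}$), we check that the functor respects them.

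\textbf{Spanning in $\mathcal{M}$.} Using the above, any diagram decomposes as a linear combination of planar diagrams in which each boundary point carries exactly one of: a through strand, a dot, or an end of a cup or cap. We then reduce to sandwich form $\overline{f}\circ g$, where $g$ is a bottom half-diagram with $t$ through strands and $f$ is likewise. The reductions are:
\begin{enumerate}
\item closed circles and dot–dot segments evaluate to $1$ by \eqref{eq:motzkin-circle};
\item a cap meeting a dot vanishes by \eqref{eq:motzkin-cap-dot};
\item a cup followed by a cap on offset strands (a zigzag) vanishes by \eqref{eq:motzkin-zigzag};
\item a cup directly under a cap becomes the circle, or a cap-over-cup pair with through strands survives, exactly as in the $\mathfrak{sl}_2$ case of \cite{Alqady2025coboundary}.
\end{enumerate}
Each reduction decreases the number of internal components, so the process terminates. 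The permutation layer $P$ of \autoref{bmt basis} is trivial here because there is a single strand colour and everything is planar.

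\textbf{Independence by counting.} The components of $B_1^{\otimes m}$ correspond to Motzkin paths. A factor $B(\omega_1)$ contributes a step $\pm1$ by \eqref{eqn: fund decomp}, and a factor $B(0)$ contributes a $0$ step. These paths biject with bottom half-diagrams: a dot is a $0$ step, a cap's right end is a $-1$ step, and a through strand or a cap's left end is a $+1$ step. So the sandwich diagrams $\overline{f}\circ g$ are in bijection with the basis of \autoref{bmt basis}, adapted to this generating object with $P=\id$. Their images are therefore linearly independent by the argument of \autoref{bijection branching}. Spanning plus independence gives that the functor is an isomorphism.

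The main obstacle I expect is the spanning step. One must make sure the three listed relations, together with the idempotent behaviour of the strand, suffice to reach sandwich form. The delicate local configurations are dots meeting cups, and caps straddling a dot. I would first try to derive from \eqref{eq:motzkin-cap-dot} and \eqref{eq:motzkin-zigzag} that a dot adjacent to a through strand can always be slid into the top or bottom half, reducing to the $\mathfrak{sl}_2$ normal form of \cite[Theorem 4.7]{Alqady2025coboundary}.
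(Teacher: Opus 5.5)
Your overall route (build a functor, check the relations, reduce to Motzkin normal forms, count via Motzkin paths) is a reasonable, more hands-on alternative to the paper's one-line appeal to \autoref{iso of categories}. As written, however, it breaks at the first step.

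You send the vertical strand to the projection $e$ onto $B(\omega_1)$. That is indeed the atom for the $+1$ step when the generator is $B(\omega_1)\oplus B(0)$, which is presumably where the choice came from. In the presented category $\mathcal{M}$, though, the bare strand is the identity of the generating object, and your normal-form argument treats it that way. So a functor must send it to $\id_{B_1}$. Your assignment is inconsistent with this. Write $\delta\colon\mathbbm{1}\to B_1$ and $\delta^*\colon B_1\to\mathbbm{1}$ for the two dots. Then $\id\circ\delta=\delta$ in $\mathcal{M}$, but $e\circ\delta=0$: $e$ kills dots rather than absorbing them.

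Reading the strand as a separate generator $s\neq\id$ is no better. Nothing in \eqref{eq:motzkin-zigzag}--\eqref{eq:motzkin-circle} forces $s^2=s$, let alone $\id=s+\delta\circ\delta^*$, so the Hom-spaces come out too large.

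The fix is to send the strand to $\id_{B_1}$ and recover $e=\id_{B_1}-\delta\circ\delta^*$ as strand minus dot-dot. (This uses the paper's right-to-left composition; your formulas for this and for the dot-dot in \eqref{eq:motzkin-circle} have the order reversed.) The relations still hold, since cup and cap factor through $B(\omega_1)^{\otimes 2}$. Independence, however, is then no longer literally \autoref{bmt basis}. Expanding $\id=e+\delta\circ\delta^*$ on each through strand shows that the image of a Motzkin diagram equals the corresponding basis element (through strands replaced by $e$) plus images of Motzkin diagrams with fewer through strands. Use this unitriangularity, or note that the functor is full and combine that with your count.

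Second, the configuration you call delicate, a cup with a dot on one leg, cannot be derived from \eqref{eq:motzkin-zigzag}--\eqref{eq:motzkin-circle} as listed. Consider the following model:
\begin{itemize}
\item $B_1\mapsto\kk\{u,l,z\}$;
\item $\delta\mapsto z$ and $\delta^*\mapsto z^*$;
\item cap $\mapsto u^*\otimes l^*$;
\item cup $\mapsto u\otimes l+z\otimes l$.
\end{itemize}
All three relations hold. Yet $(\delta^*\otimes\id)\circ\mathrm{cup}\mapsto l$, which is not a multiple of the image $z$ of $\delta$. Hence $\dim\Hom_{\mathcal{M}}(\mathbbm{1},B_1)\geq 2$, while the crystal side has dimension $1$.

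You must therefore impose the vertical reflection of \eqref{eq:motzkin-cap-dot} as a relation, as \autoref{iso of categories} does for every relation, rather than try to derive it. With that relation added, your reduction to Motzkin diagrams, together with the unitriangularity above, gives the result.
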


\begin{proof}
This follows from \autoref{iso of categories}.
\end{proof}

\begin{Remark}
As in \cite[\S 3]{Alqady2025coboundary}, $\CrysMo$ may be obtained by first renormalizing the usual Motzkin category (see e.g. \cite[\S 7.2]{hu2019presentationsdiagramcategories}) by scaling the cap and bottom dot by $q$, and then setting $q=0$. Note that \cite{hu2019presentationsdiagramcategories} draws the dots using a different convention.
\end{Remark}

There is a diagrammatic description of the coboundary structure on $\CrysMo$ exactly analogous to the one given in \cite[\S 4.2]{Alqady2025coboundary} for $\mathfrak{sl}_2$ and in \eqref{eq:so3 formula} for $\mathrm{SO}_3$-Crys, which can be proven in the same manner. 

\subsubsection{The associated monoid}

Again write $\n$ for $B_1^{\otimes n}$, and for fixed $n$, let $r_k$ be the number of distinguished bottom diagrams with domain $\n$ and codomain $\kkk$. From \autoref{wedderburn basis}, we obtain the semisimple decomposition \[\End(\n)\cong \prod_{k\le n} \Mat_{r_k}(\kk).\] 

This algebra has $n+1$ simple modules $S_r$ up to isomorphism, labeled by the number of through strands (= weight) $r$. Unravelling the isomorphism, $S_r$ has a basis indexed by distinguished top diagrams $\underline{r}\to \n$, with $\End(\n)$-action given by \[x\cdot y = \begin{cases}
xy & xy \text{\ is a distinguished top diagram with $r$ strands} \\
0 & \text{else} .
\end{cases}\]

We also have the associated monoid $M_n$, whose elements are the usual Motzkin $n$-diagrams (see \cite{benkart2014motzkin}) together with a zero element $z$, with multiplication given by the relations of $\CrysMo$. We have $\End(\n)\cong \kk M_n/\kk z$, i.e. it is a contracted monoid algebra, similar to \cite[\S 3.4]{Alqady2025coboundary}.

\begin{Proposition}
The monoid $M_n$ is an inverse monoid, that is, for $m\in M_n$, there is a unique $m^*$ such that $mm^*m=m,m^*mm^*=m^*$. 
\end{Proposition}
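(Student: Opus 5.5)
The plan is to realize $M_n$ as a monoid of partial bijections, since a submonoid of the symmetric inverse monoid that is closed under the involution is automatically inverse. The natural candidate for $m^*$ is the flip $\overline{m}$: the vertical reflection of a Motzkin diagram, with $z^*=z$. I will work inside $\End(\n)$ in the crystal category, which is legitimate because $\End(\n)\cong \kk M_n/\kk z$.

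\textbf{Step 1: every nonzero element is a partial bijection.} Each generator (the dots, the identity, the cup and the cap) is a strict crystal morphism. Each one is injective outside its kernel, in the sense used before \autoref{bmt basis}. So every Motzkin diagram $m$ not equal to $z$ acts on the finite set $B_1^{\otimes n}$ as a partial injection: it sends each element either to $0$ or to a single element, and it is injective off its kernel.

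Two points need care here:
\begin{itemize}
\item Nonzero diagrams act by genuine partial maps, not by linear combinations. Contracted multiplication in $M_n$ sends a product either to a single diagram or to $z$, and the relations (M.1)--(M.3) confirm this.
\item The vertically reflected diagram acts by the inverse partial map. It suffices to check this on the generators, since $\overline{f\circ g}=\overline g\circ\overline f$ and reflection is compatible with $\otimes$. For generators it is immediate from the definitions: the cup is the flip of the cap, the top dot is the flip of the bottom dot, and both are injective off their kernels.
\end{itemize}

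\textbf{Step 2: the defining identities.} The map $m\mapsto$ (its partial bijection), together with $z\mapsto$ the empty map, is a monoid homomorphism into the symmetric inverse monoid on $B_1^{\otimes n}$. It is injective, because distinct nonzero Motzkin diagrams are linearly independent in $\End(\n)$ by the basis \autoref{bmt basis}, so they give distinct maps, and none of them is the zero map. Hence $M_n$ embeds as a submonoid closed under $m\mapsto\overline m$. For a partial bijection $\phi$ we have $\phi\phi^{-1}\phi=\phi$ and $\phi^{-1}\phi\phi^{-1}=\phi^{-1}$. Applying this with $\phi=m$ gives $m\overline m m=m$ and $\overline m m\overline m=\overline m$ in $M_n$.

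\textbf{Step 3: uniqueness.} The cleanest route is to show that idempotents of $M_n$ commute, which together with regularity characterizes inverse monoids. An idempotent partial bijection is the identity on its domain, and any two such restrictions of the identity commute. Idempotents of $M_n$ correspond to idempotent partial bijections by the embedding, so they commute. Alternatively, in the symmetric inverse monoid the element $m^*$ is forced to be $\phi^{-1}$, and this already lies in the image.

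\textbf{Main obstacle.} The delicate point is Step 1's claim that a nonzero product of diagrams is a single diagram acting as a partial injection, and not a combination. In $\mathfrak{sl}_3$ this fails (see \eqref{eq:SL3-9}), so here I would verify it directly: rewriting with (M.1)--(M.3) only ever produces $0$, $1$, or a single diagram. It follows that the multiplication in $M_n$ agrees with composition of partial maps.
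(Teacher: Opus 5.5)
Your proof is correct. It uses the same inverse as the paper, namely $z^*=z$ and $m^*=\overline{m}$ (the flip), but it does the verification differently.

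\textbf{What the paper does.} The paper's proof only names this candidate. It defers the checks $m\overline{m}m=m$ and $\overline{m}m\overline{m}=\overline{m}$, and the uniqueness, to the analogous lemma for the crystal Temperley--Lieb monoid.

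\textbf{What you do instead.} You realize $M_n$ as an inverse-closed submonoid of the symmetric inverse monoid on $B_1^{\otimes n}$. Two facts drive this:
\begin{enumerate}
\item Every generator is injective off its kernel, and its flip acts as the inverse partial map.
\item The Motzkin diagrams are linearly independent, which makes the embedding faithful.
\end{enumerate}
This yields both defining identities and the uniqueness at once. It also explains conceptually why the flip is the inverse.

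\textbf{The one external input.} You need that a product of two Motzkin diagrams is a single diagram or $z$. This is exactly the contracted-monoid-algebra statement $\End(\n)\cong\kk M_n/\kk z$, which the paper also takes as given. Given the presentation, it holds because each of (M.1)--(M.3) rewrites a diagram either to $0$ or to a single diagram with coefficient $1$.

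\textbf{A small correction to your commentary.} What fails in $\mathfrak{sl}_3$ is not that composites act by partial injections. Any composite of atoms, flipped atoms and crossings does act by one. What fails is that such a partial injection need not be a single basis element $\overline{f}\circ P\circ g$; compare \eqref{eq:SL3-9}. That closure property is the only place where the Motzkin-specific structure enters your argument.
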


\begin{proof}
Similar to \cite[Lemma 3.41]{Alqady2025coboundary}. We have $z^*=z$, and if $x$ is a decomposition into top and bottom diagrams, then $x^*=\overline{x}$.   
\end{proof}

\begin{Proposition}\label{greens motzkin}
Declare $th(z)=-\infty$, then the Green's cells are given by: $x\le_{\mathcal{J}} y$ iff $th(x)\le th(y)$, $x\sim_\mathcal{L}y$ iff the bottom diagrams agree (otherwise incomparable), and $x\sim_\mathcal{R}y$ iff the top diagrams agree (otherwise incomparable).
\end{Proposition}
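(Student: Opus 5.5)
The plan is to work entirely inside the contracted monoid $M_n$, using that it is an inverse monoid (the preceding proposition). Every nonzero element factors uniquely as $x=\overline{f}\circ j\circ g$, with $g$ a distinguished bottom diagram $\n\to\underline{r}$, $f$ a distinguished top diagram $\underline{r}\to\n$, and $r=th(x)$. The multiplication is governed by \autoref{wedderburn basis}: $(\overline{f'}g')(\overline{f}g)$ is $\overline{f'}g$ if $g'=f$ and $z$ otherwise. This is exactly matrix-unit multiplication, so $M_n$ is a Brandt-type monoid: a union of Brandt semigroups $B(\mathcal{D}_{n,r})$ over $r$, together with $z$. The one thing to keep straight is that, in the inverse monoid $M_n$ built from diagrams, composition of an element with an element of another number of through strands is still defined. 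I will use the wedderburn product rule in its basic-morphism form: the composite of two matrix units of different weight vanishes.

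For $\mathcal{R}$: I will use the standard fact that in an inverse monoid $x\,\mathcal{R}\,y$ iff $xx^*=yy^*$. Here $x^*=\overline{x}$ gives $xx^*=\overline{f}\circ f$ (with Jones--Wenzl projector inserted), which depends only on the top diagram $f$. By the idempotent lemma, distinct distinguished diagrams give distinct idempotents, so $x\sim_{\mathcal{R}}y$ iff their top diagrams agree. Dually, $x^*x=\overline{g}g$ gives the $\mathcal{L}$ statement via bottom diagrams. The ``otherwise incomparable'' clause needs a short check. Suppose $x\le_{\mathcal{R}}y$, i.e.\ $x=ym$. If $x\neq z$ then $ym\neq z$, and by the product rule the top of $ym$ equals the top of $y$, while $th(x)=th(y)$. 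So $x\le_{\mathcal{R}}y$ forces equal top diagrams, and hence $\mathcal{R}$-equivalence. The same argument applies on the other side for $\mathcal{L}$.

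For $\mathcal{J}$: if $x=ayb\neq z$, then the product rule forces $th(x)=th(y)$, which appears to contradict the claimed $x\le_{\mathcal{J}}y$ whenever $th(x)<th(y)$. This is the main obstacle. My resolution is that $M_n$ should be understood via the diagrammatic Motzkin multiplication from the relations of $\CrysMo$, where composing with caps or dots lowers the through-strand count, rather than via the matrix units $\hat\nu$. So I would first establish the product in the non-projector basis $\overline{f}\circ P\circ g$: a product $xy$ is either $z$ or the diagram obtained by concatenation, possibly with fewer through strands.

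With that product, showing $th(x)\le th(y)\Rightarrow x\le_{\mathcal{J}}y$ goes in three steps. First, reduce $x$ and $y$ to idempotents using $x\,\mathcal{J}\,xx^*$. Second, explicitly cap or dot off through strands of $y$ to obtain an element with $th=th(x)$; a single dot via \eqref{eq:motzkin-circle} or a cap reduces $th$ by $1$ or $2$ without creating zero. Third, note that elements with the same $th$ are $\mathcal{J}$-equivalent, since $x=\overline{f}g$ and $y=\overline{f'}g'$ satisfy $x=(\overline{f}f')\,y\,(\overline{g'}g)$. The converse holds because $th$ never increases under multiplication, and $z$ sits at the bottom.
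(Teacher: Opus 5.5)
There is a genuine gap: the proposal uses two incompatible multiplications on $M_n$. In your first two paragraphs you multiply elements of $M_n$ by the matrix-unit rule of \autoref{wedderburn basis}. That rule holds for the elements $\hat\nu=\overline f\circ j\circ g$, which contain a Jones--Wenzl projector. But $M_n$ consists of the Motzkin diagrams $\overline f\circ g$ (plus $z$), multiplied by concatenation and \eqref{eq:motzkin-zigzag}--\eqref{eq:motzkin-circle}. For that product the matrix-unit rule is false. You switch to the correct product only in the $\mathcal J$ paragraph, while your ``otherwise incomparable'' argument for $\mathcal R$ and $\mathcal L$ still rests on the false rule.

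Your claim that $x=ym\neq z$ forces $th(x)=th(y)$ and equal top diagrams already fails for $y=\id$: $x=\id\cdot x$, so $x\le_{\mathcal R}\id$ for every $x$. Likewise, composing $y$ on the right with an element that dots off one through strand gives $x\le_{\mathcal R}y$ with $th(x)=th(y)-1$. If your step were valid, it would also make different through-strand counts $\mathcal J$-incomparable, which contradicts your own final paragraph. As written, the proposal is inconsistent.

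To repair it, use the concatenation product throughout. The $\mathcal R$-equivalence criterion survives once you drop the projector. For $x=\overline f\circ g$ one has $xx^*=\overline f\circ g\circ\overline g\circ f=\overline f\circ f$, because $g\circ\overline g=\id_{\underline{r}}$: a bottom diagram stacked on its flip produces only circles and dot--dot components, each equal to $1$ by \eqref{eq:motzkin-circle}. The same identity $f'\circ\overline{f'}=\id_{\underline{r}}$ is what justifies your formula $x=(\overline f f')\,y\,(\overline{g'}g)$, and you should state it.

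The incomparability clause can only hold within a fixed value of $th$; across different values it is false, by the examples above. Within a fixed $th$, prove it with the correct product. Suppose $x=ym\neq z$ and $th(x)=th(y)$. Each through strand of $x$ starts at a top point of $y$ and must leave $y$ along a through strand of $y$, and distinct through strands of $x$ use distinct ones. So every through strand of $y$ continues to the bottom of $m$. Hence the cups, top dots and through-strand positions of $x$ are exactly those of $y$, and the top diagrams agree. Alternatively, use stability of finite monoids ($x\le_{\mathcal R}y$ and $x\,\mathcal J\,y$ imply $x\,\mathcal R\,y$) together with the $xx^*$ criterion.

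With these corrections, your last paragraph gives the $\mathcal J$ statement: elements with the same $th$ are $\mathcal J$-equivalent, a single dot lowers $th$ by one without producing $z$, and $th$ never increases under multiplication. The proof is then complete.
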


\begin{proof}
Similar to \cite[Lemma 3.40]{Alqady2025coboundary}.
\end{proof}

It is easy to see that all maximal subgroups of $M_n$ are trivial. An inverse monoid all of whose maximal subgroups are trivial is semisimple over any field, by \cite[Corollary 9.4]{steinberg2016representation}, so $\kk M_n$ is always semisimple. The representation theory of a semisimple monoid algebra is determined by its character table, see \cite[\S 7]{steinberg2016representation}. 

\begin{Proposition}
The character table for $M_n$ is obtained from the character table $S$ for standard modules of $Mo_n$ (see \cite[Proposition 2B.15]{he2025tensorpowersrepresentationsdiagram}) by adding a column of zeros corresponding to the class of $z$, and a row of ones (trivial representation) coming from $z$. It looks as follows:
\[
\scalebox{0.9}{$
\begin{array}{c|ccccc}
& z & 0 & 1 & 2 & \cdots \\
\hline
z & 1 & 1 & 1 & 1 & \cdots \\
0 & 0 & 1 & M(1,0) & M(2,0) & \cdots \\
1 & 0 & 0 & 1 & M(2,1)& \cdots \\
\vdots & \vdots & \vdots & \vdots& \vdots & \ddots
\end{array}
$}
\]
where we index rows and columns by $r$ representing through strands, and $M(i,j)$ is the number of Motzkin top diagrams with domain $\underline{i}$ and $j$ through strands, $$M(i,j)=\begin{cases}\sum_{t=0}^{i} \frac{j+1}{j+t+1} \binom{i}{j+2t} \binom{j+2t}{t}
& i \ge j,\\ 0 & \text{else.}\end{cases}.$$ Let $C$ denote the character table viewed as a matrix, then $C^{-1}=
\begin{psmallmatrix}
1 & -\,\mathbf{1}^{\mathsf T}S^{-1}\\
0 & S^{-1}
\end{psmallmatrix},$ where \(\mathbf{1}=(1,1,1,\dots)^{\mathsf T}\)
and\[
S^{-1}_{ij}=
\begin{cases}
[x^{j-i}](1+x+x^2)^{-(i+1)}
=
(-1)^{j-i}\displaystyle\sum_{r=0}^{\left\lfloor\frac{j-i}{2}\right\rfloor}
(-1)^r
\binom{i+r}{r}
\binom{j-r}{j-i-2r},
& i\le j,\\[1.25em]
0, & i>j.
\end{cases}
\]
\end{Proposition}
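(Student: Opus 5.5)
The plan is to use the general theory of semisimple inverse monoids, following \cite[\S 7]{steinberg2016representation}, and to handle the zero element $z$ separately. Since $\kk M_n$ is semisimple and $\End(\n)\cong \kk M_n/\kk z$, I would first split the algebra as $\kk M_n\cong \kk z\times \kk M_n/\kk z$. This works because $z$ is a central idempotent: the first factor carries the one-dimensional representation in which every element of $M_n$, including $z$, acts by $1$, and the second factor is $\End(\n)$. So the irreducible representations of $M_n$ are this extra representation (the row labelled $z$, which is constantly $1$) together with the $n+1$ modules $S_r$ pulled back along the quotient map. On these pulled-back modules $z$ acts by $0$.

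Next I would fix the conjugacy-class representatives. By \autoref{greens motzkin}, the $\mathcal{J}$-classes of $M_n$ are indexed by the number of through strands $r$, with $th(z)=-\infty$ at the bottom. All maximal subgroups are trivial, so in Steinberg's framework the character table has exactly one column per $\mathcal{J}$-class, represented by an idempotent $e_r$. A natural choice is $e_r=\overline{f}\circ f$, where $f$ is a fixed distinguished bottom diagram with $r$ through strands. The column for $z$ is then: $\chi_z(z)=1$, and $\chi_{S_r}(z)=0$ for every $r$, since $z$ is zero in the contracted algebra.

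The main computation is $\chi_{S_r}(e_k)$. I would use the explicit description of $S_r$: it has a basis of distinguished top diagrams $y\colon \underline{r}\to\n$, and $x\cdot y=xy$ if this product is again such a diagram, and $0$ otherwise. For an idempotent $e_k$ the trace counts the fixed basis elements $y$ with $e_ky=y$. I expect this fixed-point count to reduce to the count of distinguished top diagrams from $\underline{k}$ into the top of $e_k$ with $r$ through strands. That is $M(k,r)$ when $k\ge r$, and $0$ otherwise, giving an upper unitriangular matrix; this is the table $S$ of \cite[Proposition 2B.15]{he2025tensorpowersrepresentationsdiagram}, and I would cite that for the explicit sum. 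This identification is the main obstacle. One has to check, using the $H=I$ and zig-zag relations (M.1)--(M.3), that $e_ky=y$ holds exactly when $y$ factors through the top of $e_k$, with no sign or coefficient appearing. The cleanest route is to pass through \autoref{wedderburn basis}: there the action is by matrix units, so traces are $0/1$ counts.

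Finally I would compute the inverse. Ordering the columns as $z,0,1,\dots$, the matrix is $C=\begin{psmallmatrix}1&\mathbf{1}^{\mathsf T}\\0&S\end{psmallmatrix}$, and block inversion gives $C^{-1}=\begin{psmallmatrix}1&-\mathbf{1}^{\mathsf T}S^{-1}\\0&S^{-1}\end{psmallmatrix}$ directly. For $S^{-1}$ I would use the generating function $\sum_i M(i,j)x^i=x^j c(x)^{j+1}$, where $c$ is the Motzkin-path generating function, so that $S$ is a Riordan array. Riordan inversion, using the functional equation $c=1+xc+x^2c^2$, then gives $S^{-1}_{ij}=[x^{j-i}](1+x+x^2)^{-(i+1)}$. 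The closed sum follows by expanding $(1+x+x^2)^{-(i+1)}$ through negative binomial series in $x(1+x)$ and extracting coefficients.
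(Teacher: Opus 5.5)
Your proposal is correct. Its skeleton is the same as the paper's: one column per $\mathcal{J}$-class represented by an idempotent, characters of the partial-permutation modules $S_r$ computed as fixed-point counts, and block inversion for the $z$ row and column. The routes differ in three places.

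\emph{Choice of representative.} The paper takes the specific idempotent with $k$ through strands and $n-k$ stacked dot pairs. Then $x\cdot y=y$ exactly when $y$ has top dots at those $n-k$ positions, the other $k$ positions are free, and $M(k,r)$ is read off at once. You allow an arbitrary $e_k=\overline{f}\circ f$, and that is what creates your ``main obstacle''. It closes in a few lines that you should write out.
\begin{enumerate}
\item By \eqref{eq:motzkin-circle}, $f\circ\overline{f}=\id_{\underline{k}}$, since every cap meets its mirror cup in a circle and every dot meets its mirror dot.
\item Stacking two top diagrams never triggers a relation. So for a top diagram $t\colon\underline{r}\to\underline{k}$ with $r$ through strands, $\overline{f}\circ t$ is again such a top diagram, it is fixed by $e_k$, and $t\mapsto\overline{f}\circ t$ is injective because $f\circ\overline{f}\circ t=t$.
\item Conversely, if $e_ky=y$, then $f\circ y$ is nonzero and has nothing attached at the $\underline{r}$ end, since anything there would survive into $\overline{f}\circ f\circ y=y$. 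So $f\circ y$ is such a $t$.
\end{enumerate}
In your matrix-unit phrasing: the trace of $\overline{f}\circ f$ on $S_r\cong\Hom(B(r),\underline{n})$ is the multiplicity of $B(r)$ in its image. That image is a copy of $\underline{k}$ because $f\circ\overline{f}=\id$. Your version shows the value does not depend on the chosen idempotent and reads it as a multiplicity; the paper's choice gives a one-line count.

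\emph{The inverse.} You derive $S^{-1}$ yourself, whereas the paper cites \cite[Proposition 2B.15]{he2025tensorpowersrepresentationsdiagram}. Strictly, it is the lower-triangular $S^{\mathsf T}$ that is the Riordan array $(c,xc)$, not $S$. Since $u=xc$ satisfies $u=x(1+u+u^2)$, its inverse is $\bigl((1+x+x^2)^{-1},\,x(1+x+x^2)^{-1}\bigr)$, and transposing gives the stated formula.

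\emph{The $z$ row and column.} Splitting off the central idempotent $z$ justifies the extra row of ones and column of zeros, which the paper asserts without argument.

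Both additions are correct and make the proof self-contained.
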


\begin{proof}
Let $\chi_j$ be the character for $S_j$ in notation above. Note that $S_j$ is a permutation representation. For the conjugacy class indexed by $i$, take as representative $x$ the diagram with $i$ through lines and $n-i$ dots. If $y$ is a top diagram with $j$ through lines, then $x\cdot y=y$ iff $y$ has $n-i$ dots where $x$ has them, leaving $i$ positions free. It follows that the number of such $y$'s is $M(i,j)$. This implies that the form of the character table is as claimed. The inverse of the matrix is obtained from \cite[Proposition 2B.15]{he2025tensorpowersrepresentationsdiagram}.     
\end{proof}

Since any $\kk M_n$-module $M$ is determined by its character, the above formula enables the determination of the number of summands in $M$ corresponding to any simple module. In particular, the structure of the representation ring is easily computable, cf. \cite[Proposition 2B.18]{he2025tensorpowersrepresentationsdiagram}. We could also work directly with the semisimple algebra $\End(\n)$: the calculation above shows that it has a well-defined character theory, encoded in the matrix $S$, which is the same as the character theory of the cell modules of the usual Motzkin algebra.

\begin{Remark}
An analogous statement holds for the Temperley--Lieb monoid with zero, denoted by $T_n$ in \cite[\S 3.4]{Alqady2025coboundary}. One just has to replace the matrix $S$ with the matrix in \cite[Proposition 2B.15]{he2025tensorpowersrepresentationsdiagram} (which was also computed in \cite[\S 2]{halverson1995characters}), and the inverse $S^{-1}$ by the matrix in \cite[Proposition 2B.17]{he2025tensorpowersrepresentationsdiagram}.  
\end{Remark}

\begin{Remark}
The growth problem (in the sense of, e.g., \cite{CoOsTu-growth}) for the 
crystal Motzkin monoid is thus easy, and can be directly obtained from the above and 
\cite{HeTu-growth-finite-monoids}.
\end{Remark}


\end{document}